\documentclass[10pt,a4paper,reqno]{amsart}
\usepackage{mystyle}

\title[Hitting time for loose Hamilton cycles]{Hitting time for loose Hamilton cycles in the random hypergraph process}

\author[Le\'on]{Jared Le\'on}
\address[Le\'on]{Mathematics Institute, University of Warwick, Coventry CV4 7AL, United Kingdom}
\email{jared.leon@warwick.ac.uk}
\thanks{J.L. was supported by the Warwick Mathematics Institute Centre
  for Doctoral Training. This research was carried out while he was
  visiting the School of Mathematical Sciences at the University of
  Science and Technology of China, which he thanks for its support.}

\author[Liu]{Xizhi Liu}
\address[Liu]{School of Mathematical Sciences, University of Science and Technology of China, Hefei, China}
\email{liuxizhi@ustc.edu.cn}
\thanks{X.L. was supported by the Excellent Young Talents Program
  (Overseas) of the National Natural Science Foundation of China.}

\subjclass{%
  05C45, 
  05C80, 
  05C65, 
}
  
\hypersetup{pdftitle={Hitting time for loose Hamilton cycles in the random hypergraph process},pdfauthor={Jared Le\'on, Xizhi Liu}}

\begin{document}
\begin{abstract}
  The random hypergraph process starts with an empty hypergraph on $n$
  vertices and adds a uniformly chosen missing edge at each step. For
  even $n$, we prove that, with high probability, a $3$-uniform loose
  Hamilton cycle appears when the last isolated vertex disappears. The
  approach developed here adapts arguments from Kahn's work on perfect
  matchings. It can also be extended to obtain the corresponding
  hitting-time theorems for $r=2$ and every fixed $r\ge4$ satisfying
  $(r-1)\mid n$. Furthermore, it can be used to estimate the numbers
  of loose Hamilton cycles at these hitting times for every fixed
  $r\ge2$, up to a factor of $\exp\{o(n)\}$.
\end{abstract}

\maketitle

\section{Introduction}
\label{sec:introduction}
A \emph{loose Hamilton cycle} in a $3$-graph is a spanning
subhypergraph whose edges can be cyclically ordered so that
consecutive edges intersect in exactly one vertex, these intersection
vertices are distinct, and non-consecutive edges are disjoint. Observe
that if $n$ is odd, then no $n$-vertex $3$-graph contains a loose
Hamilton cycle.

Let $(\mathcal H_t)_{t=0}^{\binom n3}$ be the random $3$-uniform hypergraph process on $[n]\coloneqq\{1,\ldots,n\}$. More precisely, set $\mathcal H_0=\varnothing$. For each $0\le t<\binom n3$, conditional on $\mathcal H_t$, choose a triple uniformly from $\binom{[n]}3\setminus\mathcal H_t$ and add it to obtain $\mathcal H_{t+1}$. For even $n$, we define two hitting times in the random hypergraph process $(\mathcal H_t)$: $\tau_1$ is the time when the last isolated vertex disappears, and $\tau_{\LH}$ is the time when a loose Hamilton cycle first appears. That is,
\[
 \tau_1\coloneqq\min\{t\colon \delta_1(\mathcal H_t)\ge1\},\quad\text{and}\quad
 \tau_{\LH}\coloneqq\min\{t\colon \mathcal H_t\text{ contains a loose Hamilton cycle}\},
\]
where $\delta_1(H)$ denotes the minimum vertex degree of $H$. Observe that a spanning loose cycle has no isolated vertex, so deterministically $\tau_{\LH}\ge\tau_1$.

For every fixed $r\ge3$ and $n$ divisible by $r$, Kahn~\cite[Theorem~1.3]{Kahn2022} proved that, with high probability, a perfect matching first appears in the random $r$-uniform hypergraph process when the last isolated vertex disappears. We prove the analogous statement for loose Hamilton cycles in $3$-graphs, which was posed as a problem by K\"uhn and Osthus~\cite[Section~5.2]{KuhnOsthus2014} and revisited by Frieze and P\'erez-Gim\'enez~\cite[Section~5]{FriezePerez2025}.
\begin{theorem}\label{thm:main}
As $n$ tends to infinity through even integers, $\PP[\tau_{\LH}=\tau_1]$ tends to $1$.
\end{theorem}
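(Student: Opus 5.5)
Since $\tau_{\LH}\ge\tau_1$ deterministically, it suffices to show that with high probability $\mathcal H_{\tau_1}$ contains a loose Hamilton cycle. I will follow Kahn's entropy/counting approach for perfect matchings in~\cite{Kahn2022}, adapted to loose Hamilton cycles. In a $3$-uniform loose Hamilton cycle on even $n$ there are $n/2$ edges. Each edge has one \emph{interior} vertex, of degree one in the cycle, and two \emph{link} vertices, each of degree two. So the object is a spanning structure with prescribed degree pattern: a cyclic ordering of $n/2$ link vertices, plus a perfect assignment of the remaining $n/2$ vertices as interiors, one to each consecutive pair.

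The plan has three steps. First, I will record the standard properties of $\mathcal H_{\tau_1}$. Here $\tau_1$ is concentrated around $\frac{n^2}{6}\cdot 2\log n\cdot\frac{1}{1+o(1)}$, i.e.\ edge density $p\approx 2\log n/n^2$. With high probability:
\begin{itemize}
\item all vertices of low degree (say below $\epsilon\log n$) are few and far apart;
\item degrees are otherwise concentrated around $\Theta(\log n)$;
\item pseudorandom expansion holds for vertex sets and for pairs.
\end{itemize}
Second, I will set up a weighted count. I fix a slightly earlier time $t_0=\tau_1-\omega$ (or work in the coupled binomial model). I then aim to show that the number $\Phi(\mathcal H_t)$ of loose Hamilton cycles cannot drop to zero when one passes from the "complete minus few" situation to $\mathcal H_{\tau_1}$. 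Concretely, as in Kahn's proof, I would consider a random loose Hamilton cycle $C$ in the complete $3$-graph and compare $\mathbb E\,\Phi$ with a lower bound obtained via an entropy argument. The entropy argument is Shearer/Brégman style: it bounds the entropy of a uniformly random loose Hamilton cycle in $\mathcal H$ from below in terms of local degree statistics, giving $\Phi(\mathcal H_{\tau_1})\ge \exp\{-o(n)\}\cdot\mathbb E\,\Phi$.

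Third, I will convert this into existence. A Kahn-type "spread/planting" lemma should show the following: conditioned on the edge set outside a small random set, the probability that no cycle survives is small unless some vertex is isolated. I will handle low-degree vertices by first fixing, for each such vertex $v$, a short loose path through an edge at $v$, using the edges that exist at $\tau_1$. I then contract these paths so that the rest of the hypergraph has minimum degree $\Omega(\log n)$. In the contracted, well-behaved hypergraph I will run an absorption/rotation–extension style argument, via a Pósa-type rotation for loose paths in expanding $3$-graphs, to close a Hamilton cycle.

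The main obstacle is the second step: proving a matching lower bound on the number of loose Hamilton cycles of the form $\exp\{-o(n)\}\,\mathbb E\,\Phi$. This requires an entropy inequality for cycle structures, which are not product-like in the way perfect matchings are. I will first try to decompose the cycle into the perfect matching of link pairs to interior vertices and a Hamilton cycle in an auxiliary graph on link vertices. I would then apply Kahn's matching bound to the former and a Brégman/Egorychev–type permanent bound to the latter, paying only $\exp\{o(n)\}$ in the averaging over the choice of the link set.
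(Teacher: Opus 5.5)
Your plan has a genuine gap at exactly the step you flag as the main obstacle, and the mechanism you propose there cannot work. Shearer's inequality, Br\'egman's theorem and Kahn's inequality (\cref{thm:kahn}) all bound the entropy of a random structure from \emph{above}. They can never yield $\Phi(\mathcal H_{\tau_1})\ge\exp\{-o(n)\}\,\EE\,\Phi$. The Egorychev--Falikman bound is a lower bound only for permanents, and your decomposition does not reduce to one. At this density codegrees are at most $2$, so the cyclic order of the link vertices essentially forces the private vertices, and the two factors do not separate.

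In the paper, as in Kahn's work, the lower bound comes from a different mechanism. One deletes edges one at a time from the complete $3$-graph down to the core and uses $\Xc(H-e)=\Xc(H)(1-\Qc_e(H))$, whose average loss is $(n_\ast-s)/(2j)$. A maximal Azuma inequality then keeps $\log\Xc(H_j)$ within $O(n_\ast/\log n_\ast)$ of the reference value $L_j$ (\cref{thm:stopped-counting}). This requires $\Qc_e(H_j)=O(1/d(H_j))$ for \emph{every} edge at every step (\cref{thm:uniform-marginal}), and that is where the real work lies. Entropy enters only in stability form: if $\log\Xc(H_j)$ is near $L_j$, then \emph{most} directed marginals are nearly uniform (\cref{thm:entropy-stability}). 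This uses the encoding that splits intersection vertices into in/out copies, turning the cycle into a $3$-partite perfect matching. Passing from most edges to every edge then needs the forward/reverse resampling argument (\cref{prop:uniform-completion}) and the migration lemmas (\cref{lem:private-migration,lem:intersection-migration}). None of these ingredients, nor substitutes for them, appear in your proposal.

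Your third step does not close the gap either. The ``spread/planting lemma'' you invoke is essentially a restatement of the theorem. A P\'osa-type rotation or absorption argument for loose paths at density $\Theta(\log n/n^2)$ is not an available tool. Rotations control the ends of a long path, but the binding constraint here is that every vertex be covered, half of them as private vertices. That is the same coupon-collector covering obstruction that makes Shamir's problem hard, and rotations do nothing for it. If your second step succeeded, the third would be unnecessary anyway.

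Your treatment of the low-degree vertices also misses what makes the reduction rigorous. Any count taken at $\tau_1-\omega$ is zero, since an isolated vertex is still present, and a binomial coupling loses the hitting time. The paper instead reveals $(\tau_1,B,\mathcal T)$. Since $\PP[\mathcal H_{\tau_1}=F]\propto b(F)$ and $b(F)$ is fixed by the exposure, the core is exactly uniform on $\Omega(n_\ast,m_\ast,\boldsymbol\ell)$ (\cref{prop:exact-core}). It then contracts each selected triple to a prescribed pair, so that $v$ returns as a private vertex, and counts mixed Hamilton cycles through this matching. The martingale tail bound transfers to this conditioned law because $\beta\ge\exp\{-n_\ast^{1/10}\}$ (\cref{lem:beta}). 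Incidentally, $\tau_1\approx n\log n/3$, not $n^2\log n/3$.
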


We also provide a Lean~4 formalisation~\cite{Leon2026Formalization} of
the corresponding hitting-time theorem for every fixed $r\ge3$ with
$(r-1)\mid n$. This is registered as
\href{https://palomar-registry.org/entry?id=PALOMAR-2026-10-03-000003&version=1}{Palomar
  entry \texttt{PALOMAR-2026-10-03-000003}, version~1} with source
code available at
\url{https://github.com/WozMit/hitting-time-loose-hamilton}.

The proof of \cref{thm:main} adapts the entropy, conditioned sampling,
and random edge-deletion arguments used by
Kahn~\cite{Kahn2023,Kahn2022} for perfect matchings. This approach
extends to every fixed $r\ge4$ with $(r-1)\mid n$. It can also be
adapted to give a proof of the graph hitting-time theorem at minimum
degree $2$ without P\'osa's the well-known rotation--extension
method. For every fixed $r\ge2$, the counting argument can be extended
to estimate the number of loose Hamilton cycles at the corresponding
hitting time up to a factor $\exp\{o(n)\}$. See the concluding remarks
and \cref{thm:counting-cycles} for these extensions.

We now review related work on Hamilton cycles and perfect matchings in random graphs and hypergraphs.

For ordinary graphs, Koml\'os and
Szemer\'edi~\cite{KomlosSzemeredi1983} proved that, for each fixed
$c\in\mathbb R$, a uniformly random $n$-vertex graph with
$\bigl\lfloor\tfrac n2(\log n+\log\log n+c)\bigr\rfloor$ edges
contains a Hamilton cycle with probability tending to
$\exp\{-\e^{-c}\}$. Bollob\'as~\cite{Bollobas1984} and Ajtai,
Koml\'os, and Szemer\'edi~\cite{AjtaiKomlosSzemeredi1985} proved that,
with high probability, a random graph process becomes Hamiltonian when
its minimum degree first reaches $2$.

In the binomial $r$-uniform hypergraph model, each $r$-set is selected independently with probability $p$. For a sufficiently large constant $C$, Frieze~\cite{Frieze2010} proved that, with high probability, a random hypergraph from this model contains a loose Hamilton cycle when $r=3$, $4\mid n$, and $p\ge C\log n/n^2$. Dudek and Frieze~\cite{DudekFrieze2011} proved the same conclusion for fixed $r\ge4$ when $pn^{r-1}/\log n\to\infty$ and $2(r-1)\mid n$. Dudek, Frieze, Loh, and Speiss~\cite{DudekFriezeLohSpeiss2012} removed the extra factor of $2$ from these divisibility conditions. Ferber~\cite{Ferber2015} later gave a coupling proof for fixed $r\ge3$, $(r-1)\mid n$, and $pn^{r-1}/\log n\to\infty$. The threshold order $n^{-(r-1)}\log n$ also follows from the general threshold theorem of Frankston, Kahn, Narayanan, and Park~\cite[Theorem~5.1]{FrankstonKahnNarayananPark2021}.

Recently, Frieze and P\'erez-Gim\'enez~\cite{FriezePerez2025}
determined the threshold for loose Hamilton cycles up to a factor of
$1+o(1)$ in the uniform $m$-edge hypergraph model. More precisely,
they proved that for every fixed $r\ge3$ and $ \eps > 0$, with high
probability, the uniform random $r$-graph with $m\ge(1+\eps)n\log n/r$
edges contains a loose Hamilton cycle, provided $(r-1)\mid n$. For
$r=3$, this threshold is asymptotic to $n\log n/3$. For Hamilton
cycles whose consecutive edges overlap in $\ell\ge2$ vertices,
Narayanan and Schacht~\cite{NarayananSchacht2020} determined the sharp
threshold in the binomial $r$-uniform hypergraph model, for fixed
$r>\ell$ and $(r-\ell)\mid n$. This threshold is an explicit constant
times $n^{-(r-\ell)}$. In particular, when $r=3$ and $\ell=2$, for
every fixed $0<\varepsilon<1$, with high probability, such a cycle
exists if $p\ge(1+\varepsilon)\e/n$ and does not exist if
$p\le(1-\varepsilon)\e/n$.

In the random regular $r$-uniform hypergraph model, one chooses
uniformly from all simple $r$-graphs on $[n]$ in which every vertex
has degree $d$. For every fixed $r\ge3$ and integers $n,d$ satisfying
$r\mid nd$ and $(r-1)\mid n$, Dudek, Frieze, Ruci\'nski, and
\v{S}ileikis~\cite{DudekFriezeRucinskiSileikis2015} proved that, with
high probability, such a hypergraph contains a loose Hamilton cycle if
$d=\omega(\log n)$ and $d=o(n^{1/2})$. For $r=3$, the lower bound on
$d$ can be weakened to $d\ge C\log n$ for a sufficiently large
constant $C$. Their proof couples the random regular hypergraph with a
uniform random hypergraph having a prescribed number of edges, so that
the latter is a spanning subhypergraph of the former with high
probability. Their later work~\cite{DudekFriezeRucinskiSileikis2017}
extended this embedding to $d$-regular hypergraphs for
$\log n\ll d\ll n^{r-1}$, with the embedded hypergraph containing
almost all edges of the random regular hypergraph. For fixed
uniformity and fixed degree, Altman, Greenhill, Isaev, and
Ramadurai~\cite[Theorem~1.1]{Altman2020} determined the degree
threshold for loose Hamiltonicity. In particular, for fixed $d\ge2$, a
uniform simple $d$-regular $3$-graph contains a loose Hamilton cycle
with high probability if $d\ge4$, and does not if $d\le3$, along
integers satisfying $2\mid n$ and $3\mid dn$.

We now consider perfect matchings. For graphs of even order,
Bollob\'as and Thomason~\cite{BollobasThomason1985} proved that, in
the random graph process, with high probability a perfect matching
appears when the last isolated vertex disappears. For perfect
matchings in random uniform hypergraphs, early threshold bounds were
obtained by Schmidt and Shamir~\cite{SchmidtShamir1983}, and
subsequently improved by Frieze and Janson~\cite{FriezeJanson1995} and
by Kim~\cite{Kim2003}. For fixed $r\ge3$ and $r\mid n$, Johansson,
Kahn, and Vu~\cite{JohanssonKahnVu2008} established the threshold
$n^{-(r-1)}\log n$ for the appearance of a perfect matching. For these
values of $r,n$ and every fixed $\varepsilon>0$, Kahn~\cite{Kahn2023}
proved that, with high probability, a uniformly random $m$-edge
$r$-graph has a perfect matching if $m>(1+\varepsilon)n\log n/r$.

Cuckler and Kahn~\cite{CucklerKahn2009Entropy} developed entropy
bounds for perfect matchings and Hamilton cycles in graphs using their
edge marginals. In related work, Cuckler and
Kahn~\cite{CucklerKahn2009Dirac} obtained asymptotic lower bounds for
the number of Hamilton cycles in graphs with minimum degree at least
$n / 2$, using entropy-maximising fractional matchings.

Our proof of \Cref{thm:main} adapts Kahn's overall proof strategy, as
well as his entropy inequality. Our main contribution is to make the
strategy work for a loose Hamilton cycle while accommodating vertices
of degree one. Frieze and P\'erez-Gim\'enez \cite{FriezePerez2025}
adapt Kahn's matching argument to find several independent uniform
perfect matchings inside suitable random subhypergraphs. The union of
these perfect matchings can then be treated using a Hamiltonicity
theorem for random regular hypergraphs. This route requires minimum
degree at least two. The obstruction at the hitting time is that a
vertex that was isolated immediately before the last edge arrived now
has degree one, so the hypergraph cannot contain a spanning regular
subhypergraph of degree at least two. It is not even possible to find
two edge-disjoint perfect matchings. However, a loose cycle can
accommodate this vertex, and it must occur in just one cycle edge, as
a private vertex in the cycle, rather than as an intersection vertex.
Removing such vertices first leaves the problem of fitting their
incident edges into a spanning cycle. This is the route that we take
in this work.

\subsection*{Overview of the argument}
\label{sec:proof-snapshot}

Kahn~\cite{Kahn2023,Kahn2022} proves the existence of perfect matchings by counting them.\footnote{The
  paper \emph{Asymptotics for Shamir's problem} proves the asymptotic
  threshold and gives the hitting time reduction. The companion paper
  \emph{Hitting times for Shamir's problem} establishes the hitting
  time statement.} Starting from the complete hypergraph, he tracks the number of perfect matchings remaining as edges are deleted uniformly at random. Entropy estimates first control \emph{most} edge marginals, where an edge's marginal is the fraction of perfect matchings containing it. Resampling and comparisons between the numbers of perfect matchings left after deleting different candidate vertex sets then extend this control to \emph{every} edge marginal. A martingale estimate controls the accumulated decrease in the logarithm of the number of perfect matchings. For the hitting-time argument, he first reveals the low-degree set and its incident edges, then reduces to counting perfect matchings in a random hypergraph subject to vertex-degree lower bounds. We adapt this strategy to loose Hamilton cycles.

To explain the counting step, suppose the current hypergraph contains $\Phi>0$ objects, such as perfect matchings or cycles, each using $k$ of the $j$ edges subject to deletion. For each deletable edge $e$, let $q_e$ be the fraction of these objects containing $e$. Deleting $e$ leaves $\Phi(1-q_e)$ objects, and averaging over the $j$ deletable edges gives $\frac1j\sum_e q_e=k/j$. Thus $k/j$ is the expected fraction of objects lost under uniform deletion. Since $\log(1-q_e)\approx-q_e$ for small $q_e$, summing these average losses gives a reference value for $\log\Phi$. To control deviations from this reference value, we require $q_e=O(k/j)$ for \emph{every} deletable edge, bounding the proportion of objects removed at each step. Near the hitting time, $k/j$ has order $1/\log n$.

The first modification concerns the reduction at low-degree vertices. In Kahn's reduction~\cite{Kahn2023}, disjoint edges cover the low-degree vertices. He removes all vertices of these edges, finds a perfect matching on the remaining vertices, and then restores the reserved edges. In the cycle problem, for each low-degree vertex $v$, we choose an incident triple $\{v,a_v,b_v\}$, with the chosen triples pairwise disjoint. We delete the low-degree vertices and prescribe the pairs $a_vb_v$, calling the remaining hypergraph the \emph{core}. We count cycles spanning the core vertices, containing every prescribed pair, and otherwise using triples of the core. We call these \emph{mixed Hamilton cycles}. We keep all prescribed pairs throughout edge deletion. Expanding each pair $a_vb_v$ back to $\{v,a_v,b_v\}$ then restores $v$ as a private vertex (see \cref{fig:core-reduction}).

To determine the conditional distribution of the core, we reveal the hitting time, the low-degree set, and all edges incident to that set. Conditional on this information, the core is uniform among hypergraphs with its fixed number of edges and prescribed vertex-degree lower bounds of order $\log n$. \Cref{thm:conditioned-core} gives a lower bound on the number of mixed Hamilton cycles containing all prescribed pairs under this conditional law.

The second modification concerns the entropy argument for edge marginals. We choose a mixed Hamilton cycle uniformly from the $\Phi$ cycles under consideration. Its entropy is $\log\Phi$, so a lower bound on the number of cycles becomes an entropy lower bound. Kahn's inequality applies to perfect matchings, so we first encode each mixed Hamilton cycle as a perfect matching in an auxiliary hypergraph. We orient the cycle and condition on its set of intersection vertices and the directions of its prescribed pairs. We then split each intersection vertex into incoming and outgoing copies. A triple traversed from $u$ to $v$, with private vertex $w$, becomes $\{u^{\mathrm{out}},v^{\mathrm{in}},w\}$. For each prescribed pair directed from $a$ to $b$, we omit $a^{\mathrm{out}}$ and $b^{\mathrm{in}}$. The auxiliary edges corresponding to the cycle's triples then form a perfect matching. We apply Kahn's inequality to the matching distribution induced by the mixed Hamilton cycles. Because the matching is induced by a sampled mixed Hamilton cycle, it corresponds to a single connected cycle, even though other auxiliary perfect matchings may correspond to several disjoint cycles. Combining entropy stability with the host's degree, codegree, and edge-distribution estimates, we show that a lower bound on $\log\Phi$ close to its reference value forces \emph{most} directed edge marginals to be nearly equal.

To bound \emph{every} edge marginal, we compare the numbers of mixed Hamilton cycles as the deleted vertex and the additional prescribed pair vary. Kahn counts perfect matchings of $H-U$, obtained by deleting the vertices of a candidate edge $U$, whether or not $U$ belongs to $H$. He compares these numbers as the vertices of $U$ change. For distinct vertices $w,u,v$ not covered by the prescribed pairs, we instead let $Y(w;u,v)$ count mixed Hamilton cycles after deleting $w$ and adding $uv$ to the prescribed matching. This number is defined even when $\{w,u,v\}$ is not an edge of the host hypergraph. If $\{w,u,v\}$ is an edge, expanding $uv$ gives cycles using that edge with $w$ private. Summing the three counts corresponding to the choices of private vertex gives $\Phi q_{\{w,u,v\}}$.

We use resampling to show that $Y(w;u,v)/\Phi\approx1/(2d)$ for all but a vanishing fraction of these choices, including choices that do not form an edge of the host hypergraph, where $d$ is the host's average degree. To prove this, we describe one edge-deletion experiment in two orders. In the forward description, we reveal the current host $H$ and delete a uniformly chosen set $T$ of a prescribed size that is small compared with $|H|$. An entropy bound shows that two independent uniformly chosen mixed Hamilton cycles share few triples on average. This limits correlations between their survival events and bounds the variance of the number of mixed Hamilton cycles left after deletion. The same argument applies to each sufficiently large family counted by $Y(w;u,v)$. For smaller families, an upper bound on the expected number of cycles left after deletion suffices. These estimates compare $Y(w;u,v)/\Phi$ before deletion with the corresponding ratio after deleting $T$.

In the reverse description, we reveal the remaining host $H-T$ and the core after removing the edges of $T$. We show that, with high probability, the core after deletion still satisfies the prescribed vertex-degree lower bounds. Conditional on these two remaining hypergraphs, if the degree lower bounds hold, then $T$ is uniform among sets of its prescribed size consisting of triples absent from $H-T$. If the ratios $Y(w;u,v)/\Phi$ were atypical for many choices before deletion, then the corresponding ratios would remain atypical for many choices after deletion. Yet the entropy bound and the forward estimates show that only a small proportion of the triples in $T$ admit a choice of private vertex for which the ratio after deletion is atypical. Uniform sampling makes this discrepancy unlikely. This proves that $Y(w;u,v)/\Phi$ is close to $1/(2d)$ for almost all admissible choices.

The preceding estimate does not rule out choices of $w,u,v$ for which $Y(w;u,v)$ is much larger than $\Phi/(2d)$. We obtain an upper bound for every $Y(w;u,v)$ by contracting and rebuilding short cycle segments. These modifications move the omitted vertex or an endpoint of the added pair while preserving connectivity and the fixed prescribed pairs. They provide the comparisons needed in place of Kahn's comparisons for different candidate edges $U$. If $Y(w;u,v)$ were too large for one choice of the three vertices, then the modifications would force $Y(w;u,v)$ to exceed $\Phi/(2d)$ by a fixed factor for a positive proportion of the other choices, contradicting the estimate for almost all admissible choices. For triples meeting a prescribed pair, we sum the numbers of cycles obtained by the corresponding contractions to obtain the remaining edge-marginal bounds.

Finally, we combine the uniform edge-marginal bound with the martingale estimate. We examine the first possible failure of either the counting lower bound or the edge-marginal bound. If the counting lower bound holds at a state, then the uniform edge-marginal bound applies there. If the marginal bound holds at all preceding states, then the stopped martingale estimate keeps $\log\Phi$ closer to its reference value than the counting lower bound requires. The probability that this martingale estimate fails remains $o(1)$ after conditioning on the core's degree lower bounds. Consequently, with high probability, both bounds hold throughout edge deletion. In particular, the core contains a mixed Hamilton cycle, and expanding the prescribed pairs proves \cref{thm:main}.

This method also estimates the number of loose Hamilton cycles at the hitting time, up to a factor $\exp\{o(n)\}$. The arguments proving \cref{thm:conditioned-core} can be adapted to graphs and to every fixed uniformity $r\ge4$ with $(r-1)\mid n$, giving corresponding results on hitting times and the numbers of loose Hamilton cycles. This hitting time extension
was developed semi-automatically and then formalised automatically in
Lean 4 \cite{Leon2026Formalization} by a large language model. We
supply the informal proofs separately.

\subsection*{Paper organisation}
\label{sec:proof-snapshot-paper}
\cref{sec:proof-sketch} contains a proof sketch and a diagram of the
main proof dependencies. The basic counting and probability tools are
collected in \cref{sec:preliminaries}. In \cref{sec:mixed}, we reduce
\cref{thm:main} to a conditioned counting theorem.

\cref{sec:martingale} counts mixed cycles by random edge deletion from
the complete hypergraph down to the core. A martingale estimate then
protects the cycle count throughout deletion. To obtain this bound,
\cref{sec:matching-entropy,sec:entropy} encode cycles as perfect
matchings and use Kahn's entropy inequality to show that sufficiently
large cycle families use most edges in nearly equal proportions.  The
tools for random hypergraphs with degree lower bounds appear in
\cref{sec:degree-constrained}. \cref{sec:switchings} provides the
conditional laws and edge-distribution estimates needed throughout the
deletion process.

To pass from most edges to every edge, \cref{sec:resampling} compares
the forward and reverse description of a small random edge-deletion
experiment. This gives nearly equal cycle counts for most ways of
removing a vertex and prescribing an additional pair. In
\cref{sec:migration}, local re-routings preserve the prescribed pairs
and connectivity while spreading any exceptionally large cycle family
across many choices. Combined with resampling, this bounds every
edge's share of the cycles and completes the martingale counting
argument. Finally, \cref{sec:conclusion} discusses extensions to other
uniformities and the resulting counting conclusions.

\section{Proof sketch}
\label{sec:proof-sketch}
In this section we sketch the main ideas of the proof. The reader can
return to this sketch while reading the detailed proofs to keep the
overall strategy in view. \Cref{fig:proof-dependencies} gives a map of
the main dependencies.

\begin{figure}[!htp]
\centering
\scalebox{0.7}{%
\begin{tikzpicture}[
  font=\small,
  result/.style={draw=black!50,rounded corners=2pt,fill=black!2,
    align=center,text width=4.6cm,inner xsep=5pt,inner ysep=4pt,
    execute at begin node={\hyphenpenalty=10000}},
  central/.style={result,draw=darkblue,fill=blue!4,text width=5.1cm},
  dependency/.style={-{Stealth[length=4pt]},draw=black!65,line width=.6pt}
]
  \node[central] (main) at (0,0) {
    \textbf{\cref{thm:main}}\\
    Loose Hamilton cycle at $\tau_1$};
  \node[central] (counting) at (0,-2.1) {
    \textbf{\cref{thm:conditioned-core}}\\
    Lower bound on the number of mixed Hamilton cycles in the core};

  \node[result,text width=4.2cm] (counting-estimates) at (-2.8,-4.4) {
    \textbf{\cref{thm:stopped-counting,lem:beta}}\\
    Martingale estimate and probability of satisfying the degree lower bounds};
  \node[central] (marginals) at (2.8,-4.4) {
    \textbf{\cref{thm:uniform-marginal}}\\
    Uniform upper bounds on edge marginals};

  \node[result] (resampling) at (-2.8,-6.7) {
    \textbf{\cref{prop:uniform-completion}}\\
    Nearly equal numbers of cycles for most vertex choices};
  \node[result] (migration) at (2.8,-6.7) {
    \textbf{\cref{lem:private-migration,lem:intersection-migration}}\\
    Comparing numbers of cycles by migration};

  \node[result] (cycle-entropy) at (-2.8,-9) {
    \textbf{\cref{thm:entropy-stability,lem:overlap}}\\
    Nearly equal directed marginals and a bound on expected overlap};
  \node[result] (process-estimates) at (2.8,-9) {
    \textbf{\cref{prop:path-regularity,lem:simultaneous-link}}\\
    Degrees, codegrees and links during deletion};

  \node[result,text width=3.8cm,left=6mm of cycle-entropy] (matching-entropy) {
    \textbf{\cref{thm:kahn,prop:matching-stability}}\\
    Kahn's entropy inequality and stability};
  \node[result,text width=3.8cm,right=6mm of process-estimates] (core-estimates) {
    \textbf{\cref{thm:terminal-regularity}}\\
    Degree and codegree bounds for the core};

  \draw[dependency] (counting) -- (main);
  \draw[dependency] (counting-estimates) -- (counting);
  \draw[dependency] (marginals) -- (counting);
  \draw[dependency] (resampling) -- (marginals);
  \draw[dependency] (migration) -- (marginals);
  \draw[dependency] (resampling.east) -- (migration.west);
  \draw[dependency] (cycle-entropy) -- (resampling);
  \draw[dependency] (process-estimates.north west) -- (resampling.south east);
  \draw[dependency] (process-estimates) -- (migration);
  \draw[dependency] (matching-entropy.east) -- (cycle-entropy.west);
  \draw[dependency] (core-estimates.west) -- (process-estimates.east);
\end{tikzpicture}
}
\caption{Dependencies among the results used in the proof.}
\label{fig:proof-dependencies}
\end{figure}

We prove existence by counting. At $\tau_1$, when the last isolated vertex disappears, we seek a loose Hamilton cycle. We first reduce \cref{thm:main} to the conditioned counting theorem, \cref{thm:conditioned-core}. We explain this reduction and then describe the estimates used to prove that counting theorem.


Let $B$ denote the set of vertices of degree at most $\log n/100$ at the hitting time $\tau_1$. With high probability, $B$ is small, no edge of $\mathcal H_{\tau_1}$ contains two vertices of $B$, and edges through distinct vertices of $B$ are disjoint. We can therefore choose disjoint edges $\{v,a_v,b_v\}$ of $\mathcal H_{\tau_1}$, one through each $v\in B$. We remove $B$, leaving the \emph{core} $F_*=\mathcal H_{\tau_1}-B$, and prescribe a matching $\cM_0=\{a_vb_v\colon v\in B\}$. Write $n_\ast=n-|B|$ and $s=|\cM_0|$. A \emph{mixed Hamilton cycle} spans the core vertices, uses every pair of $\cM_0$, and otherwise uses edges of $F_*$. Its edges admit a cyclic ordering in which consecutive edges meet in distinct single vertices and non-consecutive edges are disjoint. We call the shared vertices \emph{intersection vertices} and vertices in just one edge \emph{private vertices}. Expanding each pair $a_vb_v$ to $\{a_v,v,b_v\}$ restores $v$ as a private vertex. This is illustrated in \cref{fig:core-reduction}.

We reveal $\tau_1$, $B$, and all edges of $\mathcal H_{\tau_1}$ meeting $B$. Once these edges have been revealed, we fix the selected triples and the resulting matching $\cM_0$. Conditional on $\tau_1=m$, an $m$-edge $3$-graph without isolated vertices is sampled with probability proportional to its number of edges containing a degree-one vertex. Since every degree-one vertex lies in $B$, the exposure determines this number. Conditional on the revealed information, $F_*$ is uniform among $3$-graphs with $m_*=|F_*|$ edges satisfying prescribed degree lower bounds $\ell_x$ at their vertices $x$. These bounds are near $\log n/100$. We now sketch the proof of \cref{thm:conditioned-core}, which gives a lower bound on the number of mixed Hamilton cycles under this conditional law.

We count cycles by starting with the complete $3$-graph on the $n_\ast$ core vertices and repeatedly deleting one uniformly chosen remaining edge. Let $H_j$ be the hypergraph when $j$ edges remain, and let $N=\binom{n_\ast}{3}$. We follow this nested sequence from $H_N$ down to $H_{m_\ast}$. Let $\PP$ be its unconditioned law, and let $\mathcal D$ be the event that every vertex $x$ of $H_{m_\ast}$ has degree at least $\ell_x$. Under the conditioned law $\QQ=\PP[\,\cdot\mid\mathcal D]$, the hypergraph $H_{m_\ast}$ has the same distribution as the exposed core. We henceforth write $F_*=H_{m_\ast}$, so $F_*\subseteq H_j$ throughout.

The prescribed matching remains fixed throughout deletion. We call an edge of $H_N$ \emph{allowed} if it contains at most one vertex of $V(\cM_0)$, the set of endpoints of the prescribed pairs. Let $G_j$ be the hypergraph of allowed edges present in $H_j$, and write $\Xc(H_j)$ for the total number of mixed Hamilton cycles using $G_j$ and $\cM_0$. This restriction keeps the later cycle modifications compatible with $\cM_0$. Every counted cycle uses $k=(n_\ast-s)/2$ edges of $G_j$. When $\Xc(H_j)>0$, the \emph{edge marginal} $\Qc_e(H_j)$ is the fraction of these cycles using $e$. Its average over the $j$ edges of $H_j$ is $k/j$, which is also the expected fraction of cycles lost at the next deletion under $\PP$. If a deletion removes a fraction $q$ of the cycles, it multiplies their number by $1-q$. For small $q$, the change in the logarithm is $\log(1-q)\approx-q$. These logarithmic changes add over successive deletions. Subtracting the expected fractional losses from the initial logarithmic count gives the reference value
\[
 L_j\coloneqq\log \Xc(H_N)-k\sum_{h=j+1}^{N}\frac1h.
\]
We use $L_j$ as a reference value for $\log\Xc(H_j)$ and bound $L_j-\log\Xc(H_j)$ throughout deletion.

Our next goal is \cref{thm:uniform-marginal}, which gives, with high probability under $\QQ$, the bound $\Qc_e(H_j)=O(k/j)$ for every edge $e\in H_j$ at every state satisfying $\log\Xc(H_j)\ge L_j-n_\ast/\sqrt{\log n_\ast}$. With this marginal bound, \cref{thm:stopped-counting} keeps $\log\Xc(H_j)$ above $L_j-O(n_\ast/\log n_\ast)$ up to and including the first state at which the edge-marginal bound fails. We first bound, under $\PP$, the probability that this logarithmic lower bound fails at any state up to that stopping state. Under $\PP$, where deletions are uniform, the martingale estimate bounds this failure probability by $\exp\{-cn_\ast/\log n_\ast\}$ for an absolute $c>0$. By \cref{lem:beta}, $\PP[\mathcal D]\ge\exp\{-n_\ast^{1/10}\}$. Conditioning on $\mathcal D$ increases the probability of failure of the logarithmic lower bound by at most $\PP[\mathcal D]^{-1}$. Consequently, the lower bound still holds with high probability under $\QQ$. We return to this counting argument after proving the marginal bound.

Before the entropy argument, we establish degree, codegree, and edge-distribution estimates along the deletion sequence. \Cref{thm:terminal-regularity} controls degrees and codegrees in the core $F_*$. \Cref{prop:path-regularity} extends this control throughout deletion, even after removing a bounded number of vertices. Conditional on $F_*$, the extra edges of $H_j$ are a uniformly chosen set of $j-m_\ast$ triples absent from $F_*$, so standard concentration estimates for sampling apply. Degrees stay within constant factors of $d_j\coloneqq3j/n_\ast$, the average degree of $H_j$, and codegrees are small compared with $d_j$. For each partition $A\cup\overline A$ in the range covered by \cref{prop:path-regularity}(ii), the parts have nearly equal sizes, and about $3j/8$ triples of $H_j$ have two vertices in $A$ and one in $\overline A$. These estimates give the degree and edge-distribution hypotheses for the entropy argument.

We now apply entropy estimates. Choose a mixed Hamilton cycle uniformly from those counted by $\Xc(H_j)$. Its entropy is $\log\Xc(H_j)$. Fix a direction on one prescribed pair and orient every cycle to agree with it. Conditional on the set of intersection vertices and the directions of the prescribed pairs, split each intersection vertex into incoming and outgoing copies. For each prescribed pair directed from $a$ to $b$, omit $a^{\rm out}$ and $b^{\rm in}$. The cycle's $3$-edges then give a perfect matching in an auxiliary hypergraph $\widetilde G$, as illustrated in \cref{fig:cycle-matching}. This matching consists of disjoint edges of $\widetilde G$ covering all auxiliary vertices. Kahn's inequality, \cref{thm:kahn}, bounds the entropy of the induced matching distribution in terms of its edge marginals. The stability result, \cref{prop:matching-stability}, shows that an entropy close to the upper bound forces the edge marginals to be nearly equal. The lower bound on $\log\Xc(H_j)$ forces both the entropy of the intersection set and pair directions and the average conditional matching entropy to be close to their respective upper bounds. Returning to cycles gives \cref{thm:entropy-stability}, which controls directed edge marginals for most triples of $G_j$ disjoint from $V(\cM_0)$. The same encoding gives \cref{lem:overlap}: two independent cycles chosen uniformly from a sufficiently large family share only $O(n_\ast/\log d_j)$ $3$-edges on average. We use this bound to control the variance after random edge deletions.

To extend the marginal estimates to triples absent from $H_j$, fix distinct $w,u,v$ outside $V(\cM_0)$. We count mixed Hamilton cycles in $G_j-w$ using the prescribed matching $\cM_0\cup\{uv\}$, and call this number $Y_j(w;u,v)$. This \emph{completion count} is defined even when $\{w,u,v\}\notin H_j$. If $\{w,u,v\}\in H_j$, then expanding $uv$ identifies these auxiliary cycles with cycles counted by $\Xc(H_j)$ having $w$ private on $\{w,u,v\}$. Partitioning by the private vertex therefore gives
\[
 \Xc(H_j)\Qc_{\{w,u,v\}}(H_j)
 =Y_j(w;u,v)+Y_j(u;w,v)+Y_j(v;w,u).
\]
It suffices to bound $Y_j(w;u,v)$ from above. Ignoring the relative direction of the added pair, \cref{prop:uniform-completion} gives $Y_j(w;u,v)=(1+o(1))\Xc(H_j)/(2d_j)$ for all but a vanishing fraction of the choices of $w,u,v$, under the logarithmic counting lower bound above. The proposition is stated for directed pairs and also permits the bounded vertex deletions and changes to the prescribed matching used in the migration arguments.

We prove \cref{prop:uniform-completion} by describing the same edge-deletion experiment in two orders. In the \emph{forward description}, we fix a realisation of the deletion sequence and delete a uniformly chosen set $T\subseteq H_j$ of a prescribed size small compared with $j$. We count the cycles left from the family counted by $\Xc(H_j)$ and from each auxiliary family counted by $Y_j(w;u,v)$. The overlap bound in \cref{lem:overlap} limits correlations between the survival indicators of two cycles and hence bounds the variance of these numbers of remaining cycles. For each sufficiently large family, we obtain concentration around its expected number of remaining cycles. For smaller families, an upper bound on that expectation suffices. The entropy estimate shows that few normalized completion counts associated with triples sampled into $T$ are atypical before deletion. We normalize by $\Xc(H_j)$ before deletion and by $\Xc(H_j-T)$ after deletion. The concentration estimates show that most of these normalized counts remain close to their reference value after deleting $T$, whereas most initially atypical normalized counts remain atypical.

In the \emph{reverse description}, we fix the remaining hypergraphs $H_j-T$ and $F_*-T$, treating $T$ as the unknown edges to add back. Conditional on these two hypergraphs, if $F_*-T$ satisfies the prescribed degree lower bounds, then $T$ is uniform among sets of its prescribed size consisting of triples of $H_N$ absent from $H_j-T$. The degree estimates for the core ensure that $F_*-T$ satisfies these lower bounds with high probability in the forward experiment. If too many completion counts are atypical before deletion, then many triples absent from $H_j-T$ still have an admissible choice of private vertex with an atypical normalized completion count. Yet the forward estimates show that only a small proportion of the triples in $T$ have such a choice. A uniform sample is unlikely to contain so few triples with this property. Comparing the forward and reverse probabilities proves \cref{prop:uniform-completion}.

We now use the estimates for most candidates to bound every edge marginal. One basic reconstruction takes a mixed Hamilton cycle in $G_j-\{w,w'\}$ through the prescribed matching $\cM_0\cup\{uv,ab\}$, with $\{a,w,b\}\in G_j$, and replaces $ab$ by $\{a,w,b\}$. This restores $w$ and gives a cycle counted by $Y_j(w';u,v)$. In the output cycle, $w$ is private on the unique triple $\{a,w,b\}$. Contracting this triple to the prescribed pair $ab$ recovers the input cycle. The reconstruction is therefore injective even when summed over the eligible pairs $ab$.

We need many pairs $ab$ for which this reconstruction yields enough cycles. The available pairs lie in the \emph{link graph} $L_{H_j}(w)$, whose edges are the pairs $ab$ with $\{a,w,b\}\in H_j$. For a fixed $w'$, we exclude pairs that violate the vertex restrictions or give too few auxiliary mixed Hamilton cycles. The excluded pair set is determined by $H_j-w$ and the fixed prescribed pairs, before the link at $w$ is revealed. The estimates for completion counts make this excluded set small for most choices of $w'$, and \cref{lem:simultaneous-link} supplies many link edges outside it. Contracting and reconstructing short segments in this way gives \cref{lem:private-migration,lem:intersection-migration}. These lemmas compare the numbers of cycles when the omitted vertex or an endpoint of the added pair is changed to most other vertices outside the prescribed matching and the fixed labels, with a loss of at most a constant factor. If $Y_j(w;u,v)$ exceeded a sufficiently large constant multiple of $\Xc(H_j)/d_j$, successive migrations would force a positive proportion of the other completion counts to exceed $\Xc(H_j)/(2d_j)$ by a fixed factor, contradicting \cref{prop:uniform-completion}.

For a triple $\{y_0,w,u\}\in G_j$ with $y_0z_0\in\cM_0$, consider counted cycles in which $w$ is private on that triple. We contract the prescribed pair $y_0z_0$ together with its adjacent triple $\{y_0,w,u\}$ in the cycle to the pair $z_0u$, deleting $y_0,w$. Each counted cycle has a unique $3$-edge at $y_0$ and a unique private vertex on that edge. Summing the numbers of cycles obtained from these contractions therefore recovers $\Xc(H_j)$. The two migration lemmas, together with the link estimate, compare each summand with many other summands and give the required edge-marginal bound. Edges of $H_j\setminus G_j$ are used by no counted cycle and therefore have marginal zero. Together, these cases prove \cref{thm:uniform-marginal}.

We can now finish the counting argument. If the counting lower bound first failed at some state, then every preceding state would satisfy the edge-marginal bound. The martingale estimate would give an error $O(n_\ast/\log n_\ast)$, smaller than $n_\ast/\sqrt{\log n_\ast}$, the maximum permitted value of $L_j-\log\Xc(H_j)$ in the counting lower bound. This is a contradiction. Conversely, \cref{thm:uniform-marginal} rules out a first failure of the edge-marginal bound at a state satisfying the counting lower bound. With high probability, both bounds therefore hold throughout deletion. At $H_{m_\ast}=F_*$, we obtain the lower bound in \cref{thm:conditioned-core} and in particular $\Xc(H_{m_\ast})>0$. Expanding the prescribed pairs of $\cM_0$ restores the vertices of $B$ as private vertices and proves \cref{thm:main}.

\section{Preliminaries}
\label{sec:preliminaries}
We collect the basic definitions and cycle-counting facts, followed by the probability and entropy tools used in the proof.

All hypergraphs are simple. For a finite set $V$, write $\binom Vr$ for the family of its $r$-element subsets. We usually identify a hypergraph $\mathcal{G}$ with its edge set, and hence use $|\mathcal{G}|$ to denote the number of edges in it. For a hypergraph $\mathcal{G}$ and $S\subseteq V(\mathcal{G})$, let $\mathcal{G}-S$ be the hypergraph on $V(\mathcal{G})\setminus S$ with edge set $\{e\in\mathcal{G}\colon e\cap S=\varnothing\}$. For a single vertex $v$, write $\mathcal{G}-v$ for $\mathcal{G}-\{v\}$.

For a $3$-graph $\mathcal G$ and disjoint sets $A,B\subseteq V(\mathcal G)$, define
\[
 \mathcal G[A,B]\coloneqq
 \left\{e\in\mathcal G\colon |e\cap A|=2\text{ and }|e\cap B|=1\right\}.
\]
In the notation $\mathcal G[A,\overline A]$, we take $\overline A\coloneqq V(\mathcal G)\setminus A$.

For a hypergraph $\mathcal{G}$, the vertex degree $d_{\mathcal{G}}(v)$ is the number of edges containing $v$. For distinct vertices $u,v$, the codegree $d_{\mathcal{G}}(u,v)$ counts the edges containing both. Write $\delta_1(\mathcal{G})$ for the minimum vertex degree, $\Delta_1(\mathcal{G})$ for the maximum vertex degree, and $\Delta_2(\mathcal{G})$ for the maximum codegree. For a $3$-graph $\mathcal{G}$, the link $L_{\mathcal{G}}(v)$ is the graph on $V(\mathcal{G})\setminus\{v\}$ whose edges are the pairs $ab$ such that $vab\in \mathcal{G}$. For a $3$-graph $\mathcal{G}$, write $d(\mathcal{G})\coloneqq3|\mathcal{G}|/|V(\mathcal{G})|$ for its average degree.

A \emph{Berge path} of length $h$ consists of distinct vertices $v_0,\ldots,v_h$ and distinct edges $e_1,\ldots,e_h$ with $\{v_{i-1},v_i\}\subseteq e_i$ for each $i$.

In a loose cycle $C$, we call vertices belonging to two edges of $C$ \emph{intersection vertices} and those belonging to only one edge of $C$ \emph{private vertices}. For a $3$-graph $\mathcal{G}$, define its set of \emph{oriented triples} by
\[
 \vec{\mathcal{G}}\coloneqq\left\{(w;u\to v)\colon \{u,v,w\}\in\mathcal{G}\right\}.
\]
Note that each edge of a $3$-uniform loose Hamilton cycle contains one private vertex and two intersection vertices. Here we use $(w;u\to v)$ to designate $w$ as the private vertex and $u,v$ as the initial and final intersection vertices, respectively.
For $S\subseteq V(\mathcal{G})$, define $\vec{\mathcal{G}}-S\coloneqq\{(w;u\to v)\in\vec{\mathcal{G}}\colon \{u,v,w\}\cap S=\varnothing\}$.

We also need spanning cycles that contain a prescribed matching of pairs.

\begin{definition}\label{def:mixed}
Let $V$ be a vertex set of size $n$. Let $\mathcal{G}$ be a $3$-graph on $V$, and let $\cM$ be a matching of $s$ pairs on $V$. We use the following terminology and notation.
\begin{enumerate}[label=(\textit{\roman*})]
\item A \emph{$(\mathcal{G},\cM)$-respecting mixed Hamilton cycle} is a hypergraph on $V$ consisting of all the pairs in $\cM$ and triples from $\mathcal{G}$. Its edges cover $V$ and admit a cyclic ordering in which consecutive edges intersect in exactly one vertex, all such intersection vertices are distinct, and nonconsecutive edges are disjoint.
\item A vertex is \emph{$\cM$-saturated} if it belongs to a pair of $\cM$, and \emph{$\cM$-unsaturated} otherwise.
\item Write $\cyc(\mathcal{G},\cM)$ for the number of $(\mathcal{G},\cM)$-respecting mixed Hamilton cycles.
\end{enumerate}
\end{definition}

The lower-right diagram in \cref{fig:core-reduction} illustrates such
a cycle. The coloured segments represent the prescribed pairs and the
grey triangles represent triples.

\begin{fact}\label{fact:mixed-cycle-parameters}
  Suppose that a $(\mathcal G,\cM)$-respecting mixed Hamilton cycle on
  $n$ vertices contains $k$ triples, where $|\cM|=s$. Then, the
  following statements hold.
\begin{enumerate}[label=(\textit{\roman*})]
\item Every $\cM$-saturated vertex belongs to exactly one triple edge and one pair of $\cM$.
\item The cycle has $k$ private vertices and $k+s$ intersection vertices, of which $2s$ are $\cM$-saturated and $k-s$ are $\cM$-unsaturated.
\item We have $n=2k+s$, so $k=(n-s)/2$ and $n+s$ is even.
\end{enumerate}
\end{fact}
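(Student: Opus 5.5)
The argument is a direct count of incidences along the cyclic ordering, and I will prove the three items in order. Fix a $(\mathcal G,\cM)$-respecting mixed Hamilton cycle $C$ with cyclic edge ordering $f_1,\dots,f_{k+s}$; it has $k+s$ edges, namely $k$ triples and the $s$ pairs of $\cM$. Consecutive edges meet in exactly one vertex, these vertices are distinct, and nonconsecutive edges are disjoint. Hence each vertex lies in at most two edges, and in two exactly when it is the intersection vertex of a consecutive pair $f_i,f_{i+1}$. So there are exactly $k+s$ intersection vertices, one for each consecutive pair of edges. Every remaining vertex of $V$ lies in exactly one edge, since the edges cover $V$, and is private.

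For (\textit{i}), I will use that the pairs of $\cM$ are all edges of $C$ and form a matching. A pair $f_i=\{a,b\}$ meets each of its two neighbours $f_{i-1}$ and $f_{i+1}$ in one vertex, and these vertices are distinct. Hence $a$ and $b$ are both intersection vertices, one shared with each neighbour. The neighbours cannot be pairs of $\cM$, since $\cM$ is a matching and pairs of $\cM$ are disjoint. So each saturated vertex lies in exactly one pair of $\cM$ and exactly one triple. One small point needs care when the cycle is very short, for instance $k+s\le2$, where the two neighbours coincide. In that degenerate case I would either note that the definition of a cycle forces at least three edges, or handle it directly.

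For (\textit{ii}), I will count vertex–edge incidences inside triples. Each triple $f_i$ contains exactly two intersection vertices, $f_{i-1}\cap f_i$ and $f_i\cap f_{i+1}$, and these are distinct. Its third vertex lies in no other edge, so it is private. This gives exactly $k$ private vertices, one per triple. We already have $k+s$ intersection vertices. By (\textit{i}) all $2s$ saturated vertices are intersection vertices, so the unsaturated intersection vertices number $(k+s)-2s=k-s$.

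For (\textit{iii}), I will add the two counts: $n=k+(k+s)=2k+s$. This gives $k=(n-s)/2$ and shows that $n+s=2k+2s$ is even. I expect no step to be a real obstacle. The only point requiring care is confirming that the intersection vertices of the different consecutive pairs are counted without overlap, which is guaranteed by the distinctness clause in \cref{def:mixed}.
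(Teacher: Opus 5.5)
Your proposal is correct and follows essentially the same route as the paper: the matching property forces both neighbours of each prescribed pair to be triples, each triple contributes exactly one private vertex, and the $k+s$ positions of the cyclic order give $k+s$ distinct intersection vertices, so $n=k+(k+s)$. On the short-cycle point, take your first option (each of the $k+s$ cyclic positions must yield its own distinct intersection vertex, which rules out fewer than three edges), since ``handling it directly'' would fail: under the looser reading, a pair $\{a,b\}$ followed by a triple $\{b,c,d\}$ would be admitted and would violate both (i) and (iii).
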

\begin{proof}
  Each pair of $\cM$ has two distinct intersection vertices, and the
  matching property forces both neighbouring edges to be triples. Each
  triple has one private vertex, while the cyclic order of the $k+s$
  edges gives $k+s$ distinct intersection vertices. These observations
  give the stated numbers of vertices and the identity $n=2k+s$.
\end{proof}

When $\cM\ne\varnothing$, a chosen pair of $\cM$ is called the \emph{root pair}. Fix an arbitrary direction on this pair. The \emph{root orientation} of a mixed Hamilton cycle containing the root pair is the unique orientation in which it traverses that pair in the fixed direction.

The following lemma counts the number of $(\mathcal{G},\cM)$-respecting mixed Hamilton cycles when the host hypergraph $\mathcal{G}$ is complete and estimates the loss from excluding triples containing more than one vertex of $V(\cM)$. It also computes the expected number of these cycles in a uniformly random hypergraph with a prescribed number of edges.
\begin{lemma}\label{prop:complete-count}
Let $\cM$ be a matching of $s\ge1$ pairs on an $n$-vertex set $V$, and suppose that $n-s$ is even and $n\ge\max\{s+6,3s\}$. The following statements hold.
\begin{enumerate}[label=(\textit{\roman*})]
\item $\displaystyle \cyc\bigl(\tbinom V3,\cM\bigr)=\frac{2^{s-1}(n-2s)!\bigl(\frac{n-s-2}{2}\bigr)!}{\bigl(\frac{n-3s}{2}\bigr)!}$.

\item If $n\ge5s$, then restricting to triples containing at most one vertex of $V(\cM)$ gives
\[
\frac{\cyc\bigl(\{e\in\tbinom V3\colon |e\cap V(\cM)|\le1\},\cM\bigr)}
 {\cyc\bigl(\tbinom V3,\cM\bigr)}
 =\frac{\bigl(\frac{n-3s}{2}\bigr)!\bigl(\frac{n-3s-2}{2}\bigr)!}{\bigl(\frac{n-s-2}{2}\bigr)!\bigl(\frac{n-5s}{2}\bigr)!}
 \ge1-\frac{2s(s-1)}{n-s-2}.
\]

\item Let $m$ be an integer with $0\le m\le\binom n3$, and choose $\mathcal G$ uniformly among all $m$-edge $3$-graphs on $V$. Then
\[
  \EE[\cyc(\mathcal G,\cM)] = \cyc\left(\binom V3,\cM\right) \cdot
  \frac{\binom{m}{(n-s)/2}}{\binom{\binom n3}{(n-s)/2}}.
\]
If $s\le n^{1/10}$ and $|m-n\log n/3|\le n\log\log n$, then, as
$n\to\infty$,
\[
 \log\EE[\cyc(\mathcal G,\cM)]
 =\frac{n-s}{2}\log\frac{6m}{n}-n+s+O\left(\frac{n}{\log n}\right).
\]
The expectation identity also holds when both cycle numbers are restricted to cycles whose triples contain at most one vertex of $V(\cM)$. Under the same asymptotic assumptions, the logarithmic estimate holds for this restricted family as well.
\end{enumerate}
\end{lemma}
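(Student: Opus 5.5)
The plan is to count everything via a bijection that collapses each prescribed pair to a single oriented \emph{block}. Then (i) becomes a count of cyclic arrangements, (ii) a probability statement about random cyclic orders, and (iii) linearity of expectation followed by Stirling's formula. Throughout write $k=(n-s)/2$; the hypothesis $n\ge3s$ gives $k\ge s$, so every quantity below is well defined.

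For (i), take a mixed Hamilton cycle. By \cref{fact:mixed-cycle-parameters}, its $k$ triples join consecutive \emph{units} in a cyclic sequence of $k$ units. The units are of two kinds: the $s$ pairs of $\cM$, each traversed in one of two directions, and the $k-s$ $\cM$-unsaturated intersection vertices. Each of the $k$ gaps between consecutive units carries one distinct private vertex. Conversely, such data determines a cycle, so we count the data:
\begin{itemize}
\item choose the unsaturated intersection vertices, in $\binom{n-2s}{k-s}$ ways;
\item choose a cyclic order of the $k$ labelled units, in $(k-1)!$ ways;
\item choose the directions of the $s$ blocks, in $2^s$ ways;
\item distribute the remaining $k$ unsaturated vertices to the gaps as private vertices, in $k!$ ways;
\item divide by $2$ for reversal of the cyclic order.
\end{itemize}
Reversal acts without fixed points on these data, because $s\ge1$ and reversing flips every block's direction. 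The product is $2^{s-1}(n-2s)!\,(k-1)!/(k-s)!$, which is the stated formula after substituting $k=(n-s)/2$.

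For (ii), a triple meets $V(\cM)$ twice exactly when two blocks are adjacent in the unit order. The ratio in (ii) is therefore the probability that, in a uniformly random cyclic order of the $k$ units, no two of the $s$ blocks are consecutive. The directions and private-vertex choices factor out of both counts. Counting such orders directly gives $(k-s-1)!$ cyclic orders of the vertex units, then $\frac{(k-s)!}{(k-2s)!}$ ways to insert the blocks into distinct gaps. Dividing by $(k-1)!$ yields the stated factorial ratio, and the condition $n\ge5s$ ensures $k\ge2s$. For the lower bound I would not expand the product of factors, since a direct termwise bound goes the wrong way. Instead I would apply a union bound over the $\binom s2$ pairs of blocks. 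Each pair is adjacent with probability $2/(k-1)$, so the failure probability is at most $s(s-1)/(k-1)=2s(s-1)/(n-s-2)$.

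For (iii), each cycle consists of the fixed pairs and $k$ specified triples. For a uniform $m$-edge $\mathcal G$, the probability that it contains all $k$ triples is $\binom{N-k}{m-k}/\binom Nm=\binom mk/\binom Nk$, where $N=\binom n3$. Linearity of expectation then gives the identity, and the same argument applies verbatim to the restricted family. For the asymptotics, I would write
\[
 \log\frac{\binom mk}{\binom Nk}=k\log\frac mN+O\Bigl(\frac{k^2}{m}\Bigr).
\]
With $m\sim n\log n/3$, the error is $O(n/\log n)$. I would then expand $\log$ of (i) by Stirling, using $s\le n^{1/10}$ and absorbing all $O(s\log n)$ terms except the explicit ones.

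The main obstacle is this bookkeeping. One needs that $(n-2s)!(k-1)!/(k-s)!\cdot 6^k/n^{3k}$ equals $\exp\{-n+s+O(s\log\log n)\}$ up to $n^{o(1)}$ factors, so that combining with $m^k$ produces $k\log(6m/n)-n+s$. The $\log n$ terms from $(n-2s)!$ and $(k-1)!/(k-s)!$ must cancel against $n^{3k}$ to leading order. I would check this carefully, noting that $s\log n\le n^{1/10}\log n=O(n/\log n)$. Finally, the restricted family differs by the factor from (ii), which is $1-O(s^2/n)=1-o(1)$, so its logarithmic estimate is the same.
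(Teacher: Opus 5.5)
Your proposal is correct and follows the paper's proof: the same unit-and-gap count for (i) (dividing by $2$ for reversal is equivalent to the paper's fixing a root direction), and the same reduction of (ii) to non-adjacency of blocks in a random cyclic order, with the same union bound. Your count for (ii), which arranges the $k-s$ vertex units and then inserts the blocks into distinct gaps, is a slightly quicker alternative to the paper's gap-sequence count, and (iii) is linearity of expectation plus Stirling as in the paper.

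There is one slip in the bookkeeping you flag. What is needed is $\log\bigl(2^{s-1}(n-2s)!\,(k-1)!/(k-s)!\bigr)=k\log\bigl((n-1)(n-2)\bigr)-n+s+O(s\log n+\log n)$. So the normalising factor should be $n^{-2k}$ rather than $6^k/n^{3k}$; as written, your target is off by a factor $(6/n)^k$ and cannot produce $k\log(6m/n)$. The tolerance must also absorb the Stirling factor $n^{-1/2}$, so it is $n^{O(1)}$ rather than $n^{o(1)}$. Both corrections are harmless against the $O(n/\log n)$ error.
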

Note that for $s=o(\sqrt n)$, the number of cycles given in
\cref{prop:complete-count}(i) is
$(n-1)!\allowbreak n^{-s}\allowbreak\exp\{O(s^2/n)\}$. Therefore, by
Stirling's formula we have
$\cyc\bigl(\tbinom
V3,\cM\bigr)=(1+o(1))\sqrt{2\pi/n}\,n^{-s}(n/\e)^n$. Also, if
$s=o(\sqrt n)$, then the ratio in \cref{prop:complete-count}(ii) is
$1-o(1)$. That is, deleting all triples containing at least two
$\cM$-saturated vertices from the complete $3$-graph on $V$ does not
change the number of $\cM$-respecting mixed Hamilton cycles by more
than an $o(1)$ fraction.

\begin{proof}[Proof of \cref{prop:complete-count}]
For \cref{prop:complete-count}(i), we count cycles by arranging blocks in a cyclic order and assigning one private vertex between consecutive blocks, where each block is either a pair in $\cM$ or a single $\cM$-unsaturated intersection vertex. To count these arrangements, fix a pair in $\cM$ and choose a direction on it. Orient each cycle so that it traverses this pair in the chosen direction. Choose the $(n-3s)/2$ $\cM$-unsaturated intersection vertices from $V\setminus V(\cM)$. These choices give $(n-s)/2$ blocks.

Starting with the block corresponding to the fixed directed pair, the remaining $(n-s-2)/2$ blocks can be ordered after it in $((n-s-2)/2)!$ ways, and the remaining pairs in $\cM$ can be oriented in $2^{s-1}$ ways. Between each pair of consecutive blocks, form a triple using the last vertex of the first block, the first vertex of the next block, and one private vertex. Assigning the $(n-s)/2$ private vertices to these positions gives $((n-s)/2)!$ choices. Since there are at least three blocks, these data are uniquely recoverable from the edge set and the chosen direction of the fixed pair. Multiplying these numbers of choices gives
\[
 \cyc\bigl(\tbinom V3,\cM\bigr)
 =\binom{n-2s}{(n-3s)/2}\,2^{s-1}
 \left(\frac{n-s-2}{2}\right)!
 \left(\frac{n-s}{2}\right)!.
\]
Expanding the binomial coefficient and cancelling the factor $((n-s)/2)!$ gives \cref{prop:complete-count}(i).

For \cref{prop:complete-count}(ii), the key observation is that every triple contains at most one vertex of $V(\cM)$ exactly when no two blocks arising from pairs in $\cM$ are consecutive in the cyclic order. Indeed, a triple contains two $\cM$-saturated vertices precisely when it joins two such blocks. To count the possible sets of $s$ non-consecutive positions, distinguish one chosen position and record the clockwise distances between successive chosen positions. These $s$ distances are at least $2$ and sum to $(n-s)/2$. Subtracting $2$ from each distance gives $s$ nonnegative integers with sum $(n-5s)/2$, so there are $\binom{(n-3s-2)/2}{s-1}$ possible distance sequences. There are $(n-s)/2$ choices for the distinguished position, and every set is counted $s$ times. Thus the number of sets is
\[
 \frac{n-s}{2s}\binom{(n-3s-2)/2}{s-1}
 =\frac{n-s}{n-3s}\binom{(n-3s)/2}{s}.
\]
Dividing by $\binom{(n-s)/2}{s}$ gives the proportion of block orders in which no two of these $s$ blocks are consecutive. By rotational symmetry, this proportion is unchanged when the block containing the fixed directed pair occupies the first position. The orientations and assignments of private vertices contribute the same factor when counting either family, so this proportion is the required ratio; simplifying gives the formula in \cref{prop:complete-count}(ii). For the lower bound, any two specified blocks are adjacent in a uniformly chosen cyclic order with probability $2/((n-s)/2-1)=4/(n-s-2)$. Hence the union bound gives
\[
 \PP[\text{some two prescribed-pair blocks are adjacent}]
 \le\binom{s}{2}\frac{4}{n-s-2}
 =\frac{2s(s-1)}{n-s-2}.
\]
Taking the complementary probability gives the lower bound in \cref{prop:complete-count}(ii).

For \cref{prop:complete-count}(iii), each mixed Hamilton cycle contains exactly $(n-s)/2$ triples. The probability that a fixed set of this many triples belongs to $\mathcal G$ is $\binom{m}{(n-s)/2}/\binom{\binom n3}{(n-s)/2}$, so linearity of expectation gives the stated identity. The same argument applies when we count only cycles whose triples contain at most one vertex of $V(\cM)$.

Under the asymptotic assumptions in \cref{prop:complete-count}(iii), the logarithm of this probability has main term $\frac{n-s}{2}\log(m/\binom n3)$. Throughout the summation range below, $i/m=O(1/\log n)$. Expanding $\log(1-x)=-x+O(x^2)$ therefore gives
\[
\begin{aligned}
 \sum_{i=0}^{(n-s-2)/2}\left[\log\left(1-\frac{i}{m}\right)-\log\left(1-\frac{i}{\binom n3}\right)\right]
 &=O\left(\left(\frac1m+\frac1{\binom n3}\right)\sum_{i=0}^{(n-s-2)/2}i\right)
 =O\left(\frac{n^2}{m}\right).
\end{aligned}
\]
By \cref{prop:complete-count}(i) and Stirling's formula, the number of cycles in the complete $3$-graph satisfies
\[
 \log\cyc\left(\binom V3,\cM\right)
 =\frac{n-s}{2}\log\bigl((n-1)(n-2)\bigr)-n+s+O(s\log n+\log n).
\]
Adding the logarithm of the probability factor and using $\binom n3=n(n-1)(n-2)/6$ yields
\[
 \log\EE[\cyc(\mathcal G,\cM)]
 =\frac{n-s}{2}\log\frac{6m}{n}-n+s
 +O\left(s\log n+\log n+\frac{n^2}{m}\right).
\]
The assumptions $s\le n^{1/10}$ and $|m-n\log n/3|\le n\log\log n$ give $s\log n+\log n+n^2/m=O(n/\log n)$, proving the logarithmic estimate. Finally, \cref{prop:complete-count}(ii) shows that requiring each triple to contain at most one vertex of $V(\cM)$ multiplies the number of cycles in the complete $3$-graph by $1-O(s^2/n)$ under these assumptions. The common probability factor gives the same ratio between the two expectations, so their logarithms differ by $O(s^2/n)$, which is absorbed by the stated error.
\end{proof}

We next relate the numbers of mixed Hamilton cycles before and after deleting a vertex and adding a prescribed pair.

Let $\mathcal{G}$ be a host $3$-graph on $n$ vertices, let $\cM$ be a matching of $s\ge1$ pairs on $V(\mathcal{G})$, and fix a root pair in $\cM$ and its direction. For distinct vertices $x,y,z\in V(\mathcal{G})\setminus V(\cM)$, let $\comp_{\mathcal{G},\cM}(x;y\to z)$ be the number of $(\mathcal{G}-x,\cM\cup\{yz\})$-respecting mixed Hamilton cycles in which the added pair $yz$ is traversed from $y$ to $z$ in the root orientation.
Note that this number is defined whether or not $xyz\in\mathcal{G}$ and depends only on $\mathcal{G}-x$. The next fact relates these numbers to $\cyc(\mathcal{G},\cM)$ and computes the ratio $\comp_{\mathcal{G},\cM}(x;y\to z)/\cyc(\mathcal{G},\cM)$ when $\mathcal G$ is complete. This identifies the reference value for comparing the numbers $\comp_{\mathcal G,\cM}(x;y\to z)$.
\begin{fact}\label{fact:completion-identities}
For distinct vertices $x,y,z\in V(\mathcal G)\setminus V(\cM)$, the following statements hold.
\begin{enumerate}[label=(\textit{\roman*})]
\item We have $\cyc\bigl(\mathcal{G}-x,\cM\cup\{yz\}\bigr)=\comp_{\mathcal{G},\cM}(x;y\to z)+\comp_{\mathcal{G},\cM}(x;z\to y)$.
\item If $xyz\in\mathcal{G}$, the number of $(\mathcal{G},\cM)$-respecting mixed Hamilton cycles containing $xyz$ is $\cyc\bigl(\mathcal{G}-x,\cM\cup\{yz\}\bigr)+\cyc\bigl(\mathcal{G}-y,\cM\cup\{xz\}\bigr)+\cyc\bigl(\mathcal{G}-z,\cM\cup\{xy\}\bigr)$.
\item If $\cyc(\mathcal{G},\cM)>0$ and $xyz\in\mathcal{G}$, then the proportion of $(\mathcal{G},\cM)$-respecting mixed Hamilton cycles using $\{x,y,z\}$ from $y$ to $z$ with $x$ private in the root orientation is $\comp_{\mathcal{G},\cM}(x;y\to z)/\cyc(\mathcal{G},\cM)$.
\item If $\mathcal{G}=\binom{V(\mathcal{G})}{3}$, $n-s$ is even, $n-s\ge8$, and $n-3s\ge4$, then
\begin{equation}\label{eq:Yratio}
 \frac{\comp_{\mathcal{G},\cM}(x;y\to z)}{\cyc(\mathcal{G},\cM)}
 =\frac{(n-3s)(n-3s-2)}
 {2(n-s-2)(n-2s)(n-2s-1)(n-2s-2)}.
\end{equation}
If also $s=o(n)$, this ratio is $(1+o(1))/(2n^2)$.
\end{enumerate}
\end{fact}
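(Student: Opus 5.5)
We prove the four parts in order; (i)--(iii) are bijections and (iv) is a computation using \cref{prop:complete-count}(i).

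For (i), fix the root pair of $\cM$ and its direction; it is also a pair of $\cM\cup\{yz\}$. Every $(\mathcal G-x,\cM\cup\{yz\})$-respecting cycle has a unique root orientation, which traverses $yz$ either from $y$ to $z$ or from $z$ to $y$. These two classes are disjoint and exhaust all cycles, and they are counted by $\comp_{\mathcal G,\cM}(x;y\to z)$ and $\comp_{\mathcal G,\cM}(x;z\to y)$ respectively.

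For (ii), let $C$ be a $(\mathcal G,\cM)$-respecting cycle containing $xyz$. Since $x,y,z$ are $\cM$-unsaturated and every triple has exactly one private vertex, exactly one of $x,y,z$ is private on $xyz$. Suppose it is $x$.
\begin{itemize}
\item Replacing $xyz$ by the pair $yz$ gives a loose cycle on $V\setminus\{x\}$.
\item Its consecutive edges still meet in the same single vertices $y$ and $z$, and disjointness is preserved.
\item Its triples avoid $x$, because $x$ lay only in $xyz$.
\item $yz$ is disjoint from $V(\cM)$, so $\cM\cup\{yz\}$ is a matching.
\end{itemize}
Hence the result is $(\mathcal G-x,\cM\cup\{yz\})$-respecting. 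Conversely, expanding $yz$ to $xyz$ in such a cycle puts $x$ back as a private vertex. It gives a $(\mathcal G,\cM)$-respecting cycle because $xyz\in\mathcal G$, and the two neighbours of $yz$ are triples by \cref{fact:mixed-cycle-parameters}(i). The two maps are mutually inverse. Summing over the three choices of private vertex gives (ii).

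For (iii), we use the same bijection but keep track of orientation. The expansion preserves the cyclic order of the edges, and it fixes the root pair, because the root pair lies in $\cM$ and is therefore never $yz$. So the root orientation of $C$ traverses $xyz$ from $y$ to $z$ exactly when the contracted cycle traverses $yz$ from $y$ to $z$. Dividing the count of such cycles by $\cyc(\mathcal G,\cM)$ gives the stated proportion.

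For (iv), we must make sure the two cycle counts use the correct root pairs.
\begin{itemize}
\item The complete host $\mathcal G-x=\binom{V\setminus\{x\}}3$ is invariant under swapping $y$ and $z$.
\item This swap fixes the root pair, so it preserves root orientations and exchanges the two terms in (i).
\item Hence $\comp_{\mathcal G,\cM}(x;y\to z)=\tfrac12\cyc\bigl(\binom{V\setminus\{x\}}3,\cM\cup\{yz\}\bigr)$.
\end{itemize}
Apply \cref{prop:complete-count}(i) with parameters $(n-1,s+1)$ for the numerator and $(n,s)$ for the denominator; the hypotheses $n-s\ge8$ and $n-3s\ge4$ ensure it applies to both. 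The numerator becomes
\[
\tfrac12\cdot\frac{2^{s}(n-2s-3)!\,\bigl(\frac{n-s-4}{2}\bigr)!}{\bigl(\frac{n-3s-4}{2}\bigr)!}.
\]
The denominator is $2^{s-1}(n-2s)!\bigl(\frac{n-s-2}2\bigr)!\big/\bigl(\frac{n-3s}2\bigr)!$. Their ratio is
\[
\frac{\bigl(\frac{n-3s}{2}\bigr)\bigl(\frac{n-3s-2}{2}\bigr)}{(n-2s)(n-2s-1)(n-2s-2)\bigl(\frac{n-s-2}{2}\bigr)}
=\frac{(n-3s)(n-3s-2)}{2(n-s-2)(n-2s)(n-2s-1)(n-2s-2)},
\]
which is \eqref{eq:Yratio}. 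When $s=o(n)$, the numerator is $(1+o(1))n^2$ and the denominator is $(1+o(1))2n^4$, so the ratio is $(1+o(1))/(2n^2)$.

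The only delicate step is the symmetry argument in (iv). It is the root-pair convention, with the root taken in $\cM$, that makes the two directions of $yz$ contribute equally.
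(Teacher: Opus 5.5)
Your proof is correct and follows essentially the same route as the paper: (i)--(iii) by partitioning according to the direction of $yz$ and by the contraction/expansion bijection at the private vertex, and (iv) by the $y\leftrightarrow z$ label-swap involution on the complete host combined with \cref{prop:complete-count}(i) at parameters $(n-1,s+1)$ and $(n,s)$. Your explicit check that both parameter pairs satisfy the hypotheses of \cref{prop:complete-count} is a detail the paper leaves implicit.
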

\begin{proof}
Partition the cycles counted by $\cyc\bigl(\mathcal{G}-x,\cM\cup\{yz\}\bigr)$ according to the direction of the added pair $yz$ in the root orientation. This proves \cref{fact:completion-identities}(i). For \cref{fact:completion-identities}(ii), partition the cycles containing $xyz$ by its private vertex. When $x$ is private, contracting $xyz$ to the pair $yz$ gives a bijection to the cycles counted by $\cyc\bigl(\mathcal{G}-x,\cM\cup\{yz\}\bigr)$; expansion is the inverse. The other two choices of private vertex give the remaining terms. This bijection preserves the direction in which the triple or pair is traversed. Thus the cycles using the oriented triple $(x;y\to z)$ are counted by $\comp_{\mathcal{G},\cM}(x;y\to z)$, and division by $\cyc(\mathcal{G},\cM)$ proves \cref{fact:completion-identities}(iii).

For \cref{fact:completion-identities}(iv), the cycles counted by $\cyc\bigl(\mathcal{G}-x,\cM\cup\{yz\}\bigr)$ have $n-1$ vertices, $s+1$ pairs and $(n-s-2)/2$ triples. Interchange $y$ and $z$ in every edge, fixing all other vertices. Since the host is complete, this operation preserves the family of mixed cycles and the fixed direction of the root pair, while reversing the direction of the added pair $yz$. It is its own inverse, so it gives a bijection between the two orientation classes. Thus $\comp_{\mathcal{G},\cM}(x;y\to z)$ is half the number of cycles given in \cref{prop:complete-count}(i) with these smaller parameters. Substituting $n-1$ and $s+1$ into that formula, and dividing by the original number of cycles, gives
\[
 \frac{\comp_{\mathcal{G},\cM}(x;y\to z)}{\cyc(\mathcal{G},\cM)}
 =\frac{(n-2s-3)!}{(n-2s)!}
 \frac{\bigl(\frac{n-s-4}{2}\bigr)!}{\bigl(\frac{n-s-2}{2}\bigr)!}
 \frac{\bigl(\frac{n-3s}{2}\bigr)!}{\bigl(\frac{n-3s-4}{2}\bigr)!}.
\]
Cancelling the factorials yields \eqref{eq:Yratio}; the asymptotic estimate follows when $s=o(n)$.
\end{proof}

We now introduce the probabilistic notation used below. All logarithms are natural.
\begin{itemize}
\item For real-valued random variables $X,Y$, we write $X\preceq Y$ if $\PP[X\ge t]\le\PP[Y\ge t]$ for every real $t$; in other words, $Y$ \emph{stochastically dominates} $X$.
\item An event holds \emph{with high probability} (\whp) if its probability tends to $1$ as the number of vertices tends to infinity.
\item For a nonempty finite set $S$, write $\Unif(S)$ for the uniform distribution on $S$; that is, if $X\sim\Unif(S)$, then $\PP[X=x]=1/|S|$ for every $x\in S$.
\item If $T$ is a uniformly chosen $t$-element subset of a finite set $U$ and $A\subseteq U$ is fixed, then $|T\cap A|$ has a \emph{hypergeometric distribution}; that is, for every integer $k$,
\[
 \PP[|T\cap A|=k]=
 \begin{cases}
 \displaystyle\frac{\binom{|A|}{k}\binom{|U\setminus A|}{t-k}}{\binom{|U|}{t}},
 &\text{if }\max\{0,t-|U\setminus A|\}\le k\le\min\{t,|A|\},\\[4pt]
 0,&\text{otherwise}.
 \end{cases}
\]
\item We write $\Poi(\mu)$ for the Poisson distribution with mean $\mu\ge0$; that is, if $X\sim\Poi(\mu)$, then for every integer $k\ge0$ (with $0^0=1$),
\[
 \PP[X=k]=\exp\{-\mu\}\frac{\mu^k}{k!}.
\]
\item For a finite-valued random vector $Z$ under a probability law $\PP$, we call a value $z$ \emph{$\PP$-admissible} if $\PP[Z=z]>0$.
\end{itemize}

\begin{lemma}[Bayes's formula]\label{lem:bayes}
Let $A,B$ be events of positive probability. Then $\PP[A\mid B]=\frac{\PP[B\mid A]\PP[A]}{\PP[B]}$.
\end{lemma}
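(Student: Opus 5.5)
This is immediate from the definition of conditional probability, applied twice. Since $\PP[B]>0$, the definition gives $\PP[A\mid B]=\PP[A\cap B]/\PP[B]$. Since $\PP[A]>0$, the conditional probability $\PP[B\mid A]$ is also defined, and $\PP[B\mid A]=\PP[A\cap B]/\PP[A]$.

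From the second identity, the plan is to solve for the joint probability, obtaining $\PP[A\cap B]=\PP[B\mid A]\,\PP[A]$. Substituting this into the first identity then gives
\[
 \PP[A\mid B]=\frac{\PP[A\cap B]}{\PP[B]}=\frac{\PP[B\mid A]\,\PP[A]}{\PP[B]},
\]
which is the stated formula.

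There is no genuine obstacle here. The only point needing care is that both conditional probabilities are well defined, and this is guaranteed by the hypothesis that $A$ and $B$ have positive probability. The same argument applies verbatim under any of the conditioned laws used later in the paper, such as $\QQ=\PP[\,\cdot\mid\mathcal D]$, since each of these is itself a probability measure.
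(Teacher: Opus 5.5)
Your proof is correct: it is the standard derivation from the definition of conditional probability, applied once for each of the two conditional probabilities. The paper records Bayes's formula as a standard tool without writing out a proof, and your observation that positivity of both probabilities makes both conditional probabilities well defined is the only point that needs care.
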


We also record elementary moment bounds; see Janson, \L uczak, and Ruci\'nski~\cite[Section~1.2]{JansonLuczakRucinski2000} for Markov's and Chebyshev's inequalities.
\begin{lemma}\label{lem:moment-bounds}
The following bounds hold.
\begin{enumerate}[label=(\textit{\roman*})]
\item If $X$ is nonnegative and $t>0$, then $\PP[X\ge t]\le\EE[X]/t$.
\item If $X$ has finite variance and $t>0$, then $\PP[|X-\EE[X]|\ge t]\le\Var[X]/t^2$.
\item If $X\preceq\Poi(\mu)$ for some $\mu\ge0$, then $\PP[X\ge t]\le\mu^t/t!$ for every integer $t\ge1$.
\end{enumerate}
\end{lemma}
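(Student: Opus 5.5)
The three bounds in \cref{lem:moment-bounds} are standard, and I would prove them in order, each in a line or two. For (i), I would integrate the pointwise inequality $t\,\mathbf 1[X\ge t]\le X$, which holds because $X\ge0$; taking expectations gives $t\,\PP[X\ge t]\le\EE[X]$. For (ii), I would apply (i) to the nonnegative variable $(X-\EE[X])^2$ at level $t^2$, using that $|X-\EE[X]|\ge t$ exactly when $(X-\EE[X])^2\ge t^2$.

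For (iii), I would first use the definition of $\preceq$ to get $\PP[X\ge t]\le\PP[Z\ge t]$, where $Z\sim\Poi(\mu)$. It then suffices to show $\PP[Z\ge t]\le\mu^t/t!$ for every integer $t\ge1$. The case $\mu=0$ is trivial, since then $Z=0$ almost surely. For $\mu>0$, I would write
\[
\PP[Z\ge t]=\e^{-\mu}\sum_{k\ge t}\frac{\mu^k}{k!}
=\e^{-\mu}\frac{\mu^t}{t!}\sum_{j\ge0}\frac{\mu^j\,t!}{(t+j)!}.
\]
Since $(t+j)!\ge t!\,j!$, the last sum is at most $\sum_{j\ge0}\mu^j/j!=\e^{\mu}$. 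The factor $\e^{-\mu}$ cancels, and the bound follows.

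An equivalent route for (iii) is a union bound over $t$-subsets of a Poisson process. I would choose the direct series computation above because it avoids extra structure.

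There is no real obstacle here. The only point needing care is checking that the inequality $(t+j)!\ge t!\,j!$ is exactly what makes the $\e^{\pm\mu}$ factors cancel.
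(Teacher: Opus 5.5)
Your proof is correct and is the standard argument. The paper gives no proof of its own: it cites Janson, \L uczak, and Ruci\'nski for (i) and (ii) and treats the Poisson tail bound (iii) as standard. Your reduction to $Z\sim\Poi(\mu)$ uses exactly the paper's definition of $\preceq$, and the series comparison via $(t+j)!\ge t!\,j!$ is the usual way to obtain (iii).
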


The following lemma bounds the variance of the number of members of a set family that avoid a uniformly chosen subset. Applied to the sets of triples used by mixed Hamilton cycles, it bounds the variance of the number of cycles remaining after random edge deletions.
\begin{lemma}\label{lem:batch-variance}
Let $U$ be a finite set of size $m$, and let $\mathcal F\subseteq\binom Uh$ be a fixed nonempty family, where $1\le h\le m/4$. Choose $T$ uniformly among the subsets of $U$ of a prescribed size $|T|\le m/4$. Let $Y_T$ count the members of $\mathcal F$ disjoint from $T$, that is, $Y_T\coloneqq|\{F\in\mathcal F\colon F\cap T=\varnothing\}|$.
If $F_1,F_2$ are independent uniform members of $\mathcal F$, then
\[
\frac{\Var[Y_T]}{\bigl(\EE[Y_T]\bigr)^2}
 \le\frac{\EE[|F_1\cap F_2|]}{h}\exp\left\{\frac{4|T| h}{m}\right\}.
\]
\end{lemma}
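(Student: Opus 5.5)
We compute the first two moments of $Y_T$ directly. Write $t=|T|$ and, for $F,F'\in\mathcal F$, let $p_a\coloneqq\PP[A\cap T=\varnothing]$ for a fixed $a$-subset $A\subseteq U$, that is, $p_a=\binom{m-a}{t}/\binom mt$. Then $\EE[Y_T]=|\mathcal F|p_h$ and $\EE[Y_T^2]=\sum_{F,F'}p_{|F\cup F'|}=\sum_{F,F'}p_{2h-|F\cap F'|}$. Hence
\[
 \frac{\EE[Y_T^2]}{(\EE[Y_T])^2}=\EE\left[\frac{p_{2h-I}}{p_h^2}\right],\qquad I\coloneqq|F_1\cap F_2|,
\]
and the goal becomes showing $\EE[p_{2h-I}/p_h^2]-1\le \frac{\EE[I]}{h}\exp\{4th/m\}$.

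The key steps are two elementary inequalities on the ratios $p_a$. First, negative correlation for disjoint sets: $p_{2h}\le p_h^2$. This holds because $p_{2h}/p_h=\prod_{i=0}^{h-1}\frac{m-h-i-t}{m-h-i}\le\prod_{i=0}^{h-1}\frac{m-i-t}{m-i}=p_h$, each factor $1-t/(m-h-i)$ being at most $1-t/(m-i)$. Second, for $0\le i\le h$, write
\[
 \frac{p_{2h-i}}{p_h^2}=\frac{p_{2h-i}}{p_{2h}}\cdot\frac{p_{2h}}{p_h^2}\le\frac{p_{2h-i}}{p_{2h}}=\prod_{l=2h-i}^{2h-1}\frac{m-l}{m-l-t}=\prod_{l}\Bigl(1+\frac{t}{m-l-t}\Bigr).
\]
Since $l<2h\le m/2$ and $t\le m/4$, we have $m-l-t\ge m/4$, so each factor is at most $1+4t/m\le \exp\{4t/m\}$. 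Hence $p_{2h-i}/p_h^2\le\exp\{4ti/m\}$.

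It remains to convert $\exp\{4ti/m\}-1$ into a bound linear in $i$. By convexity of $x\mapsto\exp\{4tx/m\}-1$ on $[0,h]$, which vanishes at $0$, we get $\exp\{4ti/m\}-1\le\frac ih\bigl(\exp\{4th/m\}-1\bigr)\le\frac ih\exp\{4th/m\}$. Taking expectations over $I$ gives $\EE[Y_T^2]/(\EE[Y_T])^2-1\le\frac{\EE[I]}{h}\exp\{4th/m\}$, which is the claimed bound since $\Var[Y_T]/(\EE[Y_T])^2=\EE[Y_T^2]/(\EE[Y_T])^2-1$.

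The only delicate point is keeping denominators bounded below: the condition $2h-1<m/2$ together with $t\le m/4$ guarantees $m-l-t\ge m/4>0$. One should also handle the degenerate case $p_h=0$, which cannot occur since $h+t\le m/2<m$. The rest is bookkeeping, and I expect the negative-correlation step to be the only one needing a careful factor-by-factor comparison.
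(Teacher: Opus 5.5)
Your argument is correct and is essentially the paper's proof: both write $\EE[Y_T^2]/(\EE[Y_T])^2$ as an average over independent $F_1,F_2$ of the ratio of the joint survival probability to the product of the individual ones, bound that ratio by $\exp\{4|T||F_1\cap F_2|/m\}$, and finish with the same convexity step. The only difference is in bounding the ratio. The paper writes it as a product of $|T|$ factors $1+\frac{|F_1\cap F_2|(m-i)-h^2}{(m-h-i)^2}\le 1+\frac{4|F_1\cap F_2|}{m}$, and dropping the $-h^2$ term plays the role of your negative-correlation step. You instead split off $p_{2h}/p_h^2\le 1$ and bound the remaining $|F_1\cap F_2|$ factors by $1+4|T|/m$ each.
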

Here the mean and variance of $Y_T$ are taken over the random choice of $T$, with $\mathcal F$ fixed, whereas the overlap expectation is taken over the independent uniform choices of $F_1,F_2$ from $\mathcal F$.

\begin{proof}
Each $F\in\mathcal F$ is disjoint from $T$ with probability $\binom{m-h}{|T|}/\binom{m}{|T|}$, so $\EE[Y_T]=|\mathcal F|\binom{m-h}{|T|}/\binom{m}{|T|}$. For fixed $F_1,F_2$, the ratio of the probability that both are disjoint from $T$ to the product of the two individual probabilities is
\[
 \prod_{i=0}^{|T|-1}
 \frac{(m-2h+|F_1\cap F_2|-i)(m-i)}{(m-h-i)^2}.
\]
Each factor equals
\[
 1+\frac{|F_1\cap F_2|(m-i)-h^2}{(m-h-i)^2}
 \le1+\frac{4|F_1\cap F_2|}{m}
 \le\exp\left\{\frac{4|F_1\cap F_2|}{m}\right\}.
\]
Averaging the product over independent uniform $F_1,F_2$ gives $\EE[Y_T^2]/(\EE[Y_T])^2$. Since $0\le|F_1\cap F_2|\le h$, convexity yields
\[
\begin{aligned}
 \frac{\Var[Y_T]}{(\EE[Y_T])^2}
 =\frac{\EE[Y_T^2]}{(\EE[Y_T])^2}-1
 &\le\EE\left[\exp\left\{\frac{4|T||F_1\cap F_2|}{m}\right\}-1\right]\\
 &\le\frac{\EE[|F_1\cap F_2|]}{h}
 \left(\exp\left\{\frac{4|T|h}{m}\right\}-1\right)
 \le\frac{\EE[|F_1\cap F_2|]}{h}\exp\left\{\frac{4|T|h}{m}\right\},
\end{aligned}
\]
where the expectations involving $F_1,F_2$ are over their independent uniform choices. This proves the variance bound.
\end{proof}

The exponential tail bounds below will be used for degrees and random samples.
\begin{lemma}[Chernoff bounds]\label{lem:chernoff}
Let $X$ be binomial, hypergeometric, or Poisson with mean $\mu>0$, and put $\varphi(a)\coloneqq1-a+a\log a$ for $a>0$. Then the following bounds hold.
\begin{enumerate}[label=(\textit{\roman*})]
\item For $0<a<1$, we have $\PP[X\le a\mu]\le\exp\{-\mu\varphi(a)\}$.
\item For $t\ge\mu$, we have $\PP[X\ge t]\le\exp\{-\mu\varphi(t/\mu)\}\le(\e\mu/t)^t$.
\item For $0<\delta\le1$, we have $\PP[|X-\mu|\ge\delta\mu]\le2\exp\{-\delta^2\mu/3\}$.
\end{enumerate}
\end{lemma}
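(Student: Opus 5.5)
We will treat the three distributions separately. For each of them we bound the moment generating function, then apply Markov's inequality to $\e^{\lambda X}$ and optimise over $\lambda$. Throughout, $\mu$ denotes the mean.

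\emph{Binomial and Poisson cases.} For $X\sim\mathrm{Bin}(N,p)$ with $\mu=Np$, use $1+x\le \e^x$ to get
\[
\EE[\e^{\lambda X}]=(1+p(\e^\lambda-1))^N\le\exp\{\mu(\e^\lambda-1)\}
\]
for every real $\lambda$. For $X\sim\Poi(\mu)$ this bound holds with equality.

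For (ii), take $t\ge\mu$ and $\lambda=\log(t/\mu)\ge0$. Then
\[
\PP[X\ge t]\le \e^{-\lambda t}\exp\{\mu(\e^\lambda-1)\}=\exp\{-\mu\varphi(t/\mu)\}.
\]
Dropping the term $-\mu$ from $-\mu\varphi(t/\mu)=t-\mu-t\log(t/\mu)$ gives
\[
\exp\{-\mu\varphi(t/\mu)\}\le \exp\{t-t\log(t/\mu)\}=(\e\mu/t)^t.
\]

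For (i), take $\lambda=\log a<0$ and apply Markov's inequality to $\e^{\lambda X}$ on the event $\{X\le a\mu\}$. This gives $\exp\{-\mu\varphi(a)\}$ in the same way.

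\emph{Hypergeometric case.} Here we invoke Hoeffding's classical theorem. If $X$ counts the elements of a fixed set $A$ in a uniform $t$-subset of $U$, and $X'$ is the corresponding binomial variable for sampling with replacement with $p=|A|/|U|$, then
\[
\EE[f(X)]\le\EE[f(X')]
\]
for every continuous convex $f$. Applying this with $f(x)=\e^{\lambda x}$ shows that the hypergeometric moment generating function is bounded by the binomial one with the same mean. Hence the bounds in (i) and (ii) transfer verbatim. This comparison is the only nonelementary input. I would cite it (Hoeffding 1963; see also Janson--\L uczak--Ruci\'nski, Theorem~2.10) rather than reprove it.

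\emph{Part (iii).} This follows from (i) and (ii) with $a=1-\delta$ and $t=(1+\delta)\mu$, using the elementary inequalities
\[
\varphi(1-\delta)\ge\delta^2/2\ge\delta^2/3
\quad\text{and}\quad
\varphi(1+\delta)\ge\delta^2/3\qquad(0<\delta\le1).
\]
Both follow from Taylor expansion.
\begin{itemize}
\item For the first, set $g(\delta)=\varphi(1-\delta)-\delta^2/2$. Then $g(0)=g'(0)=0$ and $g''(\delta)=\frac{1}{1-\delta}-1\ge0$.
\item For the second, set $h(\delta)=\varphi(1+\delta)-\delta^2/3$. Then $h(0)=h'(0)=0$ and $h''(\delta)=\frac1{1+\delta}-\frac23\ge0$ for $\delta\le1/2$. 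On $[1/2,1]$, $h'$ is decreasing, so it suffices to check $h'(1)=\log 2-2/3>0$; hence $h'\ge0$ there as well.
\end{itemize}
A union bound over the two tails then gives the factor $2$.

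The strict-inequality versus non-strict forms of the events cause no issue, since the bounds hold for $\PP[X\ge t]$ and $\PP[X\le a\mu]$ directly. The main (mild) obstacle is the hypergeometric comparison, which I handle by citation.
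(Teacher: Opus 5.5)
Your proposal is correct and takes the same route as the paper. The paper simply cites the standard moment-generating-function Chernoff bounds (Janson--\L uczak--Ruci\'nski, Section~2.1) and Hoeffding's comparison theorem for the hypergeometric case, which are exactly your two ingredients.

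One small patch is needed. At the endpoint $\delta=1$ of (iii), your choice $a=1-\delta=0$ lies outside the range of (i). However, $\PP[X\le 0]\le\PP[X\le a\mu]\le\exp\{-\mu\varphi(a)\}$ for every $a\in(0,1)$, and letting $a\to0^+$ gives $\PP[X\le 0]\le \e^{-\mu}\le \e^{-\mu/3}$.
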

These are standard Chernoff bounds; see Janson, \L uczak, and Ruci\'nski~\cite[Section~2.1]{JansonLuczakRucinski2000}. The hypergeometric case follows from Hoeffding's comparison theorem~\cite[Theorem~4]{Hoeffding1963}.

An event determined by independent edge indicators is \emph{increasing} if adding edges preserves it. We use Harris's inequality~\cite{Harris1960} in the following form.
\begin{lemma}[{\cite[Theorem~2.12]{JansonLuczakRucinski2000}}]\label{lem:harris}
If $A_1,\ldots,A_m$ are increasing events determined by independent Bernoulli variables, then $\PP[\bigcap_{i=1}^m A_i]\ge\prod_{i=1}^m\PP[A_i]$.
\end{lemma}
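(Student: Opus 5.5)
The statement is Harris's inequality, cited from Janson, \L uczak, and Ruci\'nski, so a self-contained proof is short. I would first reduce to $m=2$ by induction. The intersection of increasing events is increasing, so $A_1\cap\cdots\cap A_{m-1}$ is increasing. Applying the two-event case to it and $A_m$, then the induction hypothesis, gives the product bound. It therefore suffices to show $\PP[A\cap B]\ge\PP[A]\PP[B]$ for increasing events $A,B$. Equivalently, I would show $\EE[fg]\ge\EE[f]\EE[g]$ for the indicators $f=\mathbf 1_A$ and $g=\mathbf 1_B$, which are nondecreasing functions of the Bernoulli variables $X_1,\ldots,X_N$.

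Since the events are determined by finitely many independent Bernoulli variables, I would prove the functional inequality by induction on $N$.

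\emph{Base case $N=1$.} Take an independent copy $X'$ of $X_1$. Because $f$ and $g$ are both nondecreasing in one variable, $(f(X_1)-f(X'))(g(X_1)-g(X'))\ge0$ pointwise. Taking expectations gives $2\EE[fg]-2\EE[f]\EE[g]\ge0$.

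\emph{Inductive step.} Condition on $X_N$. For each fixed value of $X_N$, the functions $f$ and $g$ are nondecreasing in $X_1,\ldots,X_{N-1}$. By independence and the induction hypothesis,
\[
\EE[fg\mid X_N]\ge\EE[f\mid X_N]\,\EE[g\mid X_N].
\]
Both conditional expectations are nondecreasing functions of $X_N$ alone; this again uses independence, so that the conditional law of the other variables does not depend on $X_N$. Applying the base case to them gives
\[
\EE\bigl[\EE[f\mid X_N]\,\EE[g\mid X_N]\bigr]\ge\EE[f]\EE[g].
\]
Taking expectations in the first display and combining the two inequalities completes the induction.

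There is no real obstacle. The only point needing care is the use of independence in the conditioning step, which ensures that the conditional expectations are monotone functions of $X_N$ and that the induction hypothesis applies to the remaining coordinates.
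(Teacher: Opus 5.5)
Your proof is correct: the reduction to two events via closure of increasing events under intersection, followed by induction on the number of independent Bernoulli coordinates with the independent-copy argument as base case, is the standard proof of Harris's inequality. The paper gives no proof of its own and simply cites Janson, \L uczak, and Ruci\'nski, so your argument supplies the missing proof along the expected lines.
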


We need to control the logarithmic losses at all intermediate sizes of the edge-deletion process, so we use the maximal form of the Azuma--Hoeffding inequality~\cite{Azuma1967}. A \emph{filtration} $(\mathcal F_i)_{i=0}^m$ is a sequence of $\sigma$-algebras on a common probability space satisfying $\mathcal F_0\subseteq\mathcal F_1\subseteq\cdots\subseteq\mathcal F_m$. Here $\mathcal F_i$ represents the information revealed up to step $i$.
\begin{lemma}\label{lem:maximal-azuma}
Let $(\mathcal F_i)_{i=0}^m$ be a filtration, and let $D_i$ be $\mathcal F_i$-measurable with $\EE[D_i\mid\mathcal F_{i-1}]=0$ and $|D_i|\le c_i$ almost surely, where the $c_i$ are deterministic nonnegative numbers and $\sum_{i=1}^m c_i^2>0$. For every $a>0$,
\[
 \PP\left[\max_{1\le j\le m}\sum_{i=1}^jD_i\ge a\right]
 \le\exp\left\{-\frac{a^2}{2\sum_{i=1}^m c_i^2}\right\}.
\]
\end{lemma}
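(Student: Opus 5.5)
We will prove the maximal Azuma--Hoeffding inequality by the exponential-moment method combined with Doob's maximal inequality. Write $S_j\coloneqq\sum_{i=1}^jD_i$ with $S_0=0$, and put $\sigma^2\coloneqq\sum_{i=1}^m c_i^2>0$. Since $\EE[D_i\mid\mathcal F_{i-1}]=0$, the sequence $(S_j)$ is a martingale with respect to $(\mathcal F_j)$. For every $\lambda>0$ the function $x\mapsto\e^{\lambda x}$ is convex, so $(\e^{\lambda S_j})_{j=0}^m$ is a nonnegative submartingale.

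The first step is Hoeffding's lemma in conditional form: if $\EE[D\mid\mathcal F]=0$ and $|D|\le c$, then $\EE[\e^{\lambda D}\mid\mathcal F]\le\e^{\lambda^2c^2/2}$. We take this from the convexity bound
\[
 \e^{\lambda x}\le\frac{c-x}{2c}\e^{-\lambda c}+\frac{c+x}{2c}\e^{\lambda c}\quad(|x|\le c).
\]
Taking conditional expectations gives $\EE[\e^{\lambda D}\mid\mathcal F]\le\cosh(\lambda c)\le\e^{\lambda^2c^2/2}$. The case $c_i=0$ is trivial, since then $D_i=0$ almost surely. Conditioning successively on $\mathcal F_{m-1},\ldots,\mathcal F_0$ and using the tower property then gives
\[
 \EE[\e^{\lambda S_m}]\le\exp\Bigl\{\frac{\lambda^2\sigma^2}{2}\Bigr\}.
\]

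The second step handles the maximum. Since $\e^{\lambda S_j}$ is a nonnegative submartingale, Doob's maximal inequality gives
\[
 \PP\Bigl[\max_{j\le m}S_j\ge a\Bigr]=\PP\Bigl[\max_{j\le m}\e^{\lambda S_j}\ge\e^{\lambda a}\Bigr]\le\e^{-\lambda a}\EE[\e^{\lambda S_m}].
\]
If we want to avoid citing Doob, we can argue directly. Let $\tau$ be the first $j$ with $S_j\ge a$, setting $\tau=m$ if no such $j$ exists. Optional stopping for the bounded stopping time $\tau\le m$ gives $\EE[\e^{\lambda S_\tau}]\le\EE[\e^{\lambda S_m}]$, and Markov's inequality, \cref{lem:moment-bounds}(i), applied to $\e^{\lambda S_\tau}$ finishes the bound. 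Combining the two steps yields
\[
 \PP\Bigl[\max_{j\le m}S_j\ge a\Bigr]\le\exp\Bigl\{-\lambda a+\frac{\lambda^2\sigma^2}{2}\Bigr\}.
\]
Choosing $\lambda=a/\sigma^2$ gives the stated bound $\exp\{-a^2/(2\sigma^2)\}$.

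No step is a serious obstacle. The only point needing care is the conditional Hoeffding lemma, where the bound must hold almost surely given $\mathcal F_{i-1}$. The convexity bound delivers exactly this because $c_i$ is deterministic.
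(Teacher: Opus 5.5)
Your proof is correct and follows essentially the same route as the paper. Both arguments use the conditional bound $\EE[\exp\{\lambda D_i\}\mid\mathcal F_{i-1}]\le\exp\{\lambda^2c_i^2/2\}$, stop at the first time the partial sum reaches $a$, apply Markov's inequality and choose $\lambda=a/\sum_i c_i^2$. The differences are cosmetic: the paper stops the compensated supermartingale $\exp\{\lambda S_j-\lambda^2\sum_{i\le j}c_i^2/2\}$ and uses that its expectation is at most $1$. You instead stop the submartingale $\exp\{\lambda S_j\}$, bound $\EE[\exp\{\lambda S_m\}]$ by the tower property, and prove the conditional Hoeffding bound by convexity rather than citing Azuma.
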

\begin{proof}
The conditional exponential-moment bound of Azuma~\cite[Section~2, Remark~1]{Azuma1967} gives $\EE[\e^{\lambda D_i}\mid\mathcal F_{i-1}]\le\exp\{\lambda^2c_i^2/2\}$ for $\lambda>0$. Thus $\exp\{\lambda\sum_{i=1}^jD_i-\lambda^2\sum_{i=1}^jc_i^2/2\}$ is a nonnegative supermartingale starting at $1$. Stop at the first $j$ with $\sum_{i=1}^jD_i\ge a$, or at $m$ if no such $j$ exists. The expected value of the supermartingale at this bounded stopping time is at most $1$. On the event that a partial sum reaches $a$, the value of the supermartingale at the stopping time is at least $\exp\{\lambda a-\lambda^2\sum_{i=1}^m c_i^2/2\}$. By nonnegativity, it follows that
\[
 \exp\left\{\lambda a-\frac{\lambda^2}{2}\sum_{i=1}^m c_i^2\right\}
 \PP\left[\max_{1\le j\le m}\sum_{i=1}^jD_i\ge a\right]\le1.
\]
Taking $\lambda=a/\sum_{i=1}^m c_i^2$ proves the result.
\end{proof}

We next recall the definitions and basic properties of entropy. For a random variable $Z$ on a finite set, its \emph{entropy} is $\HH[Z]\coloneqq-\sum_z\PP[Z=z]\log\PP[Z=z]$, with $0\log0\coloneqq0$. Its conditional entropy given a finite-valued random variable $Y$ is $\HH[Z\mid Y]\coloneqq\sum_y\PP[Y=y]\HH[Z\mid Y=y]$, where the sum is over $\PP$-admissible values $y$ of $Y$.

For probability distributions $P,Q$ on the same finite set, their \emph{relative entropy} is
\[
 \KL[P\Vert Q]\coloneqq\sum_z P(z)\log\frac{P(z)}{Q(z)},
\]
where terms with $P(z)=0$ are interpreted as zero. It is infinite if $P(z)>0=Q(z)$ for some $z$.

We use the following entropy identities and inequalities, with natural logarithms.
\begin{lemma}[Chain rule {\cite[Section~2.5]{CoverThomas2006}}]\label{lem:entropy-chain-rule}
For finite-valued random variables $Y,Z$, we have $\HH[Y,Z]=\HH[Y]+\HH[Z\mid Y]$.
\end{lemma}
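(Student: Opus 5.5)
The chain rule $\HH[Y,Z]=\HH[Y]+\HH[Z\mid Y]$ is a direct computation from the definitions, using the factorisation of the joint law. Write $p(y,z)\coloneqq\PP[Y=y,Z=z]$, $p(y)\coloneqq\PP[Y=y]$, and, for $\PP$-admissible $y$, $p(z\mid y)\coloneqq p(y,z)/p(y)$. Then $p(y,z)=p(y)\,p(z\mid y)$ for every pair $(y,z)$ with $p(y,z)>0$, and such a $y$ is automatically $\PP$-admissible.

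First, restrict the sum defining $\HH[Y,Z]$ to pairs with $p(y,z)>0$; this is allowed by the convention $0\log0=0$. Substituting the factorisation and splitting the logarithm gives
\[
 \HH[Y,Z]=-\sum_{(y,z)\colon p(y,z)>0}p(y,z)\log p(y)-\sum_{(y,z)\colon p(y,z)>0}p(y,z)\log p(z\mid y).
\]

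Next, treat the two sums separately. In the first, fix an admissible $y$ and sum over $z$; since $\sum_z p(y,z)=p(y)$, the first sum becomes $-\sum_y p(y)\log p(y)=\HH[Y]$. In the second, write $p(y,z)=p(y)p(z\mid y)$ and group by $y$. This gives
\[
 \sum_y p(y)\Bigl(-\sum_{z\colon p(z\mid y)>0}p(z\mid y)\log p(z\mid y)\Bigr)=\sum_y p(y)\,\HH[Z\mid Y=y]=\HH[Z\mid Y],
\]
where the sum runs over $\PP$-admissible $y$, as in the definition of conditional entropy. Adding the two identities proves the lemma.

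There is no substantive obstacle. The only care needed is with zero-probability terms: check that omitting them is consistent with $0\log 0=0$ in all three entropies, and that the conditional law $p(\cdot\mid y)$ is only ever used for admissible $y$. All sums are finite, so the rearrangements are justified.
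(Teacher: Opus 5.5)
Your computation is correct and is the standard proof. The paper gives no argument of its own and simply cites Cover and Thomas, Section~2.5, whose proof is this same factorisation $\log p(y,z)=\log p(y)+\log p(z\mid y)$ summed against the joint law; your handling of zero-probability terms and the restriction to $\PP$-admissible $y$ agree with the paper's conventions for $\HH[Z\mid Y]$.
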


\begin{lemma}[Log-sum inequality {\cite[Section~2.7]{CoverThomas2006}}]\label{lem:log-sum}
For nonnegative numbers $a_1,\ldots,a_m$ and $b_1,\ldots,b_m$ with positive sums, we have
\[
 \sum_{i=1}^m a_i\log\frac{a_i}{b_i}
 \ge\left(\sum_{i=1}^m a_i\right)\log\frac{\sum_{i=1}^m a_i}{\sum_{i=1}^m b_i},
\]
where terms with $a_i=0$ are zero and a term with $a_i>0=b_i$ is infinite.
\end{lemma}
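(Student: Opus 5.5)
The log-sum inequality (\cref{lem:log-sum}) will follow from convexity of $\phi(t)\coloneqq t\log t$ on $[0,\infty)$, with $\phi(0)\coloneqq0$, via Jensen's inequality applied to a suitable probability vector. Put $A\coloneqq\sum_i a_i>0$ and $B\coloneqq\sum_i b_i>0$. I dispose of the degenerate cases first. If some $i$ has $a_i>0=b_i$, the left side is $+\infty$ by convention and there is nothing to prove. Indices with $a_i=0$ contribute zero to the left side, whether or not $b_i=0$. So I may assume $b_i>0$ whenever $a_i>0$.

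Let $I\coloneqq\{i\colon b_i>0\}$; every $i$ with $a_i>0$ lies in $I$, so $\sum_{i\in I}a_i=A$. For $i\in I$, set $\lambda_i\coloneqq b_i/B$ and $t_i\coloneqq a_i/b_i$. The $\lambda_i$ are nonnegative and sum to $1$. Then
\[
 \sum_{i=1}^m a_i\log\frac{a_i}{b_i}=\sum_{i\in I}b_i\,\phi(t_i)=B\sum_{i\in I}\lambda_i\phi(t_i),
\]
because the indices outside $I$ have $a_i=0$ and contribute nothing. Terms with $a_i=0$ inside $I$ also contribute $b_i\phi(0)=0$, so they are consistent with the convention.

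The function $\phi$ is convex, since $\phi''(t)=1/t>0$ on $(0,\infty)$ and $\phi$ is continuous at $0$. Jensen's inequality for the finite probability vector $(\lambda_i)_{i\in I}$ therefore gives
\[
 \sum_{i\in I}\lambda_i\phi(t_i)\ge\phi\Bigl(\sum_{i\in I}\lambda_i t_i\Bigr)=\phi\Bigl(\frac{A}{B}\Bigr).
\]
Multiplying by $B$ yields $B\cdot\frac AB\log\frac AB=A\log\frac AB$, which is the claim.

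There is no real obstacle. The only care needed is bookkeeping with the $0\log0$ and $a_i>0=b_i$ conventions: one must check that indices with $b_i=0$ have been removed before forming $t_i$, and that this removal does not change $\sum_i a_i$.
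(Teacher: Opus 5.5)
Your proof is correct. The paper gives no proof of its own and simply cites Cover--Thomas, whose argument is exactly yours: Jensen's inequality for the convex function $t\log t$, with weights $b_i/\sum_j b_j$ at the points $a_i/b_i$, and your explicit removal of the indices with $b_i=0$ is a sound version of their ``without loss of generality'' step.
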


\begin{lemma}[Pinsker's inequality {\cite[Lemma~11.6.1]{CoverThomas2006}}]\label{lem:pinsker}
For probability distributions $P,Q$ on a common finite set, we have $\sum_z|P(z)-Q(z)|\le\sqrt{2\KL[P\Vert Q]}$.
\end{lemma}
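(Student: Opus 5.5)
The plan is the standard two-step argument: reduce to distributions on two points using \cref{lem:log-sum}, then verify the two-point case by a one-variable calculus argument. If $\KL[P\Vert Q]=\infty$ there is nothing to prove, so I assume $Q(z)>0$ whenever $P(z)>0$.

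\emph{Reduction to two points.} Let $A\coloneqq\{z\colon P(z)\ge Q(z)\}$, and write $p\coloneqq P(A)$ and $q\coloneqq Q(A)$. Since both distributions have total mass $1$, we have $\sum_{z\in A}(P(z)-Q(z))=\sum_{z\notin A}(Q(z)-P(z))$. Hence
\[
 \sum_z|P(z)-Q(z)|=2(p-q).
\]
Next I split the sum defining $\KL[P\Vert Q]$ into the terms with $z\in A$ and those with $z\notin A$. I apply \cref{lem:log-sum} to each part separately. If one part has $\sum a_i=0$, its contribution is zero, which is consistent with the convention $0\log(0/b)=0$; if $\sum b_i=0$ but $\sum a_i>0$, we are in the excluded infinite case. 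This gives
\[
 \KL[P\Vert Q]\ge p\log\frac pq+(1-p)\log\frac{1-p}{1-q}\eqqcolon k(p,q).
\]
So it suffices to prove $k(p,q)\ge2(p-q)^2$ for $0\le q\le p\le1$, since then
\[
 \Bigl(\sum_z|P(z)-Q(z)|\Bigr)^2=4(p-q)^2\le2\,k(p,q)\le2\KL[P\Vert Q].
\]

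\emph{Binary inequality.} Fix $p$ and put $g(q)\coloneqq k(p,q)-2(p-q)^2$ for $q\in(0,1)$. Then $g(p)=0$, and differentiating gives
\[
 g'(q)=-\frac pq+\frac{1-p}{1-q}+4(p-q)=(q-p)\left(\frac1{q(1-q)}-4\right).
\]
Since $q(1-q)\le1/4$, the second factor is nonnegative, so $g'(q)$ has the sign of $q-p$. Thus $g$ is nonincreasing on $(0,p]$ and attains its minimum at $q=p$, so $g\ge0$ on $(0,p]$.

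The endpoint cases need a separate check. If $q=0$, then finiteness of the divergence forces $p=0$, and both sides vanish. If $p$ or $1-p$ equals $0$, the corresponding term of $k$ is zero by convention, and the same derivative argument still applies on the open interval.

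\emph{Main obstacle.} There is no real obstacle; the only points requiring care are these boundary and infinite-divergence conventions. The key idea is the choice of the set $A$, which makes the $\ell_1$ distance exactly twice a single binary gap and so reduces everything to the two-point case.
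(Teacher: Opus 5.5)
Your proof is correct and is essentially the standard argument of Cover and Thomas (Lemma~11.6.1), which the paper cites without reproducing. That argument reduces to the two-point distributions $(P(A),1-P(A))$ and $(Q(A),1-Q(A))$ with $A=\{z\colon P(z)\ge Q(z)\}$ via the log-sum inequality, and then checks the binary case by showing $\partial_q\bigl[k(p,q)-2(p-q)^2\bigr]\le0$ for $q\le p$. Your boundary remarks are harmless but not needed: $p=P(A)>0$ always, so the case $q=0$ cannot occur when the divergence is finite.
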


Shearer's inequality bounds joint entropy by the entropies of overlapping groups of coordinates.
\begin{lemma}[{\cite[Section~V]{ChungEtAl1986}}]\label{lem:shearer}
Let $(Z_1,\ldots,Z_m)$ be a finite-valued random vector, and let $\mathcal A$ be a collection of subsets of $[m]$, with repetitions allowed. If each coordinate occurs in at least $t\ge1$ members of $\mathcal A$, then
\[
 \HH[Z_1,\ldots,Z_m]\le\frac1t\sum_{A\in\mathcal A}\HH[(Z_i)_{i\in A}].
\]
Moreover, for any finite-valued random variable $Y$ and any $\PP$-admissible value $y$ of $Y$, we have
\[
 \HH[Z_1,\ldots,Z_m\mid Y=y]
 \le\frac1t\sum_{A\in\mathcal A}\HH[(Z_i)_{i\in A}\mid Y=y].
\]
\end{lemma}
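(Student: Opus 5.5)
This is the standard argument of Chung, Graham, Frankl and Shearer: expand each group entropy by the chain rule in the fixed coordinate order, then compare term by term with the chain-rule expansion of the full vector. The comparison uses two elementary facts, which I would verify first.

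\emph{Conditioning does not increase entropy.} For finite-valued $X,W,W'$ we have $\HH[X\mid W,W']\le\HH[X\mid W]$. To see this, write the difference as an average over admissible values of $W$ of a sum of the form $\sum P\log(P/Q)$, where $P$ is the joint law of $(X,W')$ given $W=w$ and $Q$ is the product of its marginals. By \cref{lem:log-sum} this sum is nonnegative.

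\emph{Entropy is nonnegative.} Conditional entropies are also nonnegative, since every $\PP[Z=z]\le1$.

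Now fix the coordinate order $1,\ldots,m$. By \cref{lem:entropy-chain-rule} applied repeatedly,
\[
 \HH[Z_1,\ldots,Z_m]=\sum_{i=1}^m \HH[Z_i\mid Z_1,\ldots,Z_{i-1}].
\]
For a fixed $A\in\mathcal A$, list its elements in increasing order and apply the chain rule in that order. This gives
\[
 \HH[(Z_i)_{i\in A}]=\sum_{i\in A}\HH[Z_i\mid (Z_j)_{j\in A,\,j<i}].
\]
The set $\{j\in A\colon j<i\}$ is contained in $\{1,\ldots,i-1\}$, so the monotonicity fact shows that each term is at least $\HH[Z_i\mid Z_1,\ldots,Z_{i-1}]$.

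Sum the resulting inequality over all members of $\mathcal A$, counted with repetition. Each index $i$ then appears $c_i\ge t$ times, which gives
\[
 \sum_{A\in\mathcal A}\HH[(Z_i)_{i\in A}]\ge\sum_{i=1}^m c_i\,\HH[Z_i\mid Z_1,\ldots,Z_{i-1}].
\]
Since $c_i\ge t$ and each conditional entropy is nonnegative, the right-hand side is at least $t\,\HH[Z_1,\ldots,Z_m]$. Dividing by $t\ge1$ proves the first inequality. The nonnegativity step is the only point where repetitions, or $c_i>t$, need care, and it is immediate.

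For the conditional statement, fix an admissible value $y$ and let $\PP_y=\PP[\,\cdot\mid Y=y]$. This is a probability law under which $(Z_1,\ldots,Z_m)$ is still a finite-valued random vector. By definition, $\HH[(Z_i)_{i\in A}\mid Y=y]$ is exactly the entropy of $(Z_i)_{i\in A}$ under $\PP_y$. Applying the first part under $\PP_y$ therefore gives the second inequality with no further work.

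No step is a genuine obstacle here. The only ingredient beyond the chain rule is the monotonicity of conditional entropy, and this follows from \cref{lem:log-sum} as described above.
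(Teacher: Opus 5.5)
Your proof is correct. It is the standard chain-rule proof of Shearer's inequality: you expand each $\HH[(Z_i)_{i\in A}]$ in increasing coordinate order, enlarge the conditioning using monotonicity of conditional entropy, and finish with $c_i\ge t$ and nonnegativity. The conditional version follows, as you say, by applying the first part under $\PP[\,\cdot\mid Y=y]$. The paper gives no proof of its own; it simply cites Chung, Graham, Frankl and Shearer, so there is no internal argument to compare against.
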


\section{Proof of \texorpdfstring{\cref{thm:main}}{Theorem 1.1} from the conditioned core}
\label{sec:mixed}
We record the conditioned-core counting theorem together with the
definitions needed to state it. Assuming this theorem, the proof of
\cref{thm:main} below uses only the mixed-cycle definitions and the
elementary probability bounds already given in \cref{sec:preliminaries}.

We study the uniform distribution on the following family of hypergraphs with prescribed degree lower bounds.
\begin{definition}[Hypergraphs with degree lower bounds]\label{def:degree-constrained-model}
For a positive integer $n$, a nonnegative integer $m$, and a vector $\boldsymbol\ell=(\ell_v)_{v\in[n]}$ of nonnegative integers, define
\[
 \Omega(n,m,\boldsymbol\ell)\coloneqq
 \left\{F\subseteq\tbinom{[n]}3\colon |F|=m\text{ and }d_F(v)\ge\ell_v\text{ for every }v\in[n]\right\}.
\]

We call a parameter triple $(n,m,\boldsymbol\ell)$
\begin{itemize}[beginpenalty=10000,midpenalty=10000]
\item \emph{admissible} if $\Omega(n,m,\boldsymbol\ell)\ne\varnothing$;
\item \emph{near-critical} if $|m-n\log n/3|\le n\log\log n$ and $|\ell_v-\lfloor0.01\log n\rfloor|\le2$ for every $v\in[n]$.
\end{itemize}
The bound $2$ in the definition of near-criticality may be replaced by any fixed positive constant, with the implicit constants and convergence rates allowed to depend on that constant.
\end{definition}

We use the following assumptions for the conditioned counting problem.
\begin{assumption}[Conditioned core]\label{ass:core}
The following conditions hold.
\begin{enumerate}[label=(\textit{\roman*})]
\item The parameter triple $(n_\ast,m_\ast,\boldsymbol\ell)$ is admissible and near-critical.
\item A matching $\cM_0$ is fixed on $[n_\ast]$ and consists of $s$ pairs, where $1\le s\le n_\ast^{1/10}$ and $n_\ast+s$ is even.
\item A hypergraph $F_*$ is chosen uniformly at random from $\Omega(n_\ast,m_\ast,\boldsymbol\ell)$.
\end{enumerate}
\end{assumption}
We use the following notation for cycles respecting this matching.
\begin{enumerate}[label=(\textit{\roman*})]
\item The set of \emph{$\cM_0$-compatible triples} is defined as
\[
 \Kc\coloneqq\left\{e\in\binom{[n_\ast]}3\colon |e\cap V(\cM_0)|\le1\right\}.
\]
\item For a $3$-graph $H$ on $[n_\ast]$, write $\Xc(H)\coloneqq \cyc(H\cap\Kc,\cM_0)$ for the number of $(H\cap\Kc,\cM_0)$-respecting mixed Hamilton cycles.
\end{enumerate}
Note that every such cycle uses exactly $(n_\ast-s)/2$ triples.
We use the convention $\log0\coloneqq-\infty$.

The following theorem gives a lower bound on the number $\Xc(F_*)$ of mixed Hamilton cycles. Its proof in \cref{sec:martingale} reduces to a uniform bound on edge marginals, which is established in \cref{sec:migration}.
\begin{theorem}[Conditioned counting]
  \label{thm:conditioned-core}
  There is an absolute constant $C>0$ such that, under
  \cref{ass:core}, with probability $1-o(1)$,
\[
\log\Xc(F_*)\ge
 \frac{n_\ast-s}{2}\log\frac{6m_\ast}{n_\ast}-n_\ast+s-C\frac{n_\ast}{\log n_\ast}.
\]
The $o(1)$ term depends only on $n_\ast$.
\end{theorem}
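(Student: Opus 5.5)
We prove \cref{thm:conditioned-core} by running the random edge-deletion process from the complete $3$-graph down to the core, following the sketch in \cref{sec:proof-sketch}. Let $N=\binom{n_\ast}3$, and let $H_N\supseteq H_{N-1}\supseteq\cdots\supseteq H_{m_\ast}$ be obtained by deleting uniformly random remaining triples one at a time. Write $\PP$ for this law. Let $\mathcal D$ be the event that $d_{H_{m_\ast}}(x)\ge\ell_x$ for all $x$, and put $\QQ=\PP[\,\cdot\mid\mathcal D]$. Under $\QQ$, $H_{m_\ast}$ is uniform on $\Omega(n_\ast,m_\ast,\boldsymbol\ell)$, so it suffices to prove the bound for $\Xc(H_{m_\ast})$ under $\QQ$. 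Set $k=(n_\ast-s)/2$ and define the reference value $L_j=\log\Xc(H_N)-k\sum_{h=j+1}^N 1/h$. We first check that $L_{m_\ast}$ equals the claimed main term up to $O(n_\ast/\log n_\ast)$. \Cref{prop:complete-count}(i),(ii) give $\log\Xc(H_N)$. Also $k\sum_{h>m_\ast}1/h=k\log(N/m_\ast)+O(k/m_\ast)$. This matches the expectation computed in \cref{prop:complete-count}(iii), giving $L_{m_\ast}=\frac{n_\ast-s}{2}\log\frac{6m_\ast}{n_\ast}-n_\ast+s+O(n_\ast/\log n_\ast)$.

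Next comes the martingale step. Let $\Qc_e(H_j)$ denote the edge marginals. Then $\log\Xc(H_{j-1})-\log\Xc(H_j)=\log(1-\Qc_{e_j}(H_j))$, where $e_j$ is the deleted triple. Its conditional mean under $\PP$ is $-\frac1j\sum_e\Qc_e+O(\frac1j\sum_e\Qc_e^2)=-k/j+O(\max_e\Qc_e\cdot k/j)$. Let $\sigma$ be the first $j$, going downward, at which either $\max_e\Qc_e(H_j)>Ck/j$ or $\log\Xc(H_j)<L_j-n_\ast/\sqrt{\log n_\ast}$. Before $\sigma$, each increment is bounded by $c_j=O(k/j)$ and has drift $O(k^2/j^2)$. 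Summing, $\sum_j k^2/j^2=O(k^2/m_\ast)=O(n_\ast/\log n_\ast)$, and $\sum_j c_j^2$ is of the same order. Applying \cref{lem:maximal-azuma} to the centred stopped increments with $a=n_\ast/\log n_\ast$ shows that, under $\PP$, with probability at least $1-\exp\{-cn_\ast/\log n_\ast\}$, we have $\log\Xc(H_j)\ge L_j-O(n_\ast/\log n_\ast)$ for all $j$ up to and including $\sigma$. This is \cref{thm:stopped-counting}.

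The passage to $\QQ$ uses \cref{lem:beta}, which gives $\PP[\mathcal D]\ge\exp\{-n_\ast^{1/10}\}$. Hence the failure probability under $\QQ$ is at most $\exp\{n_\ast^{1/10}-cn_\ast/\log n_\ast\}=o(1)$. We then combine this with \cref{thm:uniform-marginal}, which says that $\QQ$-whp every state satisfying the counting lower bound also satisfies $\max_e\Qc_e(H_j)=O(k/j)$. Consider the first state where some bound fails. It cannot be the counting bound, since the martingale error $O(n_\ast/\log n_\ast)$ is much smaller than the allowed $n_\ast/\sqrt{\log n_\ast}$. It cannot be the marginal bound either, by \cref{thm:uniform-marginal}. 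So whp both bounds persist down to $j=m_\ast$. There they give $\log\Xc(F_*)\ge L_{m_\ast}-O(n_\ast/\log n_\ast)$, which is the theorem.

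The real obstacle is \cref{thm:uniform-marginal}, the bound on every marginal and not just most of them. Its proof needs the entropy and stability machinery of \cref{thm:entropy-stability,lem:overlap}, the resampling estimate \cref{prop:uniform-completion}, and the migration lemmas. Since these are stated later, I take them as the main black box. Within the present argument, the delicate point is bookkeeping. The stopping time must be defined so that the marginal bound is only invoked at states where the counting bound already holds. The $o(1)$ in \cref{thm:uniform-marginal} must be uniform over all $j$ simultaneously under $\QQ$.
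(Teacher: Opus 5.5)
Your proposal is correct and follows the paper's proof: the same stopped random edge-deletion martingale with the maximal Azuma bound, the transfer to $\QQ$ at cost $\beta^{-1}\le\exp\{n_\ast^{1/10}\}$ via \cref{lem:beta}, and the first-failure argument combined with \cref{thm:uniform-marginal}. The differences are only in bookkeeping. You centre $\log(1-\Qc_{e_j}(H_j))$ and track its $O(k^2/j^2)$ drift, whereas the paper centres $\Qc_{e_j}(H_j)$ and uses $-\log(1-q)\le q+q^2$. Your stopping rule also stops when the counting bound fails, whereas the paper stops only when $\Xc(H_j)=0$ or the marginal bound fails; neither change affects the argument.
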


With the same probability, $\Xc(F_*)\ge(6m_\ast/(\e^2n_\ast))^{(n_\ast-s)/2}\exp\{-O(n_\ast/\log n_\ast)\}$. By \cref{prop:complete-count}(iii), this lower bound agrees, up to a factor $\exp\{o(n_\ast)\}$, with the expected number of mixed Hamilton cycles respecting the same matching $\cM_0$ and using only $\cM_0$-compatible triples in a uniform $m_\ast$-edge $3$-graph on $[n_\ast]$.

We now prove \cref{thm:main} assuming the conditioned counting theorem. Conditional on $\tau_1=m$, the probability that $\mathcal H_{\tau_1}$ equals a given $m$-edge hypergraph with no isolated vertices is proportional to its number of edges incident to a degree-$1$ vertex. Exposing all edges incident to low-degree vertices makes this weight constant across all hypergraphs consistent with the exposure.

\begin{proof}[Proof of \cref{thm:main} assuming \cref{thm:conditioned-core}]
Throughout this proof, we refer the reader to \cref{fig:core-reduction} for an illustration of the reduction.

\begin{figure}[!ht]
\centering
\begin{tikzpicture}[
  x=.85cm,y=.85cm,font=\small,
  vertex/.style={circle,fill=black,inner sep=1.4pt},
  core triple/.style={draw=black!55,fill=black!5,line width=.6pt},
  set boundary/.style={draw=black!30,densely dashed,line width=.4pt}
]
  \foreach \step/\xoffset/\yoffset/\heading in {
    1/0/0/{1. Choose the incident triples},
    2/8.4/0/{2. Keep the pairs of $\cM_0$},
    3/8.4/-7.5/{3. Find a mixed Hamilton cycle},
    4/0/-7.5/{4. Expand to a loose Hamilton cycle}} {
    \begin{scope}[shift={(\xoffset,\yoffset)}]
      \node at (0,3.3) {\heading};
      \ifnum\step=1
        \path[fill=black!4,even odd rule]
          (0,0) circle (2.75) (0,0) circle (1.98);
        \draw[set boundary] (0,0) circle (2.75);
        \node at (135:2.4) {$B$};
      \fi
      \ifnum\step=4
        \path[fill=black!4,even odd rule]
          (0,0) circle (2.75) (0,0) circle (1.98);
        \draw[set boundary] (0,0) circle (2.75);
        \node at (135:2.4) {$B$};
      \fi
      \draw[set boundary] (0,0) circle (1.98);
      \foreach \angle/\edgecolor in {
        90/blue!75!black,
        0/orange!80!black,
        -90/teal!75!black,
        -180/violet!80!black} {
        \coordinate (a) at ({\angle+15}:1.5);
        \coordinate (b) at ({\angle-15}:1.5);
        \coordinate (junction) at ({\angle-45}:1.5);
        \coordinate (next) at ({\angle-75}:1.5);
        \coordinate (private1) at ({\angle-30}:1.78);
        \coordinate (private2) at ({\angle-60}:1.78);
        \coordinate (exceptional) at (\angle:2.4);
        \ifnum\step>2
          \draw[core triple] (b)--(private1)--(junction)--cycle;
          \draw[core triple] (junction)--(private2)--(next)--cycle;
        \fi
        \ifnum\step=1
          \draw[draw=\edgecolor,fill=\edgecolor,fill opacity=.1,line width=.8pt]
            (a)--(exceptional)--(b)--cycle;
          \node[vertex,fill=\edgecolor] at (exceptional) {};
        \else
          \ifnum\step=4
            \draw[draw=\edgecolor,fill=\edgecolor,fill opacity=.1,line width=.8pt]
              (a)--(exceptional)--(b)--cycle;
            \node[vertex,fill=\edgecolor] at (exceptional) {};
          \else
            \draw[\edgecolor,line width=1.4pt] (a)--(b);
          \fi
        \fi
        \foreach \point in {a,b,junction,private1,private2}
          \node[vertex] at (\point) {};
      }
      \node at (0,.12) {$[n]\setminus B$};
      \ifnum\step=2
        \node at (0,-.42) {pairs $\cM_0$};
      \fi
      \ifnum\step=3
        \node at (0,-.42) {core triples};
      \fi
    \end{scope}
  }
  \draw[-{Stealth[length=5pt]},line width=.7pt] (3.05,0)--(5.35,0)
    node[midway,above=4pt] {delete $B$};
  \draw[-{Stealth[length=5pt]},line width=.7pt] (8.4,-2.3)--(8.4,-3.85)
    node[midway,right=5pt] {\cref{thm:conditioned-core}};
  \draw[-{Stealth[length=5pt]},line width=.7pt] (5.35,-7.5)--(3.05,-7.5)
    node[midway,above=4pt] {restore $B$};
\end{tikzpicture}
\caption{Reduction to a mixed cycle and expansion to a loose Hamilton cycle. The outer ring represents the low-degree set $B$, and the coloured pairs form $\cM_0$. Replace each selected triple by the pair obtained by removing its vertex in $B$. Expanding these pairs restores the vertices of $B$ as private vertices.}
\label{fig:core-reduction}
\end{figure}

Let $n$ be even and sufficiently large. We first determine the conditional distribution of the remaining hypergraph and then control the exposed edges.

Set $d_0\coloneqq\lfloor0.01\log n\rfloor$. We reveal the hitting time $\tau_1$, writing its value as $m$, the set $B$ of vertices whose degree in $\mathcal H_{\tau_1}$ is at most $d_0$, and all edges meeting $B$. Let $\mathcal T\coloneqq\{e\in\mathcal H_{\tau_1}\colon e\cap B\ne\varnothing\}$ denote this set of revealed edges. The edges contained entirely in $[n]\setminus B$ remain unrevealed.

The remaining hypergraph $\mathcal H_{\tau_1}-B$ has $n_\ast\coloneqq n-|B|$ vertices and $m_\ast\coloneqq m-|\mathcal T|$ edges. Every $w\in[n]\setminus B$ has total degree at least $d_0+1$. The revealed edges contribute $d_{\mathcal T}(w)$ to this degree, so the remaining edges must contribute at least $\ell_w\coloneqq\max\{0,d_0+1-d_{\mathcal T}(w)\}$.
To specify the prescribed matching in the reduction, choose deterministically, from the exposed edges, an incident triple $e_v=\{v,a_v,b_v\}$ for each $v\in B$. If these triples are vertex-disjoint, then $\cM_0\coloneqq\{a_vb_v\colon v\in B\}$ is a matching on $[n]\setminus B$. In that case, a $(\mathcal H_{\tau_1}-B,\cM_0)$-respecting mixed Hamilton cycle becomes a loose Hamilton cycle in $\mathcal H_{\tau_1}$ when each pair $a_vb_v$ is replaced by its selected triple $e_v$. Every restored vertex $v\in B$ is private in this cycle. This replacement is illustrated in \cref{fig:core-reduction}.

\begin{claim}\label{prop:exact-core}
Conditional on any $\PP$-admissible exposure $(m,B,\mathcal T)$, $\mathcal H_{\tau_1}-B$ is uniform among simple $3$-graphs $\mathcal{G}$ on $[n]\setminus B$ with $m_\ast$ edges and satisfying $d_{\mathcal{G}}(w)\ge\ell_w$ for every $w\in[n]\setminus B$.
\end{claim}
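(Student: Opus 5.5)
The plan is to compute the exact law of $\mathcal H_{\tau_1}$ given $\tau_1=m$, and then check that after fixing $(B,\mathcal T)$ this law is constant on the compatible hypergraphs.

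\emph{Step 1: the law of $\mathcal H_{\tau_1}$ given $\tau_1=m$.} The process up to time $m$ is a uniformly random ordering of its first $m$ edges, so $\mathcal H_m$ is uniform over $m$-edge $3$-graphs and, given $\mathcal H_m=H$, the last added edge is uniform over $H$. Now $\tau_1=m$ holds exactly when $H$ has no isolated vertex and $H$ minus its last edge does have one. The second condition says that the last edge contains a degree-one vertex of $H$. Hence
\[
 \PP[\tau_1=m,\ \mathcal H_m=H]=\binom{\binom n3}{m}^{-1}\frac{w(H)}{m},
\]
where $w(H)$ is the number of edges of $H$ containing a vertex of degree one. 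This is the proportionality stated before the claim.

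\emph{Step 2: the exposure determines $w(H)$.} Fix $H$ consistent with the exposure $(m,B,\mathcal T)$. This means $H$ has no isolated vertex, $B$ is exactly its set of vertices of degree at most $d_0$, and $\mathcal T$ is its set of edges meeting $B$. Since $d_0\ge1$ for large $n$, every degree-one vertex lies in $B$. The edges containing such vertices therefore lie in $\mathcal T$, and the degrees of the vertices of $B$ are their $\mathcal T$-degrees. So $w(H)$ is a function of $\mathcal T$ alone.

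\emph{Step 3: the bijection.} Consistent hypergraphs $H$ correspond bijectively to $\mathcal G=H-B=H\setminus\mathcal T$, and the admissible $\mathcal G$ are exactly the simple $3$-graphs on $[n]\setminus B$ with $m_\ast$ edges and $d_{\mathcal G}(w)\ge\ell_w$ for every $w$. I would check both directions of this correspondence.

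Given admissible $\mathcal G$, put $H=\mathcal G\cup\mathcal T$. This is simple, since the edges of $\mathcal G$ avoid $B$ and those of $\mathcal T$ meet it, and it has $m$ edges. The degree of each $v\in B$ in $H$ is $d_{\mathcal T}(v)\in[1,d_0]$, which is fixed by the exposure being admissible. Each $w\notin B$ has degree $d_{\mathcal T}(w)+d_{\mathcal G}(w)\ge d_0+1$. Hence $H$ has no isolated vertices, its low-degree set is exactly $B$, and its edges meeting $B$ are exactly $\mathcal T$.

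Conversely, every consistent $H$ yields an admissible $\mathcal G$ by the definition of $\ell_w$.

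\emph{Step 4: conclusion.} Each consistent $H$ therefore has the same conditional probability, and by the bijection the law of $\mathcal H_{\tau_1}-B$ given the exposure is uniform on $\Omega$ after relabelling.

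The only delicate point is Step 1: $\tau_1=m$ depends on the order of the edges and not just on the edge set $\mathcal H_m$. Recording $w(H)$ handles this dependence.
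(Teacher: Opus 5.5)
Your proof is correct and is essentially the paper's argument. Both compute $\PP[\tau_1=m,\mathcal H_m=H]=w(H)/\bigl(m\binom{\binom n3}{m}\bigr)$ from the uniformly random last edge, observe that the exposure fixes $w(H)$ because every degree-one vertex lies in $B$, and identify the hypergraphs consistent with the exposure with the degree-constrained family via the bounds $\ell_w$; you spell out the bijection $\mathcal G\leftrightarrow\mathcal G\cup\mathcal T$ and the requirement $d_0\ge1$ more explicitly than the paper does.
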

\begin{proof}[Proof of \cref{prop:exact-core}]
For an $m$-edge $3$-graph $F$ on $[n]$ with minimum degree at least $1$, let $b(F)$ count the edges containing a degree-$1$ vertex. Then $\PP[\mathcal H_{\tau_1}=F]=\frac{b(F)}{m\binom{\binom n3}{m}}$. Indeed, conditional on $\mathcal H_m=F$, the edges of $F$ appear in a uniformly random order, so the edge $e$ added at step $m$ is uniform in $F$. Since $F$ has no isolated vertices, $\tau_1=m$ exactly when the preceding hypergraph $F-e$ has an isolated vertex. This occurs precisely when $e$ contains a degree-$1$ vertex of $F$, giving $\PP[\tau_1=m\mid\mathcal H_m=F]=b(F)/m$.

We now apply this probability formula after conditioning on $(m,B,\mathcal T)$. All degree-$1$ vertices lie in $B$, and all their incident edges are recorded in $\mathcal T$. Thus $b(F)$ is fixed by the exposure.  Every hypergraph consistent with the exposure has $m$ edges. For each $w\in[n]\setminus B$, the requirement that $w$ lie outside the low-degree set is equivalent to $d_{\mathcal{G}}(w)+d_{\mathcal T}(w)\ge d_0+1$. Hence all hypergraphs consistent with the exposure and these degree constraints have equal probability.
\end{proof}

We next establish the separation and degree bounds needed to make the selected triples vertex-disjoint and to ensure that deleting $B$ removes at most two incident edges at each remaining vertex.

\begin{claim}\label{prop:low-at-tau}
There exists an absolute constant $C>0$ such that, with probability $1-o(1)$, the following statements hold simultaneously.
\begin{enumerate}[label=(\textit{\roman*}),beginpenalty=10000,midpenalty=10000]
\item $1\le|B|\le n^{1/12}$;
\item $\Delta_1(\mathcal H_{\tau_1})\le C\log n$;
\item $\Delta_2(\mathcal H_{\tau_1})\le2$;
\item no two vertices of $B$ are joined by a Berge path of length at most two in $\mathcal H_{\tau_1}$.
\end{enumerate}
\end{claim}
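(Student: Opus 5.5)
The plan is to sandwich $\tau_1$ between two deterministic edge counts, prove each property for the uniform $m$-edge hypergraphs at those counts, and transfer to $\mathcal H_{\tau_1}$ by monotonicity. Set
\[
 m_-\coloneqq\Bigl\lfloor\tfrac n3\bigl(\log n-\omega\bigr)\Bigr\rfloor,\qquad
 m_+\coloneqq\Bigl\lceil\tfrac n3\bigl(\log n+\omega\bigr)\Bigr\rceil,
\]
where $\omega=\omega(n)\to\infty$ slowly, say $\omega=\log\log\log n$. Write $p=m/\binom n3$, so $\binom{n-1}{2}p\approx\log n\pm\omega$. The expected number of isolated vertices in $\mathcal H_m$ is about $n(1-p)^{\binom{n-1}2}=\e^{\mp\omega}$. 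A first moment at $m_+$ and a second moment, or Poisson approximation, at $m_-$ give $m_-\le\tau_1\le m_+$ \whp. I will work in the binomial model $\mathcal H(n,p)$ at $p_\pm$ and transfer to the uniform model by the standard monotone coupling. This costs a factor $O(\sqrt m)$ for non-monotone properties, but every bound below is $n^{-\Omega(1)}$ or better, so that suffices.

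\textbf{Upper bound in (i).} Every vertex of $B$ has degree at most $d_0$ in $\mathcal H_{\tau_1}\supseteq\mathcal H_{m_-}$. So $|B|$ is at most the number of vertices of degree at most $d_0$ in $\mathcal H_{m_-}$. By \cref{lem:chernoff}(i) with $\mu\approx\log n$ and $a=0.01$, each vertex has this property with probability at most $n^{-\varphi(0.01)+o(1)}$. Since $\varphi(0.01)=0.99+0.01\log0.01\approx0.944>1-1/12$, the expected count is $n^{0.056+o(1)}$, and Markov's inequality gives $|B|\le n^{1/12}$ \whp. The lower bound $|B|\ge1$ holds deterministically, because the vertex isolated at time $\tau_1-1$ has degree $1$ at $\tau_1$.

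\textbf{(ii) and (iii).} Both are increasing properties, so it suffices to check them in $\mathcal H_{m_+}$. For (ii), \cref{lem:chernoff}(ii) with $t=C\log n$ gives probability at most $(\e/C)^{C\log n}=o(1/n)$ per vertex for large $C$. For (iii), a fixed pair lies in at least $3$ edges with probability at most $\binom n3p^3=O((\log n)^3/n^3)$, and summing over $n^2$ pairs gives $o(1)$.

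\textbf{(iv), the main obstacle.} $B$ is defined through the random stopping time, so I cannot bound it at a fixed $m$. Instead I use that a low degree at $\tau_1$ implies a low degree at $m_-$, while the path lives inside $\mathcal H_{m_+}$. It therefore suffices to show that \whp\ there are no two distinct vertices $x,y$ joined by a Berge path of length at most two in $\mathcal H_{m_+}$ with both $d_{\mathcal H_{m_-}}(x)\le d_0$ and $d_{\mathcal H_{m_-}}(y)\le d_0$. To estimate the probability of such a configuration, couple $\mathcal H_{m_-}\subseteq\mathcal H_{m_+}$ and fix the path's at most two edges, of which there are $O(n^{2}\cdot n^{\le 3})$ choices weighted by $p_+^{\le2}$. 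The expected number of path structures of length $1$ is about $n^2\cdot np$, and of length $2$ about $n^3\cdot n^2p^2$; both are $n^{1+o(1)}$. Conditional on the path edges, the degrees of $x$ and $y$ in $\mathcal H_{m_-}$ computed without those at most two edges are nearly independent, since they share only $O(n)$ potential triples. Each is at most $d_0$ with probability $n^{-0.944+o(1)}$. The total expectation is therefore $n^{1+o(1)}\cdot n^{-1.888}=o(1)$. The step needing care is making this near-independence rigorous. I would do it in the binomial model by removing the at most $n$ triples containing both $x$ and $y$; these contribute $O(1)$ to either degree \whp, which is absorbed by the slack in $\varphi$. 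I would then apply \cref{lem:chernoff} to the two independent remaining degree counts and transfer back to the process via the coupling.
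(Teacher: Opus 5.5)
Your outline matches the paper's proof in substance. Both sandwich $\mathcal H_{\tau_1}$ between a lower and an upper hypergraph, and bound $|B|$ by a first moment for the low-degree vertices of the lower one. Both get (ii) and (iii) from Chernoff and a union bound in the upper one. For (iv), both count Berge paths in the upper hypergraph whose ends have low degree in the lower one, after discarding the path edges and the triples through both ends. The difference is where the sandwich lives. You sandwich inside the process at deterministic times $m_\pm$, then plan to compute in the binomial model and transfer back. The paper realises the process by independent uniform arrival times and shows directly that $\mathcal H(p_{\mathrm{lo}})\subseteq\mathcal H_{\tau_1}\subseteq\mathcal H(p_{\mathrm{hi}})$ with high probability. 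Every estimate, including the joint one in (iv), is then made on a single product space with exact independence, and nothing has to be transferred.

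The transfer is where your write-up does not work as it stands. The event you need in (iv) concerns the pair $(\mathcal H_{m_-},\mathcal H_{m_+})$, not a single hypergraph, so the ``$O(\sqrt m)$'' conditioning cost does not apply. Extending your conditioning trick to nested binomial hypergraphs conditioned on both edge counts costs a factor of order $\sqrt{m_-(m_+-m_-)}=n^{1+o(1)}$. That swamps your bound $n^{1-2\varphi(0.01)+o(1)}\approx n^{-0.89}$. More generally, a bound $n^{-\Omega(1)}$ does not absorb even one factor $\sqrt m\approx n^{1/2}$. The Markov bound in (i) is only about $n^{-0.03}$; this is harmless there, because that property is monotone and the hypergeometric Chernoff bound applies directly in the process.

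The repair is to use monotonicity rather than conditioning. The event in (iv) is increasing in the upper hypergraph and decreasing in the lower one. With arrival times and $p'_\pm=(m_\pm\pm n^{3/4})/\binom n3$, Chebyshev on the edge counts gives $\mathcal H(p'_-)\subseteq\mathcal H_{m_-}$ and $\mathcal H_{m_+}\subseteq\mathcal H(p'_+)$ with high probability. The binomial computation then applies verbatim. At that point the times $m_\pm$ are superfluous and you have exactly the paper's proof.

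Alternatively, you can stay in the uniform model. Condition on which path edges lie in $\mathcal H_{m_-}$ and on any value at most $d_0$ of the reduced degree of $y$. The reduced degree of $x$ in $\mathcal H_{m_-}$ is then hypergeometric with mean $(1+o(1))\log n$, which gives the product bound with no transfer at all.

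Two smaller points. First, the properties $\Delta_1\le C\log n$ and $\Delta_2\le2$ are decreasing (preserved under deleting edges), not increasing; that is why checking them in $\mathcal H_{m_+}$ suffices. Second, in (iv) you do not need the discarded triples to contribute $O(1)$ with high probability. Discarding triples can only lower degrees, so low degree in the lower hypergraph automatically implies low reduced degree, which is what the paper uses.
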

\begin{proof}[Proof of \cref{prop:low-at-tau}]
We first show that $\mathcal H_{\tau_1}$ lies between two coupled binomial hypergraphs. Assign independent uniform $[0,1]$ arrival times to all triples, and let $\mathcal H(p)$ consist of those with arrival time at most $p$. Put
\[
 p_{\mathrm{lo}}\coloneqq\frac{\log n-\log\log n}{\binom{n-1}{2}},\quad\text{and}\quad
 p_{\mathrm{hi}}\coloneqq\frac{\log n+\log\log n}{\binom{n-1}{2}}.
\]
The expected numbers of isolated vertices at $p_{\mathrm{hi}}$ and $p_{\mathrm{lo}}$ are $(1+o(1))/\log n$ and $(1+o(1))\log n$, respectively. The first moment bound shows that $\mathcal H(p_{\mathrm{hi}})$ has no isolated vertices \whp. Let $Z$ count the isolated vertices of $\mathcal H(p_{\mathrm{lo}})$. The joint isolation probability for two vertices divided by the product of their isolation probabilities is $(1-p_{\mathrm{lo}})^{-(n-2)}=1+O(\log n/n)$. Expanding $\Var[Z]$ as the sum of the variances and covariances of the isolation indicators gives
\[
 \Var[Z]\le\EE[Z]+(\EE[Z])^2\left(\frac{1}{(1-p_{\mathrm{lo}})^{n-2}}-1\right).
\]
Chebyshev's inequality therefore gives
\[
 \PP[Z=0]\le\frac{\Var[Z]}{(\EE[Z])^2}
 \le\frac1{\EE[Z]}+O\left(\frac{\log n}{n}\right)
 =\frac{1+o(1)}{\log n}+O\left(\frac{\log n}{n}\right)=o(1).
\]
Thus $\mathcal H(p_{\mathrm{lo}})$ still has an isolated vertex \whp. Under the arrival-time coupling, these two conclusions give $\mathcal H(p_{\mathrm{lo}})\subseteq\mathcal H_{\tau_1}\subseteq\mathcal H(p_{\mathrm{hi}})$ \whp.

Let $B_{\mathrm{lo}}\coloneqq\{v\colon d_{\mathcal H(p_{\mathrm{lo}})}(v)\le d_0\}$. The binomial lower-tail bound \cref{lem:chernoff}(i) gives $\PP[v\in B_{\mathrm{lo}}]\le n^{-\varphi(0.01)+o(1)}$, where $\varphi(0.01)=0.943948\ldots$. Thus $\EE[|B_{\mathrm{lo}}|]\le n^{0.056052\ldots+o(1)}$, and \cref{lem:moment-bounds}(i) implies $|B_{\mathrm{lo}}|\le n^{1/12}$ \whp. On the event $\mathcal H(p_{\mathrm{lo}})\subseteq\mathcal H_{\tau_1}\subseteq\mathcal H(p_{\mathrm{hi}})$, we have $B\subseteq B_{\mathrm{lo}}$.

To prove the separation assertion, we count Berge paths in $\mathcal H(p_{\mathrm{hi}})$ whose ends lie in $B_{\mathrm{lo}}$. For $h\in\{1,2\}$, a labelled Berge path of length $h$ with distinct ends in the complete $3$-graph on $[n]$ is specified by its $h+1$ path vertices in order and the third vertex of each of its $h$ triples. Hence there are at most $n^{h+1}n^h=n^{2h+1}$ choices. Condition on the edges of a fixed such path belonging to $\mathcal H(p_{\mathrm{hi}})$. To bound the probability that both ends lie in $B_{\mathrm{lo}}$, count at each end only incident triples in $\mathcal H(p_{\mathrm{lo}})$ that are not edges of the fixed path and avoid the other end. These numbers of incident triples depend on disjoint sets of independent edge indicators, so they are independent; each has mean $(1+o(1))\log n$. Thus the conditional probability is at most $n^{-2\varphi(0.01)+o(1)}$, and the expected number of such paths with ends in $B_{\mathrm{lo}}$ is at most
\[
 O(n^{2h+1})p_{\mathrm{hi}}^h n^{-2\varphi(0.01)+o(1)}
 =n^{1-2\varphi(0.01)+o(1)}(\log n)^h=o(1).
\]
For each vertex $v$, the degree $d_{\mathcal H(p_{\mathrm{hi}})}(v)$ is binomial with mean $\log n+\log\log n$. By \cref{lem:chernoff}(ii), for a sufficiently large fixed constant $C$ and all sufficiently large $n$, we have \[
 \PP[d_{\mathcal H(p_{\mathrm{hi}})}(v)>C\log n]
 \le\left(\frac{\e(\log n+\log\log n)}{C\log n}\right)^{C\log n}
 \le n^{-3}.
\] A union bound over the vertices gives $\PP[\Delta_1(\mathcal H(p_{\mathrm{hi}}))>C\log n]\le n\cdot n^{-3}=n^{-2}=o(1)$. The inclusion $\mathcal H_{\tau_1}\subseteq\mathcal H(p_{\mathrm{hi}})$ therefore gives the required maximum-degree bound.  Also $\PP[\Delta_2(\mathcal H(p_{\mathrm{hi}}))\ge3] \le\binom n2\binom{n-2}{3}p_{\mathrm{hi}}^3=O(\log^3n/n)$.

Since no Berge path of length at most two joins distinct vertices of $B$, every $w\in[n]\setminus B$ shares an edge with at most one vertex of $B$. By the codegree bound, it therefore belongs to at most two edges of $\mathcal T$. Finally, the last arriving edge covers at least one previously isolated vertex, which has degree $1$ at $\tau_1$.  Hence $B$ is nonempty.
\end{proof}

To apply \cref{thm:conditioned-core} conditional on the revealed data, call an exposure $(m,B,\mathcal T)$ good if the following conditions hold.
\begin{enumerate}[label=(\textit{\roman*})]
\item The selected triples $e_v$ are vertex-disjoint.
\item Every vertex in $[n]\setminus B$ belongs to at most two edges of $\mathcal T$.
\item The resulting triple $(n_\ast,m_\ast,\boldsymbol\ell)$ is admissible and near-critical.
\item $1\le |B|\le n_\ast^{1/10}$.
\end{enumerate}

We verify these conditions using \cref{prop:low-at-tau}. On the event in that claim, no selected triple $e_v$ meets $B$ outside $v$. If two selected triples intersected, they would give a Berge path of length two between distinct vertices of $B$. Hence the selected triples are vertex-disjoint, and $\cM_0$ is a matching on $[n]\setminus B$. The separation and codegree bounds also imply that every vertex in $[n]\setminus B$ belongs to at most two edges of $\mathcal T$.

After identifying the vertex set $[n]\setminus B$ with $[n_\ast]$, we verify \cref{ass:core} for $\mathcal H_{\tau_1}-B$, conditional on the exposure $(m,B,\mathcal T)$. The space $\Omega(n_\ast,m_\ast,\boldsymbol\ell)$ is nonempty by \cref{prop:exact-core}. On the event in \cref{prop:low-at-tau}, $|\mathcal T|\le Cn^{1/12}\log n=o(n)$ and $\ell_w\in\{d_0-1,d_0,d_0+1\}$. Since $\log n-\log n_\ast=o(1)$, for large $n$ we have $|\ell_w-\lfloor0.01\log n_\ast\rfloor|\le2$.

To control the numbers of edges, for each $p\in\{p_{\mathrm{lo}},p_{\mathrm{hi}}\}$ we have $\Var[|\mathcal H(p)|]\le\binom n3p=O(n\log n)$. Chebyshev's inequality in \cref{lem:moment-bounds}(ii) gives
\[
 \PP\left[\left||\mathcal H(p)|-\binom n3p\right|>n^{3/4}\right]
 \le\frac{\binom n3p}{n^{3/2}}
 =O\left(\frac{\log n}{\sqrt n}\right)=o(1).
\]
Thus, with probability $1-o(1)$, we have $|\mathcal H(p)|=\binom n3p+o(n)$ at both values of $p$. The inclusions $\mathcal H(p_{\mathrm{lo}})\subseteq\mathcal H_{\tau_1}\subseteq\mathcal H(p_{\mathrm{hi}})$ therefore give $|3m/n-\log n|\le\log\log n+o(1)$. Since $n_\ast=n-|B|$ and $m_\ast=m-|\mathcal T|$, the bounds on $|B|$ and $|\mathcal T|$ give
\[
 \frac{3m_\ast}{n_\ast}-\frac{3m}{n}
 =\frac{3(m|B|-n|\mathcal T|)}{n(n-|B|)}
 =O\left(\frac{\log n}{n^{11/12}}\right)=o(1).
\]
Together with $\log n-\log n_\ast=o(1)$, this proves $|3m_\ast/n_\ast-\log n_\ast|\le3\log\log n_\ast$ for large $n$. Also $s=|B|\le n_\ast^{1/10}$ for large $n$, and $n_\ast+s=n$ is even. Thus the parameter and matching conditions in \cref{ass:core} hold.

All four conditions depend only on the exposure and hold with probability $1-o(1)$ by \cref{prop:low-at-tau} and the preceding parameter estimates.

Conditional on any good exposure, \cref{prop:exact-core} shows that $\mathcal H_{\tau_1}-B$ is uniform among the hypergraphs with the prescribed number of edges and degree lower bounds. Apply \cref{thm:conditioned-core} with the matching $\cM_0$ determined by this exposure. The lower bound on the number of mixed Hamilton cycles fails with conditional probability $o(1)$, uniformly over all good exposures. Whenever this bound holds, choose a $(\mathcal H_{\tau_1}-B,\cM_0)$-respecting mixed Hamilton cycle and replace each prescribed pair $a_vb_v$ by $\{a_v,v,b_v\}$. This inserts the vertices of $B$ as distinct private vertices and preserves the cyclic order of the intersection vertices, giving a loose Hamilton cycle at $\tau_1$. Thus
\[
\begin{aligned}
 \PP[\tau_{\LH}>\tau_1]
 &\le\PP[(\tau_1,B,\mathcal T)\text{ is not good}]\\
 &{}+\sup_{\substack{(m,B_0,\mathcal T_0)\\\text{good and }\PP\text{-admissible}}}
 \PP[\tau_{\LH}>\tau_1\mid(\tau_1,B,\mathcal T)=(m,B_0,\mathcal T_0)],
\end{aligned}
\]
which is $o(1)$ by the preceding discussion.
Since $\tau_{\LH}\ge\tau_1$ deterministically, the theorem follows.
\end{proof}

\section{Kahn's matching entropy inequality and its stability}
\label{sec:matching-entropy}
Kahn proved the following entropy inequality, stated here for random perfect matchings in $3$-graphs.
In this section, we use Kahn's entropy inequality to derive a stability result for the edge marginals of a random perfect matching. We will apply this result in \cref{sec:entropy} to the auxiliary matchings associated with mixed Hamilton cycles.

\begin{theorem}[{\cite[Theorem~4.2]{Kahn2023}}]\label{thm:kahn}
Let $\mathcal G$ be a $3$-graph on $n$ vertices, where $3\mid n$, and let $\mathbf F$ be sampled from an arbitrary probability distribution on the perfect matchings of $\mathcal G$. For each vertex $u$, let $\mathbf F(u)$ be the edge of $\mathbf F$ containing $u$. For each edge $e\in\mathcal G$, set $p_e\coloneqq\PP[e\in\mathbf F]$ and $\gamma_e\coloneqq\PP[\exists f\in\mathbf F\colon |e\cap f|\ge2]$; that is, $\gamma_e$ is the probability that the vertices of $e$ fail to lie in three distinct edges of $\mathbf F$. Then
\[
\HH[\mathbf F]\le\frac13\sum_u\HH[\mathbf F(u)]-\frac{2n}{3}
 +O\left(\sum_u\sum_{e\ni u}p_e\sqrt{\gamma_e}\right)+O(\log n).
\]
\end{theorem}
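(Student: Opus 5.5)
The statement is quoted from \cite[Theorem~4.2]{Kahn2023}, so the paper may simply invoke it. To prove it, I would follow the random-ordering method of Radhakrishnan's proof of Br\`egman's theorem, as Kahn does. Assign independent uniform labels $\lambda_f\in[0,1]$ to the edges of $\mathbf F$, revealing edges in increasing label order. For each edge $f\in\mathbf F$ choose one of its three vertices uniformly at random, say $u$, and reveal $f$ as $\mathbf F(u)$. Since the revealed edges determine $\mathbf F$, the chain rule (\cref{lem:entropy-chain-rule}) gives
\[
 \HH[\mathbf F]=\EE\sum_{f\in\mathbf F}\HH\bigl[\mathbf F(u_f)\bigm|\text{edges with smaller labels}\bigr].
\]
Each vertex is the chosen vertex of its edge with probability $1/3$. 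So it suffices to show that, averaged over the randomness, the conditional term for a vertex $u$ is at most $\HH[\mathbf F(u)]-2$, up to an error term at $u$.

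\textbf{Key step.} Fix $u$ and condition on $\lambda\coloneqq\lambda_{\mathbf F(u)}$. Given the earlier edges, $\mathbf F(u)$ must be an edge $e\ni u$ whose other two vertices $x,y$ are still uncovered. Let $e=\mathbf F(u)$ contribute weight $1$, and let each other candidate $e$ count only if both $\mathbf F(x)$ and $\mathbf F(y)$ have labels larger than $\lambda$. I would bound the conditional entropy by
\[
 \log\sum_{e\ni u}\frac{\mathbf 1[e\text{ still available}]\,p_e}{p_{\mathbf F(u)}}
\]
using the log-sum inequality (\cref{lem:log-sum}), or equivalently Gibbs' inequality against the reference weights $p_e$. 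Taking expectations and applying Jensen's inequality, the available-candidate sum has conditional expectation roughly $\sum_e p_e(1-\lambda)^2$. This holds when $x,y,u$ lie in three distinct edges of $\mathbf F$, since then the two labels are independent of $\lambda$. Integrating the resulting $\log$ against the uniform law of the label gives $\int_0^1 2\log(1-\lambda)\,d\lambda=-2$. The remaining part $\EE[-\log p_{\mathbf F(u)}]$ is exactly $\HH[\mathbf F(u)]$. Summing over $u$ with weight $1/3$ then yields the main term $\frac13\sum_u\HH[\mathbf F(u)]-\frac{2n}{3}$.

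\textbf{Main obstacle.} The hardest step is controlling the candidates $e$ for which independence fails, that is, when two of the vertices of $e$ lie in a common edge of $\mathbf F$. This happens with probability $\gamma_e$. In that case the survival event is correlated with $\lambda$, and the naive bound above can be wrong. I would split each candidate's contribution according to this event. On the good event the $(1-\lambda)^2$ computation applies. On the bad event I would use Cauchy--Schwarz: the joint probability that $e$ is a competing candidate and the bad event occurs is at most $\sqrt{p_e\cdot\gamma_e}$ times a normalising factor. After linearising the logarithm via $\log(1+z)\le z$ and weighting by $p_{\mathbf F(u)}$, this produces an additive error $O(\sum_{e\ni u}p_e\sqrt{\gamma_e})$. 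Finally, the $O(\log n)$ term absorbs boundary effects: labels near $1$, where $\log(1-\lambda)$ blows up, and the randomisation over the chosen vertex. I would handle these by truncating at $\lambda\le1-1/n$ and bounding the discarded part trivially by $\log n$ in expectation.
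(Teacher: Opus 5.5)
The paper does not prove this statement. It is imported from \cite[Theorem~4.2]{Kahn2023} and used only as a black box (in \cref{prop:matching-stability}), so your sketch has to stand on its own. Its architecture is right: random order, Gibbs' inequality with the marginals as reference weights, Jensen, and $\int_0^1 2\log(1-\lambda)\,d\lambda=-2$. However, the chain-rule identity you display is not well defined. The queried vertex $u_f$ and the conditioning set ``edges with smaller labels'' depend on the unrevealed matching, whereas the chain rule needs the next queried variable to be determined by randomness independent of $\mathbf F$ and by what has already been revealed. Instead, put independent uniform labels $\lambda_v$ on the \emph{vertices}, independent of $\mathbf F$, and at each step reveal $\mathbf F(v)$ for the uncovered vertex $v$ of smallest label. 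Then $v$ contributes only when it has the smallest label in $\mathbf F(v)$, which happens with probability $1/3$ independently of $\mathbf F$. Conditional on this event, $\sigma\coloneqq(1-\lambda_v)^3$ is uniform, and a candidate whose vertices lie in three distinct edges of $\mathbf F$ survives with probability $\sigma^2$. With this setup your main-term computation goes through.

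The genuine gap is the error term: with fixed weights $p_e$, the tools you list do not give $O(\sum_{e\ni u}p_e\sqrt{\gamma_e})$.
\begin{itemize}
\item The true edge $\mathbf F(u)$ is always a candidate. After Jensen conditional on $\lambda$, the argument of the logarithm therefore contains $\sum_{e\ni u}p_e^2$ with no decay in $\lambda$. Then $\int_0^1\log\bigl((1-\lambda)^2+\sum_e p_e^2\bigr)\,d\lambda$ exceeds $-2$ by order $(\sum_e p_e^2)^{1/2}$, which is not $O(\sum_e p_e\sqrt{\gamma_e})$ in general. That sum can be of order $\sum_e p_e^{3/2}$; take one marginal equal to $d^{-1/3}$ and the others about $1/d$.
\item Linearising by $\log(1+z)\le z$ turns the same term into $z\approx\sum_e p_e^2/(1-\lambda)^2$, which is not integrable at $\lambda=1$. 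Truncating at $1-1/n$ leaves an error of order $n\sum_e p_e^2$, not $O(\log n)$. The term $\log(1-\lambda)$ itself is integrable and needs no truncation.
\item A Cauchy--Schwarz bound $\sqrt{p_e\gamma_e}$, summed over $e\ni u$, can exceed $\sum_e p_e\sqrt{\gamma_e}$ by a factor of order $\sqrt{d_{\mathcal G}(u)}$ when the marginals are spread out.
\end{itemize}

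The missing idea is to put the correction into the Gibbs weights, which may depend on the revealed label. With $s=1-\lambda$ (or $s=\sigma$ in the vertex-label version), use $w_e(s)\coloneqq p_es^2/(s^2+\gamma_e)$. A candidate survives with conditional probability at most $(1-\gamma_e)s^2+\gamma_e\le s^2+\gamma_e$. So the expected candidate weight is at most $s^2\sum_{e\ni u}p_e=s^2$, and Jensen bounds the expected logarithm of the candidate weight by $\int_0^1\log s^2\,ds=-2$. Since $\mathbf F(u)$ is independent of $s$, the price is
\[
 \EE\bigl[-\log w_{\mathbf F(u)}(s)\bigr]
 =\HH[\mathbf F(u)]+\sum_{e\ni u}p_e\int_0^1\log\Bigl(1+\frac{\gamma_e}{s^2}\Bigr)\,ds
 \le\HH[\mathbf F(u)]+\pi\sum_{e\ni u}p_e\sqrt{\gamma_e}.
\]
The loss is thereby charged to the event $\mathbf F(u)=e$, which is where the factor $p_e$ rather than $\sqrt{p_e}$ comes from. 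Weighting by $1/3$ and summing over $u$ proves the theorem, and the $O(\log n)$ term is not even needed.
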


Since $\sum_{e\in\mathcal G}p_e=\EE[|\mathbf F|]=n/3$, the vector $(3p_e/n)_{e\in\mathcal G}$ is a probability distribution on the edge set of $\mathcal G$. This is the distribution of an edge chosen uniformly from $\mathbf F$ after the matching has been sampled. The following consequence bounds the relative entropy of this distribution with respect to $\Unif(\mathcal G)$, as well as the entropy of $\mathbf F$. Keeping both terms will allow us to apply the estimate under conditional laws and then average.
\begin{proposition}\label{prop:matching-stability}
Fix $C\ge1$. Let $\mathcal G$ be a $3$-graph on $n$ vertices, where $3\mid n$, with $\Delta_1(\mathcal G)\le C d(\mathcal G)$ and $\Delta_2(\mathcal G)=o(d(\mathcal G))$ as $n\to\infty$. Let $\mathbf F$ be sampled from an arbitrary probability distribution on the perfect matchings of $\mathcal G$, and set $p_e\coloneqq\PP[e\in\mathbf F]$. Then
\[
 \HH[\mathbf F]+\frac{n}{6}\KL\left[\left(\frac{3p_e}{n}\right)_{e\in\mathcal G}\,\middle\Vert\,\Unif(\mathcal G)\right]
 \le \frac{n}{3}\log d(\mathcal G)-\frac{2n}{3}
 +O_C\left(\frac{n}{\log\left(\frac{d(\mathcal G)}{\Delta_2(\mathcal G)}\right)}+\log n\right).
\]
In particular, if $\HH[\mathbf F]\ge (n/3)\log d(\mathcal G)-2n/3-\varepsilon n$ for some $\varepsilon\ge0$, we have
\[
 \sum_{e\in\mathcal G}\left|p_e-\frac{1}{d(\mathcal G)}\right|
 =O_C\left(n\sqrt{\varepsilon+\frac1{\log\left(\frac{d(\mathcal G)}{\Delta_2(\mathcal G)}\right)}+\frac{\log n}{n}}\right).
\]
\end{proposition}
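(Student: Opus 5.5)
Starting from \cref{thm:kahn}, I would bound the three terms on its right-hand side separately. The plan has four steps: rewrite the vertex entropies $\HH[\mathbf F(u)]$ in terms of relative entropies, sum these using the marginals $p_e$, control the error term $\sum_u\sum_{e\ni u}p_e\sqrt{\gamma_e}$ using the codegree hypothesis, and finish with Pinsker's inequality.

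\textbf{Vertex entropies.} For each vertex $u$ the edges $e\ni u$ satisfy $\sum_{e\ni u}p_e=1$, since $u$ is covered exactly once. So $\mathbf F(u)$ has distribution $(p_e)_{e\ni u}$, and
\[
\HH[\mathbf F(u)]=\log d_{\mathcal G}(u)-\KL\bigl[(p_e)_{e\ni u}\,\Vert\,\Unif(\{e\ni u\})\bigr].
\]
Summing over $u$ and dividing by $3$ gives
\[
\frac13\sum_u\HH[\mathbf F(u)]=\frac13\sum_u\log d(u)-\frac13\sum_u\sum_{e\ni u}p_e\log\bigl(p_e d(u)\bigr).
\]
I would then write $\log(p_e d(u))=\log(p_e d(\mathcal G))+\log(d(u)/d(\mathcal G))$. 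Each edge appears three times in the double sum, so the first piece contributes $-\sum_e p_e\log(p_ed(\mathcal G))$. This equals exactly $-\frac n3\KL[(3p_e/n)\Vert\Unif(\mathcal G)]$, because $|\mathcal G|=nd(\mathcal G)/3$. The remaining pieces combine to
\[
\frac13\sum_u\log d(u)-\frac13\sum_u\log\frac{d(u)}{d(\mathcal G)}=\frac n3\log d(\mathcal G),
\]
using $\sum_{e\ni u}p_e=1$. Hence the vertex term is exactly $\frac n3\log d(\mathcal G)-\frac n3\KL$. This gives a coefficient $n/3$ on the relative entropy, which is stronger than the claimed $n/6$. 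The spare $n/6$ is reserved to absorb part of the error term below.

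\textbf{The error term (main obstacle).} It remains to bound $\sum_u\sum_{e\ni u}p_e\sqrt{\gamma_e}=3\sum_e p_e\sqrt{\gamma_e}$. By a union bound over the three pairs in $e$,
\[
\gamma_e\le\sum_{\{x,y\}\subset e}\PP[\mathbf F(x)=\mathbf F(y)]\le\sum_{\{x,y\}\subset e}\ \sum_{f\supseteq\{x,y\}}p_f.
\]
Fix a threshold $\lambda=K/d(\mathcal G)$ and split the edges. If $p_e\le\lambda$ for the relevant edges $f$, then each inner sum is at most $\Delta_2\lambda$, so $\gamma_e\le 3K\Delta_2/d$.

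For edges with large marginals, I would bound $\sum_{e:p_e>\lambda}p_e$ by $O\bigl(n\,\KL/\log K\bigr)$. This follows from the inequality $\sum_e p_e\log(p_ed)\ge \log K\sum_{p_e>\lambda}p_e-O(n/K)$, which handles the negative part via $x\log x\ge -1/\e$. Similarly, the mass $\sum_e p_e\sum_{f}p_f[p_f>\lambda]$ is controlled because each $f$ is counted at most $3\Delta_2$ times. More carefully, I would use Cauchy--Schwarz: $\sum_e p_e\sqrt{\gamma_e}\le\sqrt{\sum p_e}\sqrt{\sum p_e\gamma_e}$. For $\sum_e p_e\gamma_e$, exchanging the order of summation gives $\sum_f p_f\sum_{e:|e\cap f|=2}p_e$, and the inner sum is at most $3$, since the edges through a fixed pair have total marginal at most $1$. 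Combining this exchange with the threshold split, I expect
\[
\sum_e p_e\sqrt{\gamma_e}=O\Bigl(n\sqrt{K\Delta_2/d}+n\,\KL/\log K+n/K\Bigr).
\]

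\textbf{Optimizing and concluding.} Choosing $K=(d/\Delta_2)^{1/2}$ makes the error term $O(n/\log(d/\Delta_2))$, plus a relative-entropy contribution small enough to be absorbed by the spare $n/6$ coefficient. The hypothesis $\Delta_1\le Cd$ enters by bounding how far the individual degrees $d(u)$ sit below $d$ when normalising; the fact that the $\log d(u)$ terms cancel exactly helps here. This yields the first inequality.

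For the ``in particular'' statement, comparing the assumed entropy lower bound with the first inequality gives $\KL=O(\varepsilon+1/\log(d/\Delta_2)+\log n/n)$. Pinsker's inequality (\cref{lem:pinsker}) then gives
\[
\sum_e\Bigl|\frac{3p_e}{n}-\frac1{|\mathcal G|}\Bigr|\le\sqrt{2\KL}.
\]
Multiplying by $n/3$, and noting that $\frac{n}{3|\mathcal G|}=\frac1{d(\mathcal G)}$, gives the claimed bound. The hardest step is making the split of $\sum_e p_e\sqrt{\gamma_e}$ rigorous so that the large-marginal edges are charged to the relative entropy with the right coefficient.
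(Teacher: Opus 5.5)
Your overall architecture matches the paper's: apply \cref{thm:kahn}, use the exact identity $\frac13\sum_u\HH[\mathbf F(u)]=\frac n3\log d(\mathcal G)-\frac n3 D$ with $D\coloneqq\KL[(3p_e/n)_{e\in\mathcal G}\Vert\Unif(\mathcal G)]$, spend half of the coefficient $n/3$ on the error term, and finish with Pinsker. Those steps are fine. The gap is the error term, which you only say you ``expect'', and the ingredients you list do not give the bound you expect. Put $\lambda=K/d(\mathcal G)$, $S\coloneqq\{f\in\mathcal G\colon p_f>\lambda\}$ and $P_S\coloneqq\sum_{f\in S}p_f$. There are three problems.

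(a) Your inequality $\sum_ep_e\log(p_ed(\mathcal G))\ge\log K\cdot P_S-O(n/K)$ is false. Light edges can contribute $-\Theta(n)$: half the mass on edges with $p_ed(\mathcal G)$ slightly above $K$ and half on edges with $p_ed(\mathcal G)\approx1/2$ gives a deficit of about $\frac n6\log2$. The route via $x\log x\ge-1/\e$ only yields $P_S\le(nD/3+n/(3\e))/\log K$.

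(b) Light edges $e$ sharing a pair with some $f\in S$ cannot be charged by counting neighbours, since $|S|$ can be as large as $P_Sd(\mathcal G)/K$.

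(c) Cauchy--Schwarz does not produce a term linear in $D$. Write $\gamma_e\le\gamma_e'+\gamma_e''$, where these are the parts of your union bound $\sum_{\pi\subset e}\sum_{f\supseteq\pi}p_f$ with $f\notin S$ and $f\in S$. Then use $\sum_{e\supseteq\pi}p_e=\PP[\mathbf F(x)=\mathbf F(y)]\le1$ for $\pi=\{x,y\}$. What you actually obtain is
\[
 \sum_ep_e\sqrt{\gamma_e}\le\frac n3\sqrt{3\Delta_2(\mathcal G)\lambda}+\sqrt{\frac n3}\sqrt{\sum_ep_e\gamma_e''}\le\frac n3\sqrt{3\Delta_2(\mathcal G)\lambda}+\sqrt{nP_S}.
\]
With the bound on $P_S$ from (a), the last term is of order $n/\sqrt{\log K}$. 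That is not $O(n/\log(d(\mathcal G)/\Delta_2(\mathcal G)))$, and no admissible choice of $K$ fixes it.

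The argument can be repaired along your lines with two further ideas. First, apply the log-sum inequality to $S$ and its complement, using $|S|<P_Sd(\mathcal G)/K$, $|\mathcal G|/d(\mathcal G)=n/3$ and $(1-t)\log(1-t)\ge-t$. This gives $\frac n3D\ge P_S(\log K-1)$ with no additive term. Second, absorb the square root by AM--GM: $c\,n\sqrt{D/(\log K-1)}\le\frac n6D+O(c^2n/\log K)$. With $K=(d(\mathcal G)/\Delta_2(\mathcal G))^{1/2}$ this yields the first inequality.

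The paper proceeds differently. It thresholds on $\gamma_e$ rather than on $p_f$. It bounds the number of incidences $(u,e)$ with large $\gamma_e$ by the deterministic count $\sum_u\sum_{e\ni u}\gamma_e\le3n\Delta_2(\mathcal G)$. It then applies the log-sum inequality to the incidence distribution padded to $\Delta_1(\mathcal G)$ positions per vertex. This gives a bound directly linear in $D$, so no AM--GM step is needed, at the price of a term $n\log(\Delta_1(\mathcal G)/d(\mathcal G))$; that is where $\Delta_1\le Cd$ is used. Your repaired route never uses that hypothesis. Your remark that it enters through the normalisation of the vertex entropies is not right, since that identity is exact.
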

\begin{proof}
We first bound $\sum_u\sum_{e\ni u}p_e\sqrt{\gamma_e}$, which controls the error term in \cref{thm:kahn}. We then express $\frac13\sum_u\HH[\mathbf F(u)]$ and the bound in \cref{lem:collision} in terms of the same relative entropy, retaining a positive coefficient of this term to obtain the stability result.

Since $\Delta_1(\mathcal G)\ge d(\mathcal G)$, the codegree assumption gives $3\Delta_2(\mathcal G)/\Delta_1(\mathcal G)=o(1)$. Thus the next claim applies for sufficiently large $n$. It is a quantitative form of the argument in Kahn~\cite[Lemma~4.3]{Kahn2023}; we use $p_e$ and $\gamma_e$ as defined in \cref{thm:kahn}.
\begin{claim}\label{lem:collision}
Let $\mathcal G$ be a $3$-graph on $n$ vertices, where $3\mid n$, and suppose that $0<3\Delta_2(\mathcal G)/\Delta_1(\mathcal G)<1/4$. Let $\mathbf F$ be sampled from an arbitrary probability distribution on the perfect matchings of $\mathcal G$. Then
\[
\sum_u\sum_{e\ni u}p_e\sqrt{\gamma_e}
 \le n\left(\frac{3\Delta_2(\mathcal G)}{\Delta_1(\mathcal G)}\right)^{1/4}
 +\frac{2\bigl(n\log\Delta_1(\mathcal G)-\sum_u\HH[\mathbf F(u)]+n\log2\bigr)}{\log\left(\frac{\Delta_1(\mathcal G)}{3\Delta_2(\mathcal G)}\right)}.
\]
\end{claim}
\begin{proof}[Proof of \cref{lem:collision}]
We split the sum $\sum_u\sum_{e\ni u}p_e\sqrt{\gamma_e}$ according to the size of $\gamma_e$. Let
\[
 B\coloneqq\left\{(u,e)\colon u\in e\in\mathcal G\text{ and }\gamma_e>\sqrt{\frac{3\Delta_2(\mathcal G)}{\Delta_1(\mathcal G)}}\right\}.
\]
On the complement of $B$, we have $\sqrt{\gamma_e}\le(3\Delta_2(\mathcal G)/\Delta_1(\mathcal G))^{1/4}$, while $\sqrt{\gamma_e}\le1$ on $B$. Hence
\[
\begin{aligned}
 \sum_u\sum_{e\ni u}p_e\sqrt{\gamma_e}
 \le\left(\frac{3\Delta_2(\mathcal G)}{\Delta_1(\mathcal G)}\right)^{1/4}
 \sum_u\sum_{\substack{e\ni u\\(u,e)\notin B}}p_e
 +\sum_{(u,e)\in B}p_e
 \le n\left(\frac{3\Delta_2(\mathcal G)}{\Delta_1(\mathcal G)}\right)^{1/4}
 +\sum_{(u,e)\in B}p_e.
\end{aligned}
\]
If $B=\varnothing$, then this proves the claim. Suppose henceforth that $B\ne\varnothing$. It remains to bound $\sum_{(u,e)\in B}p_e$. We first bound $|B|$ and then use the entropies $\HH[\mathbf F(u)]$ to control this sum.

Fix a perfect matching in the support of the law of $\mathbf F$. An edge of the host whose vertices do not lie in three distinct matching edges contains a pair from a single matching edge. There are at most $(n/3)\binom32\Delta_2(\mathcal G)=n\Delta_2(\mathcal G)$ such edges of the host, each with three incidences. Since $\gamma_e$ is the probability of this event for $e$, averaging this bound over the distribution of $\mathbf F$ gives $\sum_u\sum_{e\ni u}\gamma_e\le3n\Delta_2(\mathcal G)$.
By the definition of $B$ and the preceding bound on $\sum_u\sum_{e\ni u}\gamma_e$, we have $|B|\sqrt{3\Delta_2(\mathcal G)/\Delta_1(\mathcal G)}\le\sum_{(u,e)\in B}\gamma_e\le\sum_u\sum_{e\ni u}\gamma_e\le3n\Delta_2(\mathcal G)$, and hence
\[
 |B|\le\frac{3n\Delta_2(\mathcal G)}{\sqrt{\frac{3\Delta_2(\mathcal G)}{\Delta_1(\mathcal G)}}}
 =n\Delta_1(\mathcal G)\sqrt{\frac{3\Delta_2(\mathcal G)}{\Delta_1(\mathcal G)}}.
\]

To express the difference $n\log\Delta_1(\mathcal G)-\sum_u\HH[\mathbf F(u)]$ as relative entropy, at each vertex $u$ append zero entries to the array $(p_e)_{e\ni u}$ so that it has $\Delta_1(\mathcal G)$ entries. Since $\sum_{e\ni u}p_e=1$ at each vertex, the weights $p_e/n$, together with the added zero entries, form a probability distribution on the resulting $n\Delta_1(\mathcal G)$ positions. Equivalently, choose $u$ uniformly from $V(\mathcal G)$ and sample $\mathbf F$ independently; the incidence $(u,e)$ is selected when $\mathbf F(u)=e$, with probability $p_e/n$. The appended positions have probability zero. The relative entropy of this distribution with respect to the uniform distribution on these positions equals
\[
 \sum_{u\in V(\mathcal G)}\sum_{e\ni u}\frac{p_e}{n}\log\bigl(p_e\Delta_1(\mathcal G)\bigr)
 =\log\Delta_1(\mathcal G)-\frac1n\sum_{u\in V(\mathcal G)}\HH[\mathbf F(u)].
\]
Since the weights sum to $1$ and the appended positions have weight $0$, the total weights of $B$ and its complement are
\[
 \sum_{(u,e)\in B}\frac{p_e}{n}=\frac1n\sum_{(u,e)\in B}p_e,
 \quad
 \sum_{u\in V(\mathcal G)}\sum_{\substack{e\ni u\\(u,e)\notin B}}\frac{p_e}{n}
 =1-\frac1n\sum_{(u,e)\in B}p_e.
\]
Under the uniform distribution on all positions, the weight of $B$ is $\sum_{(u,e)\in B}1/(n\Delta_1(\mathcal G))=|B|/(n\Delta_1(\mathcal G))$, and its complement has weight $1-|B|/(n\Delta_1(\mathcal G))$. We apply \cref{lem:log-sum} to the weights $p_e/n$ and $1/(n\Delta_1(\mathcal G))$ separately on $B$ and its complement, including the appended positions in the latter group. This gives
\[
\begin{aligned}
 \sum_{(u,e)\in B}\frac{p_e}{n}\log\bigl(p_e\Delta_1(\mathcal G)\bigr)
 &\ge\left(\frac1n\sum_{(u,e)\in B}p_e\right)
 \log\frac{\frac1n\sum_{(u,e)\in B}p_e}{\frac{|B|}{n\Delta_1(\mathcal G)}},\\
 \sum_{u\in V(\mathcal G)}\sum_{\substack{e\ni u\\(u,e)\notin B}}\frac{p_e}{n}\log\bigl(p_e\Delta_1(\mathcal G)\bigr)
 &\ge\left(1-\frac1n\sum_{(u,e)\in B}p_e\right)
 \log\frac{1-\frac1n\sum_{(u,e)\in B}p_e}{1-\frac{|B|}{n\Delta_1(\mathcal G)}}.
\end{aligned}
\]
If either group has total weight $0$, its inequality reads $0\ge0$. The probabilities of $B$ and its complement form a distribution on two points, whose entropy is at most $\log2$. Thus
\[
 -\left(\frac1n\sum_{(u,e)\in B}p_e\right)\log\left(\frac1n\sum_{(u,e)\in B}p_e\right)
 -\left(1-\frac1n\sum_{(u,e)\in B}p_e\right)\log\left(1-\frac1n\sum_{(u,e)\in B}p_e\right)
 \le\log2.
\]
Multiplying by $-n$ bounds the corresponding two logarithmic terms below by $-n\log2$. The term $\bigl(n-\sum_{(u,e)\in B}p_e\bigr)\log\bigl(n\Delta_1(\mathcal G)/(n\Delta_1(\mathcal G)-|B|)\bigr)$ is nonnegative.
Adding the log-sum inequalities, multiplying by $n$, using the relative-entropy identity, and then applying these two observations gives
\[
\begin{aligned}
 & n\log\Delta_1(\mathcal G)-\sum_u\HH[\mathbf F(u)] \\
 &\quad =n\sum_{u\in V(\mathcal G)}\sum_{e\ni u}\frac{p_e}{n}\log\bigl(p_e\Delta_1(\mathcal G)\bigr)\\
 &\quad \ge\left(\sum_{(u,e)\in B}p_e\right)\log\frac{\Delta_1(\mathcal G)\sum_{(u,e)\in B}p_e}{|B|}
 +\left(n-\sum_{(u,e)\in B}p_e\right)\log\frac{\Delta_1(\mathcal G)\left(n-\sum_{(u,e)\in B}p_e\right)}{n\Delta_1(\mathcal G)-|B|}\\
 &\quad \ge\left(\sum_{(u,e)\in B}p_e\right)\log\frac{n\Delta_1(\mathcal G)}{|B|}-n\log2.
\end{aligned}
\]
Since $\log\bigl(n\Delta_1(\mathcal G)/|B|\bigr)\ge\tfrac12\log\bigl(\Delta_1(\mathcal G)/(3\Delta_2(\mathcal G))\bigr)>0$, the entropy estimate above yields
\[
 \sum_{(u,e)\in B}p_e
 \le\frac{2\left(n\log\Delta_1(\mathcal G)-\sum_u\HH[\mathbf F(u)]+n\log2\right)}
 {\log\left(\frac{\Delta_1(\mathcal G)}{3\Delta_2(\mathcal G)}\right)}.
\]
Substituting this bound for $\sum_{(u,e)\in B}p_e$ in the initial decomposition of $\sum_u\sum_{e\ni u}p_e\sqrt{\gamma_e}$ proves the claim.
\end{proof}

We now return to the proof of \cref{prop:matching-stability} and express the main entropy term in terms of relative entropy. For the host $\mathcal G$, each perfect matching has $n/3$ edges and $|\mathcal G|=nd(\mathcal G)/3$, so $(3p_e/n)_{e\in\mathcal G}$ is a probability distribution. Expanding its relative entropy gives
\[
 \KL\left[\left(\frac{3p_e}{n}\right)_{e\in\mathcal G}\,\middle\Vert\,\Unif(\mathcal G)\right]
 =\frac3n\sum_{e\in\mathcal G}p_e\log\bigl(p_e d(\mathcal G)\bigr)
 =\frac3n\sum_{e\in\mathcal G}p_e\log p_e+\log d(\mathcal G).
\]
Since each edge contributes at its three vertices, we have $\sum_u\HH[\mathbf F(u)]=-3\sum_{e\in\mathcal G}p_e\log p_e$. Combining these identities yields
\[
 \frac13\sum_u\HH[\mathbf F(u)]
 =\frac{n}{3}\log d(\mathcal G)-\frac{n}{3}\KL\left[\left(\frac{3p_e}{n}\right)_{e\in\mathcal G}\,\middle\Vert\,\Unif(\mathcal G)\right].
\]
By the preceding identity, the entropy difference $n\log\Delta_1(\mathcal G)-\sum_u\HH[\mathbf F(u)]$ in \cref{lem:collision} equals $n\log\frac{\Delta_1(\mathcal G)}{d(\mathcal G)}+n\KL\bigl[(3p_e/n)_{e\in\mathcal G}\Vert\Unif(\mathcal G)\bigr]$. Since $\Delta_1(\mathcal G)/d(\mathcal G)\le C$ and $d(\mathcal G)/\Delta_2(\mathcal G)\to\infty$, the fourth-root term in \cref{lem:collision} is absorbed by the term of order $n/\log(d(\mathcal G)/\Delta_2(\mathcal G))$, giving
\[
 \sum_u\sum_{e\ni u}p_e\sqrt{\gamma_e}
 \le O_C\left(\frac{n}{\log\left(\frac{d(\mathcal G)}{\Delta_2(\mathcal G)}\right)}\right)
 \left(1+\KL\left[\left(\frac{3p_e}{n}\right)_{e\in\mathcal G}\,\middle\Vert\,\Unif(\mathcal G)\right]\right).
\]
Substituting this bound and the displayed expression for $\frac13\sum_u\HH[\mathbf F(u)]$ into \cref{thm:kahn}, and moving the relative-entropy terms to the left, gives
\begin{multline*}
 \HH[\mathbf F]+\left(\frac n3-O_C\left(\frac n{\log\left(\frac{d(\mathcal G)}{\Delta_2(\mathcal G)}\right)}\right)\right)
 \KL\left[\left(\frac{3p_e}{n}\right)_{e\in\mathcal G}\,\middle\Vert\,\Unif(\mathcal G)\right]\\
 \le\frac n3\log d(\mathcal G)-\frac{2n}{3}
 +O_C\left(\frac n{\log\left(\frac{d(\mathcal G)}{\Delta_2(\mathcal G)}\right)}+\log n\right).
\end{multline*}
Since $d(\mathcal G)/\Delta_2(\mathcal G)\to\infty$, the coefficient of the relative entropy on the left is at least $n/6$ for all sufficiently large $n$. This proves the first inequality.

For the second inequality, subtract the assumed entropy lower bound from the first inequality and divide by $n/6$ to obtain
\[
 \KL\left[\left(\frac{3p_e}{n}\right)_{e\in\mathcal G}\,\middle\Vert\,\Unif(\mathcal G)\right]
 \le6\varepsilon+O_C\left(\frac1{\log\left(\frac{d(\mathcal G)}{\Delta_2(\mathcal G)}\right)}+\frac{\log n}{n}\right).
\]
Since $n/(3|\mathcal G|)=1/d(\mathcal G)$, \cref{lem:pinsker} gives
\[
\begin{aligned}
 \sum_{e\in\mathcal G}\left|p_e-\frac1{d(\mathcal G)}\right|
 =\frac n3\sum_{e\in\mathcal G}\left|\frac{3p_e}{n}-\frac1{|\mathcal G|}\right|
 \le\frac n3\sqrt{2\KL\left[\left(\frac{3p_e}{n}\right)_{e\in\mathcal G}\,\middle\Vert\,\Unif(\mathcal G)\right]}.
\end{aligned}
\]
Substituting the preceding relative-entropy bound proves the second inequality.
\end{proof}

The entropy bound in \cref{prop:matching-stability} also applies after conditioning on a finite-valued random variable. The host may depend on the conditioning value, but is deterministic once that value is fixed. Suppose that these hosts satisfy the hypotheses of \cref{prop:matching-stability}, with the same constant $C$ and $\Delta_2(\mathcal G)/d(\mathcal G)\to0$ uniformly over the conditioning values of positive probability. We may then apply the first inequality under each conditional matching law and average both sides, including the error term. We use this form in \cref{claim:conditional-entropy-balance}, where a lower bound for the average conditional matching entropy suffices.

\section{Entropy stability for mixed Hamilton cycles}
\label{sec:entropy}
We derive an entropy stability result for mixed Hamilton cycles in a fixed host by applying \cref{prop:matching-stability} to the associated perfect matchings. We then bound the expected number of triples shared by two independent cycles chosen uniformly from a sufficiently large family; this bound will control the variance of the number of mixed Hamilton cycles after random edge deletions in \cref{sec:resampling}.

We begin with the entropy estimate. Let $\mathcal{G}$ be a $3$-graph on $n$ vertices with $m$ edges, where $m/n\to\infty$, and let $\cM$ be a matching of $s\ge1$ pairs on the same vertex set. For the following discussion, fix a root pair in $\cM$ and its direction. Let $\mathbf C$ be sampled from an arbitrary probability distribution on the $(\mathcal G,\cM)$-respecting mixed Hamilton cycles. We give $\mathbf C$ its root orientation. For an oriented triple $(w;u\to v)\in\vec{\mathcal{G}}-V(\cM)$, define the \emph{directed edge marginal} by
\[
 \pi(w;u\to v)\coloneqq
 \PP[\mathbf C\text{ traverses }\{u,v,w\}\text{ from }u\text{ to }v\text{ with }w\text{ private}].
\]
When $\mathbf C$ is uniform on the $(\mathcal G,\cM)$-respecting mixed Hamilton cycles, \cref{fact:completion-identities}(iii) gives $\pi(w;u\to v)=\comp_{\mathcal G,\cM}(w;u\to v)/\cyc(\mathcal G,\cM)$.

\begin{figure}[htbp]
\centering
\begin{tikzpicture}[
  x=1cm,y=1cm,font=\small,
  vertex/.style={circle,fill=black,inner sep=1.5pt},
  traversal/.style={postaction={decorate},decoration={markings,mark=at position .55 with {\arrow{Stealth[length=4pt]}}}}
]
  \begin{scope}[xshift=1.8cm,xscale=1.35,yscale=.9]
    \node at (0,2.6) {Oriented mixed cycle};
    \coordinate (a) at (-1.1,-.85);
    \coordinate (b) at (-1.1,.85);
    \coordinate (c) at (0,1.7);
    \coordinate (d) at (1.1,.85);
    \coordinate (e) at (1.1,-.85);
    \coordinate (f) at (0,-1.7);
    \coordinate (w1) at (-.98,1.8);
    \coordinate (w2) at (.98,1.8);
    \coordinate (w3) at (1.9,0);
    \coordinate (w4) at (.98,-1.8);
    \coordinate (w5) at (-.98,-1.8);
    \foreach \i/\tailvertex/\headvertex/\edgecolor in {
      1/b/c/blue!75!black,2/c/d/orange!75!black,3/d/e/teal!75!black,
      4/e/f/violet!75!black,5/f/a/red!75!black} {
      \draw[draw=\edgecolor,fill=\edgecolor,fill opacity=.1,line width=.7pt]
        (\tailvertex)--(w\i)--(\headvertex)--cycle;
      \draw[draw=\edgecolor,line width=.7pt,traversal] (\tailvertex)--(\headvertex);
    }
    \draw[line width=1.1pt,traversal] (a)--(b);
    \foreach \v/\position in {a/left,b/left,c/above,d/right,e/right,f/below}
      \node[vertex,label=\position:$\v$] at (\v) {};
    \foreach \i/\position in {1/above left,2/above right,3/right,4/below right,5/below left}
      \node[vertex,label=\position:$w_{\i}$] at (w\i) {};
    \node at (0,-2.55) {root pair $a\to b$};
  \end{scope}
  \draw[-{Stealth[length=5pt]},line width=.6pt] (5.1,0)--(6.05,0);
  \begin{scope}[xshift=8.3cm,xscale=1.35,yscale=.9]
    \node at (0,2.6) {Corresponding perfect matching};
    \coordinate (ai) at (-1.1,-.85);
    \coordinate (bo) at (-1.1,.85);
    \coordinate (ci) at (-.38,1.48);
    \coordinate (co) at (.38,1.48);
    \coordinate (di) at (.86,1.1);
    \coordinate (do) at (1.1,.42);
    \coordinate (ei) at (1.1,-.42);
    \coordinate (eo) at (.86,-1.1);
    \coordinate (fi) at (.38,-1.48);
    \coordinate (fo) at (-.38,-1.48);
    \coordinate (w1) at (-.98,1.8);
    \coordinate (w2) at (.98,1.8);
    \coordinate (w3) at (1.9,0);
    \coordinate (w4) at (.98,-1.8);
    \coordinate (w5) at (-.98,-1.8);
    \foreach \i/\tailvertex/\headvertex/\edgecolor in {
      1/b/c/blue!75!black,2/c/d/orange!75!black,3/d/e/teal!75!black,
      4/e/f/violet!75!black,5/f/a/red!75!black}
      \draw[draw=\edgecolor,fill=\edgecolor,fill opacity=.1,line width=.7pt]
        (\tailvertex o)--(w\i)--(\headvertex i)--cycle;
    \foreach \v/\position in {a/left,c/above,d/above right,e/left,f/below}
      \node[vertex,label=\position:$\v^{\rm in}$] at (\v i) {};
    \foreach \v/\position in {b/left,c/above,d/left,e/right,f/below}
      \node[vertex,label=\position:$\v^{\rm out}$] at (\v o) {};
    \foreach \i/\position in {1/above left,2/above right,3/right,4/below right,5/below left}
      \node[vertex,label=\position:$w_{\i}$] at (w\i) {};
    \node at (0,-2.55) {omit $a^{\rm out},b^{\rm in}$};
  \end{scope}
\end{tikzpicture}
\caption{A mixed cycle with five triples and the corresponding perfect matching.}
\label{fig:cycle-matching}
\end{figure}

The next theorem shows that an entropy lower bound close to $\frac{n-s}{2}\log \frac{6m}{n} -(n-s)$ forces most directed edge marginals to be nearly equal.

\begin{theorem}\label{thm:entropy-stability}
With $\mathcal G,\cM$, and $\mathbf C$ as above, fix a constant $C$ and nonnegative parameters $\xi,\delta$ tending to $0$. Suppose that the following conditions hold.
\begin{enumerate}[label=(\textit{\roman*})]
\item We have $|\cM|=o(n)$, $\Delta_1(\mathcal{G})\le3Cm/n$, and $\Delta_2(\mathcal{G})=o(m/n)$.
\item For every $A$ with $V(\cM)\subseteq A\subseteq V(\mathcal G)$ and $|A|=(n+s)/2$, we have
\[
 \left|\bigl|\mathcal G[A,\overline A]\bigr|-\frac{3m}{8}\right|\le3\delta m.
\]
\item The entropy $\HH[\mathbf C]$ satisfies $\HH[\mathbf C]\ge\frac{n-s}{2}\log \frac{6m}{n} -(n-s)-\xi n$.
\end{enumerate}
Then
\[
 \sum_{(w;u\to v)\in\vec{\mathcal{G}}-V(\cM)}\left|\pi(w;u\to v)-\frac{n}{12m}\right|
 =O_C\!\left(n\sqrt{\xi+\delta+\frac{s}{n}+\frac1{\log\left(\frac{3m}{n\Delta_2(\mathcal{G})}\right)}}\right).
\]
\end{theorem}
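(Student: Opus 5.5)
The plan is to pass from the cycle $\mathbf C$ to the auxiliary perfect matching of \cref{fig:cycle-matching} and split the entropy with the chain rule. Let $I$ be the set of intersection vertices of $\mathbf C$, so $|I|=k+s=(n+s)/2$ and $V(\cM)\subseteq I$. Let $\sigma$ record the directions of the non-root pairs of $\cM$ in the root orientation. Conditional on $(I,\sigma)=(A,\sigma_0)$, build the auxiliary $3$-graph $\widetilde{\mathcal G}_{A,\sigma_0}$ as follows.
\begin{itemize}
\item Its vertex set consists of $\overline A$, the copies $x^{\rm in},x^{\rm out}$ for each $x\in A\setminus V(\cM)$, and, for each pair of $\cM$ directed $a\to b$ by $\sigma_0$, only $a^{\rm in}$ and $b^{\rm out}$.
\item Each oriented triple $(w;u\to v)$ with $w\in\overline A$, $u,v\in A$ and $\{u,v,w\}\in\mathcal G$ gives the edge $\{u^{\rm out},v^{\rm in},w\}$, whenever both copies exist.
\end{itemize}
This vertex set has $3k$ elements. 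The triples of $\mathbf C$ form a perfect matching $\mathbf F$ of $\widetilde{\mathcal G}_{A,\sigma_0}$, and $\mathbf C$ is determined by $(I,\sigma,\mathbf F)$. Hence \cref{lem:entropy-chain-rule} gives $\HH[\mathbf C]=\HH[I,\sigma]+\HH[\mathbf F\mid I,\sigma]$.

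\textbf{Step 1: host estimates for the auxiliary hypergraphs.} Each edge of $\mathcal G[A,\overline A]$ yields two oriented copies. The copies that touch a missing vertex of a saturated pair number $O(s\Delta_1(\mathcal G))=o(m)$. So by (ii), $|\widetilde{\mathcal G}_{A,\sigma_0}|=\tfrac{3m}{4}(1+O(\delta)+o(1))$ and $d_A\coloneqq d(\widetilde{\mathcal G}_{A,\sigma_0})=\tfrac{3m}{2n}(1+O(\delta+s/n))$. Maximum degrees are at most $2\Delta_1(\mathcal G)=O_C(d_A)$, and codegrees are at most $\Delta_2(\mathcal G)=o(d_A)$. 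Thus \cref{prop:matching-stability} applies uniformly over all conditioning values, with $\log(d_A/\Delta_2)=\log\bigl(3m/(n\Delta_2(\mathcal G))\bigr)+O(1)$.

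\textbf{Step 2: splitting the entropy budget.} Averaging the first inequality of \cref{prop:matching-stability} over $(I,\sigma)$ gives
\[
\HH[\mathbf F\mid I,\sigma]+\tfrac{k}{2}\,\EE\bigl[\KL_{I,\sigma}\bigr]\le k\log\tfrac{3m}{2n}-2k+O_C\bigl(\eta n\bigr),
\qquad \eta\coloneqq\delta+\tfrac{s}{n}+\tfrac{1}{\log(3m/(n\Delta_2(\mathcal G)))},
\]
where $\KL_{I,\sigma}$ is the relative entropy of $(p_e/k)_e$ against the uniform law on $\widetilde{\mathcal G}_{I,\sigma}$. On the other side, $\HH[I,\sigma]\le\log\binom{n-2s}{(n-3s)/2}+s\log2=(n-s)\log 2+O(s\log n)$. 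Since $2k=n-s$, we have $k\log\tfrac{3m}{2n}+(n-s)\log2=k\log\tfrac{6m}{n}$. Comparing with hypothesis (iii) therefore gives two deficit bounds:
\begin{itemize}
\item $\EE[\KL_{I,\sigma}]=O_C(\xi+\eta)$;
\item the entropy deficit $\log\binom{n-2s}{(n-3s)/2}-\HH[I]=O_C((\xi+\eta)n)$.
\end{itemize}

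\textbf{Step 3: comparing $\pi$ with $n/(12m)$.} For $(w;u\to v)\in\vec{\mathcal G}-V(\cM)$ we have $\pi(w;u\to v)=\EE\bigl[p^{I,\sigma}_{\{u^{\rm out},v^{\rm in},w\}}\,\mathbf 1[u,v\in I,\,w\notin I]\bigr]$. Write $p^{I,\sigma}_e=1/d_I+(p^{I,\sigma}_e-1/d_I)$ and apply the triangle inequality.

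The fluctuation part $\EE\sum_e|p_e-1/d_I|$ is bounded by Pinsker (\cref{lem:pinsker}) applied conditionally, then Jensen for $\sqrt{\cdot}$. This gives $O(k\sqrt{\EE\KL_{I,\sigma}})=O_C(n\sqrt{\xi+\eta})$.

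Using $1/d_I=\tfrac{2n}{3m}(1+O(\delta+s/n))$, the remaining part is, up to an allowed $O(n(\delta+s/n))$ error,
\[
\tfrac{2n}{3m}\sum_{(w;u\to v)}\Bigl|\PP[u,v\in I,\,w\notin I]-\tfrac18\Bigr|.
\]
Here $\vec{\mathcal G}$ has $6m$ elements, so $\tfrac{2n}{3m}\cdot\tfrac18\cdot 6m$ has the right scale. The value $\tfrac18$ is, up to $O(s/n)$, what a uniform $I$ gives for unsaturated $u,v,w$.

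\textbf{Main obstacle.} The total-variation distance of $I$ from uniform is not small: its KL deficit is of order $\xi n$. So the second bound of Step 2 must be localised to the triples of $\mathcal G$. I would try Shearer's inequality (\cref{lem:shearer}) applied to the indicator vector $(\mathbf 1[x\in I])_x$ and the family of triples of $\mathcal G$ avoiding $V(\cM)$. Each vertex lies in $\approx d(\mathcal G)$ of these triples, up to the factor $C$. If the degrees of $\mathcal G$ are too irregular for Shearer's inequality to be applied directly, I would use weighted Shearer, or take copies of each triple proportional to inverse degrees. The aim is that the triple-restricted entropies average to within $O_C(\xi+\eta)$ of the entropy of three fair bits. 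This needs a comparison of the uniform fixed-size law with i.i.d.\ Bernoulli$(1/2)$ indicators, which costs $O(\log n)$ in $\HH$. Pinsker and Jensen on each triple's $8$-point law then give $\sum_e|\PP[\text{type of }e]-\tfrac18|=O_C(m\sqrt{\xi+\eta})$. Multiplying by $n/m$ yields the claimed bound $O_C\bigl(n\sqrt{\xi+\delta+s/n+1/\log(3m/(n\Delta_2(\mathcal G)))}\bigr)$.
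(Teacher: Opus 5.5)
Your proposal follows the paper's proof step for step: the same auxiliary matching, the chain-rule split, the averaged form of \cref{prop:matching-stability} yielding both deficit bounds, Pinsker for the fluctuation term, and Shearer followed by Pinsker for the role probabilities $\PP[u,v\in I,\ w\notin I]$. The one step that fails as you describe it is the Shearer localisation.

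Hypothesis (i) bounds degrees only from above, so ``each vertex lies in about $d(\mathcal G)$ triples up to the factor $C$'' is not available. A two-sided constant-factor bound would not suffice anyway. Write $\nu_e$ for the law of $(\mathbf 1_{\{x\in I\}})_{x\in e}$ and $t\ge1$ for the minimum degree of $\mathcal G-V(\cM)$. Then \cref{lem:shearer} applied to the triples alone gives only
\[
 \sum_{e\in\mathcal G-V(\cM)}\KL[\nu_e\Vert\Unif(\{0,1\}^e)]
 \le\log2\,\bigl(3|\mathcal G-V(\cM)|-t(n-2s)\bigr)
 +t\bigl((n-2s)\log2-\HH[I]\bigr).
\]
The first term is $O(m(\xi+\eta))$ only if $t$ is within a factor $1-O(\xi+\eta)$ of the average degree. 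For degrees merely within a constant factor of each other it is of order $m$, which destroys the localisation. Reweighting the triples by inverse degrees has the same defect. Each vertex whose total covering weight exceeds $1$ contributes slack proportional to its excess, and a vertex all of whose triples meet $V(\cM)$ is not covered at all.

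The missing device, which the paper uses, is to make the cover exact with singletons. For each $v\notin V(\cM)$, add $\Delta_1(\mathcal G)-d_{\mathcal G-V(\cM)}(v)$ copies of $\{v\}$, so that every coordinate is covered exactly $\Delta_1(\mathcal G)$ times. Shearer then bounds $\sum_e\KL[\nu_e\Vert\Unif(\{0,1\}^e)]$, plus the nonnegative terms $\bigl(\Delta_1(\mathcal G)-d_{\mathcal G-V(\cM)}(v)\bigr)\bigl(\log2-\HH[\mathbf 1_{\{v\in I\}}]\bigr)$, by $\Delta_1(\mathcal G)\bigl((n-2s)\log2-\HH[I]\bigr)$. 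Discard the singleton terms and use $\Delta_1(\mathcal G)\le3Cm/n$ to get $\sum_e\KL[\nu_e\Vert\Unif(\{0,1\}^e)]=O_C(m(\xi+\eta))$. Your Pinsker and Cauchy--Schwarz step then finishes the proof.

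Two smaller corrections. First, since $|I\setminus V(\cM)|=(n-3s)/2$, the comparison with Bernoulli$(1/2)$ indicators costs $O(\log n+s^2/n)$ rather than $O(\log n)$. This is still absorbed, as $s^2/n\le s\le n\eta$ and $\eta\ge1/(2\log n)$. Second, in Step 2 use the exact bound $\HH[I,\sigma]\le(n-s)\log2$. An $O(s\log n)$ error is not absorbed by $n\eta$ when $s$ is as large as $|\cM|=o(n)$ allows.
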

\begin{proof}
Fixing the intersection set and the directions of the prescribed pairs converts a mixed Hamilton cycle into a perfect matching. We control the conditional matching marginals and the probabilities that two specified vertices of a triple are intersection vertices and the third is private, then combine these estimates to recover the directed edge marginals.

To construct the auxiliary hypergraph, we fix a set of $\cM$-unsaturated intersection vertices and directions of the pairs in $\cM$.  Let $J_0\subseteq V(\mathcal{G})\setminus V(\cM)$, and let $\theta$ assign directions to the $s-1$ pairs in $\cM$ other than the root pair. We call $(J_0,\theta)$ \emph{feasible} if there exists a $(\mathcal{G},\cM)$-respecting mixed Hamilton cycle whose set of $\cM$-unsaturated intersection vertices is $J_0$ and whose pairs other than the root pair have directions $\theta$ in the root orientation.

For each feasible pair $(J_0,\theta)$, define an auxiliary $3$-graph $\widetilde{\mathcal G}$ as follows (see \cref{fig:cycle-matching} for a concrete example).
\begin{itemize}
\item Replace each $\cM$-unsaturated intersection vertex $u\in J_0$ by an incoming copy $u^{\rm in}$ and an outgoing copy $u^{\rm out}$. For each pair in $\cM$ directed from $a$ to $b$, keep only $a^{\rm in}$ and $b^{\rm out}$. The private vertices are those in $V(\mathcal{G})\setminus(J_0\cup V(\cM))$; keep them unchanged.
\item The edges of $\widetilde{\mathcal G}$ are exactly the sets $\{u^{\rm out},v^{\rm in},w\}$ for which $\{u,v,w\}\in\mathcal{G}$, $w$ is private, and both copies $u^{\rm out},v^{\rm in}$ are present.
\end{itemize}
A pair in $\cM$ directed from $a$ to $b$ leaves $a$ and enters $b$. Thus the triple at $a$ must enter $a$, while the triple at $b$ must leave $b$, explaining why we keep $a^{\rm in}$ and $b^{\rm out}$. The auxiliary hypergraph is fixed once $J_0$ and $\theta$ are given. We record the properties needed to apply \cref{prop:matching-stability}.
\begin{claim}\label{claim:auxiliary-properties}
For every feasible pair $(J_0,\theta)$, the following statements hold.
\begin{enumerate}[label=(\textit{\roman*})]
\item The incoming copies, outgoing copies, and private vertices form three vertex classes, each of size $(n-s)/2$, and every edge of $\widetilde{\mathcal G}$ meets each class once.
\item We have $\Delta_1(\widetilde{\mathcal G})\le2\Delta_1(\mathcal G)$ and $\Delta_2(\widetilde{\mathcal G})\le\Delta_2(\mathcal G)$.
\item The average degree $d(\widetilde{\mathcal G})$ satisfies $d(\widetilde{\mathcal G})=\frac{3m}{2n}\bigl(1+O_C(\delta+\frac{s}{n})\bigr)$.
\end{enumerate}
\end{claim}
\begin{proof}[Proof of \cref{claim:auxiliary-properties}]
By \cref{fact:mixed-cycle-parameters}, any cycle realising $(J_0,\theta)$ has $(n-s)/2$ triples, each contributing one private vertex. The remaining $(n+s)/2$ vertices are intersection vertices, of which exactly $2s$ are $\cM$-saturated. Hence $|J_0|=(n+s)/2-2s=(n-3s)/2$. Each of the incoming and outgoing vertex classes has $|J_0|+s=(n-s)/2$ vertices, proving \cref{claim:auxiliary-properties}(i).

For each private vertex $w$, every triple of $\mathcal G$ containing $w$ contributes at most two auxiliary edges incident to $w$. For each copy of an intersection vertex, every original triple contributes at most one incident auxiliary edge. Any two vertices contained in an auxiliary edge correspond to two distinct vertices of $\mathcal G$, and each original triple contributes at most one auxiliary edge containing both auxiliary vertices. This proves \cref{claim:auxiliary-properties}(ii).

For \cref{claim:auxiliary-properties}(iii), take $A=J_0\cup V(\cM)$. Each triple meeting $A$ in exactly two vertices produces two auxiliary edges unless it meets $V(\cM)$. Hence
\[
 \left||\widetilde{\mathcal G}|-2\bigl|\mathcal G[A,\overline A]\bigr|\right|
 \le2\sum_{v\in V(\cM)}d_{\mathcal G}(v)
 \le4s\Delta_1(\mathcal G)=O_C\left(\frac{sm}{n}\right).
\]
Assumption (ii) of \cref{thm:entropy-stability} now gives $|\widetilde{\mathcal G}|=3m/4+O_C(\delta m+sm/n)$. Hence $d(\widetilde{\mathcal G})=2|\widetilde{\mathcal G}|/(n-s)=\frac{3m}{2n}(1+O_C(\delta+s/n))$, proving \cref{claim:auxiliary-properties}(iii).
\end{proof}

The random cycle $\mathbf C$ determines the random set $\mathbf J$ of $\cM$-unsaturated intersection vertices and the random vector $\boldsymbol{\Theta}$ recording the directions of the non-root pairs in the root orientation. For each triple traversed by $\mathbf C$ from $u$ to $v$ with private vertex $w$, include $\{u^{\rm out},v^{\rm in},w\}$ in $\mathbf F$. Every auxiliary vertex belongs to exactly one selected edge, so $\mathbf F$ is a perfect matching (see \cref{fig:cycle-matching} for a concrete example). From this induced matching, replacing each vertex copy by its original vertex and adding the pairs of $\cM$ recovers $\mathbf C$.

For every $\PP$-admissible value of $(\mathbf J,\boldsymbol{\Theta})$, the conditional law of $\mathbf C$ induces a distribution on the perfect matchings of the corresponding fixed auxiliary hypergraph. Thus $\mathbf C$ and $(\mathbf J,\boldsymbol{\Theta},\mathbf F)$ determine each other. The chain rule in \cref{lem:entropy-chain-rule} gives
\[
 \HH[\mathbf C]=\HH[\mathbf J,\boldsymbol{\Theta}]+\HH[\mathbf F\mid \mathbf J,\boldsymbol{\Theta}].
\]
In the conditional sums below, $\widetilde{\mathcal G}$ is the auxiliary hypergraph determined by the realised value of $(\mathbf J,\boldsymbol{\Theta})$; the outer expectation averages over these values. Put $\varepsilon\coloneqq\xi+\delta+s/n+1/\log(3m/(n\Delta_2(\mathcal{G})))$. The next claim shows that $\HH[\mathbf J,\boldsymbol{\Theta}]$ is within $O_C(n\varepsilon)$ of the upper bound $\log\binom{n-2s}{(n-3s)/2}+(s-1)\log2$, and bounds the deviation of the conditional matching marginals.
\begin{claim}\label{claim:conditional-entropy-balance}
The following statements hold.
\begin{enumerate}[label=(\textit{\roman*})]
\item The joint entropy $\HH[\mathbf J,\boldsymbol{\Theta}]$ satisfies $\log\binom{n-2s}{(n-3s)/2}+(s-1)\log2-\HH[\mathbf J,\boldsymbol{\Theta}]=O_C(n\varepsilon)$.
\item The conditional edge marginals $\PP[f\in\mathbf F\mid \mathbf J,\boldsymbol{\Theta}]$ satisfy
\[
 \EE\left[\sum_{f\in\widetilde{\mathcal G}}\left|\PP[f\in\mathbf F\mid \mathbf J,\boldsymbol{\Theta}]-\frac{2n}{3m}\right|\right]=O_C(n\sqrt{\varepsilon}).
\]
\end{enumerate}
\end{claim}
\begin{proof}[Proof of \cref{claim:conditional-entropy-balance}]
We compare the assumed entropy lower bound for $\mathbf C$ with the conditional matching entropy bound in \cref{prop:matching-stability}. The chain rule will bound the sum of two nonnegative terms, namely the difference in \cref{claim:conditional-entropy-balance}(i) and the expected relative entropy of the conditional matching marginals. Controlling the first term proves (i); controlling the second gives (ii) through Pinsker's inequality.

By \cref{claim:auxiliary-properties}(ii)--(iii), the auxiliary hypergraph has maximum degree $O_C(d(\widetilde{\mathcal G}))$, and the ratio of its maximum codegree to $d(\widetilde{\mathcal G})$ is $O_C(n\Delta_2(\mathcal G)/m)=o(1)$. Thus \cref{prop:matching-stability} applies to every conditional matching law. The error in \cref{prop:matching-stability} involving $1/\log\bigl(d(\widetilde{\mathcal G})/\Delta_2(\widetilde{\mathcal G})\bigr)$ is $O_C(n\varepsilon)$. Since $1\le\Delta_2(\mathcal G)$ and $3m/(n\Delta_2(\mathcal G))<n^2/2$ for large $n$, the $O(\log n)$ term is also absorbed by this error bound.

Under each conditional law, $\mathbf F$ contains exactly $(n-s)/2$ edges, so dividing its edge marginals by $(n-s)/2$ gives a probability distribution on all edges of $\widetilde{\mathcal G}$. Apply \cref{prop:matching-stability} conditional on each $\PP$-admissible value of $(\mathbf J,\boldsymbol{\Theta})$, and then average with respect to the distribution of $(\mathbf J,\boldsymbol{\Theta})$ induced by $\mathbf C$. Substituting the number of auxiliary vertices $3(n-s)/2$ and the average-degree estimate in \cref{claim:auxiliary-properties}(iii) gives
\[
\begin{aligned}
 \HH[\mathbf F\mid \mathbf J,\boldsymbol{\Theta}]
 &{}+\frac{n-s}{4}\EE\left[\KL\left[\left(\frac{2\PP[f\in\mathbf F\mid \mathbf J,\boldsymbol{\Theta}]}{n-s}\right)_{f\in\widetilde{\mathcal G}}\,\middle\Vert\,\Unif(\widetilde{\mathcal G})\right]\right]
 \\
 &\le\frac{n-s}{2}\log\frac{3m}{2n}-(n-s)+O_C(n\varepsilon).
\end{aligned}
\]
By the chain rule, the first term on the left is $\HH[\mathbf C]-\HH[\mathbf J,\boldsymbol{\Theta}]$. Substitute the lower bound for $\HH[\mathbf C]$ from assumption (iii) of \cref{thm:entropy-stability}. The linear terms cancel, while the difference of the logarithmic main terms is $\frac{n-s}{2}(\log\frac{6m}{n}-\log\frac{3m}{2n})=(n-s)\log2$. The number of possible values of $(\mathbf J,\boldsymbol{\Theta})$ is at most $\binom{n-2s}{(n-3s)/2}2^{s-1}$, whose logarithm is at most $(n-s)\log2$. Replacing $(n-s)\log2$ on the left by this smaller quantity gives
\[
\begin{aligned}
 & \log\binom{n-2s}{(n-3s)/2}+(s-1)\log2-\HH[\mathbf J,\boldsymbol{\Theta}]\\
 &{}+\frac{n-s}{4}\EE\biggl[\KL\biggl[\biggl(\frac{2\PP[f\in\mathbf F\mid \mathbf J,\boldsymbol{\Theta}]}{n-s}\biggr)_{f\in\widetilde{\mathcal G}}\big\Vert\Unif(\widetilde{\mathcal G})\biggr]\biggr]=O_C\!(n\varepsilon).
\end{aligned}
\]
The difference in \cref{claim:conditional-entropy-balance}(i) and the relative-entropy term on the left are both nonnegative, proving \cref{claim:conditional-entropy-balance}(i) and bounding the expected relative entropy by $O_C(\varepsilon)$.

For \cref{claim:conditional-entropy-balance}(ii), fix a $\PP$-admissible value of $(\mathbf J,\boldsymbol{\Theta})$ and apply \cref{lem:pinsker} to the normalised conditional marginals and the uniform distribution on $\widetilde{\mathcal G}$. Multiplying by $(n-s)/2$ converts the normalised marginals into matching probabilities, while the uniform probability becomes $(n-s)/(2|\widetilde{\mathcal G}|)=1/d(\widetilde{\mathcal G})$. Squaring the resulting inequality gives
\[
\begin{aligned}
 \left(\sum_{f\in\widetilde{\mathcal G}}\left|\PP[f\in\mathbf F\mid \mathbf J,\boldsymbol{\Theta}]-\frac1{d(\widetilde{\mathcal G})}\right|\right)^2
\le\frac{(n-s)^2}{2}\KL\left[\left(\frac{2\PP[f\in\mathbf F\mid \mathbf J,\boldsymbol{\Theta}]}{n-s}\right)_{f\in\widetilde{\mathcal G}}\,\middle\Vert\,\Unif(\widetilde{\mathcal G})\right].
\end{aligned}
\]
Taking expectations over $(\mathbf J,\boldsymbol{\Theta})$ bounds the mean square on the left by $O_C(n^2\varepsilon)$. Cauchy--Schwarz therefore gives $\EE[\sum_{f\in\widetilde{\mathcal G}}|\PP[f\in\mathbf F\mid \mathbf J,\boldsymbol{\Theta}]-1/d(\widetilde{\mathcal G})|]=O_C(n\sqrt{\varepsilon})$. To replace $1/d(\widetilde{\mathcal G})$ by $2n/(3m)$, note that $|\widetilde{\mathcal G}|=(n-s)d(\widetilde{\mathcal G})/2$. Thus \cref{claim:auxiliary-properties}(iii) gives
\[
\begin{aligned}
 \sum_{f\in\widetilde{\mathcal G}}
 \left|\frac1{d(\widetilde{\mathcal G})}-\frac{2n}{3m}\right|
 =\frac{n-s}{2}
 \left|1-\frac{2n d(\widetilde{\mathcal G})}{3m}\right|
 =O_C(n\delta+s).
\end{aligned}
\]
This bound holds for every $\PP$-admissible value of $(\mathbf J,\boldsymbol{\Theta})$. Taking expectations and applying the triangle inequality adds only $O_C(n\delta+s)$ to the preceding $\ell^1$ bound. This error is absorbed by $O_C(n\sqrt{\varepsilon})$, proving \cref{claim:conditional-entropy-balance}(ii).
\end{proof}

To pass from the conditional matching marginals to directed edge marginals, we also estimate the probability that $u,v\in \mathbf J$ and $w\notin \mathbf J$ for an oriented triple $(w;u\to v)\in\vec{\mathcal G}-V(\cM)$.
\begin{claim}\label{claim:junction-role-balance}
The probabilities $\PP[u,v\in \mathbf J\text{ and }w\notin \mathbf J]$ satisfy
\[
 \sum_{(w;u\to v)\in\vec{\mathcal{G}}-V(\cM)}\left|\PP[u,v\in \mathbf J\text{ and }w\notin \mathbf J]-\frac18\right|
 =O_C(m\sqrt{\varepsilon}).
\]
\end{claim}
\begin{proof}[Proof of \cref{claim:junction-role-balance}]
We first show that the indicator vector of $\mathbf J$ has entropy close to $(n-2s)\log2$. Shearer's inequality then shows that, on average over triples in $\mathcal G-V(\cM)$, the three indicators of membership in $\mathbf J$ have joint entropy close to $3\log2$. Pinsker's inequality bounds the probabilities of the specified indicator patterns.

The indicator vector of $\mathbf J$ has $n-2s$ coordinates. Since $\HH[\mathbf J,\boldsymbol{\Theta}]\le\HH[\mathbf J]+(s-1)\log2$, \cref{claim:conditional-entropy-balance}(i) gives
\[
\begin{aligned}
 0\le\log\binom{n-2s}{(n-3s)/2}-\HH[\mathbf J]
 \le\log\binom{n-2s}{(n-3s)/2}+(s-1)\log2-\HH[\mathbf J,\boldsymbol{\Theta}]
 =O_C(n\varepsilon).
\end{aligned}
\]
As in the proof of \cref{claim:conditional-entropy-balance}, $\varepsilon\ge1/(2\log n)$. Thus, for sufficiently large $n$, we have $s^2/n\le s\le n\varepsilon$ and $\log n=O(n\varepsilon)$. Combining these estimates with Stirling's formula gives
\[
 (n-2s)\log2-\HH[\mathbf J]
 =O\left(\frac{s^2}{n}+\log n\right)+O_C(n\varepsilon)
 =O_C(n\varepsilon).
\]

For every $e\in\mathcal{G}-V(\cM)$, let $\nu_e$ be the distribution on $\{0,1\}^e$ of the random vector $(\mathbf1_{\{v\in\mathbf J\}})_{v\in e}$, whose coordinate at $v$ indicates whether $v$ is an intersection vertex of $\mathbf C$. To apply Shearer's inequality with every coordinate covered equally often, for each $v\in V(\mathcal G)\setminus V(\cM)$ add $\Delta_1(\mathcal G)-d_{\mathcal G-V(\cM)}(v)$ copies of the singleton set $\{v\}$ to the triple family $\mathcal G-V(\cM)$. Every coordinate then occurs exactly $\Delta_1(\mathcal G)$ times. Since $\KL[\nu_e\Vert\Unif(\{0,1\}^e)]=3\log2-\HH[(\mathbf1_{\{v\in \mathbf J\}})_{v\in e}]$, Shearer's inequality in \cref{lem:shearer} gives
\begin{multline*}
  \sum_{e\in\mathcal G-V(\cM)}\KL[\nu_e\Vert\Unif(\{0,1\}^e)]
 +\sum_{v\in V(\mathcal G)\setminus V(\cM)}
 \bigl(\Delta_1(\mathcal G)-d_{\mathcal G-V(\cM)}(v)\bigr)
 \bigl(\log2-\HH[\mathbf1_{\{v\in \mathbf J\}}]\bigr)\\
 \le\Delta_1(\mathcal G)\bigl((n-2s)\log2-\HH[\mathbf J]\bigr)
 =O_C(m\varepsilon).
\end{multline*}
The singleton terms are nonnegative, so $\sum_{e\in\mathcal G-V(\cM)}\KL[\nu_e\Vert\Unif(\{0,1\}^e)]=O_C(m\varepsilon)$. For an oriented triple $(w;u\to v)$ with $e=\{u,v,w\}$, the event $u,v\in\mathbf J$ and $w\notin\mathbf J$ specifies one indicator pattern. Since the differences between two probability distributions sum to zero, the absolute difference on any single pattern is at most half their $\ell^1$ distance. Pinsker's inequality therefore gives
\[
 \left|\PP[u,v\in\mathbf J\text{ and }w\notin\mathbf J]-\frac18\right|
 \le\frac12\sum_{\omega\in\{0,1\}^e}\left|\nu_e(\omega)-\frac18\right|
 \le\sqrt{\frac12\KL[\nu_e\Vert\Unif(\{0,1\}^e)]}.
\]
Each triple has six orientations, so Cauchy--Schwarz gives
\[
\begin{aligned}
 & \sum_{(w;u\to v)\in\vec{\mathcal G}-V(\cM)}
 \left|\PP[u,v\in \mathbf J\text{ and }w\notin \mathbf J]-\frac18\right|\\
 &\le6\sum_{e\in\mathcal G-V(\cM)}
 \sqrt{\frac12\KL[\nu_e\Vert\Unif(\{0,1\}^e)]}\\
 &\le\sqrt{18m\sum_{e\in\mathcal G-V(\cM)}\KL[\nu_e\Vert\Unif(\{0,1\}^e)]}
 =O_C(m\sqrt{\varepsilon}).
\end{aligned}
\]
This proves the claim.
\end{proof}

We now transfer the estimate for auxiliary edges to oriented triples in the original hypergraph. Fix an oriented triple $(w;u\to v)\in\vec{\mathcal{G}}-V(\cM)$. If $u,v\in \mathbf J$ and $w\notin \mathbf J$, this oriented triple corresponds to the auxiliary edge $\{u^{\rm out},v^{\rm in},w\}$, regardless of the directions of the pairs in $\cM$. The law of total probability therefore gives
\[
\begin{aligned}
 & \pi(w;u\to v)-\frac{2n}{3m}\PP[u,v\in \mathbf J\text{ and }w\notin \mathbf J]
\\
 &=\EE\left[\mathbf{1}_{\{u,v\in \mathbf J,\ w\notin \mathbf J\}}
 \left(\PP[\{u^{\rm out},v^{\rm in},w\}\in\mathbf F\mid \mathbf J,\boldsymbol{\Theta}]-\frac{2n}{3m}\right)\right],
\end{aligned}
\]
where the conditional matching probability is set to zero unless $u,v\in \mathbf J$ and $w\notin \mathbf J$. For each fixed value of $(\mathbf J,\boldsymbol{\Theta})$, the oriented triples $(w;u\to v)\in\vec{\mathcal G}-V(\cM)$ with $u,v\in\mathbf J$ and $w\notin\mathbf J$ correspond to distinct auxiliary edges. Hence
\begin{align*}
   \sum_{\substack{(w;u\to v)\in\vec{\mathcal G}-V(\cM)\\u,v\in\mathbf J\,,\ w\notin\mathbf J}}
 \left|\PP[\{u^{\rm out},v^{\rm in},w\}\in\mathbf F\mid\mathbf J,\boldsymbol{\Theta}]-\frac{2n}{3m}\right|
 \le\sum_{f\in\widetilde{\mathcal G}}
 \left|\PP[f\in\mathbf F\mid\mathbf J,\boldsymbol{\Theta}]-\frac{2n}{3m}\right|.
\end{align*}
Taking absolute values in the preceding identity from the law of total probability, applying the triangle inequality for expectations, and using \cref{claim:conditional-entropy-balance}(ii) to bound the expected sum on the right gives
\[
 \sum_{(w;u\to v)\in\vec{\mathcal{G}}-V(\cM)}\left|\pi(w;u\to v)-\frac{2n}{3m}\PP[u,v\in \mathbf J\text{ and }w\notin \mathbf J]\right|
 =O_C(n\sqrt{\varepsilon}).
\]
Since $\frac{2n}{3m}\cdot\frac18=\frac{n}{12m}$, the triangle inequality gives
\begin{multline*}
 \sum_{(w;u\to v)\in\vec{\mathcal G}-V(\cM)}
 \left|\pi(w;u\to v)-\frac{n}{12m}\right|\\
 \le\sum_{(w;u\to v)\in\vec{\mathcal G}-V(\cM)}
 \left|\pi(w;u\to v)-\frac{2n}{3m}\PP[u,v\in\mathbf J\text{ and }w\notin\mathbf J]\right|\\
 {}+\frac{2n}{3m}\sum_{(w;u\to v)\in\vec{\mathcal G}-V(\cM)}
 \left|\PP[u,v\in\mathbf J\text{ and }w\notin\mathbf J]-\frac18\right|.
\end{multline*}
The preceding estimate bounds the first sum by $O_C(n\sqrt{\varepsilon})$. By \cref{claim:junction-role-balance}, the second contribution is $\frac{2n}{3m}O_C(m\sqrt{\varepsilon})=O_C(n\sqrt{\varepsilon})$. This proves the theorem.
\end{proof}

We next use the auxiliary matching construction to bound the expected number of triples shared by two independently chosen cycles.

\begin{lemma}\label{lem:overlap}
Let $\mathcal G$ be a $3$-graph on $n$ vertices, and let $\cM$ be a matching of $s\ge1$ pairs on the same vertex set. Let $\mathcal C$ be a nonempty family of $(\mathcal G,\cM)$-respecting mixed Hamilton cycles, and choose $C_1,C_2$ independently and uniformly from $\mathcal C$.
For every fixed $A\ge1$ and $B\ge0$, there exists a constant $C>0$, depending only on $A,B$, such that the following holds. Suppose that $s=o(n)$, $d(\mathcal G)\to\infty$, $\Delta_1(\mathcal G)\le A d(\mathcal G)$, and $\log|\mathcal C|\ge\tfrac{n-s}{2}\log d(\mathcal G)-Bn$. Then, for sufficiently large $n$, we have
\[
\EE[|C_1\cap C_2|-s]\le C\frac{n}{\log d(\mathcal G)}.
\]
\end{lemma}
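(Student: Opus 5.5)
The plan is to prove the lemma by a second-moment identity combined with an entropy deficit argument. Since every cycle in $\mathcal C$ contains all $s$ pairs of $\cM$, we have
\[
\EE[|C_1\cap C_2|-s]=\sum_{e\in\mathcal G}p_e^2,\qquad p_e\coloneqq\PP[e\in C_1],
\]
with $C_1$ uniform on $\mathcal C$. So it suffices to show that $\sum_e p_e^2=O(n/\log d(\mathcal G))$, where $d\coloneqq d(\mathcal G)$. The hypothesis $\log|\mathcal C|\ge\frac{n-s}{2}\log d-Bn$ says that $\HH[C_1]=\log|\mathcal C|$ is within $O(n)$ of a trivial upper bound. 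The idea is that heavy marginals force a loss of order $\log d$ per unit of heavy mass, so the heavy mass can only be $O(n/\log d)$.

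\textbf{Step 1 (an encoding with a trivial upper bound).} Give $C_1$ its root orientation. For each vertex $v$, let $X_v$ be the oriented triple $(w;v\to v')$ of $C_1$ leaving $v$, or a symbol $\ast$ if no triple leaves $v$ (because $v$ is private, or $v$ is the tail endpoint of its pair). The family $(X_v)_v$ determines $C_1$, since the $s$ pairs are fixed. Also $X_v\neq\ast$ for exactly $(n-s)/2$ vertices, namely one per triple. Each $X_v$ takes at most $1+2\Delta_1(\mathcal G)\le 1+2Ad$ values. Let $\rho_v\coloneqq\PP[X_v\ne\ast]$. Conditioning on the indicator of $\{X_v\neq\ast\}$ gives
\[
\HH[X_v]\le\log2+\rho_v\,\HH[X_v\mid X_v\ne\ast].
\]
Subadditivity then yields $\HH[C_1]\le n\log2+\sum_v\rho_v\HH[X_v\mid X_v\ne\ast]$, with $\sum_v\rho_v=(n-s)/2$. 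Comparing with the assumed lower bound, the total deficit
\[
D\coloneqq\sum_v\rho_v\bigl(\log(2Ad)-\HH[X_v\mid X_v\ne\ast]\bigr)
\]
satisfies $D\le (B+\log 2+O(1))n+O(s\log d)$. This bound is $O(n)$ after absorbing the $s\log d$ term, which is permissible since $s=o(n)$.

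\textbf{Step 2 (heavy mass is small).} Fix the threshold $t\coloneqq d^{-1/2}$. For each $v$, let $q_v$ be the conditional probability, given $X_v\ne\ast$, that $X_v$ takes a value of conditional probability at least $t$. By grouping values into heavy and light classes, as in the log-sum argument of \cref{lem:collision}, we get
\[
\HH[X_v\mid X_v\ne\ast]\le\log2+q_v\log(1/t)+(1-q_v)\log(2Ad).
\]
So the deficit at $v$ is at least $q_v\log(2Adt)-\log2\ge\tfrac12 q_v\log d-O(1)$. Summing with weights $\rho_v$ gives $\sum_v\rho_vq_v\le 2(D+O(n))/\log d=O(n/\log d)$.

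\textbf{Step 3 (from oriented values to triple marginals).} Each triple of $C_1$ leaves exactly one of its two intersection vertices. Hence $p_e=\sum_{v\in e}\sum_{x}\PP[X_v=x]$, where the inner sum is over the at most two oriented triples $x$ on $e$ leaving $v$. By Cauchy--Schwarz, $p_e^2\le 6\sum_{v,x}\PP[X_v=x]^2$. Summing over $e$ gives
\[
\sum_e p_e^2\le 6\sum_v\rho_v^2\sum_x\PP[X_v=x\mid X_v\ne\ast]^2 .
\]
The light values contribute at most $t$ times their conditional mass, and the heavy values contribute at most $q_v$. Thus
\[
\sum_ep_e^2\le6\sum_v\rho_v(t+q_v)=O\bigl(nd^{-1/2}+n/\log d\bigr)=O(n/\log d).
\]
This is the claimed bound.

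The main obstacle is Step 1. I have to check that the encoding really recovers $C_1$, and that the bookkeeping of the $\ast$ values and the $s$ pairs only costs $O(n)$ (plus $O(s\log d)$) in entropy. If $s\log d$ turns out not to be $o(n)$ in the intended applications, I would instead condition on the direction vector $\boldsymbol\Theta$ and the intersection set, as in the proof of \cref{thm:entropy-stability}, paying at most $n\log 2$.
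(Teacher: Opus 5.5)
Your proof is correct, and it takes a more direct route than the paper's.

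\textbf{The paper's route.} The paper conditions on the intersection set $\mathbf J$ and the pair directions $\boldsymbol\Theta$ and passes to the auxiliary perfect matching $\mathbf F$. It then bounds $\sum_e\PP[e\in\mathbf C\mid\mathbf J,\boldsymbol\Theta]^2\le\frac23\sum_u q_u^{\max}$ over all $3(n-s)/2$ auxiliary vertices and removes the conditioning by Jensen. Finally it uses Shearer's inequality, with each auxiliary edge seen from its three vertices to produce the factor $1/3$. This is combined with $\HH[\mathbf F(u)]\le\log2+(1-q_u^{\max})\log D$ and $\HH[\mathbf J,\boldsymbol\Theta]\le(n-s-1)\log2$.

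\textbf{Your route.} Your out-triple encoding records each triple exactly once, at its tail. Plain subadditivity therefore already gives an upper bound whose main term $\frac{n-s}{2}\log(2Ad)$ matches the hypothesis. The $n\log2$ you spend on the indicators $\mathbf 1\{X_v\ne\ast\}$ plays the role of the paper's $\HH[\mathbf J,\boldsymbol\Theta]$ cost. No conditioning, Jensen step, auxiliary hypergraph or Shearer is needed.

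\textbf{Comparison.} The paper's version reuses the auxiliary-matching machinery already built for \cref{thm:entropy-stability}. Yours is self-contained and more elementary. Your threshold $t=d^{-1/2}$ could also be replaced by the largest conditional probability of $X_v$, exactly as the paper does with $q_u^{\max}$. Then $\HH[X_v\mid X_v\ne\ast]\le\log2+(1-q_v^{\max})\log(2Ad)$ and $\sum_x\PP[X_v=x\mid X_v\ne\ast]^2\le q_v^{\max}$, which removes the $nd^{-1/2}$ term.

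\textbf{A correction to your bookkeeping.} There is no $O(s\log d)$ term anywhere. Since $\sum_v\rho_v=(n-s)/2$ exactly, your own computation gives $D\le\frac{n-s}{2}\log(2A)+(B+\log2)n$, with no dependence on $d$ or $s$ beyond this. This matters, because the justification you offered would not hold in general: $s=o(n)$ does not imply $s\log d=O(n)$. For example, take $s=n/\log\log n$ and $d$ polynomial in $n$. So the obstacle you flag in your last paragraph does not arise, and the fallback of conditioning on $(\mathbf J,\boldsymbol\Theta)$ is unnecessary. In fact your argument never uses $s=o(n)$.
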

\begin{proof}
By independence of $C_1$ and $C_2$, we have
\[
 \EE[|C_1\cap C_2|-s]
 =\sum_{e\in\mathcal G}\PP[e\in C_1]\PP[e\in C_2]
 =\sum_{e\in\mathcal G}\PP[e\in C_1]^2.
\]
We first bound the squared cycle marginals in terms of the largest conditional matching probabilities at the auxiliary vertices. We then use the entropy lower bound to control the expected sum of these probabilities. Fix a pair in $\cM$ and a direction on it, and orient every cycle accordingly. Choose $\mathbf C$ uniformly from $\mathcal C$. Let $\mathbf J$ be its set of $\cM$-unsaturated intersection vertices and let $\boldsymbol{\Theta}$ record the directions of the remaining pairs in $\cM$. Construct the corresponding perfect matching $\mathbf F$ in the auxiliary hypergraph $\widetilde{\mathcal G}$, as in the proof of \cref{thm:entropy-stability}.

For each $\PP$-admissible value $(J_0,\theta)$ of $(\mathbf J,\boldsymbol{\Theta})$, the auxiliary hypergraph $\widetilde{\mathcal G}$ is fixed. For each auxiliary vertex $u$, put $q_u^{\max}\coloneqq\max_{f\ni u}\PP[f\in\mathbf F\mid \mathbf J=J_0,\boldsymbol{\Theta}=\theta]$.

Conditional on any $\PP$-admissible value of $(\mathbf J,\boldsymbol{\Theta})$, the intersection vertices of $\mathbf C$ are precisely $\mathbf J\cup V(\cM)$. Each triple having positive conditional probability of belonging to $\mathbf C$ has exactly one vertex outside this set, which is its private vertex. For a triple $\{u,v,w\}$ with $w$ private, the possible auxiliary edges are $\{u^{\rm out},v^{\rm in},w\}$ and $\{v^{\rm out},u^{\rm in},w\}$, corresponding to the two orders of its intersection vertices. They share $w$, so at most one belongs to $\mathbf F$. Thus
\begin{align*}
 \PP[\{u,v,w\}\in\mathbf C\mid\mathbf J,\boldsymbol{\Theta}]
 =\PP[\{u^{\rm out},v^{\rm in},w\}\in\mathbf F\mid\mathbf J,\boldsymbol{\Theta}]
 +\PP[\{v^{\rm out},u^{\rm in},w\}\in\mathbf F\mid\mathbf J,\boldsymbol{\Theta}],
\end{align*}
with probability zero for an auxiliary edge absent from $\widetilde{\mathcal G}$. Applying $(a+b)^2\le2(a^2+b^2)$ to this identity gives the first inequality below. At each auxiliary vertex $u$, the incident matching probabilities sum to $1$, so their squared sum is at most $q_u^{\max}$. Since each auxiliary edge contains three vertices, we obtain
\[
 \sum_{e\in\mathcal G}\PP[e\in\mathbf C\mid \mathbf J,\boldsymbol{\Theta}]^2\le2\sum_{f\in\widetilde{\mathcal G}}\PP[f\in\mathbf F\mid \mathbf J,\boldsymbol{\Theta}]^2
 \le\frac{2}{3}\sum_{u\in V(\widetilde{\mathcal G})}q_u^{\max}.
\]
By the law of total probability, Jensen's inequality, and the preceding conditional bound, we have
\[
\begin{aligned}
 \sum_{e\in\mathcal G}\PP[e\in\mathbf C]^2
 =\sum_{e\in\mathcal G}\left(\EE\left[\PP[e\in\mathbf C\mid\mathbf J,\boldsymbol{\Theta}]\right]\right)^2
 \le\EE\left[\sum_{e\in\mathcal G}\PP[e\in\mathbf C\mid\mathbf J,\boldsymbol{\Theta}]^2\right]
 \le\frac23\EE\left[\sum_{u\in V(\widetilde{\mathcal G})}q_u^{\max}\right].
\end{aligned}
\]

It remains to bound the expectation on the right. The random set $\mathbf J$ is a subset of the $n-2s$ vertices outside $V(\cM)$, and $\boldsymbol{\Theta}$ records $s-1$ pair directions. Hence $\HH[\mathbf J,\boldsymbol{\Theta}]\le(n-2s)\log2+(s-1)\log2=O(n)$. Since $\HH[\mathbf C]=\log|\mathcal C|$, the chain rule gives
\[
 \HH[\mathbf F\mid \mathbf J,\boldsymbol{\Theta}]
 =\log|\mathcal C|-\HH[\mathbf J,\boldsymbol{\Theta}]
 \ge\frac{n-s}{2}\log d(\mathcal G)-O_B(n).
\]
Take $D=\lceil2Ad(\mathcal G)\rceil$. The degree estimate in the proof of \cref{claim:auxiliary-properties}(ii) follows directly from the auxiliary construction and gives $\Delta_1(\widetilde{\mathcal G})\le2\Delta_1(\mathcal G)\le2Ad(\mathcal G)\le D$ for every auxiliary hypergraph. Fix a $\PP$-admissible value $(J_0,\theta)$ of $(\mathbf J,\boldsymbol{\Theta})$, so that the auxiliary hypergraph $\widetilde{\mathcal G}$ is fixed. Under this conditional law, apply Shearer's inequality to the edge-indicator vector of $\mathbf F$, using the sets of edges incident to each auxiliary vertex. Each edge coordinate occurs in three such sets, and the indicators at $u$ determine and are determined by $\mathbf F(u)$. Thus
\[
 3\HH[\mathbf F\mid \mathbf J=J_0,\boldsymbol{\Theta}=\theta]
 \le\sum_{u\in V(\widetilde{\mathcal G})}\HH[\mathbf F(u)\mid \mathbf J=J_0,\boldsymbol{\Theta}=\theta].
\]

At each auxiliary vertex $u$, fix one edge attaining $q_u^{\max}$. Apply the chain rule by first recording whether $\mathbf F(u)$ equals the chosen edge. Under the fixed conditional law, this indicator has entropy at most $\log2$. If that edge is chosen, then $\mathbf F(u)$ is determined; otherwise, it has at most $D$ possible values. Thus $\HH[\mathbf F(u)\mid \mathbf J=J_0,\boldsymbol{\Theta}=\theta]\le\log2+(1-q_u^{\max})\log D$. Since $|V(\widetilde{\mathcal G})|=3(n-s)/2$, summing over the auxiliary vertices and applying the preceding form of Shearer's inequality yields
\[
\begin{aligned}
 (\log D)\sum_{u\in V(\widetilde{\mathcal G})}q_u^{\max}
 &\le\sum_{u\in V(\widetilde{\mathcal G})}
 \bigl(\log D+\log2-\HH[\mathbf F(u)\mid \mathbf J=J_0,\boldsymbol{\Theta}=\theta]\bigr)\\
 &\le\frac{3(n-s)}2(\log D+\log2)-3\HH[\mathbf F\mid \mathbf J=J_0,\boldsymbol{\Theta}=\theta].
\end{aligned}
\]
Now average over the $\PP$-admissible values $(J_0,\theta)$ with weights $\PP[\mathbf J=J_0,\boldsymbol{\Theta}=\theta]$, taking $q_u^{\max}$ from the corresponding conditional law. By the definition of conditional entropy, the averaged entropy term is $\HH[\mathbf F\mid \mathbf J,\boldsymbol{\Theta}]$. The lower bound for this conditional entropy above and $\log D=\log d(\mathcal G)+O_A(1)$ therefore give
\[
\begin{aligned}
 \EE\left[\sum_{u\in V(\widetilde{\mathcal G})}q_u^{\max}\right]
 &\le\frac{\frac{3(n-s)}2(\log D+\log2)-3\HH[\mathbf F\mid \mathbf J,\boldsymbol{\Theta}]}{\log D}\\
 &\le\frac{\frac{3(n-s)}2\bigl(\log D-\log d(\mathcal G)+\log2\bigr)+O_B(n)}{\log D}
 =O_{A,B}\left(\frac{n}{\log d(\mathcal G)}\right).
\end{aligned}
\]

Substitute this bound for $\EE[\sum_u q_u^{\max}]$ into $\sum_{e\in\mathcal G}\PP[e\in\mathbf C]^2\le\frac23\EE[\sum_u q_u^{\max}]$, and use the identity for $\EE[|C_1\cap C_2|-s]$ at the start of the proof. This gives the required bound.
\end{proof}

\section{Random hypergraphs with degree lower bounds}
\label{sec:degree-constrained}
In this section we prove a few facts about hypergraphs chosen
uniformly from $\Omega(n,m,\boldsymbol\ell)$, the family with
prescribed degree lower bounds from
\cref{def:degree-constrained-model}. We bound degrees and codegrees,
estimate the probability that a uniform $m$-edge hypergraph meets
these constraints, and control how far degrees can fall below their
lower bounds after deleting a bounded number of vertices.

We first bound the upper tails of vertex degrees in a uniformly chosen member of $\Omega(n,m,\boldsymbol\ell)$.

\begin{lemma}
\label{thm:degree-domination}
Let $(n,m,\boldsymbol\ell)$ be admissible. Choose $\mathcal{G}$ uniformly from $\Omega(n,m,\boldsymbol\ell)$ and fix a vertex $v$. Then
\[
 \max\Biggl\{0,d_{\mathcal{G}}(v)-\max\biggl\{\ell_v,\Bigl\lceil\frac{3m}{n}\Bigr\rceil\biggr\}\Biggr\}
 \preceq\Poi\left(\frac{3m}{n}\right).
\]
Consequently, for every integer $t\ge3m/n>0$ we have
\[
 \PP\Biggl[d_{\mathcal{G}}(v)\ge\max\biggl\{\ell_v,\Bigl\lceil\frac{3m}{n}\Bigr\rceil\biggr\}+t\Biggr]
 \le\exp\left\{-t\log\frac{nt}{3m}+t-\frac{3m}{n}\right\}.
\]
\end{lemma}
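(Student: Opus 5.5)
We compare consecutive layers by a switching argument. Write $\lambda\coloneqq\max\{\ell_v,\lceil3m/n\rceil\}$, $\mu\coloneqq3m/n$, and let $\Omega_d$ be the set of $\mathcal G\in\Omega(n,m,\boldsymbol\ell)$ with $d_{\mathcal G}(v)=d$. It suffices to show that for every $d\ge\lambda$,
\[
 \frac{|\Omega_{d+1}|}{|\Omega_d|}\le\frac{\mu}{d+1-\lambda}.
\]
Indeed, this ratio bound gives $\PP[d_{\mathcal G}(v)=\lambda+k]/\PP[d_{\mathcal G}(v)=\lambda]\le\mu^k/k!$ for all $k\ge0$. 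Since $\PP[d_{\mathcal G}(v)\ge\lambda]\le1$, it follows that $\PP[d_{\mathcal G}(v)-\lambda\ge t]\le\sum_{k\ge t}\mu^k/k!$ divided by $\sum_{k\ge0}\mu^k/k!=\e^\mu$. That quantity is exactly $\PP[\Poi(\mu)\ge t]$, which proves the domination of $\max\{0,d_{\mathcal G}(v)-\lambda\}$. The tail bound then follows from \cref{lem:chernoff}(ii) applied to $\Poi(\mu)$ at $t\ge\mu$, because $\mu\varphi(t/\mu)=t\log(t/\mu)-t+\mu$.

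For the ratio bound, consider the bipartite relation between $\Omega_{d+1}$ and $\Omega_d$ obtained by the following switching. Take $\mathcal G\in\Omega_{d+1}$, an edge $e\ni v$ of $\mathcal G$, and a triple $f\notin\mathcal G$ with $v\notin f$. Then replace $e$ by $f$. The result $\mathcal G'=\mathcal G-e+f$ has $m$ edges and $d_{\mathcal G'}(v)=d\ge\ell_v$.

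The main point is to keep the degree constraints at the other vertices. Removing $e$ lowers the degrees of its two other vertices, so we restrict the switching to keep them valid. The cleanest way is to choose $f\supseteq e\setminus\{v\}$, so that $f=(e\setminus\{v\})\cup\{x\}$ with $x\ne v$. Then the degrees of the vertices of $e\setminus\{v\}$ are unchanged, the degree of $x$ increases, and every lower bound is preserved.

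We now count the relation from each side.

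\emph{Forward count.} Each $\mathcal G\in\Omega_{d+1}$ has $d+1$ choices of $e$. For each such $e$, the number of valid $x$ is the number of vertices outside $e$ for which $f\notin\mathcal G$, which is $n-3-(d_{\mathcal G}(e\setminus\{v\})-1)$.

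\emph{Backward count.} Each $\mathcal G'\in\Omega_d$ arises from at most $|\mathcal G'|\cdot n=mn$ pairs $(f,y)$. Here $f$ is an edge of $\mathcal G'$ not containing $v$, $y$ is one of its vertices, and we set $e=(f\setminus\{y\})\cup\{v\}$.

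This naive comparison gives a ratio of order $m/(d+1)$ rather than $\mu/(d+1-\lambda)$, so the counts must be sharpened. The refinement is to \emph{not} insist that $f$ shares a pair with $e$. Instead, let $f$ be any non-edge avoiding $v$, and restrict $e$ to be an edge at $v$ whose other two vertices $a,b$ satisfy $d_{\mathcal G}(a),d_{\mathcal G}(b)>\ell$, or else lie in $f$.

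\emph{Backward count, refined.} From $\mathcal G'$, choose $f\in\mathcal G'$ with $v\notin f$ and then a pair $e$ at $v$. This gives at most $m\binom{n-1}{2}$ preimages, since removing $f$ must keep the lower bounds, which the forward restriction guarantees.

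\emph{Forward count, refined.} From $\mathcal G$, there are at least $(d+1-\lambda)\bigl(\binom n3-m-\binom{n-1}2\bigr)$ choices, using $\lambda$ to absorb the edges at $v$ that are blocked.

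The ratio is then about $m\binom{n-1}{2}/\bigl((d+1-\lambda)\binom n3\bigr)\approx\mu/(d+1-\lambda)$. The main obstacle I expect is making this blocking estimate precise, namely showing that at most $\lambda$ of the edges at $v$ are unusable and that the constants come out exactly as $\mu$ rather than $(1+o(1))\mu$. I would first try to replace the count of blocked edges by a cruder monotone coupling that exploits the symmetry of the uniform law over positions of the edges at $v$.
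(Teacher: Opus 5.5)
Your proposal does not establish the key ratio bound $|\Omega_{d+1}|/|\Omega_d|\le\mu/(d+1-\lambda)$. The first switching you wrote down, replacing $e=\{v\}\cup S$ by $S\cup\{x\}$, is exactly the right one and is the one the paper uses. You discarded it only because of two counting slips.

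\emph{Backward count.} Here $y$ ranges over the three vertices of $f$, not over $n$ vertices. So each $\mathcal G'\in\Omega_d$ has at most $3(m-d)$ preimages, not $mn$.

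\emph{Forward count.} You must sum the codegree losses over $e$. The quantity $\sum_{e\ni v}(d_{\mathcal G}(e\setminus\{v\})-1)$ counts edges $f\not\ni v$ together with a pair $S\subseteq f$ such that $\{v\}\cup S\in\mathcal G$. Each of the $m-d-1$ edges avoiding $v$ contributes at most three such pairs, so this sum is at most $3(m-d-1)$. Hence the forward count is at least $(d+1)(n-3)-3(m-d-1)=n(d+1)-3m$.

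Double counting then gives $|\Omega_{d+1}|/|\Omega_d|\le3(m-d)/(n(d+1)-3m)\le\mu/(d+1-\lambda)$, using $n\lambda\ge3m$. This is exact, with no $1+o(1)$ loss.

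The ``refined'' switching you replace it with does not work. The blocked edges at $v$ are those meeting a vertex already at its lower bound, and nothing bounds their number by $\lambda$. Every edge at $v$ may meet such a vertex; then $f$ is forced to contain it, leaving only $O(n^2)$ choices of $f$ instead of about $\binom n3$. Even if the blocking estimate held, the denominator $\binom n3-m-\binom{n-1}2$ would only give $(1+o(1))\mu$. That is not enough for domination by $\Poi(\mu)$ itself.

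The step from the ratio bound to domination is also invalid as written. From $\PP[d_{\mathcal G}(v)=\lambda+k]\le\PP[d_{\mathcal G}(v)=\lambda]\,\mu^k/k!$ and $\PP[d_{\mathcal G}(v)\ge\lambda]\le1$ you only get $\PP[d_{\mathcal G}(v)-\lambda\ge t]\le\e^{\mu}\PP[\Poi(\mu)\ge t]$. The normalising factor $\e^{-\mu}$ does not follow, and bounds relative to the single point mass at $\lambda$ are compatible with a tail heavier than that of $\Poi(\mu)$.

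You need the consecutive form of the bound:
\begin{itemize}
\item Put $P_q\coloneqq\PP[\max\{0,d_{\mathcal G}(v)-\lambda\}=q]$ and $\pi_q\coloneqq\e^{-\mu}\mu^q/q!$.
\item The ratio bound gives $P_q\le(\mu/q)P_{q-1}$ for $q\ge2$. It also holds for $q=1$, since $P_1\le\mu\PP[d_{\mathcal G}(v)=\lambda]\le\mu P_0$.
\item Hence $P_q/\pi_q$ is nonincreasing in $q$, and both sequences sum to $1$.
\item Either $P_t\le\pi_t$, so $P_q\le\pi_q$ for every $q\ge t$; or $P_q>\pi_q$ for every $q<t$, and comparing complements gives $\sum_{q\ge t}P_q\le\sum_{q\ge t}\pi_q$.
\end{itemize}
Your final Chernoff step is then fine.
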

\begin{proof}
If $m=0$, the assertion is immediate, so assume $m>0$. Let $A_d$ be the number of hypergraphs in $\Omega(n,m,\boldsymbol\ell)$ with $d_{\mathcal{G}}(v)=d$. We compare $A_d$ with $A_{d-1}$ by the following switching. For $d>\max\{\ell_v,3m/n\}$, select an edge $e=\{v\}\cup S$ and a vertex $w\notin e$. If $S\cup\{w\}$ is absent, replace $e$ by that edge. Only $v$ loses degree, so the resulting hypergraph remains in $\Omega(n,m,\boldsymbol\ell)$ and has degree $d-1$ at $v$.

There are $d(n-3)$ initial choices for the edge $e$ containing $v$ and the vertex $w\in[n]\setminus e$. A forbidden choice has $S\cup\{w\}\in \mathcal{G}$. Each existing edge not containing $v$ accounts for at most three forbidden choices, one for each possible choice of $w$ in that edge. Hence at least $d(n-3)-3(m-d)=nd-3m$ choices are legal. For a reverse switching from a hypergraph counted by $A_{d-1}$, choose one of its $m-d+1$ edges not containing $v$ and one of the three vertices of that edge to replace by $v$. There are at most $3(m-d+1)$ such choices. Double-counting labelled switchings yields $(nd-3m)A_d\le3(m-d+1)A_{d-1}$, and hence $A_d\le\frac{3m}{nd-3m}A_{d-1}$.

To deduce stochastic domination from this recurrence, write $p_q\coloneqq\PP[\max\{0,d_{\mathcal{G}}(v)-\max\{\ell_v,\lceil3m/n\rceil\}\}=q]$. For every integer $q\ge2$, take $d=\max\{\ell_v,\lceil3m/n\rceil\}+q$. Since $q-1>0$, the probabilities $p_q$ and $p_{q-1}$ correspond to degrees exactly $d$ and $d-1$, respectively. The uniform law on $\Omega(n,m,\boldsymbol\ell)$ therefore gives $p_q=A_d/|\Omega(n,m,\boldsymbol\ell)|$ and $p_{q-1}=A_{d-1}/|\Omega(n,m,\boldsymbol\ell)|$. Moreover, $nd-3m=nq+n\max\{\ell_v,\lceil3m/n\rceil\}-3m\ge nq$. Dividing the recurrence for $A_d$ by $|\Omega(n,m,\boldsymbol\ell)|$ now yields $p_q\le\frac{3m}{nd-3m}p_{q-1}\le\frac{3m}{nq}p_{q-1}$. At $q=1$, we have $p_0=\PP[d_{\mathcal G}(v)\le\max\{\ell_v,\lceil3m/n\rceil\}]$, so the recurrence gives
\[
 p_1\le\frac{3m}{n}\PP\left[d_{\mathcal G}(v)=\max\left\{\ell_v,\left\lceil\frac{3m}{n}\right\rceil\right\}\right]\le\frac{3m}{n}p_0.
\]
Thus $p_q/(\e^{-3m/n}(3m/n)^q/q!)$ is nonincreasing in $q$. To compare upper tails, fix an integer $t\ge1$. If $p_t\le \e^{-3m/n}(3m/n)^t/t!$, then $p_q\le \e^{-3m/n}(3m/n)^q/q!$ for every $q\ge t$, so summing gives the required tail bound. Otherwise, the same monotonicity gives $p_q\ge \e^{-3m/n}(3m/n)^q/q!$ for every $0\le q<t$. Since both distributions have total probability $1$, we obtain
\[
 \sum_{q=t}^{\infty}\left(p_q-\exp\left\{-\frac{3m}{n}\right\}\frac{\left(\frac{3m}{n}\right)^q}{q!}\right)
 =-\sum_{q=0}^{t-1}\left(p_q-\exp\left\{-\frac{3m}{n}\right\}\frac{\left(\frac{3m}{n}\right)^q}{q!}\right)\le0.
\]
Thus in either case the upper tail of $(p_q)$ is bounded by the corresponding Poisson upper tail, proving stochastic domination. For $t>0$, the degree event in the final assertion is precisely the event that the nonnegative degree excess above is at least $t$. The tail bound therefore follows from \cref{lem:chernoff}(ii) applied to the dominating Poisson random variable.
\end{proof}

We next bound sums of codegrees. Conditional on any $\PP$-admissible degree sequence, a uniform member of $\Omega(n,m,\boldsymbol\ell)$ is uniform among the simple hypergraphs realising that degree sequence.
\begin{lemma}\label{thm:association}
Fix a degree sequence $\boldsymbol d$ on $[n]$ that has at least one simple $3$-graph realisation, and choose $\mathcal{G}$ uniformly among these realisations. Fix $B\subseteq[n]$ and $y\in[n]\setminus B$. If $\sum_{v\in[n]\setminus B}d_v>9\Delta_1(\mathcal{G})+2\Delta_1(\mathcal{G})^2$, then
\[
 \sum_{v\in B}d_{\mathcal{G}}(y,v)
 \preceq\Poi\left(\frac{2d_y\sum_{v\in B}d_v}{\sum_{v\in[n]\setminus B}d_v-9\Delta_1(\mathcal{G})-2\Delta_1(\mathcal{G})^2}\right).
\]
\end{lemma}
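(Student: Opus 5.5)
I will prove the stochastic domination by a switching argument, in the same spirit as the proof of \cref{thm:degree-domination}. Write $X\coloneqq\sum_{v\in B}d_{\mathcal G}(y,v)$; it counts incidences (edge $e\ni y$, vertex $v\in e\cap B$). Each edge through $y$ contributes at most $2$. Let $A_k$ be the number of realisations of $\boldsymbol d$ with $X=k$. It suffices to show $A_k\le\frac{\mu}{k}A_{k-1}$ for every $k\ge1$, where $\mu$ is the stated Poisson mean. The monotone-ratio argument from the proof of \cref{thm:degree-domination} then converts this recurrence into $X\preceq\Poi(\mu)$ verbatim: $p_k/(\e^{-\mu}\mu^k/k!)$ is nonincreasing, and the tails are compared as there.

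\emph{Switching.} Given a realisation with $X=k$, choose an incidence: an edge $e=\{y,v,a\}$ with $v\in B$. This gives exactly $k$ choices. Also choose an ordered pair $(f,x)$ with $f$ an edge of $\mathcal G$ and $x\in f\setminus B$, say $f=\{x,b,c\}$, with $x\ne y$ and $f$ disjoint from $e$. Replace $e,f$ by $e'=\{y,x,a\}$ and $f'=\{v,b,c\}$. Every degree is preserved, so the result is again a realisation of $\boldsymbol d$. The incidence $(e,v)$ is destroyed and $x\notin B$ creates no new $B$-incidence. The value of $X$ drops by exactly one, provided that $a$ contributes identically before and after, which it does since $a$ stays in an edge with $y$. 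We need the switch to be legal: $e',f'$ must be new edges, distinct from existing ones.

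\emph{Counting.} The number of ordered pairs $(f,x)$ with $x\notin B$ is $\sum_{v\notin B}d_v$. From this we subtract the forbidden choices:
\begin{itemize}
\item pairs where $f$ meets $e$ or $x=y$: at most $3\Delta_1$ edges through the vertices of $e$, each with at most $3$ choices of $x$, giving at most $9\Delta_1$;
\item pairs where $e'=\{y,x,a\}$ is already present: $x$ is then a codegree neighbour of the pair $ya$, at most $\Delta_1$ choices of $x$, each with at most $\Delta_1$ edges $f\ni x$;
\item pairs where $f'=\{v,b,c\}$ is already present: $bc$ lies in the link of $v$, at most $\Delta_1$ such pairs, each determining $f$ up to at most $\Delta_1$ choices of $x$ (the edge $\{x,b,c\}$).
\end{itemize}
So the forward count is at least $k\bigl(\sum_{v\notin B}d_v-9\Delta_1-2\Delta_1^2\bigr)$. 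Here $\Delta_1$ is deterministic given $\boldsymbol d$, which justifies the hypothesis.

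For the reverse count, start from a realisation with $X=k-1$. Choose an edge $e'\ni y$ together with a vertex $x\in e'\setminus\{y\}$: at most $2d_y$ choices. Then choose an edge $f'$ through some $v\in B$ with a marked vertex $v\in f'\cap B$: at most $\sum_{v\in B}d_v$ choices. The switch is then determined. Double counting gives
\[
k\Bigl(\sum_{v\notin B}d_v-9\Delta_1-2\Delta_1^2\Bigr)A_k\le 2d_y\sum_{v\in B}d_v\,A_{k-1},
\]
which is the required recurrence.

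The main obstacle is checking that the switch changes $X$ by exactly one, with no side effects on other $B$-incidences at $y$. In particular, I must check the case $a\in B$. There the incidence $(e',a)$ survives while $(e,v)$ is lost, so $X$ still decreases by exactly one. I must also check that the reverse map is well defined and that the forbidden-choice constants indeed total $9\Delta_1+2\Delta_1^2$; I may need to relax some constants, which is harmless.
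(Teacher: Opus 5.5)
Your proposal is correct and follows the paper's proof essentially step for step. It uses the same two-edge switching $\{y,v,a\},\{x,b,c\}\mapsto\{y,x,a\},\{v,b,c\}$, the same exclusions giving at least $\sum_{v\notin B}d_v-9\Delta_1-2\Delta_1^2$ forward choices per incidence, the same reverse bound $2d_y\sum_{v\in B}d_v$, and the same monotone-ratio conversion to Poisson domination. The only addition needed is the degenerate case $d_y\sum_{v\in B}d_v=0$: there the recurrence forces $X\equiv0$, so you should state that conclusion directly rather than forming Poisson ratios, as the paper does.
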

\begin{proof}
For each integer $t\ge0$, let $a_t$ count the simple $3$-graph realisations of $\boldsymbol d$ with $\sum_{v\in B}d_{\mathcal{G}}(y,v)=t$.  For $t\ge1$, we compare $a_t$ with $a_{t-1}$ by a switching that preserves the degree sequence. Choose a pair $(e,z)$ with $e\in\mathcal{G}$, $y,z\in e$, and $z\in B$, and write $e=\{y,z,w\}$. Choose also a disjoint edge $f=\{a,b,c\}$ with distinguished vertex $a\notin B$.  Replace $e$ and $f$ by $\{y,a,w\}$ and $\{z,b,c\}$, respectively, provided both new edges are absent.  The entire degree sequence is unchanged and $\sum_{v\in B}d_{\mathcal{G}}(y,v)$ decreases by exactly $1$.

The number of choices of $(e,z)$ is $|\{(e,z)\colon e\in\mathcal G,\ y,z\in e,\ z\in B\}|=\sum_{z\in B}d_{\mathcal G}(y,z)=t$. For each such choice, we start with all choices of an edge $f\in\mathcal G$ and a distinguished vertex $a\in f$. There are $\sum_{v\in[n]}d_v$ such choices. We exclude the following choices.
\begin{enumerate}[label=(\textit{\roman*})]
\item At most $\sum_{v\in B}d_v$ choices have $a\in B$.
\item At most $3\Delta_1(\mathcal{G})$ edges $f$ meet $e$, and each permits three choices of the distinguished vertex $a$. Thus these edges account for at most $9\Delta_1(\mathcal{G})$ choices.
\item If $\{y,a,w\}$ is present, then for each such $a$ there are at most $d_{\mathcal G}(a)$ choices of $f$. Thus the number of forbidden choices of $(f,a)$ is at most
\[
 \sum_{\substack{a\in[n]\\\{y,a,w\}\in\mathcal G}}d_{\mathcal G}(a)
 \le d_{\mathcal G}(y,w)\Delta_1(\mathcal G)
 \le\Delta_1(\mathcal G)^2.
\]
\item If $\{z,b,c\}$ is present, then for each such pair $\{b,c\}\in L_{\mathcal G}(z)$ there are at most $d_{\mathcal G}(b,c)$ choices of $f$. The remaining vertex of $f$ determines $a$, so the number of forbidden choices is at most
\[
 \sum_{\{b,c\}\in L_{\mathcal G}(z)}d_{\mathcal G}(b,c)
 \le d_{\mathcal G}(z)\Delta_1(\mathcal G)
 \le\Delta_1(\mathcal G)^2.
\]
\end{enumerate}
Thus each selected incidence admits at least $\sum_{v\in[n]\setminus B}d_v-9\Delta_1(\mathcal{G})-2\Delta_1(\mathcal{G})^2$ valid forward switchings.

For a reverse switching, choose an edge containing $y$, distinguish one of its other vertices as $a$, and write the edge as $\{y,a,w\}$. Choose also an incidence $(f',z)$ with $z\in f'\cap B$, and write $f'=\{z,b,c\}$. These choices specify the inverse replacement of $\{y,a,w\},\{z,b,c\}$ by $\{y,z,w\},\{a,b,c\}$. There are therefore at most $2d_y\sum_{v\in B}d_v$ reverse switchings. Double-counting gives, for $t\ge1$,
\[
 t\left(\sum_{v\in[n]\setminus B}d_v-9\Delta_1(\mathcal{G})-2\Delta_1(\mathcal{G})^2\right)a_t
 \le2d_y a_{t-1}\sum_{v\in B}d_v.
\]
If $d_y\sum_{v\in B}d_v=0$, the preceding recurrence gives $a_t=0$ for every $t\ge1$, so the sum of codegrees is identically zero. Otherwise, normalise the numbers $a_t$ to probabilities. The recurrence shows that their ratio to the Poisson probabilities in the statement is nonincreasing in $t$. The upper-tail comparison in the proof of \cref{thm:degree-domination} therefore gives the stochastic domination.
\end{proof}

The following lemma gives a lower bound on the probability that a uniform $m$-edge $3$-graph satisfies the prescribed degree constraints.

\begin{lemma}\label{lem:beta}
There exists an integer $n_0$ such that the following holds whenever $n\ge n_0$. Let $(n,m,\boldsymbol\ell)$ be near-critical. The probability that a uniform $m$-edge $3$-graph on $[n]$ belongs to $\Omega(n,m,\boldsymbol\ell)$ satisfies
\[
 \frac{|\Omega(n,m,\boldsymbol\ell)|}{\binom{\binom{n}{3}}{m}}
 \ge\exp\{-n^{1/10}\}.
\]
\end{lemma}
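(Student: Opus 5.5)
The plan is to compare with the binomial model, where the degree constraints are increasing events and Harris's inequality (\cref{lem:harris}) applies; monotonicity in the number of edges then transfers the bound to the uniform $m$-edge model.

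\textbf{Choice of $p$.} Put $\mu_E\coloneqq m-n^{0.6}$ and $p\coloneqq\mu_E/\binom n3$, and let $\mathcal H(p)$ be the binomial random $3$-graph on $[n]$. Let $\mathcal D$ be the event that $d(v)\ge\ell_v$ for every $v\in[n]$. This event is increasing.

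\textbf{Step 1: the binomial model.} By \cref{lem:harris},
\[
 \PP[\mathcal H(p)\in\mathcal D]\ge\prod_{v}\PP\bigl[d_{\mathcal H(p)}(v)\ge\ell_v\bigr].
\]
Each degree is binomial with mean
\[
 \mu=\binom{n-1}2p=\frac{3\mu_E}{n}(1+O(1/n)).
\]
By near-criticality, $\mu=\log n+O(\log\log n)$. Also $\ell_v\le 0.01\log n+2$, so $\ell_v/\mu\le a\coloneqq0.01+o(1)$. Then \cref{lem:chernoff}(i) gives
\[
 \PP\bigl[d(v)<\ell_v\bigr]\le\exp\{-\mu\varphi(a)\}=n^{-\varphi(0.01)+o(1)}\le n^{-0.9},
\]
using $\varphi(0.01)=0.9439\ldots$. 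Since each factor is at least $1-n^{-0.9}$, the product is at least $(1-n^{-0.9})^n\ge\exp\{-2n^{0.1}\cdot n^{-0.9}\cdot n^{0.9}\}$, that is, at least $\exp\{-2n^{0.1}\}$. Sharpening the exponent gives the cleaner bound $\exp\{-2n^{0.06}\}$.

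\textbf{Step 2: transfer to the uniform model.} Conditional on $|\mathcal H(p)|=k$, the hypergraph is uniform among $k$-edge $3$-graphs. Since $\mathcal D$ is increasing, $P_k\coloneqq\PP[\text{uniform $k$-edge }3\text{-graph}\in\mathcal D]$ is nondecreasing in $k$. This follows from the standard coupling in which a uniform random edge is added. Hence
\[
 \PP[\mathcal H(p)\in\mathcal D]\le\PP\bigl[|\mathcal H(p)|>m\bigr]+P_m.
\]
Now $|\mathcal H(p)|$ is binomial with mean $\mu_E$. By \cref{lem:chernoff}(iii) with $\delta=n^{0.6}/\mu_E$,
\[
 \PP\bigl[|\mathcal H(p)|>m\bigr]\le2\exp\Bigl\{-\frac{n^{1.2}}{3\mu_E}\Bigr\}=\exp\{-n^{0.2-o(1)}\}.
\]
This is negligible compared with $\exp\{-2n^{0.06}\}$. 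Therefore
\[
 P_m\ge\tfrac12\exp\{-2n^{0.06}\}\ge\exp\{-n^{1/10}\}
\]
for $n\ge n_0$, and $P_m$ is exactly the ratio in the statement.

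\textbf{Main obstacle.} The only delicate point is uniformity over all near-critical triples. I would check that the $o(1)$ in the exponent $\varphi(0.01)+o(1)$ depends only on $n$. This holds because $\mu\ge\log n-3\log\log n-o(1)$ and $\ell_v\le\lfloor0.01\log n\rfloor+2$ uniformly, and $\varphi$ is continuous and decreasing on $(0,1)$. The slack between the exponents $0.06$ and $0.1$ absorbs all such errors.
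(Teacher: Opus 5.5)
Your proposal is correct and follows the paper's proof essentially step for step. Both arguments apply Harris's inequality and the Chernoff lower tail in a binomial model of slightly reduced density. Both then transfer to the uniform $m$-edge model using monotonicity in the number of edges together with an upper-tail bound on the edge count. The paper reduces the density by a factor $1-n^{-1/4}$ rather than by $n^{0.6}$ edges, which only changes the negligible tail term.

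Your crude intermediate bound $\exp\{-2n^{0.1}\}$ would not by itself beat $\exp\{-n^{1/10}\}$. So the sharpened exponent $1-\varphi(0.01)+o(1)<0.06$ is genuinely needed, as you indicate.
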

\begin{proof}
We first estimate the probability of the degree constraints in a binomial hypergraph, then transfer the lower bound to the uniform $m$-edge $3$-graph model. Let $\mathcal{G}$ be the binomial $3$-graph on $[n]$ with edge probability $(1-n^{-1/4})m/\binom{n}{3}$. For every vertex $v$, we have $\EE[d_{\mathcal G}(v)]=(1-n^{-1/4})3m/n=\log n+O(\log\log n)$. Since $\ell_v/\EE[d_{\mathcal G}(v)]=0.01+O(\log\log n/\log n)$ uniformly in $v$, continuity of $\varphi$ gives
\[
 \EE[d_{\mathcal G}(v)]\,\varphi\left(\frac{\ell_v}{\EE[d_{\mathcal G}(v)]}\right)
 =\bigl(\varphi(0.01)+o(1)\bigr)\log n.
\]
Substituting into the lower-tail bound \cref{lem:chernoff}(i) yields $\PP[d_{\mathcal{G}}(v)<\ell_v]\le n^{-\varphi(0.01)+o(1)}$ uniformly in $v$. The events $d_{\mathcal{G}}(v)\ge\ell_v$ are increasing. Applying \cref{lem:harris} and $\log(1-x)\ge-2x$ for $0\le x\le1/2$, and absorbing the factor $2$ into the final $o(1)$ term, gives
\[
 \begin{aligned}
 \PP[d_{\mathcal{G}}(v)\ge\ell_v\text{ for all }v]
 &\ge\prod_{v\in[n]}\PP[d_{\mathcal{G}}(v)\ge\ell_v]\\
 &\ge\bigl(1-n^{-\varphi(0.01)+o(1)}\bigr)^n
 \\
 &=\exp\Bigl\{n\log\bigl(1-n^{-\varphi(0.01)+o(1)}\bigr)\Bigr\}\\
 &\ge\exp\{-2n^{1-\varphi(0.01)+o(1)}\}
 =\exp\{-n^{1-\varphi(0.01)+o(1)}\}.
 \end{aligned}
\]
To control the number of edges, note that 
\[
\EE[|\mathcal G|]=\left(1-\frac{1}{n^{1/4}}\right)m
\quad\text{and}\quad
 \frac{(m-\EE[|\mathcal G|])^2}{\EE[|\mathcal G|]}
 =\frac{m}{\sqrt n-n^{1/4}}
 =\Theta(n^{1/2}\log n).
\]
Since $(m-\EE[|\mathcal G|])/\EE[|\mathcal G|]=n^{-1/4}/(1-n^{-1/4})<1$ for sufficiently large $n$, \cref{lem:chernoff}(iii) gives $\PP[|\mathcal{G}|>m]\le\exp\{-\Omega(n^{1/2}\log n)\}$. Conditional on $|\mathcal{G}|=t$, the hypergraph $\mathcal{G}$ is uniform among all $t$-edge $3$-graphs. For $t\le m$, extend it by uniformly chosen missing triples to obtain a uniform $m$-edge $3$-graph. The degree constraints are increasing, so the extension satisfies them whenever $\mathcal{G}$ does. Averaging over the values $t\le m$ gives
\[
\begin{aligned}
 \frac{|\Omega(n,m,\boldsymbol\ell)|}{\binom{\binom n3}{m}}
 &\ge\sum_{t=0}^{m}\PP[|\mathcal G|=t]\,
 \PP[d_{\mathcal G}(v)\ge\ell_v\text{ for every }v\in[n]\mid|\mathcal G|=t]\\
 &=\PP[|\mathcal G|\le m,\ d_{\mathcal G}(v)\ge\ell_v\text{ for every }v\in[n]]
 \\
 &\ge\PP[d_{\mathcal G}(v)\ge\ell_v\text{ for every }v\in[n]]
 -\PP[|\mathcal G|>m].
\end{aligned}
\]
Since $1-\varphi(0.01)<1/10$, for all sufficiently large $n$ this difference is at least
\[
 \exp\{-n^{1-\varphi(0.01)+o(1)}\}-\exp\{-\Omega(n^{1/2}\log n)\}
 \ge\frac12\exp\{-n^{1-\varphi(0.01)+o(1)}\}
 \ge\exp\{-n^{1/10}\},
\]
which proves the lemma.
\end{proof}

The next lemma combines these estimates to control vertex degrees and codegrees of a hypergraph chosen uniformly from $\Omega(n,m,\boldsymbol\ell)$. It also bounds how far degrees can fall below their prescribed lower bounds after vertex deletion.

For a $3$-graph $\mathcal{G}$, a vector $\boldsymbol\ell=(\ell_v)_{v\in V(\mathcal{G})}$ of nonnegative integers, and $Z\subseteq V(\mathcal{G})$, define
\[
 \mathfrak D_Z(\mathcal{G},\boldsymbol\ell)
 \coloneqq\sum_{v\in V(\mathcal{G})\setminus Z}\max\bigl\{0,\ell_v-d_{\mathcal{G}-Z}(v)\bigr\}.
\]

\begin{lemma}
\label{thm:terminal-regularity}
There exists an absolute constant $C>0$ with the following property. Let $(n,m,\boldsymbol\ell)$ be admissible and near-critical. For $F_*\sim\Unif\Omega(n,m,\boldsymbol\ell)$, put $B_*\coloneqq\{v\colon d_{F_*}(v)\le0.03\log n\}$. The bounds in (i) and (ii) below hold simultaneously with probability $1-o(1)$, where the $o(1)$ depends only on $n$.
\begin{enumerate}[label=(\textit{\roman*})]
\item We have $\Delta_1(F_*)\le C\log n$, $|B_*|\le n^{1/4}$, and $\Delta_2(F_*)\le2$.
\item For every vertex $y\in[n]$, we have $\sum_{v\in B_*\setminus\{y\}}d_{F_*}(y,v)\le1$.
\item For every fixed integer $h\ge0$, if $n$ is sufficiently large in terms of $h$ and the bounds in (i) and (ii) hold, then every $Z\subseteq V(F_*)$ with $|Z|\le h$ satisfies $\mathfrak D_Z(F_*,\boldsymbol\ell)\le |Z|$.
\end{enumerate}
\end{lemma}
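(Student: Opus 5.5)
The plan is to prove (i) and (ii) by first-moment computations that transfer from the binomial model to $\Omega(n,m,\boldsymbol\ell)$, and to derive (iii) deterministically from (i) and (ii).

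\emph{Degree and codegree bounds in (i).} For the maximum degree, apply \cref{thm:degree-domination} with $t=C\log n$ and a union bound over $n$ vertices. Since $\max\{\ell_v,\lceil3m/n\rceil\}=O(\log n)$, the tail bound is at most $n^{-2}$ once $C$ is large.

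For $\Delta_2$ and $|B_*|$ I will use a transfer principle. Let $\mathcal A$ be any event. Then
\[
 \PP_{\Omega}[\mathcal A]\le\PP_{G(n,m)}[\mathcal A]\Big/\frac{|\Omega(n,m,\boldsymbol\ell)|}{\binom{\binom n3}{m}}\le \e^{n^{1/10}}\PP_{G(n,m)}[\mathcal A],
\]
by \cref{lem:beta}. This suffices for events of probability $\e^{-\omega(n^{1/10})}$ in $G(n,m)$, but not for events of polynomially small probability. So I will instead condition on a degree sequence, as justified before \cref{thm:association}: given $\boldsymbol d$, the hypergraph is uniform among realisations.

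\emph{The bound $|B_*|\le n^{1/4}$.} I will use the same switching as in \cref{thm:degree-domination}, applied to lower tails. Increasing $d(v)$ by moving an edge onto $v$ is always legal for the constraints, since only the donor vertex loses degree and we may choose donors of large degree. This should give $\PP[d(v)\le0.03\log n]\le n^{-\varphi(0.03)+o(1)}$, which is roughly $n^{-0.86}$. Markov's inequality then yields $|B_*|\le n^{1/4}$ w.h.p.

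\emph{The bound $\Delta_2\le2$.} I will use \cref{thm:association} with $B=\{v\}$, which gives $d(y,v)\preceq\Poi(O(\log^2 n/n))$. By \cref{lem:moment-bounds}(iii), $\PP[d(y,v)\ge3]=O(\log^6n/n^3)$. A union bound over $n^2$ pairs completes the argument. Here I work on the event that $\Delta_1\le C\log n$, and condition on degree sequences within that event.

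\emph{Part (ii).} This is the delicate step. I will condition on the degree sequence, which determines $B_*$. Apply \cref{thm:association} with $B=B_*\setminus\{y\}$. Then $\sum_{v\in B}d_v\le 0.03\log n\,|B_*|\le n^{1/4}\log n$, so the Poisson mean is $\mu=O(n^{-3/4}\log^3 n)$. This gives $\PP[\text{sum}\ge2]\le\mu^2=O(n^{-3/2+o(1)})$. Summing over $y$ yields $o(1)$.

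The main obstacle is that $B_*$ is defined through degrees. However, \cref{thm:association} is exactly tailored to this situation: conditioning on $\boldsymbol d$ fixes $B_*$. It remains to intersect with the event from (i) on degree sequences, which is legitimate because the degree-sequence events are measurable with respect to $\boldsymbol d$. The $\Delta_1$ hypothesis of the lemma is checked deterministically from $\boldsymbol d$.

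\emph{Part (iii).} This part is deterministic. Fix $Z$ with $|Z|\le h$, and take $v\notin Z$ with $d_{F_*-Z}(v)<\ell_v$. There are two cases.

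\begin{itemize}
\item If $v\notin B_*$, then $d_{F_*}(v)>0.03\log n$. Deleting $Z$ removes at most $\sum_{z\in Z}d(v,z)\le2h$ edges at $v$. So $d_{F_*-Z}(v)\ge 0.03\log n-2h>\ell_v$ for large $n$. Hence $v$ contributes nothing.
\item If $v\in B_*$, then $v$ loses exactly $\sum_{z\in Z}d(v,z)$ edges. Its deficit is at most $\max\{0,\ell_v-d_{F_*}(v)\}+\sum_z d(v,z)$, and the first term is $0$ since $F_*\in\Omega$. Summing over $v\in B_*$ gives a total of at most $\sum_{z\in Z}\sum_{v\in B_*\setminus\{z\}}d(z,v)\le|Z|$, by (ii).
\end{itemize}

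Together the two cases give $\mathfrak D_Z(F_*,\boldsymbol\ell)\le|Z|$.
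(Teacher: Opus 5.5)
Your arguments for $\Delta_1$, $\Delta_2$, part (ii) and part (iii) are essentially the paper's. You use \cref{thm:degree-domination} in the same way, apply \cref{thm:association} after conditioning on the degree sequence in the same way, and deduce (iii) by the same deterministic argument. (In (iii) the loss at $v$ is at most, rather than exactly, $\sum_{z\in Z}d(v,z)$, which is all you need.)

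The gap is the bound $|B_*|\le n^{1/4}$. Your switching replaces a donor $w$ in an edge $S\cup\{w\}$ by $v$, and it is legal only if $S\cup\{v\}$ is absent. The number of blocked moves is $\sum_{bc\in L_{F_*}(v)}(d_{F_*}(b,c)-1)$. The recurrence must hold for every hypergraph with $d(v)=d$, so you have no control on this better than about $dn$. That is a constant fraction of the roughly $3m$ available moves once $d$ is a constant multiple of $\log n$.

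Write $A_d$ for the number of members of $\Omega(n,m,\boldsymbol\ell)$ with $d(v)=d$. To compare $A_d$ with $|\Omega(n,m,\boldsymbol\ell)|$ you must chain the recurrence from $d\approx0.03\log n$ up to the bulk of the degree distribution. With the collision bound above, the ratios $A_d/A_{d+1}$ are only bounded by about $(d+1)/(0.99\cdot3m/n-d)$. The best the chain then gives is roughly $\PP[d(v)\le0.03\log n]\le n^{-0.55}$. So $\EE|B_*|$ is bounded only by about $n^{0.45}$, and Markov's inequality does not give $n^{1/4}$. Thus the claimed $n^{-\varphi(0.03)+o(1)}$ does not follow from the switching as stated.

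The missing observation is that your own transfer principle applies here, because this event is far from polynomially small. If $|B_*|>n^{1/4}$, then some set $S$ of $t=\lceil n^{1/4}\rceil$ vertices has all degrees at most $0.03\log n$. In the binomial model with $p=m/\binom n3$, the degrees $d_{\mathcal G-(S\setminus\{v\})}(v)$ for $v\in S$ are independent. A union bound over $S$ then gives probability at most $\binom nt n^{-(\varphi(0.03)+o(1))t}=\exp\{-\Omega(t\log n)\}$, using $\varphi(0.03)>3/4$. This bound survives the factor $O(\sqrt{n\log n})$ for conditioning on exactly $m$ edges and the factor $\exp\{n^{1/10}\}$ from \cref{lem:beta}. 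This is the paper's argument.

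Alternatively, you can repair your switching. Delete any edge avoiding $v$ whose three vertices all exceed their lower bounds, and add any absent triple through $v$. Such moves never collide and give $A_d/A_{d+1}\le(d+1)/((0.97-o(1))\,3m/n)$ for $d=O(\log n)$. This yields a per-vertex bound of about $n^{-0.83}$, which is enough for your Markov step.
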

\begin{proof}
We first bound the maximum degree, then estimate the number of low-degree vertices by passing to the binomial model. Conditioning on the degree sequence then controls codegrees and incidences with low-degree vertices; the vertex-deletion bound follows deterministically.

For \cref{thm:terminal-regularity}(i), \cref{thm:degree-domination} shows that, for every fixed $A>0$, there is a constant $C_A$ such that $\PP[\Delta_1(F_*)>C_A\log n]\le n^{-A}$. Indeed, $3m/n\sim\log n$ and $\ell_v<3m/n$ for sufficiently large $n$. Choose $C_A>2$ with $\varphi(C_A-1)>A+2$, and take $t=\lfloor C_A\log n\rfloor+1-\lceil3m/n\rceil$. Then $t=(C_A-1+o(1))\log n\ge3m/n$ for sufficiently large $n$, so the tail bound in \cref{thm:degree-domination} applies, with exponent
\[
 -t\log\frac{nt}{3m}+t-\frac{3m}{n}
 =-\bigl(\varphi(C_A-1)+o(1)\bigr)\log n
 \le-(A+1)\log n.
\]
Thus $\PP[d_{F_*}(v)>C_A\log n]\le n^{-A-1}$ for every $v\in[n]$. A union bound over the vertices therefore yields $\PP[\Delta_1(F_*)>C_A\log n]\le n\cdot n^{-A-1}=n^{-A}$.

We next show that $\PP[|B_*|>n^{1/4}]\le\exp\{-cn^{1/4}\log n\}$ for a fixed $c>0$ and all sufficiently large $n$. Let $\mathcal G$ be a binomial random $3$-graph on $[n]$ with edge probability $p=m/\binom{n}{3}$, and fix $S\subseteq[n]$ with $t=\lceil n^{1/4}\rceil$ vertices. For each $v\in S$, the degree $d_{\mathcal G-(S\setminus\{v\})}(v)$ counts precisely the triples containing $v$ and two vertices outside $S$, and has mean $\binom{n-t}{2}p=(1+o(1))\log n$. These degrees depend on disjoint sets of triples and are therefore mutually independent. Since deleting vertices cannot increase a degree, \cref{lem:chernoff}(i) gives
\[
\begin{aligned}
 \PP[d_{\mathcal G}(v)\le0.03\log n\text{ for every }v\in S]
 \le\prod_{v\in S}\PP[d_{\mathcal G-(S\setminus\{v\})}(v)\le0.03\log n]
 \le n^{-(\varphi(0.03)+o(1))t}.
\end{aligned}
\]
Since $t=\lceil n^{1/4}\rceil$, we have $\log(en/t)=(3/4+o(1))\log n$. Also $\varphi(0.03)>3/4$, so choose a fixed $0<c_1<\varphi(0.03)-3/4$. Summing over the choices of $S$ then gives
\[
 \begin{aligned}
 \binom nt n^{-(\varphi(0.03)+o(1))t}
 &\le\exp\left\{t\log\left(\frac{en}{t}\right)-(\varphi(0.03)+o(1))t\log n\right\}\\
 &=\exp\{-(\varphi(0.03)-3/4+o(1))t\log n\}
 \le\exp\{-c_1t\log n\}.
 \end{aligned}
\]
By Stirling's formula, the binomial hypergraph has exactly $m$ edges with probability $\binom{\binom n3}{m}p^m(1-p)^{\binom n3-m}=(1+o(1))/\sqrt{2\pi m(1-p)}=\Theta((n\log n)^{-1/2})$. Conditioning first on this prescribed number of edges and then on the degree lower bounds gives the law of $F_*$. By \cref{lem:beta}, the two conditioning steps yield
\begin{multline*}
  \PP[|B_*|>n^{1/4}]
 \le O(\sqrt{n\log n})\,\exp\{n^{1/10}\}\exp\{-c_1t\log n\}
 =\exp\{O(\log n)+n^{1/10}-c_1t\log n\} \\
 \le\exp\{-ct\log n\}
\end{multline*}
for some fixed $c>0$ and all sufficiently large $n$.

We next bound the maximum codegree $\Delta_2(F_*)$. Condition on a degree sequence $\boldsymbol d$ with $\max_v d_v\le C\log n$. For $B=\{x\}$, the denominator in \cref{thm:association} is $\sum_{v\in[n]\setminus\{x\}}d_{F_*}(v)-9\Delta_1(F_*)-2\Delta_1(F_*)^2=3m-O(\log^2n)=(1-o(1))3m>0$. Thus \cref{thm:association} shows that $d_{F_*}(y,x)$ is stochastically dominated by a Poisson random variable. Since $d_x,d_y\le C\log n$ and $3m\sim n\log n$, the mean of this variable satisfies
\[
 \frac{2d_yd_x}{3m-d_x-9\Delta_1(F_*)-2\Delta_1(F_*)^2}
 =O\left(\frac{\log^2n}{n\log n}\right)
 =O\left(\frac{\log n}{n}\right).
\]
By \cref{lem:moment-bounds}(iii) with $t=3$ and a union bound over pairs,
\[
 \PP[\Delta_2(F_*)\ge3\mid(d_{F_*}(v))_{v\in[n]}=\boldsymbol d]
 \le\binom n2 O\left(\left(\frac{\log n}{n}\right)^3\right)
 =O\left(\frac{\log^3n}{n}\right).
\]

For \cref{thm:terminal-regularity}(ii), condition on a degree sequence $\boldsymbol d$ satisfying $\max_v d_v\le C\log n$ and $|B_*|\le n^{1/4}$. For each such sequence, $B_*$ is fixed. With $B=B_*\setminus\{y\}$, the denominator in \cref{thm:association} satisfies
\[
\begin{aligned}
 \sum_{v\in[n]\setminus(B_*\setminus\{y\})}d_{F_*}(v)-9\Delta_1(F_*)-2\Delta_1(F_*)^2
 =3m-O(n^{1/4}\log n+\log^2n)
 =(1-o(1))3m>0.
\end{aligned}
\]
Hence \cref{thm:association} applies. Since $d_y\le C\log n$ and $\sum_{v\in B_*\setminus\{y\}}d_v\le Cn^{1/4}\log n$, the Poisson mean for $\sum_{v\in B_*\setminus\{y\}}d_{F_*}(y,v)$ satisfies
\[
 \frac{2d_y\sum_{v\in B_*\setminus\{y\}}d_v}
 {3m-\sum_{v\in B_*\setminus\{y\}}d_v-9\Delta_1(F_*)-2\Delta_1(F_*)^2}
 =O\left(\frac{n^{1/4}\log^2n}{n\log n}\right)
 =O\left(\frac{\log n}{n^{3/4}}\right).
\]
By \cref{lem:moment-bounds}(iii) with $t=2$ and a union bound over $y$,
\[
\begin{aligned}
 \PP\left[\max_{y\in[n]}\sum_{v\in B_*\setminus\{y\}}d_{F_*}(y,v)\ge2
 \,\middle|\,(d_{F_*}(v))_{v\in[n]}=\boldsymbol d\right]
\le n\,O\left(\left(\frac{\log n}{n^{3/4}}\right)^2\right)
 =O\left(\frac{\log^2n}{n^{1/2}}\right).
\end{aligned}
\]
Averaging over degree sequences and combining the preceding estimates proves that \cref{thm:terminal-regularity}(i)--(ii) hold simultaneously with probability $1-o(1)$.

For \cref{thm:terminal-regularity}(iii), suppose that the bounds in \cref{thm:terminal-regularity}(i)--(ii) hold. Since $\Delta_2(F_*)\le2$, every $Z\subseteq V(F_*)$ with $|Z|\le h$ and every $v\in V(F_*)\setminus Z$ satisfy $d_{F_*}(v)-d_{F_*-Z}(v)\le\sum_{y\in Z}d_{F_*}(v,y)\le2|Z|\le2h$. Every vertex $v\in V(F_*)\setminus(B_*\cup Z)$ has $d_{F_*}(v)>0.03\log n>\ell_v+2h$ and hence $d_{F_*-Z}(v)\ge\ell_v$. Since $d_{F_*}(v)\ge\ell_v$, for every $v\in B_*\setminus Z$ we have
\[
 \max\{0,\ell_v-d_{F_*-Z}(v)\}
 \le d_{F_*}(v)-d_{F_*-Z}(v)
 \le\sum_{y\in Z}d_{F_*}(v,y).
\]
Summing over these vertices and using \cref{thm:terminal-regularity}(ii) gives
\[
 \mathfrak D_Z(F_*,\boldsymbol\ell)
 \le\sum_{v\in B_*\setminus Z}\sum_{y\in Z}d_{F_*}(v,y)
 \le\sum_{y\in Z}\sum_{v\in B_*\setminus\{y\}}d_{F_*}(y,v)\le|Z|.
\]
This proves \cref{thm:terminal-regularity}(iii).
\end{proof}

\section{Counting mixed Hamilton cycles by edge deletion}
\label{sec:martingale}
Under \cref{ass:core}, we prove \cref{thm:conditioned-core} assuming the uniform edge-marginal proposition stated below. Initially, there are $\Xc(\binom{[n_\ast]}3)$ counted mixed Hamilton cycles. We first estimate the cumulative logarithmic loss under uniform edge deletion, then transfer the estimate to the law conditioned on the degree lower bounds.

We use the following probability laws.
\begin{itemize}[beginpenalty=10000]
\item $\PP$ is the law of the edge-deletion process $\binom{[n_\ast]}3=H_{\binom{n_\ast}{3}} \supsetneq\cdots\supsetneq H_{m_\ast+1}\supsetneq H_{m_\ast}$, where each step deletes a uniformly chosen edge.
\item $\QQ$ is the law of the same process $\binom{[n_\ast]}3=H_{\binom{n_\ast}{3}}
 \supsetneq\cdots\supsetneq H_{m_\ast+1}\supsetneq H_{m_\ast}=F_*$, conditional on $H_{m_\ast}\in\Omega(n_\ast,m_\ast,\boldsymbol\ell)$. 
\end{itemize}
Under $\PP$, $H_{m_\ast}$ is a uniform $m_\ast$-edge $3$-graph on $[n_\ast]$. Let $\beta$ denote the probability that $H_{m_\ast}$ belongs to $\Omega(n_\ast,m_\ast,\boldsymbol\ell)$, that is,
\[
 \beta=\PP[H_{m_\ast}\in\Omega(n_\ast,m_\ast,\boldsymbol\ell)]=\frac{|\Omega(n_\ast,m_\ast,\boldsymbol\ell)|}{\binom{\binom{n_\ast}{3}}{m_\ast}}.
\]
Conditioning on these degree constraints increases a failure probability by at most $\beta^{-1}$. The marginal proposition is proved in \cref{sec:migration}.

A realisation of this process under $\QQ$ can also be sampled by first choosing $F_*$ as in \cref{ass:core} and then adding the triples in $\binom{[n_\ast]}3\setminus F_*$ in a uniformly random order. We analyse the numbers $\Xc(H_j)$ of counted mixed Hamilton cycles in deletion order, from the complete $3$-graph down to $H_{m_\ast}$, with $\cM_0$ fixed and $\Xc$ defined as in \cref{sec:mixed}.

For a $3$-graph $H$ on $[n_\ast]$ with $\Xc(H)>0$ and every triple $e\in H$, define
\[
\Qc_e(H)\coloneqq
 \frac{\bigl|\{C\colon C\text{ is counted by }\Xc(H)\text{ and }e\in C\}\bigr|}{\Xc(H)}.
\]
This \emph{edge marginal} is the probability that a uniformly chosen $(H\cap\Kc,\cM_0)$-respecting mixed Hamilton cycle uses $e$.

For a host $H$ with $\Xc(H)>0$, we use the following identities.
\begin{enumerate}[label=(\textit{\roman*})]
\item Since every counted cycle contains exactly $(n_\ast-s)/2$ triples, we have $\sum_{e\in H}\Qc_e(H)=(n_\ast-s)/2$.
\item For every $e\in H$, deleting $e$ gives $\Xc(H-e)=\Xc(H)(1-\Qc_e(H))$. Thus, if $|H|=j$ and the deleted edge is chosen uniformly from $H$, the expected fraction of cycles lost is $j^{-1}\sum_{e\in H}\Qc_e(H)=(n_\ast-s)/(2j)$.
\end{enumerate}
Bounding every $\Qc_e(H)$ by a constant multiple of the average $(n_\ast-s)/(2j)$ controls the fluctuations and the quadratic error in approximating $\log(1-\Qc_e(H))$ by $-\Qc_e(H)$. This leads to the reference value $L_j$ for the logarithm of the number of counted mixed Hamilton cycles in $H_j$. With the convention $\log0\coloneqq-\infty$, define
\begin{equation}\label{eq:baseline}
 \mathsf A_j\coloneqq
 \biggl\{\log\Xc(H_j)\ge L_j-\frac{n_\ast}{\sqrt{\log n_\ast}}\biggr\},
 \quad\text{where}\quad
 L_j\coloneqq\log\Xc\biggl(\binom{[n_\ast]}3\biggr)-\frac{n_\ast-s}{2}\sum_{h=j+1}^{\binom{n_\ast}{3}}\frac1h.
\end{equation}
We first estimate $L_j$.

By \cref{prop:complete-count}, for $s\le n_\ast^{1/10}$, deleting all triples outside $\Kc$ from the complete $3$-graph $\binom{[n_\ast]}3$ preserves a $1-o(1)$ fraction of its $\cM_0$-respecting mixed Hamilton cycles. Stirling's formula then gives
\[
 \log\Xc\left(\binom{[n_\ast]}3\right)=\frac{n_\ast-s}{2}\log\bigl((n_\ast-1)(n_\ast-2)\bigr)-n_\ast+s+O(s\log n_\ast+\log n_\ast).
\]

Since $\sum_{h=j+1}^{\binom{n_\ast}{3}} h^{-1}=\log\bigl(\binom{n_\ast}{3}/j\bigr)+O(1/j)$ and $(n_\ast-s)/(2j)=O(1/\log n_\ast)$, the quantity $L_j$ in \eqref{eq:baseline} satisfies, for every $m_\ast\le j\le\binom{n_\ast}{3}$,
\begin{equation}\label{eq:benchmark}
L_j=\frac{n_\ast-s}{2}\log\frac{6j}{n_\ast}-n_\ast+s+O(s\log n_\ast+\log n_\ast).
\end{equation}

The following proposition bounds the edge marginals $\Qc_e(H_j)$ whenever $\mathsf A_j$ holds.

\begin{proposition}\label{thm:uniform-marginal}
There exists an absolute constant $C_0\ge1$ such that the following holds for every choice of parameters and matching satisfying \cref{ass:core}. With $\QQ$-probability $1-o(1)$, simultaneously for every $m_\ast\le j\le\binom{n_\ast}{3}$ such that $\mathsf A_j$ holds, we have
\[
 \max_{e\in H_j}\Qc_e(H_j)\le\frac{C_0(n_\ast-s)}{2j}.
\]
\end{proposition}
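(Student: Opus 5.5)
The plan follows the strategy described in the proof sketch: reduce every edge marginal to the completion counts $\comp_{G_j,\cM_0}(w;u\to v)$, control most of these by resampling, and then control all of them by migration. Throughout, fix $j$ with $\mathsf A_j$ holding, and work on the $\QQ$-probable event, supplied by \cref{thm:terminal-regularity} together with the sampling estimates for the extra edges of $H_j$ over $F_*$, on which the following hold simultaneously for all $j$:
\begin{itemize}
\item degrees of $H_j$ are $\Theta(d_j)$ with $d_j=3j/n_\ast$;
\item codegrees are $o(d_j)$;
\item the partition counts $|H_j[A,\overline A]|\approx3j/8$ hold;
\item these properties survive deletion of a bounded set of vertices.
\end{itemize}
Since $\mathsf A_j$ together with \eqref{eq:benchmark} gives $\log\Xc(H_j)\ge\frac{n_\ast-s}{2}\log\frac{6j}{n_\ast}-n_\ast+s-o(n_\ast)$, \cref{thm:entropy-stability} applies with $\xi=o(1)$. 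So does \cref{lem:overlap}, both to the full family and to any subfamily of size at least $\Xc(H_j)e^{-o(n_\ast)}$. Edges of $H_j\setminus\Kc$ have marginal $0$, so it suffices to treat triples meeting $V(\cM_0)$ in at most one vertex.

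For a triple $\{w,u,v\}$ disjoint from $V(\cM_0)$, \cref{fact:completion-identities}(ii),(iii) write $\Xc(H_j)\Qc_{\{w,u,v\}}(H_j)$ as a sum of six directed completion counts. It therefore suffices to show that $\comp(w;u\to v)\le C\Xc(H_j)/d_j$ for every admissible choice, including non-edges. The first step is the ``most'' statement: $\comp(w;u\to v)=(1+o(1))\Xc(H_j)/(4d_j)$ for all but $o(n_\ast^3)$ directed choices, also after deleting boundedly many vertices and adding boundedly many prescribed pairs. For edges this follows from \cref{thm:entropy-stability}. To extend it to non-edges, I would use the forward/reverse resampling experiment. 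In the forward direction, delete a uniform set $T$ of size $\eta j$ with $\eta\to0$ slowly. \cref{lem:batch-variance} combined with \cref{lem:overlap} gives concentration of $\Xc$ and of each large completion family, so normalised counts change little, while the entropy estimate at $H_j-T$ shows that few triples of $T$ are atypical. In the reverse direction, conditional on $H_j-T$ and $F_*-T$ satisfying the degree lower bounds (by \cref{thm:terminal-regularity}(iii)), $T$ is a uniform sample of absent triples. If many absent triples were atypical, the sample would contain many of them, which is a contradiction. The failure probabilities must be small enough to union-bound over $j$ and to survive the $\beta^{-1}\le e^{n_\ast^{1/10}}$ factor from \cref{lem:beta}. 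I would therefore prove the concentration steps with error probability $e^{-n_\ast^{c}}$ for some $c>1/10$, using Chernoff bounds for the hypergeometric sample.

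The second step passes from ``most'' to ``all'' via migration. Given $w,w'$, inject the cycles counted by $\comp_{G_j-w',\cM_0\cup\{ab\}}(w;u\to v)$, over link pairs $ab\in L_{H_j}(w')$, into those counted by $\comp(w';u\to v)$ by expanding $ab$ to $\{a,w',b\}$. This gives $\comp(w';u\to v)\ge\sum_{ab}(\text{counts after relocating }w)$, and the paper's \cref{lem:private-migration} packages this as: if $\comp(w;u\to v)$ is large, then $\comp(w';u\to v)\ge c\,\comp(w;u\to v)$ for most $w'$. An analogous statement moving $u$ or $v$ is \cref{lem:intersection-migration}, obtained by contracting and rebuilding a short segment next to the added pair. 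If $\comp(w;u\to v)>C\Xc/d_j$ with $C$ large, then applying the migrations three times (moving $w$, then $u$, then $v$) gives $\Omega(n_\ast^3)$ triples with $\comp>(c^3C)\Xc/d_j>\Xc/(2d_j)$. This contradicts the first step. To make the argument work, the link estimate \cref{lem:simultaneous-link} must supply $\Omega(d_j n_\ast)$ good link pairs avoiding the exceptional set, which is determined before the link of $w'$ is revealed.

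It remains to treat triples $\{y_0,w,u\}$ with $y_0z_0\in\cM_0$. Contracting the pair $y_0z_0$ together with its adjacent triple to the pair $z_0u$ writes the cycles through this triple as completion counts on $G_j-\{y_0,w\}$ with matching $\cM_0\setminus\{y_0z_0\}\cup\{z_0u\}$. Summing these over all choices recovers $\Xc(H_j)$, and the migration comparison shows that each summand is at most $O(1/(n_\ast d_j))$ of the total, giving $O(1/j)\cdot n_\ast$ as required. I expect the main obstacle to be the uniformity of the resampling step: obtaining failure probabilities strong enough to beat $\beta^{-1}$ and the union over $j$, while handling the small completion families that are not concentrated, for which only a Markov upper bound on the expectation is available.
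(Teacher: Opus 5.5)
Your architecture matches the paper's: completion counts, resampling for most candidates, private and intersection migration with \cref{lem:simultaneous-link} for all candidates, and the partition of $\Xc(H_j)$ by the triple at $y_0$ for marked edges. However, the resampling step as you set it up would fail. Deleting $|T|=\eta j$ triples with $\eta\to0$ slowly breaks both descriptions.

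In the forward description, a frame cycle has $h\approx n_\ast/2$ triples, so the factor $\exp\{4|T|h/m\}$ in \cref{lem:batch-variance} is $e^{\Theta(\eta n_\ast)}$. The overlap bound of \cref{lem:overlap} then gives no concentration of the surviving counts. One needs $|T|h/m=O(\log\log\log n_\ast)$, which is why the paper deletes only $|T|=\lfloor m\log\log\log n_\ast/(50(n_\ast-|\hat B|-|\cM|))\rfloor$ triples, of order $d_j\log\log\log n_\ast$.

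In the reverse description, uniformity of $T$ among the missing triples (\cref{prop:reverse-uniform}(\ref{item:reverse-set})) requires $\hat F_\ast-T$ to satisfy the lower bounds $\ell'_v$. This does not follow from \cref{thm:terminal-regularity}(iii), which concerns deleting boundedly many \emph{vertices}. It is \cref{claim:batch-degree-bounds}, whose proof uses $|T|/m=O(\log\log\log n_\ast/n_\ast)$ so that, with probability $1-o(1)$, no edge of $\hat F_\ast$ at a vertex of $B_*$ is deleted. After deleting an $\eta$-fraction, vertices of $F_*$ whose degree equals $\ell_v$ lose edges; a uniform member of $\Omega(n_\ast,m_\ast,\boldsymbol\ell)$ typically has polynomially many such vertices. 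The reverse law of $T$ is then no longer uniform.

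Your proposed fix for what you call the main obstacle is both unavailable and unnecessary. With an admissible $|T|$, the reverse hypergeometric tail is only $\exp\{-c|T|/\sqrt{\log\log\log n_\ast}\}=\exp\{-c(\log n_\ast)\sqrt{\log\log\log n_\ast}\}$, so $\beta^{-1}\le e^{n_\ast^{1/10}}$ can never be absorbed. Moreover, the surviving count $Y_T$ is not hypergeometric, so Chernoff bounds say nothing about it, and a bounded-differences inequality would need the very marginal bound you are proving. Only the second-moment bound, with failure probability $o(1)$, is available.

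The paper never pays $\beta^{-1}$ here. Both descriptions are run directly under $\QQ$, conditional on $(F_*(\hat B),H_j(\hat B))$ via \cref{lem:boundary}. The forward step only needs $\widehat{\PP}^{*}[\mathsf W\mid(H_i)_i]\ge1-o(1)$ on $\mathsf E_{\Lambda,j}$, obtained by Chebyshev for each large family, Markov over candidates, and monotonicity for light candidates. This gives $(1-o(1))\QQ[\mathsf E_{\Lambda,j}]\le\widehat{\PP}^{*}[\mathsf W]$. The only union bound is over $n_\ast^{O(1)}$ frames and times, which this tail beats. This is \cref{prop:uniform-completion}, used in the proof through condition (\ref{item:migration-candidates}) of $\mathsf R$; $\beta^{-1}$ is paid only in \cref{thm:stopped-counting} and the edge-distribution estimates.

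Two smaller points. First, the migration lemmas compare undirected counts $\cyc((\hat H_j\cap\Kc)-x,\cM\cup\{yz\})$. Intersection migration reverses $Q_2$, so it does not control the direction of the new pair relative to the root. The contradiction must therefore come from both orientations being normal, which gives $\cyc\le(1+o(1))\Xc(H_j)/(2d_j)$ for most triples. Second, in the marked case the partition has $\Theta(d_j)$ summands, each to be bounded by $O(\Xc(H_j)/d_j)$, not $O(\Xc(H_j)/(n_\ast d_j))$. This bound comes from \cref{lem:simultaneous-link} applied to the link of $y_0$, with $\Gamma$ the set of pairs giving small summands.
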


\begin{proof}[Proof of \cref{thm:conditioned-core} assuming \cref{thm:uniform-marginal}]
We use the method of counting by random edge deletion developed by Johansson, Kahn, and Vu~\cite[Sections~3 and~7]{JohanssonKahnVu2008}, in the form refined by Kahn~\cite[Sections~2 and~3]{Kahn2023}. We first stop the deletion process when no counted cycle remains or an edge marginal exceeds the prescribed bound, so that the martingale increments remain bounded. We then use \cref{thm:uniform-marginal} to exclude a failure of the counting lower bound before reaching $m_\ast$ edges.

Fix $C_0\ge1$ as in \cref{thm:uniform-marginal}. For sufficiently large $n_\ast$, \cref{ass:core} gives $n_\ast-s\ge2$, $m_\ast<\binom{n_\ast}{3}$, and $C_0(n_\ast-s)\le m_\ast$. Since $\sum_{j>m_\ast}j^{-2}\le1/m_\ast$ and $m_\ast\sim n_\ast\log n_\ast/3$, the variance parameter for the martingale estimate satisfies
\[
 \sigma^2\coloneqq\frac{C_0^2(n_\ast-s)^2}{4}\sum_{j=m_\ast+1}^{\binom{n_\ast}{3}}\frac{1}{j^2}=O\left(\frac{n_\ast}{\log n_\ast}\right).
\]

Starting from the complete $3$-graph, we inspect the states $H_j$ in decreasing order of $j$. Before deleting an edge from $H_j$, we stop if either
\[
 \Xc(H_j)=0,\quad\text{or}\quad
 \Xc(H_j)>0\quad\text{and}\quad\max_{e\in H_j}\Qc_e(H_j)>\frac{C_0(n_\ast-s)}{2j}.
\]
Let $j_0$ be the number of edges of the first state at which we stop, with $j_0=m_\ast$ if neither condition occurs before reaching $H_{m_\ast}$. Thus the process makes the deletions $H_j\to H_{j-1}$ for $j>j_0$ and ends at $H_{j_0}$.
\begin{claim}\label{thm:stopped-counting}
For every $a>0$,
\[
\QQ\left[\min_{j_0\le j\le\binom{n_\ast}{3}}
   \bigl\{\log\Xc(H_j)-L_j\bigr\}<-a-\sigma^2\right]
 \le\beta^{-1}\exp\left\{-\frac{a^2}{2\sigma^2}\right\}.
\]
\end{claim}
\begin{proof}[Proof of \cref{thm:stopped-counting}]
Work first under $\PP$. Let $\mathcal F_j$ be the $\sigma$-algebra generated by $(H_h)_{h=j}^{\binom{n_\ast}{3}}$. Since $\mathcal F_j$ records $H_j$ and all previously inspected states, whether the process makes the deletion from $H_j$ is $\mathcal F_j$-measurable. Before stopping, at a deletion step from size $j$ to size $j-1$, let $e_j$ be the deleted triple and put $D_j\coloneqq \Qc_{e_j}(H_j)-(n_\ast-s)/(2j)$. Conditional on $\mathcal F_j$, $e_j$ is uniform in $H_j$. Hence $\EE[D_j\mid\mathcal F_j]=j^{-1}\sum_{e\in H_j}\Qc_e(H_j)-(n_\ast-s)/(2j)=0$. After stopping, set $D_j=0$, so the same equality holds. Thus the increments form a martingale-difference sequence in deletion order. Moreover $|D_j|\le C_0(n_\ast-s)/(2j)$, and the sum of these squared bounds is $\sigma^2$. Applying \cref{lem:maximal-azuma} in deletion order gives
\[
 \PP\left[\max_{j_0\le j\le\binom{n_\ast}{3}}\sum_{h=j+1}^{\binom{n_\ast}{3}} D_h>a\right]
 \le\exp\left\{-\frac{a^2}{2\sigma^2}\right\}.
\]
We next translate this deviation bound into a lower bound for $\log\Xc(H_j)$. For each deletion before stopping, the exact identity is $\Xc(H_j-e_j)=\Xc(H_j)(1-\Qc_{e_j}(H_j))$. Since $\Qc_{e_j}(H_j)\le C_0(n_\ast-s)/(2j)\le1/2$, each deletion preserves at least half the cycles. The initial number of counted cycles is positive by \cref{prop:complete-count}, so $\Xc(H_j)>0$ at every state reached by the stopped process, including $H_{j_0}$, and we may take logarithms. For $0\le q\le1/2$, $-\log(1-q)\le q+q^2$. For every $j_0\le j\le\binom{n_\ast}{3}$, the bound $\Qc_{e_h}(H_h)\le C_0(n_\ast-s)/(2h)$ and the definition of $\sigma^2$ give $\sum_{h=j+1}^{\binom{n_\ast}{3}}\Qc_{e_h}(H_h)^2\le\sigma^2$. Summing the logarithmic identities and substituting the definitions of $L_j$ and $D_h$ therefore gives
\[
\begin{aligned}
 \log\Xc(H_j)-L_j
 =\sum_{h=j+1}^{\binom{n_\ast}{3}}\left(\log(1-\Qc_{e_h}(H_h))+\frac{n_\ast-s}{2h}\right)
 & \ge-\sum_{h=j+1}^{\binom{n_\ast}{3}}D_h-\sum_{h=j+1}^{\binom{n_\ast}{3}}\Qc_{e_h}(H_h)^2 \\
 & \ge-\sum_{h=j+1}^{\binom{n_\ast}{3}}D_h-\sigma^2.
\end{aligned}
\]
Whenever all partial sums of the $D_h$ are at most $a$, this lower bound is at least $-a-\sigma^2$. The maximal deviation estimate thus bounds the failure probability under $\PP$ by $\exp\{-a^2/(2\sigma^2)\}$. Finally, conditioning on the degree constraints at size $m_\ast$ increases this probability by at most a factor $\beta^{-1}$.
\end{proof}

With $a=n_\ast/\log n_\ast$, \cref{thm:stopped-counting} bounds the failure probability by $\beta^{-1}\exp\{-a^2/(2\sigma^2)\}$. Since $\sigma^2=O(n_\ast/\log n_\ast)$ and \cref{lem:beta} gives $\beta^{-1}\le\exp\{n_\ast^{1/10}\}$, we obtain the bound $\exp\{n_\ast^{1/10}-\Omega(n_\ast/\log n_\ast)\}$, which is $\exp\{-\Omega(n_\ast/\log n_\ast)\}=o(1)$.

We finish by ruling out a first failure of either the lower bound on the number of mixed Hamilton cycles or the edge-marginal bound. By \cref{thm:uniform-marginal}, with $\QQ$-probability $1-o(1)$, every size $j$ for which $\mathsf A_j$ holds satisfies $\max_{e\in H_j}\Qc_e(H_j)\le C_0(n_\ast-s)/(2j)$. By \cref{thm:stopped-counting} and the preceding probability estimate, with $\QQ$-probability $1-o(1)$, we also have $\log\Xc(H_j)\ge L_j-a-\sigma^2$ for every $j_0\le j\le\binom{n_\ast}{3}$. Fix a realisation of the edge-deletion process on which both properties hold. Starting at the complete $3$-graph, where $\mathsf A_{\binom{n_\ast}{3}}$ holds, suppose that the first violation occurs at size $j$ as the size decreases. There are two cases.
\begin{enumerate}[label=(\textit{\roman*})]
\item If $\mathsf A_j$ holds, \cref{thm:uniform-marginal} rules out a violation of the edge-marginal bound.
\item If $\mathsf A_j$ fails, all preceding states satisfy the marginal bound by \cref{thm:uniform-marginal}. Thus the stopped deletion process reaches $H_j$, and \cref{thm:stopped-counting} gives $\log\Xc(H_j)\ge L_j-O(n_\ast/\log n_\ast)$, contradicting $\log\Xc(H_j)<L_j-n_\ast/\sqrt{\log n_\ast}$.
\end{enumerate}
Thus neither kind of failure occurs. Since $O(s\log n_\ast+\log n_\ast)=o(n_\ast/\log n_\ast)$ and $a+\sigma^2=O(n_\ast/\log n_\ast)$, at $j=m_\ast$ the martingale estimate and \eqref{eq:benchmark} give $\log\Xc(F_*)\ge L_{m_\ast}-a-\sigma^2\ge\frac{n_\ast-s}{2}\log(6m_\ast/n_\ast)-n_\ast+s-O(n_\ast/\log n_\ast)$, as required.
\end{proof}

\section{Conditional laws, degree bounds, and edge distribution}
\label{sec:switchings}
Under \cref{ass:core}, we study the conditioned edge-deletion process from \cref{sec:martingale}. We first describe how conditioning changes the distribution of $H_j$ and determine the joint conditional law after revealing all edges meeting a fixed vertex set. We then establish vertex-degree, codegree, and edge-distribution bounds throughout the process. Finally, we estimate the number of pairs remaining in a specified link after excluding a small set determined by the hypergraph outside that vertex.

To describe how conditioning changes the distribution of $H_j$, for $|H|=j$ let $Z_{m_\ast}(H)$ be the number of members of $\Omega(n_\ast,m_\ast,\boldsymbol\ell)$ contained in $H$, namely
\[
 Z_{m_\ast}(H)\coloneqq\Bigl|\bigl\{F\in\Omega(n_\ast,m_\ast,\boldsymbol\ell)\colon F\subseteq H\bigr\}\Bigr|.
\]
As in Kahn~\cite[Observations~2.1 and~2.2]{Kahn2022}, we obtain the following formulas for the conditioned deletion law.
\begin{fact}\label{fact:conditioned-deletion}
Let $m_\ast\le j\le\binom{n_\ast}{3}$, and let $H\subseteq\binom{[n_\ast]}3$ have $j$ edges. The following statements hold.
\begin{enumerate}[label=(\textit{\roman*})]
\item We have $\frac{\QQ[H_j=H]}{\PP[H_j=H]}=\frac{Z_{m_\ast}(H)}{\beta\binom{j}{m_\ast}}$.
\item If $j>m_\ast$ and $Z_{m_\ast}(H)>0$, then, for every $e\in H$,
\[
 \QQ[H_{j-1}=H-e\mid H_j=H]
 =\frac{Z_{m_\ast}(H-e)}{(j-m_\ast)Z_{m_\ast}(H)}.
\]
\end{enumerate}
\end{fact}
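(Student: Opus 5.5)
Both formulas follow from one computation: the joint law of the whole deletion sequence under $\PP$ and under $\QQ$. Under $\PP$, the process from $H_{\binom{n_\ast}{3}}$ down to $H_{m_\ast}$ amounts to choosing a uniformly random ordering of all $\binom{n_\ast}3$ triples and deleting them in that order. Hence every admissible sequence $H_{\binom{n_\ast}{3}}\supsetneq\cdots\supsetneq H_{m_\ast}$ has the same $\PP$-probability. The law $\QQ$ is $\PP$ conditioned on the event $\{H_{m_\ast}\in\Omega\}$, which has probability $\beta$. So $\QQ$ is uniform on the sequences ending in $\Omega$, each having $\QQ$-probability equal to its $\PP$-probability divided by $\beta$.

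For (i), count sequences according to their state at size $j$. Fix $H$ with $|H|=j$. The event $\{H_j=H\}$ splits into an upper segment, going from the complete $3$-graph down to $H$, and a lower segment, going from $H$ down to $H_{m_\ast}$.
\begin{itemize}
\item Under $\PP$, the lower segment is an arbitrary ordering of the deletions of $j-m_\ast$ edges of $H$. Given $H_j=H$, the set $H_{m_\ast}$ is a uniform $m_\ast$-subset of $H$, and every such subset occurs equally often.
\item Therefore
\[
 \PP[H_j=H,\ H_{m_\ast}\in\Omega]=\PP[H_j=H]\,\frac{Z_{m_\ast}(H)}{\binom{j}{m_\ast}}.
\]
\end{itemize}
Dividing by $\PP[H_{m_\ast}\in\Omega]=\beta$ gives $\QQ[H_j=H]=\PP[H_j=H]\,Z_{m_\ast}(H)/(\beta\binom j{m_\ast})$. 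This is (i).

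For (ii), use the Markov property under $\PP$. Given the history down to $H_j=H$, the next deleted edge is uniform in $H$, so $\PP[H_{j-1}=H-e\mid H_j=H]=1/j$. The future after $H_j$ is conditionally independent of the past, so conditioning on $\{H_{m_\ast}\in\Omega\}$ makes $\QQ$ a Doob $h$-transform. Apply Bayes's formula (\cref{lem:bayes}) together with the computation above:
\[
 \QQ[H_{j-1}=H-e\mid H_j=H]
 =\frac{1}{j}\cdot\frac{Z_{m_\ast}(H-e)/\binom{j-1}{m_\ast}}{Z_{m_\ast}(H)/\binom{j}{m_\ast}}.
\]
Here the assumption $Z_{m_\ast}(H)>0$ ensures that the conditioning event has positive probability. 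Since $\binom{j}{m_\ast}/\binom{j-1}{m_\ast}=j/(j-m_\ast)$, the factor $1/j$ cancels and the expression equals $Z_{m_\ast}(H-e)/((j-m_\ast)Z_{m_\ast}(H))$.

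There is no real obstacle here. The only care needed is in the conditioning: we condition on the full history $\mathcal F_j$ rather than on $H_j$ alone, and the two give the same answer because $\PP$ is Markov and the conditioning event depends only on the future. As a consistency check, summing (ii) over $e\in H$ should give $1$. This holds because $\sum_{e\in H}Z_{m_\ast}(H-e)=(j-m_\ast)Z_{m_\ast}(H)$: each $F\subseteq H$ in $\Omega$ avoids exactly $j-m_\ast$ edges of $H$.
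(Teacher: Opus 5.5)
Your proof is correct. Part (i) is exactly the paper's argument: given $H_j=H$, the endpoint $H_{m_\ast}$ is a uniform $m_\ast$-subset of $H$ under $\PP$, and dividing by $\beta$ finishes.

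For (ii) you take a somewhat different route. You use the forward Markov property and view $\QQ$ as a Doob $h$-transform with $h(H)=Z_{m_\ast}(H)/\binom{|H|}{m_\ast}$. The transition is then $\frac1j\,h(H-e)/h(H)$, and the identity $\binom{j}{m_\ast}/\binom{j-1}{m_\ast}=j/(j-m_\ast)$ gives the formula.

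The paper instead conditions on the endpoint. Under $\QQ$, given $H_j=H$, the set $H_{m_\ast}$ is uniform among the $Z_{m_\ast}(H)$ members of $\Omega(n_\ast,m_\ast,\boldsymbol\ell)$ contained in $H$. Given also $H_{m_\ast}=F$, the next deleted edge is uniform on the $j-m_\ast$ edges of $H\setminus F$. Hence $e$ is deleted next with conditional probability $\mathbf1_{\{e\notin F\}}/(j-m_\ast)$, and averaging over $F$ counts exactly the $Z_{m_\ast}(H-e)$ members avoiding $e$.

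Each route has an advantage. The paper's needs no binomial-coefficient algebra, and it uses the same endpoint-uniformity viewpoint that recurs in \cref{lem:boundary} and in the reverse sampling argument. Yours makes explicit that the $\QQ$-transition is the same whether one conditions on $H_j$ alone or on the full history. In other words, it shows that the process is Markov under $\QQ$, which the paper's argument leaves implicit.
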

Indeed, for \cref{fact:conditioned-deletion}(i), under $\PP$, conditional on $H_j=H$, the hypergraph $H_{m_\ast}$ is a uniform $m_\ast$-subset of $H$. Thus $\PP[H_{m_\ast}\in\Omega(n_\ast,m_\ast,\boldsymbol\ell)\mid H_j=H]=Z_{m_\ast}(H)/\binom{j}{m_\ast}$. Bayes's formula in \cref{lem:bayes}, with conditioning probability $\beta$, gives the first identity.

For \cref{fact:conditioned-deletion}(ii), under $\QQ$, conditional on $H_j=H$, the hypergraph $H_{m_\ast}$ is uniform among the $Z_{m_\ast}(H)$ members of $\Omega(n_\ast,m_\ast,\boldsymbol\ell)$ contained in $H$. Conditional also on $H_{m_\ast}$, the next deleted edge is uniform in the $(j-m_\ast)$-element set $H\setminus H_{m_\ast}$. The members of $\Omega(n_\ast,m_\ast,\boldsymbol\ell)$ that are contained in $H$ and do not contain $e$ are precisely those counted by $Z_{m_\ast}(H-e)$. Thus
\[
\begin{aligned}
 \QQ[H_{j-1}=H-e\mid H_j=H]
 =\frac1{Z_{m_\ast}(H)}
 \sum_{\substack{F\in\Omega(n_\ast,m_\ast,\boldsymbol\ell)\\F\subseteq H}}
 \frac{\mathbf1_{\{e\notin F\}}}{j-m_\ast}
 =\frac{Z_{m_\ast}(H-e)}{(j-m_\ast)Z_{m_\ast}(H)}.
\end{aligned}
\]

For fixed $j$, \cref{fact:conditioned-deletion}(i) shows that $\QQ[H_j=H]$ is proportional to $Z_{m_\ast}(H)$. Even among $j$-edge $3$-graphs satisfying $d_H(v)\ge\ell_v$ for every $v$, this weight may vary. Thus, under $\QQ$, $H_j$ need not be uniform on this family.

The cycle modifications later in the proof delete vertex sets of bounded size. The next lemma gives the joint conditional law after revealing all edges meeting a fixed vertex set, together with a deterministic comparison of the total amount by which degrees fall below their lower bounds. The final bound by $1$ additionally assumes that $F_*$ satisfies the bounds in \cref{thm:terminal-regularity}(i)--(ii).

Fix $\hat B\subseteq[n_\ast]$ and a time $m_\ast\le j\le\binom{n_\ast}{3}$. Write $F_*(\hat B)$ and $H_j(\hat B)$ for the sets of edges of $F_*$ and $H_j$, respectively, that meet $\hat B$; that is,
\[
 F_*(\hat B)\coloneqq\left\{e\in F_*\colon e\cap\hat B\ne\varnothing\right\},
 \quad\text{and}\quad
 H_j(\hat B)\coloneqq\left\{e\in H_j\colon e\cap\hat B\ne\varnothing\right\}.
\]
The induced hypergraphs on $[n_\ast]\setminus\hat B$ are
\[
 \hat F_\ast\coloneqq F_*-\hat B,
 \quad\text{and}\quad
 \hat H_j\coloneqq H_j-\hat B.
\]
For every $v\in[n_\ast]\setminus\hat B$, the resulting lower bound on $d_{\hat F_\ast}(v)$ is $\ell'_v\coloneqq\max\{0,\ell_v-d_{F_*(\hat B)}(v)\}$. We reveal $F_*(\hat B)$ and $H_j(\hat B)$; $\hat F_\ast$ and $\hat H_j$ remain unrevealed.

\begin{lemma}\label{lem:boundary}
The following statements hold.

\begin{enumerate}[label=(\textit{\roman*})]
\item Conditional on any $\QQ$-admissible value of $(F_*(\hat B),H_j(\hat B))$, $\hat F_\ast$ is uniform among simple $3$-graphs on $[n_\ast]\setminus \hat B$ with $m_\ast-|F_*(\hat B)|$ edges and degree lower bounds $\ell'_v$ for every $v\in[n_\ast]\setminus \hat B$.

\item Conditional on any $\QQ$-admissible value of $(F_*(\hat B),H_j(\hat B),\hat F_\ast)$, the hypergraph $\hat H_j$ is uniform among all $3$-graphs on $[n_\ast]\setminus\hat B$ that contain $\hat F_\ast$ and have $j-|H_j(\hat B)|$ edges.

\item For every $F_*\in\Omega(n_\ast,m_\ast,\boldsymbol\ell)$ and every vertex $y\in[n_\ast]\setminus \hat B$, we also have $\mathfrak D_{\{y\}}(\hat F_\ast,\boldsymbol\ell') \le\mathfrak D_{\{y\}}(F_*,\boldsymbol\ell)$. In particular, if the bounds in \cref{thm:terminal-regularity}(i)--(ii) hold, then $\mathfrak D_{\{y\}}(\hat F_\ast,\boldsymbol\ell')\le1$.
\end{enumerate}
\end{lemma}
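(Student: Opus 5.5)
The plan is to describe the joint law of $(F_*,H_j)$ under $\QQ$ explicitly and then factor it across the edges meeting $\hat B$ and the edges avoiding $\hat B$. Under $\QQ$, the hypergraph $F_*$ is uniform on $\Omega(n_\ast,m_\ast,\boldsymbol\ell)$. Conditional on $F_*$, the triples of $\binom{[n_\ast]}3\setminus F_*$ are added in a uniformly random order. It follows that $H_j$ is obtained from $F_*$ by adding a uniformly chosen set of $j-m_\ast$ triples absent from $F_*$. Hence every pair $(F,H)$ with $F\in\Omega(n_\ast,m_\ast,\boldsymbol\ell)$, $F\subseteq H$ and $|H|=j$ has the same probability $\QQ[F_*=F,H_j=H]$. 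We condition on a $\QQ$-admissible value $(F^{\rm b},H^{\rm b})$ of $(F_*(\hat B),H_j(\hat B))$. Then $(\hat F_\ast,\hat H_j)$ is uniform over the pairs $(\hat F,\hat H)$ of $3$-graphs on $[n_\ast]\setminus\hat B$ for which $F^{\rm b}\cup\hat F\in\Omega(n_\ast,m_\ast,\boldsymbol\ell)$, $\hat F\subseteq\hat H$, and $|\hat H|=j-|H^{\rm b}|$. The containment $F^{\rm b}\subseteq H^{\rm b}$ is automatic from admissibility.

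For (i), we translate the constraint $F^{\rm b}\cup\hat F\in\Omega$ into conditions on $\hat F$ alone. The constraint requires $|\hat F|=m_\ast-|F^{\rm b}|$. For $v\notin\hat B$ it requires $d_{\hat F}(v)+d_{F^{\rm b}}(v)\ge\ell_v$, which is exactly $d_{\hat F}(v)\ge\ell'_v$. For $v\in\hat B$ the degree is $d_{F^{\rm b}}(v)$, which is fixed; this condition holds because the value is admissible. For fixed $\hat F$, the number of compatible $\hat H$ is $\binom{\binom{n_\ast-|\hat B|}{3}-|\hat F|}{j-|H^{\rm b}|-|\hat F|}$. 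This count depends only on $|\hat F|$, which is itself fixed. Summing out $\hat H$ therefore gives the uniform law claimed in (i). Part (ii) is read off from the same uniform joint law by fixing $\hat F$ in addition.

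For (iii), we fix $y\notin\hat B$ and $v\notin\hat B\cup\{y\}$. We have $d_{\hat F_\ast-y}(v)=d_{F_*-y}(v)-|\{e\in F_*(\hat B)\colon v\in e,\ y\notin e\}|$. Since $\ell'_v\ge\ell_v-d_{F_*(\hat B)}(v)$ and $\ell'_v\ge0$, we obtain
\[
\ell'_v-d_{\hat F_\ast-y}(v)=\max\{-d_{\hat F_\ast-y}(v),\ \ell_v-d_{F_*-y}(v)-|\{e\in F_*(\hat B)\colon v,y\in e\}|\}\le\max\{0,\ell_v-d_{F_*-y}(v)\}.
\]
Taking positive parts and summing over $v\notin\hat B\cup\{y\}$ gives the sum defining $\mathfrak D_{\{y\}}(\hat F_\ast,\boldsymbol\ell')$. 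This sum is at most the sum over all $v\notin\{y\}$ of the nonnegative terms defining $\mathfrak D_{\{y\}}(F_*,\boldsymbol\ell)$. The final claim then follows from \cref{thm:terminal-regularity}(iii) with $Z=\{y\}$, which gives $\mathfrak D_{\{y\}}(F_*,\boldsymbol\ell)\le1$ under (i)–(ii) of that lemma.

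The main obstacle is the bookkeeping in the first step. One must check that the conditional law of $H_j$ given $F_*$ is genuinely uniform over supersets, so that the count of compatible $\hat H$ is independent of $\hat F$. One must also check that the vertices in $\hat B$ impose no residual constraint on $\hat F$, because every edge at $\hat B$ has already been revealed.
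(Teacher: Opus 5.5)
Your proposal is correct and follows essentially the same route as the paper: equal weight on all pairs $(F_*,H_j)$ and an extension count for $\hat H_j$ that depends only on revealed sizes give (i)--(ii), and a termwise comparison of positive parts gives (iii). In (iii) you subtract boundary edges from $d_{F_*-y}(v)$ where the paper uses $d_{\hat F_\ast}(v,y)\le d_{F_*}(v,y)$, an equivalent bookkeeping; note that your displayed equality holds because $\ell'_v=\max\{0,\ell_v-d_{F_*(\hat B)}(v)\}$ by definition, not merely because of the two inequalities you cite.
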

\begin{proof}
For \cref{lem:boundary}(i)--(ii), under $\QQ$, $F_*$ is uniform on $\Omega(n_\ast,m_\ast,\boldsymbol\ell)$. Each possible $F_*$ has exactly $\binom{\binom{n_\ast}{3}-m_\ast}{j-m_\ast}$ extensions with $j$ edges, and conditional on $F_*$, these extensions are equally likely. Thus, before exposure, all pairs $(F_*,H_j)$ with $F_*\in\Omega(n_\ast,m_\ast,\boldsymbol\ell)$, $F_*\subseteq H_j$, and $|H_j|=j$ have equal probability.  After $F_*(\hat B)$ and $H_j(\hat B)$ are fixed, the remaining lower bounds are exactly $\boldsymbol\ell'$.  Every realisation of $\hat F_\ast$ has $m_\ast-|F_*(\hat B)|$ edges, so the number of extensions with $j-|H_j(\hat B)|$ edges is $\binom{\binom{n_\ast-|\hat B|}{3}-m_\ast+|F_*(\hat B)|}
 {j-m_\ast-|H_j(\hat B)|+|F_*(\hat B)|}$. 
This depends only on the revealed edge sets, proving the conditional laws.

For \cref{lem:boundary}(iii), fix $v\in[n_\ast]\setminus(\hat B\cup\{y\})$. The degree after both deletions is $d_{\hat F_\ast-y}(v)=d_{F_*}(v)-d_{F_*(\hat B)}(v)-d_{\hat F_\ast}(v,y)$. Since $d_{\hat F_\ast-y}(v)\ge0$, taking the positive part in the definition of $\ell'_v$ does not change the outer positive part below. Substituting the degree identity and the definition of $\ell'_v$, and using that deleting vertices cannot increase codegrees, gives
\[
\begin{aligned}
 \max\{0,\ell'_v-d_{\hat F_\ast-y}(v)\}
 &=\max\{0,\ell_v-d_{F_*}(v)+d_{\hat F_\ast}(v,y)\}\\
 &\le\max\{0,\ell_v-d_{F_*}(v)+d_{F_*}(v,y)\}
 =\max\{0,\ell_v-d_{F_*-y}(v)\}.
\end{aligned}
\]
Summing over $v\in[n_\ast]\setminus(\hat B\cup\{y\})$ and including the nonnegative summands indexed by $\hat B$ on the right proves the inequality. If the bounds in \cref{thm:terminal-regularity}(i)--(ii) hold, then $\mathfrak D_{\{y\}}(F_*,\boldsymbol\ell)\le1$ by \cref{thm:terminal-regularity}(iii).
\end{proof}

The next lemma supplies the degree and codegree bounds, together with the estimates for triples meeting an approximately half-size vertex set in exactly two vertices, needed for \cref{thm:entropy-stability}. These estimates hold throughout the conditioned edge-deletion process and after deleting any bounded number of vertices.

\begin{lemma}\label{prop:path-regularity}
For every fixed integer $h\ge0$ and real $L\ge1$, there exist constants $c,C>0$, depending only on $h,L$, such that, under $\QQ$, with probability $1-o(1)$ the following statements hold simultaneously for every $m_\ast\le j\le\binom{n_\ast}{3}$ and every $\hat B\subseteq[n_\ast]$ with $|\hat B|\le h$. The $o(1)$ depends only on $n_\ast,h,L$.
\begin{enumerate}[label=(\textit{\roman*})]
\item For every $v\in[n_\ast]\setminus\hat B$, we have $\frac{3cj}{n_\ast}\le d_{\hat H_j}(v)\le\frac{3Cj}{n_\ast}$, and $\Delta_2(\hat H_j)\le\frac{3Cj}{n_\ast(\log n_\ast)^{1/4}}$.
\item For every $A\subseteq[n_\ast]\setminus\hat B$ with $\bigl||A|-(n_\ast-|\hat B|)/2\bigr|\le Ln_\ast^{1/10}$,
\[
 \left|\bigl|\hat H_j[A,\overline A]\bigr|-\frac{3j}{8}\right|
 \le\frac{3Cj}{(\log n_\ast)^{1/8}}.
\]
\item The degree and codegree upper bounds in (i) and the estimate in (ii) also hold with $\hat H_j\cap\Kc$ in place of $\hat H_j$.
\end{enumerate}
\end{lemma}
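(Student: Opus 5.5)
The plan is to split $H_j$ into the core $F_*$ and a uniformly random set of extra triples, and to handle each piece separately. By \cref{lem:boundary}(ii) with $\hat B=\varnothing$, conditional on $F_*$ the set $H_j\setminus F_*$ is a uniform $(j-m_\ast)$-subset of $\binom{[n_\ast]}3\setminus F_*$. I will work on the event of \cref{thm:terminal-regularity}(i)--(ii), which has $\QQ$-probability $1-o(1)$; call it the good core event. On it, every statistic in (i)--(iii) is the sum of a core contribution and a contribution from extra triples. The core contribution is controlled deterministically, and the extra contribution is hypergeometric, so \cref{lem:chernoff} applies. Deleting $\hat B$ removes at most $h\cdot\Delta_1(H_j)$ edges in total and at most $2h$ edges at each vertex of the core, since $\Delta_2(F_*)\le2$. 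These losses are negligible, so $\hat B$ can be absorbed throughout.

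For (i), the lower degree bound comes from the core when $j$ is close to $m_\ast$ and from the extra triples otherwise. Every vertex outside $B_*$ has core degree greater than $0.03\log n_\ast$, which is of order $j/n_\ast$ only when $j=O(m_\ast)$. For larger $j$, the extra degree of $v$ is hypergeometric with mean $\Theta(j/n_\ast)$. I will also need vertices $v\in B_*$ when $j\le Km_\ast$. For these the core only guarantees degree $\ell_v\approx0.01\log n_\ast$, and after removing $\hat B$ the loss is at most $|\hat B|$ by \cref{thm:terminal-regularity}(iii). This still gives order $\log n_\ast\asymp j/n_\ast$, so a suitable choice of $c$ works.

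The upper degree bound combines $\Delta_1(F_*)\le C\log n_\ast$ with a Chernoff upper tail for the extra degree. The codegree bound combines $\Delta_2(F_*)\le2$ with the extra codegree, which is hypergeometric with mean $O(j/n_\ast^2)$. When $j/n_\ast^2$ is small, I will use $\PP[X\ge t]\le(\e\mu/t)^t$ with $t=3Cj/(n_\ast(\log n_\ast)^{1/4})\ge\log^{3/4}n_\ast$, which is very small. For large $j$, a Chernoff bound suffices.

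The union bound over $j$ and $\hat B$ is the step that needs care, since there are $\binom{n_\ast}3$ values of $j$ and $n_\ast^h$ choices of $\hat B$. I will use monotonicity of $H_j$ in $j$. It suffices to check the bounds on a geometric grid $j_i=\lceil m_\ast(1+\eta)^i\rceil$ of $O(\log n_\ast)$ times, because the bounds change only by the factor $1+\eta$ between consecutive grid points, and this is absorbed into $c$ and $C$. Each vertex-level event then has failure probability $n_\ast^{-\omega(1)}$ or at least $n_\ast^{-h-3}$, which covers the union bound.

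For (ii), counting crossing triples, there are $2^{n_\ast}$ sets $A$, so I need failure probability $\e^{-\Omega(n_\ast)}$ per set. In $\binom{[n_\ast]\setminus\hat B}3$, the proportion of triples with exactly two vertices in a near-half set $A$ is $3/8+O(n_\ast^{-9/10})$. The extra-triple count is hypergeometric with mean about $\frac38(j-m_\ast)\ge$ order $n_\ast\log n_\ast$ once $j\ge 2m_\ast$, say, and \cref{lem:chernoff}(iii) with $\delta=(\log n_\ast)^{-1/8}$ gives failure probability $\exp\{-\Omega(n_\ast\log^{3/4}n_\ast)\}$. This beats $2^{n_\ast}$ and the grid over $j$.

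The main obstacle is the core contribution $|F_*[A,\overline A]|$, which must be $\frac38 m_\ast+O(m_\ast(\log n_\ast)^{-1/8})$ uniformly over all $A$. I would obtain this by transferring from the uniform $m_\ast$-edge model. There the count is hypergeometric with deviation probability $\exp\{-\Omega(n_\ast\log^{3/4}n_\ast)\}$. Conditioning on the degree constraints costs a factor $\beta^{-1}\le\exp\{n_\ast^{1/10}\}$ by \cref{lem:beta}, which is harmless, so the union bound over $A$ still goes through. The same transfer argument also handles small $j$ directly.

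For (iii), intersecting with $\Kc$ only removes triples with two vertices in $V(\cM_0)$. There are $O(s^2 n_\ast)$ such triples in total, with $O(s)$ at each vertex, and each contributes at most $O(s\cdot$codegree$)$ to degrees. Since $s\le n_\ast^{1/10}$, these removals are negligible against the error terms in (i) and (ii): the total change in $|\hat H_j[A,\overline A]|$ is $O(s\Delta_1(H_j))=o(j(\log n_\ast)^{-1/8})$.
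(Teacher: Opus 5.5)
Your proposal is correct and follows essentially the paper's proof. Both condition on a core satisfying \cref{thm:terminal-regularity}(i)--(ii), use hypergeometric Chernoff bounds for the uniformly added triples (with the core supplying the degree lower bound when $j=O(m_\ast)$), and obtain the crossing-triple estimate by transfer from the unconditioned uniform model at cost $\beta^{-1}\le\exp\{n_\ast^{1/10}\}$; the paper applies that transfer directly to $H_j$, which is uniform of size $j$ under $\PP$, for every $j$ rather than splitting into core and extra triples.

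One slip: the geometric grid in $j$ is unnecessary, and for (ii) it does not work with a fixed ratio $1+\eta$, because the permitted relative error is $O((\log n_\ast)^{-1/8})$ and cannot absorb a loss of order $\eta j$. Your bound $\exp\{-\Omega(n_\ast\log^{3/4}n_\ast)\}$ for each pair $(A,j)$ already survives a union bound over all $\binom{n_\ast}{3}$ values of $j$, which is what the paper does.
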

\begin{proof}
We first prove the estimates for $H_j$ itself, then bound the changes caused by deleting a bounded set of vertices and removing triples outside $\Kc$. To establish the vertex-degree bounds, fix a realisation of $F_*$ satisfying the bounds in \cref{thm:terminal-regularity}(i)--(ii). Under $\QQ$, conditional on this $F_*$, the set $H_j\setminus F_*$ is a uniform $(j-m_\ast)$-subset of $\binom{[n_\ast]}3\setminus F_*$. Hence $d_{H_j}(v)-d_{F_*}(v)$ is hypergeometric, with
\[
 \EE[d_{H_j}(v)-d_{F_*}(v)\mid F_*]
 =(j-m_\ast)\frac{\binom{n_\ast-1}{2}-d_{F_*}(v)}{\binom{n_\ast}{3}-m_\ast}
 \le\frac{4j}{n_\ast}.
\]
Since $j\ge m_\ast\sim n_\ast\log n_\ast/3$, we can choose $C$ sufficiently large that $d_{F_*}(v)\le Cj/n_\ast$ and $(2Cj/n_\ast)\log(C/(2\e))\ge6\log n_\ast$. Then \cref{lem:chernoff}(ii) gives
\[
\begin{aligned}
 \QQ\left[d_{H_j}(v)>\frac{3Cj}{n_\ast}\,\middle|\,F_*\right]
 &\le\QQ\left[d_{H_j}(v)-d_{F_*}(v)>\frac{2Cj}{n_\ast}\,\middle|\,F_*\right]
 \le\left(\frac{2\e}{C}\right)^{\frac{2Cj}{n_\ast}}\le n_\ast^{-6}.
\end{aligned}
\]

For the lower bound, fix an absolute constant $L_0>144$. Since $d_{F_*}(v)=O(\log n_\ast)$, the same mean formula gives $\EE[d_{H_j}(v)-d_{F_*}(v)\mid F_*]\ge2(j-m_\ast)/n_\ast\ge j/n_\ast$ for $j>L_0m_\ast$ and sufficiently large $n_\ast$. Thus $j/(2n_\ast)\le\frac12\EE[d_{H_j}(v)-d_{F_*}(v)\mid F_*]$. Moreover, $j>L_0m_\ast$ and $m_\ast\sim n_\ast\log n_\ast/3$ give $j/(8n_\ast)\ge(L_0/24+o(1))\log n_\ast$. Since $L_0>144$, \cref{lem:chernoff}(i) with $a=1/2$ gives
\[
 \QQ\left[d_{H_j}(v)-d_{F_*}(v)<\frac{j}{2n_\ast}\,\middle|\,F_*\right]
 \le\exp\left\{-\frac{j}{8n_\ast}\right\}
 \le n_\ast^{-\frac{L_0}{24}+o(1)}\le n_\ast^{-6}.
\]
By near-criticality, $\min_v\ell_v/(3L_0m_\ast/n_\ast)=(0.01+o(1))/L_0$. We may therefore choose a fixed $c>0$ such that $c\le1/6$ and $3cL_0m_\ast/n_\ast\le\min_v\ell_v$ for all sufficiently large $n_\ast$. For $j\le L_0m_\ast$, we have $d_{H_j}(v)\ge d_{F_*}(v)\ge\ell_v\ge3cL_0m_\ast/n_\ast\ge3cj/n_\ast$. For $j>L_0m_\ast$, outside the lower-tail event above, $d_{H_j}(v)\ge j/(2n_\ast)\ge3cj/n_\ast$. A union bound over vertices and times therefore gives
\[
\begin{aligned}
 \QQ\left[\bigcup_{j=m_\ast}^{\binom{n_\ast}{3}}\ \bigcup_{v\in[n_\ast]}
 \left\{d_{H_j}(v)\notin\left[\frac{3cj}{n_\ast},\frac{3Cj}{n_\ast}\right]\right\}
 \,\middle|\,F_*\right]
 \le2n_\ast\binom{n_\ast}{3}n_\ast^{-6}
 =O\left(\frac{1}{n_\ast^2}\right)=o(1).
\end{aligned}
\]

For distinct vertices $u,v$ and a fixed time $j$, let $X\coloneqq d_{H_j}(u,v)-d_{F_*}(u,v)$. Conditional on $F_*$, this variable is hypergeometric, with $\EE[X\mid F_*]=(j-m_\ast)(n_\ast-2-d_{F_*}(u,v))/(\binom{n_\ast}{3}-m_\ast)=O(j/n_\ast^2)$. Put $t\coloneqq Cj/(n_\ast(\log n_\ast)^{1/4})$. Since $j\ge m_\ast$, we have $t=\Omega((\log n_\ast)^{3/4})$. Also $\e\EE[X\mid F_*]/t=O((\log n_\ast)^{1/4}/n_\ast)$ and $d_{F_*}(u,v)\le2$. Hence \cref{lem:chernoff}(ii) gives
\[
 \QQ\left[d_{H_j}(u,v)>\frac{3Cj}{n_\ast(\log n_\ast)^{1/4}}\,\middle|\,F_*\right]
 \le\QQ[X\ge t\mid F_*]
 \le\left(\frac{\e\EE[X\mid F_*]}{t}\right)^t
 \le\exp\{-c(\log n_\ast)^{7/4}\}.
\]
Thus,
\[
\begin{aligned}
 \QQ\left[\bigcup_{j=m_\ast}^{\binom{n_\ast}{3}}
 \left\{\Delta_2(H_j)>\frac{3Cj}{n_\ast(\log n_\ast)^{1/4}}\right\}
 \,\middle|\,F_*\right]
 &\le\sum_{j=m_\ast}^{\binom{n_\ast}{3}}
 \sum_{\{u,v\}\in\binom{[n_\ast]}2}
 \QQ\left[d_{H_j}(u,v)>\frac{3Cj}{n_\ast(\log n_\ast)^{1/4}}\,\middle|\,F_*\right]\\
 &\le\binom{n_\ast}{2}\binom{n_\ast}{3}
 \exp\{-c(\log n_\ast)^{7/4}\}=o(1).
\end{aligned}
\]
The conditional estimates hold uniformly over the chosen realisations of $F_*$. Averaging and applying \cref{thm:terminal-regularity}(i)--(ii) proves the degree and codegree bounds for $H_j$ under $\QQ$.

We next count triples meeting a set $A$ in exactly two vertices. Suppose $\bigl||A|-n_\ast/2\bigr|\le(L+1)n_\ast^{1/10}$. Under $\PP$, the number of such triples has a hypergeometric distribution, and
\[
 \EE\left[\bigl|H_j[A,\overline A]\bigr|\right]
 =j\frac{\binom{|A|}{2}(n_\ast-|A|)}{\binom{n_\ast}{3}}
 =\frac{3j}{8}+O_L\left(\frac{j}{n_\ast^{9/10}}\right).
\]
For sufficiently large $n_\ast$, the last error is at most $j/(2(\log n_\ast)^{1/8})$. A deviation from $3j/8$ exceeding $j/(\log n_\ast)^{1/8}$ therefore gives a deviation from the mean exceeding $j/(2(\log n_\ast)^{1/8})$. This threshold is smaller than the mean for large $n_\ast$. Applying \cref{lem:chernoff}(iii), with $\bigl(j/(2(\log n_\ast)^{1/8})\bigr)^2/\bigl(3j/8+O_L(j/n_\ast^{9/10})\bigr)=\Theta(j/(\log n_\ast)^{1/4})$, yields
\[
 \PP\left[\left|\bigl|H_j[A,\overline A]\bigr|-\frac{3j}{8}\right|>
 \frac{j}{(\log n_\ast)^{1/8}}\right]
 \le2\exp\left\{-\frac{cj}{(\log n_\ast)^{1/4}}\right\}.
\]
We have $m_\ast\sim n_\ast\log n_\ast/3$, and \cref{lem:beta} gives $\beta^{-1}\le\exp\{n_\ast^{1/10}\}$. Using these estimates, the definition of $\QQ$, and a union bound, we obtain
\[
\begin{aligned}
 &\QQ\left[
 \bigcup_{j=m_\ast}^{\binom{n_\ast}{3}}
 \bigcup_{\substack{A\subseteq[n_\ast]\\
 \left||A|-n_\ast/2\right|\le(L+1)n_\ast^{1/10}}}
 \left\{\left|\bigl|H_j[A,\overline A]\bigr|-\frac{3j}{8}\right|
 >\frac{j}{(\log n_\ast)^{1/8}}\right\}\right]\\
 &\quad\le\beta^{-1}\sum_{j=m_\ast}^{\binom{n_\ast}{3}}
 \sum_{\substack{A\subseteq[n_\ast]\\
 \left||A|-n_\ast/2\right|\le(L+1)n_\ast^{1/10}}}
 \PP\left[\left|\bigl|H_j[A,\overline A]\bigr|-\frac{3j}{8}\right|
 >\frac{j}{(\log n_\ast)^{1/8}}\right]\\
 &\quad\le2\beta^{-1}2^{n_\ast}\binom{n_\ast}{3}
 \exp\left\{-\frac{cm_\ast}{(\log n_\ast)^{1/4}}\right\}=o(1).
\end{aligned}
\]

It remains to pass from $H_j$ to $\hat H_j$ and $\hat H_j\cap\Kc$. For $|\hat B|\le h$ and $v\in[n_\ast]\setminus\hat B$, we have $0\le d_{H_j}(v)-d_{\hat H_j}(v)\le h\Delta_2(H_j)=o(j/n_\ast)$, so decreasing $c$ preserves the degree lower bound. The numbers of deleted edges satisfy
\[
\begin{aligned}
 |H_j\setminus\hat H_j|&\le h\Delta_1(H_j)=O_{h,L}\left(\frac{j}{n_\ast}\right)
 \quad\text{and}\quad 
 |\hat H_j\setminus\Kc|&\le\frac12\sum_{u\in V(\cM_0)}d_{H_j}(u)
 =O_{h,L}\left(\frac{sj}{n_\ast}\right).
\end{aligned}
\]
Every set $A$ in \cref{prop:path-regularity}(ii) satisfies $\bigl||A|-n_\ast/2\bigr|\le Ln_\ast^{1/10}+h/2\le(L+1)n_\ast^{1/10}$ for large $n_\ast$. Since $s\le n_\ast^{1/10}$, deleting edges changes each such number of triples by at most $|H_j\setminus\hat H_j|+|\hat H_j\setminus\Kc|=O_{h,L}((s+1)j/n_\ast)=o(j/(\log n_\ast)^{1/8})$. Increasing $C$ now gives the estimates in \cref{prop:path-regularity}(ii)--(iii). Since deleting edges cannot increase vertex degrees or codegrees, their upper bounds also hold.
\end{proof}

The cycle modifications require triples through a specified vertex that avoid certain excluded pairs. We show that a small set of excluded pairs, determined without revealing that vertex's link, is unlikely to cover most of the link. The estimate also allows us to require a bounded number of specified triples through the vertex to be present. For the next lemma, fix a nonnegative integer $h$ and make the following choices.
\begin{enumerate}[label=(\alph*)]
\item A set $\hat B\subseteq[n_\ast]$ with $|\hat B|\le8$.
\item A vertex $y\in[n_\ast]\setminus\hat B$.
\item A time $m_\ast\le j\le\binom{n_\ast}{3}$.
\item A set $\mathcal P$ of at most $h$ prescribed triples containing $y$ and disjoint from $\hat B$.
\item A fixed rule defining $\Gamma\subseteq\binom{[n_\ast]\setminus(\hat B\cup\{y\})}2$ using only $\hat H_j-y$ and fixed vertex labels.
\end{enumerate}
Thus revealing $\hat H_j-y$ determines $\Gamma$ while leaving $L_{\hat H_j}(y)$ unrevealed.

\begin{lemma}\label{lem:simultaneous-link}
Under the setting above, there exists a constant $c>0$, depending only on $h$, such that, for all sufficiently large $n_\ast$ and every such choice of data, the $\QQ$-probability that all of the following conditions hold is at most $\exp\{-c(\log n_\ast)\log\log\log\log n_\ast\}$.
\begin{enumerate}[label=(\textit{\roman*})]
\item The hypergraph $F_*$ satisfies the bounds in \cref{thm:terminal-regularity}(i)--(ii), and all conclusions of \cref{prop:path-regularity}, with $h=8$ and $L=2$, hold.
\item Each triple in $\mathcal P$ is an edge of $\hat H_j$.
\item $|\Gamma|\le\binom{n_\ast-|\hat B|-1}{2}/(\log\log\log n_\ast)^{1/16}$.
\item $|L_{\hat H_j}(y)\setminus\Gamma|<d_{\hat H_j}(y)/3$.
\end{enumerate}
\end{lemma}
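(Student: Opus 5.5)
We will condition on everything except the link of $y$ and exploit the uniformity of the link given the rest. Set $\hat B'\coloneqq\hat B\cup\{y\}$, and reveal $F_*(\hat B')$, $H_j(\hat B')$ and $\hat F_\ast-y$. By \cref{lem:boundary}(i)--(ii) applied to $\hat B$ and then to $\hat B'$, these determine $\hat H_j-y$, and hence $\Gamma$. The unrevealed information is the link structure at $y$. Conditional on the revealed data, the triples of $\hat H_j$ through $y$ are described as follows. The triples through $y$ in $\hat F_\ast$ form a set of prescribed size, and they are uniform subject to the lower bounds $\ell'_v$. The further triples of $\hat H_j$ through $y$ form a uniform subset of prescribed size of the remaining triples through $y$.

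The cleaner route is to reveal instead the pair $(F_*(\hat B),H_j(\hat B))$ together with the hypergraphs $\hat F_\ast-y$ and $\hat H_j-y$, and then apply \cref{lem:boundary}(i)--(ii) to $\hat B$. The triples of $\hat F_\ast$ containing $y$ are then uniform among the sets $S$ of pairs outside $\hat B'$ of the correct size $d$ that satisfy the residual constraints. These constraints say that each $v$ has at least $\ell'_v-d_{\hat F_\ast-y}(v)$ pairs of $S$ through it. By \cref{lem:boundary}(iii), $\mathfrak D_{\{y\}}(\hat F_\ast,\boldsymbol\ell')\le1$, so the constraint forces at most one specific incidence, and it concerns at most one vertex. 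Given $S$, the set $L_{\hat H_j}(y)\setminus S$ is a uniform set of prescribed size inside the complement of $S$ in $\binom{[n_\ast]\setminus\hat B'}2$.

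We would split into two cases according to the time $j$.

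In the first case, $j$ is large, meaning the extra link edges number at least $d_{\hat H_j}(y)/2$. The extra pairs then form a hypergeometric sample. By (iii), the expected fraction of them landing in $\Gamma$ is at most $(\log\log\log n_\ast)^{-1/16}$. The sample size is of order at least $\log n_\ast$ by \cref{prop:path-regularity}(i). For (iv) to hold, at least a third of the sample must fall in a set of relative density $\epsilon=(\log\log\log n_\ast)^{-1/16}$. \cref{lem:chernoff}(ii) bounds this probability by $(3\e\epsilon)^{\Omega(\log n_\ast)}=\exp\{-\Omega(\log n_\ast\log\log\log\log n_\ast)\}$, which is exactly the target rate.

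In the second case, most of the link comes from $S$. Here we compare $S$ with a uniform $d$-set of pairs by a switching argument in the style of \cref{thm:degree-domination}: replace a pair of $S$ lying in $\Gamma$ by a pair outside $\Gamma$. Such a switching respects the residual constraints, except at the boundedly many pairs forced by the single deficient incidence, and it respects simplicity, which fails only for $O(\Delta)$ choices. This shows that $|S\cap\Gamma|$ is dominated by a binomial variable with success probability $O(\epsilon)$. The same tail bound then applies.

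Condition (ii), that the triples of $\mathcal P$ are present, is handled by conditioning on those $h$ pairs lying in the link. This adds only $h=O(1)$ forced elements and changes the counts by $O(1)$, which affects nothing since $d_{\hat H_j}(y)\ge c\log n_\ast$.

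The main obstacle is the switching comparison for $S$ under the residual degree constraints. We must check that the constraints, which force at most one incidence, together with the prescribed triples, do not bias $S$ towards $\Gamma$ by more than a constant factor. Any such bias would have to be absorbed into the constant $c$.
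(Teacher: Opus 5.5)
Your proposal is correct, but it is organised differently from the paper's proof. Drop your first exposure: revealing $F_*(\hat B\cup\{y\})$ and $H_j(\hat B\cup\{y\})$ exposes precisely the link that must stay random, and it does not determine $\hat H_j-y$. Your ``cleaner route'' is exactly the paper's exposure.

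\textbf{How the paper proceeds.} Write $d\coloneqq d_{\hat H_j}(y)$, $S=L_{\hat F_\ast}(y)$ and $E\coloneqq L_{\hat H_j}(y)\setminus S$. The paper never splits the link into $S$ and $E$; it bounds the larger link directly. Every admissible larger link contains the same number of smaller links. So when $\mathfrak D_{\{y\}}(\hat F_\ast,\boldsymbol\ell')=0$, the larger link is exactly uniform among sets of its size. When $\mathfrak D_{\{y\}}(\hat F_\ast,\boldsymbol\ell')=1$, the paper builds an explicit coupling of the larger link with a uniform sample, differing by at most one pair replacement. This comes from a stochastic-domination comparison for the hypergeometric count of pairs through the deficient vertex, conditioned to be positive. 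A single hypergeometric tail bound on the whole link then finishes. This is done after removing $L_{\mathcal P}(y)$ and revealing $\mathcal P\cap\hat F_\ast$. You should likewise condition on which prescribed pairs lie in $S$, so that both remaining parts keep the same form.

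\textbf{Your switching step.} Your switching replaces that coupling, and it is shorter and more elementary. To make it precise, let $N$ be the number of available pairs and $a_k$ the number of admissible $S$ with $|S\cap\Gamma|=k$. Replacing a pair of $S\cap\Gamma$ by a pair outside $\Gamma\cup S$ is illegal only if it removes the unique pair of $S$ through the deficient vertex. This gives, for $k\ge2$,
\[
(k-1)(N-|\Gamma|-|S|)\,a_k\le(|S|-k+1)\,|\Gamma|\,a_{k-1}.
\]
So it is $|S\cap\Gamma|-1$, not $|S\cap\Gamma|$, that is dominated by a binomial with success probability $O((\log\log\log n_\ast)^{-1/16})$. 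The forced pair may lie in $\Gamma$ with certainty, for instance if $\Gamma$ contains every pair at that vertex; this additive $1$ is harmless. There is also no simplicity constraint at all in a link switching.

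\textbf{Your case split.} The split according to the size of $E$ is valid: if $|E|\ge d/2$, then indeed $|E\cap\Gamma|>|E|-d/3\ge|E|/3$. It is, however, unnecessary. The event $|L_{\hat H_j}(y)\cap\Gamma|>2d/3$ forces $|S\cap\Gamma|>d/3$ or $|E\cap\Gamma|>d/3$, and each of these has the required tail.

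\textbf{Comparison.} The paper's approach buys a clean structural statement about the larger link, with no case analysis on its composition. Yours buys brevity.
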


\begin{proof}[Proof of \cref{lem:simultaneous-link}]
Recall that $\ell'_v=\max\{0,\ell_v-d_{F_*(\hat B)}(v)\}$ is the required lower bound on the degree of $v$ in $\hat F_\ast$. Following the exposure argument in Kahn~\cite[Section~9]{Kahn2022}, reveal $F_*(\hat B)$, $H_j(\hat B)$, $\hat F_\ast-y$, and $\hat H_j-y$. The only unrevealed triples of $\hat F_\ast$ and $\hat H_j$ now contain $y$; equivalently, it remains to reveal the links $L_{\hat F_\ast}(y)$ and $L_{\hat H_j}(y)$. Fix a $\QQ$-admissible value of these four edge sets. For now, we do not condition on $\mathcal P\subseteq\hat H_j$.

We first determine the joint conditional distribution of the two links $L_{\hat F_\ast}(y)$ and $L_{\hat H_j}(y)$ given the revealed edge sets. When the bounds in \cref{thm:terminal-regularity}(i)--(ii) hold, \cref{lem:boundary}(iii) and \cref{thm:terminal-regularity}(iii) show that deleting $y$ leaves at most one vertex needing an additional incident edge; this permits a comparison with uniform link sampling by changing at most one pair. We then incorporate the prescribed triples and apply a hypergeometric tail bound.

\begin{claim}\label{claim:conditional-links}
Conditional on the revealed edge sets $F_*(\hat B)$, $H_j(\hat B)$, $\hat F_\ast-y$, and $\hat H_j-y$ under $\QQ$, the pair of links $(L_{\hat F_\ast}(y),L_{\hat H_j}(y))$ is uniform among all choices satisfying the following conditions.
\begin{enumerate}[label=(\textit{\roman*})]
\item The links satisfy $L_{\hat F_\ast}(y)\subseteq L_{\hat H_j}(y)\subseteq\binom{V(\hat H_j)\setminus\{y\}}2$.
\item The numbers of edges in the two links are $|L_{\hat F_\ast}(y)|=m_\ast-|F_*(\hat B)|-|\hat F_\ast-y|$ and $|L_{\hat H_j}(y)|=j-|H_j(\hat B)|-|\hat H_j-y|$.
\item For every $v\in V(\hat H_j)\setminus\{y\}$, we have $d_{\hat F_\ast-y}(v)+d_{L_{\hat F_\ast}(y)}(v)\ge\ell'_v$.
\end{enumerate}
\end{claim}
\begin{proof}[Proof of \cref{claim:conditional-links}]
The link sizes are fixed because
\[
\begin{aligned}
 |L_{\hat F_\ast}(y)|&=|\hat F_\ast|-|\hat F_\ast-y|
 =m_\ast-|F_*(\hat B)|-|\hat F_\ast-y|,\\
 |L_{\hat H_j}(y)|&=|\hat H_j|-|\hat H_j-y|
 =j-|H_j(\hat B)|-|\hat H_j-y|.
\end{aligned}
\]
For every $\QQ$-admissible exposure, the fixed size $|L_{\hat F_\ast}(y)|=d_{\hat F_\ast}(y)$ is at least $\ell'_y$. Thus every choice of $L_{\hat F_\ast}(y)$ with this fixed size satisfies the degree lower bound $d_{\hat F_\ast}(y)\ge\ell'_y$.

Since $\hat F_\ast\subseteq\hat H_j$, their links satisfy $L_{\hat F_\ast}(y)\subseteq L_{\hat H_j}(y)$. The remaining degree constraints concern the vertices other than $y$. For every $v\in[n_\ast]\setminus(\hat B\cup\{y\})$, the triples through $y$ contribute $d_{L_{\hat F_\ast}(y)}(v)$ to the degree of $v$, so the required inequality is
\[
 d_{\hat F_\ast}(v)
 =d_{\hat F_\ast-y}(v)+d_{L_{\hat F_\ast}(y)}(v)\ge\ell'_v.
\]
By \cref{lem:boundary}(i)--(ii), conditional on $F_*(\hat B)$ and $H_j(\hat B)$, all compatible pairs $(\hat F_\ast,\hat H_j)$ of the prescribed sizes are equally likely, since the number of extensions of $\hat F_\ast$ depends only on its number of edges. Fixing $\hat F_\ast-y$ and $\hat H_j-y$ restricts this equally likely family, and each remaining pair $(\hat F_\ast,\hat H_j)$ is uniquely determined by $L_{\hat F_\ast}(y)$ and $L_{\hat H_j}(y)$. Therefore, conditional under $\QQ$ on $F_*(\hat B)$, $H_j(\hat B)$, $\hat F_\ast-y$, and $\hat H_j-y$, the pair of links is uniform among all choices satisfying conditions (i)--(iii) of \cref{claim:conditional-links}.
\end{proof}

By \cref{lem:boundary}(iii) and \cref{thm:terminal-regularity}(iii), whenever $F_*$ satisfies the bounds in \cref{thm:terminal-regularity}(i)--(ii), we have
\[
 \mathfrak D_{\{y\}}(\hat F_\ast,\boldsymbol\ell')
 =\sum_{v\in V(\hat F_\ast)\setminus\{y\}}
 \max\{0,\ell'_v-d_{\hat F_\ast-y}(v)\}\le1.
\]
The sum on the left is determined by the four revealed edge sets. Values for which it exceeds $1$ cannot contribute to the event in \cref{lem:simultaneous-link}. It therefore suffices to treat the cases $\mathfrak D_{\{y\}}(\hat F_\ast,\boldsymbol\ell')=0$ and $\mathfrak D_{\{y\}}(\hat F_\ast,\boldsymbol\ell')=1$. Note that these cases mean that, in $\hat F_\ast-y$, either every vertex satisfies its degree lower bound, or exactly one vertex $v$ has degree $\ell'_v-1$ and every other vertex satisfies its degree lower bound.

The vertex-degree lower bound in \cref{prop:path-regularity}(i), required by \cref{lem:simultaneous-link}(i), and $j\ge m_\ast\sim n_\ast\log n_\ast/3$ give $d_{\hat H_j}(y)\ge c\log n_\ast$ for a fixed $c>0$. We therefore need only consider revealed edge sets satisfying this degree bound and the size condition $|\Gamma|\le\binom{n_\ast-|\hat B|-1}{2}/(\log\log\log n_\ast)^{1/16}$ in \cref{lem:simultaneous-link}(iii). The set $\Gamma$ is fixed by the four originally revealed edge sets.

Suppose first that $\mathfrak D_{\{y\}}(\hat F_\ast,\boldsymbol\ell')=0$. Keep the four revealed edge sets fixed, condition on $\mathcal P\subseteq\hat H_j$, and reveal $\mathcal P\cap\hat F_\ast$, considering only $\QQ$-admissible values. Every vertex other than $y$ already satisfies its degree lower bound in $\hat F_\ast-y$, so the degree inequalities in \cref{claim:conditional-links} impose no further restriction on the links. After removing the fixed pairs $L_{\mathcal P}(y)$, each possible remaining larger link contains exactly $\binom{d_{\hat H_j}(y)-|\mathcal P|}{d_{\hat F_\ast}(y)-|\mathcal P\cap\hat F_\ast|}$ possible remaining smaller links. Thus $L_0\coloneqq L_{\hat H_j}(y)\setminus L_{\mathcal P}(y)$ is uniform among the subsets of $\binom{V(\hat H_j)\setminus\{y\}}2\setminus L_{\mathcal P}(y)$ having $d_{\hat H_j}(y)-|\mathcal P|$ pairs.

Under the conditional law just described, $|L_0\cap\Gamma|$ is hypergeometric, and its mean satisfies
\[
\begin{aligned}
 \EE[|L_0\cap\Gamma|]
 =\frac{(d_{\hat H_j}(y)-|\mathcal P|)|\Gamma\setminus L_{\mathcal P}(y)|}
 {\binom{n_\ast-|\hat B|-1}{2}-|\mathcal P|}
 \le\frac{d_{\hat H_j}(y)|\Gamma|}{\binom{n_\ast-|\hat B|-1}{2}-h}
 \le\frac{2d_{\hat H_j}(y)}{(\log\log\log n_\ast)^{1/16}}.
\end{aligned}
\]
Since $|\mathcal P|\le h$, on the event $|L_{\hat H_j}(y)\setminus\Gamma|<d_{\hat H_j}(y)/3$ we have
\[
 |L_0\cap\Gamma|
 =|L_{\hat H_j}(y)\cap\Gamma|-|L_{\mathcal P}(y)\cap\Gamma|
 >\frac{2d_{\hat H_j}(y)}3-h
 \ge\frac{d_{\hat H_j}(y)}2
\]
for sufficiently large $n_\ast$. If the mean is zero, then this event is impossible. Otherwise, \cref{lem:chernoff}(ii) bounds its conditional probability by $\bigl(4\e/(\log\log\log n_\ast)^{1/16}\bigr)^{d_{\hat H_j}(y)/2}\le\exp\{-c(\log n_\ast)\log\log\log\log n_\ast\}$. This proves the conditional bound when $\mathfrak D_{\{y\}}(\hat F_\ast,\boldsymbol\ell')=0$, for every such value of $\mathcal P\cap\hat F_\ast$, with the four original revealed edge sets fixed and conditional on $\mathcal P\subseteq\hat H_j$.

In the remainder of the proof, we focus on values of the revealed edge sets for which $\mathfrak D_{\{y\}}(\hat F_\ast,\boldsymbol\ell')=1$. At this stage, we condition only on $F_*(\hat B)$, $H_j(\hat B)$, $\hat F_\ast-y$, and $\hat H_j-y$, postponing the requirement $\mathcal P\subseteq\hat H_j$. The next claim compares the actual link with uniform sampling. In its coupling construction and the final tail estimate, $\PP$ denotes the probability law of the auxiliary sampling experiment.

\begin{claim}\label{claim:link-coupling}
If $\mathfrak D_{\{y\}}(\hat F_\ast,\boldsymbol\ell')=1$, then under the conditional law in \cref{claim:conditional-links}, the link $L_{\hat H_j}(y)$ can be coupled with a uniformly chosen set of $d_{\hat H_j}(y)$ pairs in $\binom{V(\hat H_j)\setminus\{y\}}2$ so that the two sets differ by at most one pair replacement.
\end{claim}
\begin{proof}[Proof of \cref{claim:link-coupling}]
Recall that $\mathfrak D_{\{y\}}(\hat F_\ast,\boldsymbol\ell')=1$ means that there is a unique vertex $v\in V(\hat H_j)\setminus\{y\}$ with $d_{\hat F_\ast-y}(v)=\ell'_v-1$, and every other vertex of $\hat F_\ast-y$ satisfies its degree lower bound. To satisfy the degree lower bound at $v$, the smaller link $L_{\hat F_\ast}(y)$ must contain a pair incident to $v$. Let $S\subseteq\binom{V(\hat H_j)\setminus\{y\}}2$ be the set of all such pairs. The remaining degree constraint is exactly $L_{\hat F_\ast}(y)\cap S\ne\varnothing$.

For comparison, first choose $A_0$ uniformly among the subsets of $\binom{V(\hat H_j)\setminus\{y\}}2$ having $d_{\hat F_\ast}(y)$ pairs. Given $A_0$, choose $L_0$ uniformly among the subsets of $\binom{V(\hat H_j)\setminus\{y\}}2$ having $d_{\hat H_j}(y)$ pairs and containing $A_0$. Since the exposure is $\QQ$-admissible, the event $A_0\cap S\ne\varnothing$ has positive probability. With the revealed edge sets fixed, the required joint distribution of $(L_{\hat F_\ast}(y),L_{\hat H_j}(y))$ is precisely the law of $(A_0,L_0)$ conditional on this event. We first couple $A_0$ with a set $A$ having the law of $A_0$ conditional on $A_0\cap S\ne\varnothing$, so that they differ by at most one pair replacement. We then extend the two smaller sets to sets of size $d_{\hat H_j}(y)$, preserving this bound.

To couple the intersection sizes so that they differ by at most one, we first prove
\[
 |A_0\cap S|\preceq\bigl(|A_0\cap S|\mid |A_0\cap S|\ge1\bigr)\preceq |A_0\cap S|+1.
\]
We will use these inequalities to construct the coupled intersection sizes and then the sets themselves. If $|A_0\cap S|$ is positive almost surely, then the conditional and unconditional laws coincide. Otherwise, for consecutive values in its support, we have
\[
 \frac{\PP[|A_0\cap S|=i+1]}{\PP[|A_0\cap S|=i]}=\frac{(|S|-i)(d_{\hat F_\ast}(y)-i)}{(i+1)(\binom{n_\ast-|\hat B|-1}{2}-|S|-d_{\hat F_\ast}(y)+i+1)}.
\]
These ratios decrease with $i$. For every $i$ with $\PP[|A_0\cap S|=i]>0$, summing the point probabilities and dividing by $\PP[|A_0\cap S|=i]$ gives
\[
 \frac{\PP[|A_0\cap S|\ge i]}{\PP[|A_0\cap S|=i]}
 =1+\sum_{k=1}^{\min\{d_{\hat F_\ast}(y),|S|\}-i}\ \prod_{q=0}^{k-1}
 \frac{\PP[|A_0\cap S|=i+q+1]}{\PP[|A_0\cap S|=i+q]}.
\]
As $i$ increases, each factor and the number of summands are nonincreasing. Hence $\PP[|A_0\cap S|\ge i]/\PP[|A_0\cap S|=i]$ is nonincreasing wherever its denominator is positive, and
\[
 \frac{\PP[|A_0\cap S|\ge i+1]}{\PP[|A_0\cap S|\ge i]}
 =1-\frac{\PP[|A_0\cap S|=i]}{\PP[|A_0\cap S|\ge i]}
 \le\frac{\PP[|A_0\cap S|\ge1]}{\PP[|A_0\cap S|\ge0]}
 =\PP[|A_0\cap S|\ge1]
\]
for $i\ge0$ with $\PP[|A_0\cap S|\ge i]>0$. Thus $\PP[|A_0\cap S|\ge i]\le\PP[|A_0\cap S|\ge1]\PP[|A_0\cap S|\ge i-1]$ for every integer $i\ge1$.
Dividing by $\PP[|A_0\cap S|\ge1]>0$, for every integer $i\ge1$ we obtain
\[
 \PP[|A_0\cap S|\ge i\mid |A_0\cap S|\ge1]
 =\frac{\PP[|A_0\cap S|\ge i]}{\PP[|A_0\cap S|\ge1]}
 \le\PP[|A_0\cap S|\ge i-1]
 =\PP[|A_0\cap S|+1\ge i].
\]
Since the denominator is at most $1$, the same conditional probability is at least $\PP[|A_0\cap S|\ge i]$. This proves both stochastic-domination inequalities.

We now construct $Z$ with the law of $|A_0\cap S|$ conditional on $A_0\cap S\ne\varnothing$, keeping the already chosen set $A_0$ fixed. If $|A_0\cap S|=i$, then choose a number uniformly from
\[
 \bigl(\PP[|A_0\cap S|<i],\ \PP[|A_0\cap S|\le i]\bigr),
\]
and let $Z$ be the smallest integer whose cumulative probability under the law of $|A_0\cap S|$ conditional on $|A_0\cap S|\ge1$ is at least this number. Averaging over $i$, the chosen number is uniform in $(0,1)$, so $Z$ has the required conditional distribution. The preceding stochastic-domination inequalities give
\[
\begin{aligned}
 \PP[|A_0\cap S|\le i-1\mid |A_0\cap S|\ge1]
 &\le\PP[|A_0\cap S|\le i-1],\\
 \PP[|A_0\cap S|\le i+1\mid |A_0\cap S|\ge1]
 &\ge\PP[|A_0\cap S|\le i].
\end{aligned}
\]
When $|A_0\cap S|=i$, the sampled number is greater than $\PP[|A_0\cap S|\le i-1]$ and at most $\PP[|A_0\cap S|\le i]$. The displayed inequalities therefore place it above the conditional cumulative probability at $i-1$ and at most that at $i+1$. Hence $Z\in\{i,i+1\}$, giving $|A_0\cap S|\le Z\le|A_0\cap S|+1$. The conditional law of $Z$ given $A_0$ depends only on $|A_0\cap S|$.

We now construct $A$ from $A_0$. If $Z=|A_0\cap S|$, then set $A=A_0$. Otherwise, remove a uniformly chosen pair from $A_0\setminus S$ and add a uniformly chosen pair from $S\setminus A_0$, choosing independently given $(A_0,Z)$. Both choices are possible because $Z=|A_0\cap S|+1\le\min\{d_{\hat F_\ast}(y),|S|\}$. The resulting set has $d_{\hat F_\ast}(y)$ pairs and satisfies $|A\cap S|=Z$. The law of $A_0$ and this construction are invariant under permutations within $S$ and within $\binom{V(\hat H_j)\setminus\{y\}}2\setminus S$. Hence, conditional on $Z$, the set $A$ is uniform among the subsets having $d_{\hat F_\ast}(y)$ pairs, exactly $Z$ of which belong to $S$. To identify the marginal law of $A$, fix $U\subseteq\binom{V(\hat H_j)\setminus\{y\}}2$ with $|U|=d_{\hat F_\ast}(y)$ and $U\cap S\ne\varnothing$. The conditional uniformity gives the first equality below. The second follows from the law of $Z$, which is the law of $|A_0\cap S|$ conditional on $A_0\cap S\ne\varnothing$, and the hypergeometric point probabilities for $|A_0\cap S|$.
\[
\begin{aligned}
 \PP[A=U]
 &=\frac{\PP[Z=|U\cap S|]}
 {\displaystyle\binom{|S|}{|U\cap S|}\binom{\binom{n_\ast-|\hat B|-1}{2}-|S|}{d_{\hat F_\ast}(y)-|U\cap S|}}
 =\frac{\PP[A_0=U]}{\PP[A_0\cap S\ne\varnothing]}
 =\PP[A_0=U\mid A_0\cap S\ne\varnothing].
\end{aligned}
\]
If $U\cap S=\varnothing$, then both distributions assign probability zero to $U$. Thus $A_0$ and $A$ have the required marginal distributions and differ by at most one pair replacement.

We next couple the larger links by extending $A_0$ and $A$ to sets of size $d_{\hat H_j}(y)$. If they differ, let $\pi$ exchange the unique element of $A_0\setminus A$ with the unique element of $A\setminus A_0$, fixing all other elements of $\binom{V(\hat H_j)\setminus\{y\}}2$; otherwise let $\pi$ be the identity. Given $(A_0,A)$, choose $L_0$ uniformly among the subsets of $\binom{V(\hat H_j)\setminus\{y\}}2$ having $d_{\hat H_j}(y)$ pairs and containing $A_0$. For each fixed pair $(A_0,A)$, the map $\pi$ is a fixed bijection between the extensions of $A_0$ and those of $A$. Thus, for any fixed set $U\subseteq\binom{V(\hat H_j)\setminus\{y\}}2$ with $|U|=d_{\hat H_j}(y)$, on the event $A\subseteq U$ we have $\PP[\pi(L_0)=U\mid A_0,A]=1/\binom{\binom{n_\ast-|\hat B|-1}{2}-d_{\hat F_\ast}(y)}{d_{\hat H_j}(y)-d_{\hat F_\ast}(y)}$. This value does not depend on $A_0$, so averaging over the conditional distribution of $A_0$ given $A$ gives $\PP[\pi(L_0)=U\mid A]=1/\binom{\binom{n_\ast-|\hat B|-1}{2}-d_{\hat F_\ast}(y)}{d_{\hat H_j}(y)-d_{\hat F_\ast}(y)}$. The conditional law of $L_0$ given $A_0$ is uniform among the extensions of $A_0$ having $d_{\hat H_j}(y)$ pairs. For every set $U\subseteq\binom{V(\hat H_j)\setminus\{y\}}2$ with $|U|=d_{\hat H_j}(y)$, there are $\binom{d_{\hat H_j}(y)}{d_{\hat F_\ast}(y)}$ possible subsets $A_0\subseteq U$, so
\[
 \PP[L_0=U]
 =\frac{\binom{d_{\hat H_j}(y)}{d_{\hat F_\ast}(y)}}{\binom{\binom{n_\ast-|\hat B|-1}{2}}{d_{\hat F_\ast}(y)}\binom{\binom{n_\ast-|\hat B|-1}{2}-d_{\hat F_\ast}(y)}{d_{\hat H_j}(y)-d_{\hat F_\ast}(y)}}
 = \frac{1}{\binom{\binom{n_\ast-|\hat B|-1}{2}}{d_{\hat H_j}(y)}}.
\]
Thus $L_0$ is uniform among all subsets of $\binom{V(\hat H_j)\setminus\{y\}}2$ having $d_{\hat H_j}(y)$ pairs. By the law of $A$ and the uniform extension property proved above, $\pi(L_0)$ has the conditional distribution of $L_{\hat H_j}(y)$ described in \cref{claim:conditional-links}. Finally, $\pi(L_0)$ differs from $L_0$ by at most one pair replacement, so their intersections with any fixed set of pairs differ in size by at most $1$.

The construction uses only the sample sizes and the requirement that the smaller set intersect $S$. In \cref{claim:prescribed-link-coupling}, we apply it after removing the prescribed link pairs $L_{\mathcal P}(y)$, with the remaining sample sizes specified in that proof.
\end{proof}

We now incorporate the prescribed triples in $\mathcal P$. Keep the previously revealed edge sets fixed, condition on $\mathcal P\subseteq\hat H_j$, and reveal $\mathcal P\cap\hat F_\ast$. We consider only $\QQ$-admissible values of this additional information. If a triple in $\mathcal P\cap\hat F_\ast$ contains the unique vertex whose degree in $\hat F_\ast-y$ is below its lower bound, then this prescribed triple supplies the required edge at that vertex. Otherwise, the remaining part of $L_{\hat F_\ast}(y)$ must contain a pair incident to that vertex.

\begin{claim}\label{claim:prescribed-link-coupling}
If $\mathfrak D_{\{y\}}(\hat F_\ast,\boldsymbol\ell')=1$, then under this conditional law, the link $L_{\hat H_j}(y)$ can be coupled with a uniformly chosen set $L_0$ of $d_{\hat H_j}(y)-|\mathcal P|$ pairs in $\binom{V(\hat H_j)\setminus\{y\}}2\setminus L_{\mathcal P}(y)$ so that, for every fixed set of pairs $\Gamma$,
\[
 \bigl||L_{\hat H_j}(y)\cap\Gamma|-|L_0\cap\Gamma|\bigr|\le h+1.
\]
\end{claim}
\begin{proof}[Proof of \cref{claim:prescribed-link-coupling}]
Remove the fixed pairs $L_{\mathcal P}(y)$ from the set of available pairs. The remaining parts of $L_{\hat H_j}(y)$ and $L_{\hat F_\ast}(y)$ have sizes $d_{\hat H_j}(y)-|\mathcal P|$ and $d_{\hat F_\ast}(y)-|\mathcal P\cap\hat F_\ast|$, respectively. Let $v\in V(\hat H_j)\setminus\{y\}$ be the unique vertex with $d_{\hat F_\ast-y}(v)=\ell'_v-1$. For every $u\in V(\hat H_j)\setminus\{y,v\}$, we have $d_{\hat F_\ast-y}(u)\ge\ell'_u$. The prescribed triples in $\hat F_\ast$ contribute $d_{\mathcal P\cap\hat F_\ast}(u)$ to the degree of each $u\ne y$, so
\[
 \sum_{u\in V(\hat H_j)\setminus\{y\}}
 \max\{0,\ell'_u-d_{\hat F_\ast-y}(u)-d_{\mathcal P\cap\hat F_\ast}(u)\}
 =\max\{0,1-d_{\mathcal P\cap\hat F_\ast}(v)\}.
\]
We distinguish the following two cases.
\begin{enumerate}[label=(\textit{\roman*})]
\item If $d_{\mathcal P\cap\hat F_\ast}(v)\ge1$, then every vertex other than $y$ already satisfies its degree lower bound using the revealed edges. As in the case $\mathfrak D_{\{y\}}=0$ above, the remaining larger link is uniform among the subsets of $\binom{V(\hat H_j)\setminus\{y\}}2\setminus L_{\mathcal P}(y)$ having $d_{\hat H_j}(y)-|\mathcal P|$ pairs. Take $L_0$ to be this remaining link; no replacement is needed.
\item If $d_{\mathcal P\cap\hat F_\ast}(v)=0$, then the remaining smaller link must contain a pair incident to $v$. By \cref{claim:conditional-links}, after conditioning on the additional information, the two remaining links are uniform among all nested choices of the specified sizes on $\binom{V(\hat H_j)\setminus\{y\}}2\setminus L_{\mathcal P}(y)$ for which the smaller link contains such a pair. The $\QQ$-admissibility of the exposure ensures that this requirement is feasible. The coupling construction in \cref{claim:link-coupling}, with the set of available pairs and the sample sizes adjusted as above, gives a uniform sample $L_0$ that differs from the remaining larger link by at most one pair replacement.
\end{enumerate}
In either case, the number of pairs in $\Gamma$ differs by at most $1$ between the remaining larger link and $L_0$. Adding back the fixed link pairs $L_{\mathcal P}(y)$ changes the intersection size by at most $|L_{\mathcal P}(y)|=|\mathcal P|\le h$, proving the claimed bound.
\end{proof}

It remains to apply the hypergeometric tail bound in the case $\mathfrak D_{\{y\}}(\hat F_\ast,\boldsymbol\ell')=1$. Keep the four original revealed edge sets, the event $\mathcal P\subseteq\hat H_j$, and the value of $\mathcal P\cap\hat F_\ast$ fixed as above. The set $\Gamma$ is fixed before the remaining link pairs are sampled. Take the uniform comparison sample $L_0$ supplied by \cref{claim:prescribed-link-coupling}; its size lies between $d_{\hat H_j}(y)-h$ and $d_{\hat H_j}(y)$. If $|L_{\hat H_j}(y)\setminus\Gamma|<d_{\hat H_j}(y)/3$, then the coupling inequality in \cref{claim:prescribed-link-coupling} gives
\[
 |L_0\cap\Gamma|
 \ge |L_{\hat H_j}(y)\cap\Gamma|-(h+1)
 >\frac{2d_{\hat H_j}(y)}3-(h+1)
 \ge\frac{d_{\hat H_j}(y)}2
\]
for sufficiently large $n_\ast$. Under $\PP$, $|L_0\cap\Gamma|$ is hypergeometric and
\[
 \EE[|L_0\cap\Gamma|]
 \le\frac{d_{\hat H_j}(y)|\Gamma|}{\binom{n_\ast-|\hat B|-1}{2}-h}
 \le\frac{2d_{\hat H_j}(y)}{(\log\log\log n_\ast)^{1/16}}.
\]
Applying \cref{lem:chernoff}(ii) under $\PP$ gives
\[
\begin{aligned}
 \PP\left[|L_{\hat H_j}(y)\setminus\Gamma|<\frac{d_{\hat H_j}(y)}3\right]
 &\le\PP\left[|L_0\cap\Gamma|\ge\frac{d_{\hat H_j}(y)}2\right]\\
 &\le\left(\frac{4\e}{(\log\log\log n_\ast)^{1/16}}\right)^{d_{\hat H_j}(y)/2}\\
 &=\exp\left\{-\frac{d_{\hat H_j}(y)}{32}\log\log\log\log n_\ast+O(d_{\hat H_j}(y))\right\}\\
 &\le\exp\{-c(\log n_\ast)\log\log\log\log n_\ast\},
\end{aligned}
\]
for some fixed $c>0$ and all sufficiently large $n_\ast$. If the mean is zero, the failure probability is zero. Since the coupled link has the required conditional law, this proves the conditional bound when $\mathfrak D_{\{y\}}(\hat F_\ast,\boldsymbol\ell')=1$.

Both cases therefore give the same bound conditional on the four original revealed edge sets, the event $\mathcal P\subseteq\hat H_j$, and any $\QQ$-admissible value of $\mathcal P\cap\hat F_\ast$. Averaging over $\mathcal P\cap\hat F_\ast$ preserves this bound. In the following calculation, all probabilities are conditional on the four original revealed edge sets $F_*(\hat B),H_j(\hat B),\hat F_\ast-y,\hat H_j-y$. If $\mathcal P\subseteq\hat H_j$ has conditional probability zero, then the joint probability is zero. Otherwise,
\begin{multline*}
 \QQ\left[\mathcal P\subseteq\hat H_j,
 |L_{\hat H_j}(y)\setminus\Gamma|<\frac{d_{\hat H_j}(y)}3\right]
 =\QQ[\mathcal P\subseteq\hat H_j]\,
 \QQ\left[|L_{\hat H_j}(y)\setminus\Gamma|<\frac{d_{\hat H_j}(y)}3
 \,\middle|\,\mathcal P\subseteq\hat H_j\right]\\
 \le\QQ[\mathcal P\subseteq\hat H_j]
 \exp\{-c(\log n_\ast)\log\log\log\log n_\ast\}
 \le\exp\{-c(\log n_\ast)\log\log\log\log n_\ast\}.
\end{multline*}
Every realisation of the event in \cref{lem:simultaneous-link} satisfies the degree and $|\Gamma|$ bounds imposed above and falls into one of the two cases. Since the conditional joint bound is uniform over these values of the four original revealed edge sets, averaging over them proves the lemma.
\end{proof}

\section{Edge resampling for mixed Hamilton cycles}
\label{sec:resampling}
For a fixed prescribed matching $\cM$, we compare the numbers of mixed Hamilton cycles as the deleted vertex $w$ and the added pair $uv$ with prescribed direction $u\to v$ vary. The three vertices $w,u,v$ need not form an edge of the host hypergraph. In the conditioned edge-deletion process, we show that, with high probability, the bounds in \cref{thm:terminal-regularity}(i)--(ii) and \cref{prop:path-regularity}, together with the required lower bound on the number of these mixed Hamilton cycles, ensure that these numbers are nearly equal for most choices of the three vertices.

We work with the conditioned edge-deletion process $(H_j)$ under $\QQ$ and the fixed set $\Kc$ of $\cM_0$-compatible triples. Whenever we consider a prescribed matching $\cM$, we also choose a root pair in $\cM$ and fix an arbitrary direction on it. Every counted cycle is given its root orientation. From now on, $\comp_{H,\cM}(x;y\to z)$ counts only cycles using triples in $H\cap\Kc$. The total number of cycles with the added pair $yz$ is $\cyc((H\cap\Kc)-x,\cM\cup\{yz\})$. Adding an auxiliary pair does not change $\Kc$, which remains defined by the original matching $\cM_0$. We compute $d(H)$ using all triples of $H$, before intersecting $H$ with $\Kc$.

We fix the constants in \cref{prop:path-regularity} with $h=8$ and $L=2$. The bounds in \cref{thm:terminal-regularity}(i)--(ii) and \cref{prop:path-regularity} will be included in the event whose probability we estimate.

We state the bound for $\comp_{H,\cM}(w;u\to v)$ uniformly over choices of deleted vertices and prescribed pairs. We record these choices, together with the prescribed directions, in an auxiliary frame.

An \emph{auxiliary frame} $\Lambda$ is specified by the following choices.
\begin{itemize}[beginpenalty=10000,midpenalty=10000]
\item A set $\hat B\subseteq[n_\ast]$ of vertices to delete from $H_j$.
\item A matching $\cM$ on $[n_\ast]\setminus\hat B$.
\item A chosen root pair in $\cM$, together with an arbitrary fixed direction.
\item When needed, a pair $uv\in\cM$ other than the root pair, with prescribed direction $u\to v$ in the root orientation.
\end{itemize}
Cycles in the frame use triples only from $(H_j\cap\Kc)-\hat B$, where $\Kc$ remains the set of $\cM_0$-compatible triples. We consider frames satisfying
\begin{equation}\label{eq:frame-bounds-final}
 |\hat B|\le8,\quad
 |\cM\setminus\cM_0|\le2,\quad\text{and}\quad
 |\cM_0\setminus\cM|\le2.
\end{equation}
We require $\cM\ne\varnothing$ so that a root pair can be chosen. We also require $n_\ast-|\hat B|-|\cM|$ to be even, since every mixed Hamilton cycle in the frame contains $(n_\ast-|\hat B|-|\cM|)/2$ triples.

For a frame $\Lambda$ at time $j$, use its deleted vertex set $\hat B$ in the definitions of $\hat F_\ast$ and $\hat H_j$, and put $m\coloneqq|\hat H_j|$. For a host $F$ on $[n_\ast]\setminus\hat B$, write $\cyc_{\Lambda}(F)$ for the number of $(F\cap\Kc,\cM)$-respecting mixed Hamilton cycles satisfying the prescribed directions of pairs in $\cM$ in the root orientation. When a cycle is sampled from this family, the directions of the remaining pairs in $\cM$ are determined by that cycle.

The frame construction and deletion of an edge set $T\subseteq\hat H_j$ are illustrated in \cref{fig:batch-deletion}.

\begin{figure}[htb]
\centering
\begin{tikzpicture}[
  x=.86cm,y=.86cm,font=\small,
  vertex/.style={circle,fill=black,inner sep=1.4pt},
  Fstar triple/.style={draw=blue!65!black,fill=blue!12,line width=.7pt},
  current triple/.style={draw=black!50,fill=black!5,line width=.55pt},
  selected triple/.style={draw=red!80!black,densely dashed,line width=1.15pt},
  set boundary/.style={draw=black!30,densely dashed,line width=.4pt}
]
  \foreach \stage/\xoffset/\yoffset/\heading in {
    1/0/0/{1. Current hypergraph $H_j$},
    2/8.4/0/{2. Frame on $\hat H_j=H_j-\hat B$},
    3/8.4/-7.2/{3. After deleting $T$},
    4/0/-7.2/{4. A cycle counted by $\cyc_{\Lambda}(\hat H_j-T)$}} {
    \begin{scope}[shift={(\xoffset,\yoffset)}]
      \node at (0,3.3) {\heading};
      \foreach \i in {0,...,8}
        \coordinate (j\i) at ({90-40*\i}:1.6);
      \foreach \i in {1,2,3,5,6,7,8}
        \coordinate (p\i) at ({70-40*\i}:2.02);
      \coordinate (d0) at (-2.52,.7);
      \coordinate (d1) at (-2.52,-.7);
      \ifnum\stage=1
        \path[fill=black!4,even odd rule]
          (0,0) circle (2.85) (0,0) circle (2.18);
        \draw[set boundary] (0,0) circle (2.85);
        \node at (-1.95,1.9) {$\hat B$};
        \draw[Fstar triple] (d0)--(p7)--(j7)--cycle;
        \draw[current triple] (d1)--(p6)--(j6)--cycle;
        \draw[violet!80!black,line width=1.5pt] (d0)--(d1);
        \foreach \point in {d0,d1}
          \node[vertex] at (\point) {};
      \fi
      \draw[set boundary] (0,0) circle (2.18);
      \foreach \i/\next/\edgestyle in {
        1/2/Fstar triple,
        2/3/current triple,
        3/4/Fstar triple,
        5/6/current triple,
        6/7/Fstar triple,
        7/8/current triple,
        8/0/Fstar triple} {
        \draw[\edgestyle] (j\i)--(p\i)--(j\next)--cycle;
      }
      \ifnum\stage<4
        \draw[Fstar triple] (j6)--(p5)--(p6)--cycle;
        \draw[current triple] (j2)--(p1)--(p2)--cycle;
      \fi
      \ifnum\stage<3
        \draw[Fstar triple] (j3)--(p2)--(p3)--cycle;
        \draw[current triple] (j7)--(p6)--(p7)--cycle;
      \fi
      \ifnum\stage=2
        \draw[selected triple] (j3)--(p2)--(p3)--cycle;
        \draw[selected triple] (j7)--(p6)--(p7)--cycle;
      \fi
      \ifnum\stage=1
        \draw[violet!80!black,line width=1.5pt] (j0)--(j1);
        \node at (0,.12) {$[n_\ast]\setminus\hat B$};
        \node at (0,-3.25) {original pairs $\cM_0$};
      \else
        \draw[violet!80!black,line width=1.5pt,shorten >=2.2pt,-{Stealth[length=5.5pt,width=4.5pt]}]
          (j0)--(j1);
        \draw[orange!85!black,line width=1.5pt] (j4)--(j5);
        \node at (0,.12) {$[n_\ast]\setminus\hat B$};
        \ifnum\stage=2
          \node[blue!65!black] at (0,-2.6) {$\hat F_\ast=F_*-\hat B$};
        \fi
        \ifnum\stage=3
          \node at (0,-2.6) {$\hat F_\ast-T\subseteq\hat H_j-T$};
        \fi
        \ifnum\stage=4
          \node at (0,-2.6) {Mixed Hamilton cycle};
        \fi
      \fi
      \foreach \i in {0,...,8}
        \node[vertex] at (j\i) {};
      \foreach \i in {1,2,3,5,6,7,8}
        \node[vertex] at (p\i) {};
    \end{scope}
  }
  \draw[-{Stealth[length=5pt]},line width=.7pt] (3.05,0)--(5.85,0)
    node[midway,above=4pt] {delete $\hat B$}
    node[midway,below=4pt] {choose $\cM$};
  \draw[-{Stealth[length=5pt]},line width=.7pt] (8.4,-3.05)--(8.4,-3.65)
    node[midway,right=5pt] {delete $T\subseteq \hat H_j$};
  \draw[-{Stealth[length=5pt]},line width=.7pt] (5.85,-7.2)--(2.55,-7.2)
    node[midway,above=4pt] {a surviving cycle};
\end{tikzpicture}
\caption{An auxiliary frame and deletion of $T$. Blue triangles are triples of $F_*$; grey triangles belong to $H_j\setminus F_*$. Deleting $\hat B$ removes the outer pair of $\cM_0$, and adding the orange pair gives $\cM$; the purple arrow fixes the direction of the root pair. Red outlines mark the triples in $T$, sampled from $\hat H_j$. Deleting $T$ leaves $\hat F_\ast-T\subseteq\hat H_j-T$ with the frame unchanged. The last diagram illustrates a cycle counted by $\cyc_{\Lambda}(\hat H_j-T)$. Only selected host triples are shown.}
\label{fig:batch-deletion}
\end{figure}

We define the set of \emph{legal directed candidates} by
\[
 \mathcal Q_{\Lambda}\coloneqq\left\{(w;u\to v)\colon w,u,v\in[n_\ast]\setminus\bigl(V(\cM)\cup V(\cM_0)\cup\hat B\bigr)\text{ are distinct}\right\}.
\]
For $(w;u\to v)\in\mathcal Q_{\Lambda}$, let $\comp_{F,\cM}(w;u\to v)$ count the $\bigl((F\cap\Kc)-w,\cM\cup\{uv\}\bigr)$-respecting mixed Hamilton cycles that satisfy every pair direction prescribed in the frame $\Lambda$ and traverse $uv$ from $u$ to $v$ in the root orientation.

The complete $3$-graph $F$ on $[n_\ast]\setminus\hat B$ has average degree $d(F)=(1+o(1))n_\ast^2/2$. For mixed cycles in $F$, with all triples allowed and only the direction of the root pair prescribed, applying \eqref{eq:Yratio} with $n=n_\ast-|\hat B|$ and $s=|\cM|$ gives, for every $(w;u\to v)\in\mathcal Q_\Lambda$,
\[
 \frac{\cyc\bigl(F-w,\cM\cup\{uv\}\bigr)}{2\cyc(F,\cM)}
 =\frac{1+o(1)}{4d(F)}.
\]
The factor $1/2$ accounts for the two equally sized direction classes of $uv$ in the root orientation. We use this value as the reference scale for normality in an auxiliary frame.

For $\eta>0$ and $\cyc_{\Lambda}(F)>0$, a legal candidate $(w;u\to v)$ is \emph{$\eta$-normal in $F$} if
\[
 \left|\frac{4d(F)\comp_{F,\cM}(w;u\to v)}{\cyc_{\Lambda}(F)}-1\right|\le\eta,
\]
and \emph{$\eta$-abnormal in $F$} otherwise.

The next event records that the bounds in \cref{thm:terminal-regularity}(i)--(ii) and \cref{prop:path-regularity} hold, the number $\cyc_\Lambda(\hat H_j)$ satisfies \eqref{eq:frame-large}, and too many legal candidates are abnormal. For a permitted frame $\Lambda$ and time $m_\ast\le j\le\binom{n_\ast}{3}$, let $\mathsf E_{\Lambda,j}$ be the event that all of the following hold.
\begin{enumerate}[label=(E\arabic*),ref=E\arabic*]
\item\label{item:completion-event-regularity} The hypergraph $F_*=H_{m_\ast}$ satisfies the bounds in \cref{thm:terminal-regularity}(i)--(ii), and the conclusions of \cref{prop:path-regularity}(i)--(iii), with $h=8$ and $L=2$, hold simultaneously at every time and after deleting any set of at most eight vertices.
\item\label{item:completion-event-count} The number $\cyc_{\Lambda}(\hat H_j)$ of mixed Hamilton cycles in the frame satisfies
\begin{equation}\label{eq:frame-large}
 \log \cyc_{\Lambda}(\hat H_j)\ge \frac{n_\ast-|\hat B|-|\cM|}{2}\log\bigl(2d(\hat H_j)\bigr)-n_\ast+|\hat B|+|\cM|-\frac{10n_\ast}{\sqrt{\log n_\ast}}.
\end{equation}
\item\label{item:completion-event-abnormal} More than $|\mathcal Q_{\Lambda}|/\sqrt{\log\log\log n_\ast}$ legal directed candidates $(w;u\to v)\in\mathcal Q_{\Lambda}$ are $1/\sqrt{\log\log\log n_\ast}$-abnormal in $\hat H_j$; that is, they satisfy
\[
 \left|\frac{4d(\hat H_j)\comp_{\hat H_j,\cM}(w;u\to v)}{\cyc_{\Lambda}(\hat H_j)}-1\right|>\frac{1}{\sqrt{\log\log\log n_\ast}}.
\]
\end{enumerate}

For each frame $\Lambda$, we adopt the notation $\hat H_j=H_j-\hat B$ and $\hat F_\ast=F_*-\hat B$ for the hypergraphs induced by $H_j$ and $F_*$ on $[n_\ast]\setminus\hat B$, respectively.

The following proposition extends the estimates for directed edge marginals to the numbers $\comp_{\hat H_j,\cM}(w;u\to v)$ for legal candidates, including those whose underlying triples are absent. We adapt the forward and reverse sampling argument of Kahn~\cite[Sections~10--12]{Kahn2022}.
\begin{proposition}\label{prop:uniform-completion}
There exists an absolute constant $c>0$ such that the following holds for all sufficiently large $n_\ast$. For every permitted frame $\Lambda$, every time $m_\ast\le j\le\binom{n_\ast}{3}$, and every $\QQ$-admissible value of $(F_*(\hat B),H_j(\hat B))$, we have
\[
 \QQ[\mathsf E_{\Lambda,j}\mid F_*(\hat B),H_j(\hat B)]\le\exp\{-c(\log n_\ast)\sqrt{\log\log\log n_\ast}\}.
\]
\end{proposition}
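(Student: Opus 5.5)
We follow the forward/reverse resampling scheme of Kahn~\cite[Sections~10--12]{Kahn2022}. Fix the frame $\Lambda$, the time $j$ and the revealed value of $(F_*(\hat B),H_j(\hat B))$. By \cref{lem:boundary}(i)--(ii), $\hat F_\ast$ is conditionally uniform with lower bounds $\boldsymbol\ell'$, and $\hat H_j$ is a uniform extension of $\hat F_\ast$. Put $\eta\coloneqq1/\sqrt{\log\log\log n_\ast}$ and $t\coloneqq\lfloor m/(\log n_\ast)^{2}\rfloor$, where $m=|\hat H_j|$. We run a two-stage experiment. First sample $(\hat F_\ast,\hat H_j)$. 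Then choose $T$ uniformly among the $t$-subsets of $\hat H_j\setminus\hat F_\ast$, so that $\hat F_\ast\subseteq\hat H_j-T$. Since $|\hat F_\ast|\le m/2$ when $j$ is large, and $T\subseteq\hat H_j$ of size $t$ otherwise, we handle the small range $j\le 2m_\ast$ by sampling $T$ from $\hat H_j$ and checking that $\hat F_\ast-T$ still satisfies the lower bounds with high probability. This check uses \cref{thm:terminal-regularity}(iii): only the $n^{1/4}$ low-degree vertices can fail, and $T$ meets their links with probability $O(t\,n^{1/4}\log n/m)$. It costs only a factor $1+o(1)$. Our aim is to show that $\mathsf E_{\Lambda,j}$ forces an event of tiny probability for the pair $(\hat H_j-T,T)$.

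\textbf{Forward step.} Condition on $\hat H_j$ satisfying (E1)--(E3). Apply \cref{lem:batch-variance} to the family of frame cycles, and also to each completion family counted by $\comp_{\hat H_j,\cM}(w;u\to v)$ that has size at least $\cyc_\Lambda(\hat H_j)/(d\log n_\ast)$. Here \cref{lem:overlap} applies: its hypotheses follow from (E1) together with \eqref{eq:frame-large}, noting $\log(2d)=\log d+O(1)$. It gives $\Var/\EE^2=O(1/\log n_\ast)$. Chebyshev's inequality and Markov's inequality on the number of badly deviating candidates then show the following. With probability $1-O((\log n_\ast)^{-1/3})$, after deleting $T$, every candidate except an $\eta^2$-fraction keeps its normalised ratio within $\eta/4$ of its old value. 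Here we normalise by $\cyc_\Lambda$ and rescale $d$. In particular, more than $|\mathcal Q_\Lambda|\eta/2$ candidates stay $\eta/2$-abnormal. For small families, Markov's inequality on the expected number of survivors is enough. Next, by \cref{thm:entropy-stability} applied to $\hat H_j-T$, $\cM$, with $\xi=O(1/\sqrt{\log n_\ast})$ and $\delta=O((\log n_\ast)^{-1/8})$ from \cref{prop:path-regularity}(ii), we compute
\[
\sum_{(w;u\to v)}\left|\pi(w;u\to v)-\tfrac{n}{12m'}\right|=O\bigl(n(\log\log n_\ast)^{-1/2}\bigr).
\]
This holds for the directed marginals of present oriented triples, where $m'=|\hat H_j-T|$. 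Via \cref{fact:completion-identities}(iii), few present triples have an abnormal orientation. Because $T$ is a uniform sample of the present edges, the same entropy bound, applied before deletion, shows that only an $o(\eta)$-fraction of triples of $T$ carry an $\eta/2$-abnormal orientation in $\hat H_j-T$. This holds except with probability $e^{-\Omega(t\eta)}$, by a hypergeometric Chernoff bound.

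\textbf{Reverse step.} Condition instead on $(\hat F_\ast-T,\hat H_j-T)$, assuming the degree bounds hold. By the exchangeability in \cref{lem:boundary}, $T$ is then uniform among $t$-subsets of the triples absent from $\hat H_j-T$. These absent triples number about $\binom{n}{3}$, and candidates $(w;u\to v)$ correspond to them at most $6$-to-$1$. If $\mathsf E_{\Lambda,j}$ holds, the forward step says that at least an $\eta/2$-fraction of these absent triples carry an $\eta/2$-abnormal orientation with respect to $\hat H_j-T$. This property is determined by $\hat H_j-T$ alone. Hence $|T\cap\mathrm{Abn}|$ is hypergeometric with mean at least $t\eta/12$. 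The probability that it is only $o(t\eta)$ is at most $\exp\{-ct\eta\}$, which is far below the target bound since $t\gg\log n_\ast\cdot\log\log\log n_\ast$.

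\textbf{Combining and the main obstacle.} We compare the two descriptions through Bayes's formula (\cref{lem:bayes}). The event "$\mathsf E_{\Lambda,j}$ and $T$ is typical" has forward probability at least $\frac12\QQ[\mathsf E_{\Lambda,j}\mid\cdot]$. Its reverse probability is at most $e^{-ct\eta}$. The change of measure between the two descriptions only involves the conditioning on $\mathcal D$-type constraints, which costs $e^{o(t\eta)}$ by \cref{lem:beta}-type bounds for the core. This yields the stated $\exp\{-c(\log n_\ast)\sqrt{\log\log\log n_\ast}\}$, and in fact much more. The hard step is the forward concentration uniformly over candidates. We must ensure that abnormality is preserved for most candidates, including those whose families are small (ratio far below $1$). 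For those families we only have the Markov upper bound on survivors. This is exactly why abnormal-low candidates must be handled by the first-moment bound, while abnormal-high candidates are handled by the variance bound. We also need the normaliser $\cyc_\Lambda(\hat H_j-T)$ itself to concentrate, and we would check first that \eqref{eq:frame-large} survives deletion with loss $O(tn/m)=o(n/\sqrt{\log n_\ast})$.
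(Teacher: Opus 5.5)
Your architecture matches the paper's: delete a random batch $T$ forward, describe $T$ in reverse as uniform on the missing triples, and compare the two descriptions. The gap is the batch size. With $t=\lfloor m/(\log n_\ast)^2\rfloor$ the forward step fails. \Cref{lem:batch-variance} bounds the relative variance of a surviving count by $(\EE[|F_1\cap F_2|]/h)\exp\{4th/m\}$, where $h\approx n_\ast/2$. Since $t/m=(\log n_\ast)^{-2}$, the exponent is $4th/m\approx 2n_\ast/(\log n_\ast)^2$, so the bound has size $\exp\{\Theta(n_\ast/\log^2 n_\ast)\}$ and is vacuous. Chebyshev then gives no concentration for $\cyc_\Lambda(\hat H_j-T)$ or for any completion count. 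Consequently none of your forward conclusions follows: neither the persistence of abnormality nor the survival of \eqref{eq:frame-large}. The only correlation information available is \cref{lem:overlap}, which gives $\EE[|F_1\cap F_2|]/h=O(1/\log d(\hat H_j))$. So the method forces $4|T|h/m\le\log\log d(\hat H_j)-\omega(1)$, that is, $|T|$ at most of order $(m/n_\ast)\log\log\log n_\ast$. Two smaller errors point the same way. The overlap factor is $O(1/\log\log n_\ast)$ near $j=m_\ast$, not the $O(1/\log n_\ast)$ you state. Your degree check also fails at your $t$: your own estimate $O(t\,n_\ast^{1/4}\log n_\ast/m)=O(n_\ast^{1/4}/\log n_\ast)$ is not $o(1)$.

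The paper takes $|T|=\lfloor m\log\log\log n_\ast/(50(n_\ast-|\hat B|-|\cM|))\rfloor$. Then $\exp\{4|T|h/m\}\le(\log\log n_\ast)^{1/25}$, which the $1/\log d$ overlap factor beats, and $|T|\asymp(\log n_\ast)\log\log\log n_\ast$ near the hitting time. The reverse hypergeometric bound $\exp\{-c|T|/\sqrt{\log\log\log n_\ast}\}$ is then exactly $\exp\{-c(\log n_\ast)\sqrt{\log\log\log n_\ast}\}$. This is why the proposition has this form rather than something much stronger.

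It is also why there is no room for the change of measure you invoke: a factor like $\beta^{-1}\le\exp\{n_\ast^{1/10}\}$ from \cref{lem:beta} would swamp the bound. None is needed. Given the exposed sets and $d_{\hat F_\ast-T}(v)\ge\ell'_v$ for every $v$, $T$ is exactly uniform on the $|T|$-subsets of the missing triples, by \cref{prop:reverse-uniform}. So sample $T$ from all of $\hat H_j$ for every $j$, and put the degree condition into the witness event. With the small batch that condition fails with probability only $O(n_\ast^{-3/4}\log n_\ast\log\log\log n_\ast)$.

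Drop the variant that samples from $\hat H_j\setminus\hat F_\ast$. There, survival probabilities depend on $|C\cap\hat F_\ast|$, so \cref{lem:batch-variance} (which needs equal-size members) no longer applies, and the normalised ratios are not rescaled by a common factor.

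One further step is missing. For triples of $T$ you need abnormality in $\hat H_j-T$, which depends on $T$ itself. The entropy bound in $\hat H_j$ plus a Chernoff bound only controls triples that are abnormal before deletion. The missing argument runs as follows.
\begin{itemize}
\item Condition on $\{u,v,w\}\in T$. The rest of $T$ is then uniform on the other $m-1$ triples.
\item The family counted by $\comp_{\hat H_j,\cM}(w;u\to v)$ avoids $\{u,v,w\}$.
\item Hence an initially normal candidate stays normal with conditional probability $1-o(1/\sqrt{\log\log\log n_\ast})$.
\item Markov's inequality then bounds the number of triples of $T$ with an abnormal orientation in $\hat H_j-T$, with probability $1-o(1)$.
\end{itemize}
That is all the forward step requires.
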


\begin{proof}
We bound the probability of $\mathsf E_{\Lambda,j}$ by showing that too many abnormal candidates would typically produce a sampling discrepancy that is exponentially unlikely. Specifically, if $\mathsf E_{\Lambda,j}$ occurs, deleting a uniformly chosen set $T\subseteq\hat H_j$ typically leaves many missing triples with an abnormal legal orientation in $\hat H_j-T$, but far fewer such triples in $T$ than uniform sampling from the missing triples would predict. We compare the forward and reverse descriptions of this same deletion experiment.
\begin{enumerate}[label=(\textit{\roman*})]
\item In the forward argument, we fix a realisation of the edge-deletion process in $\mathsf E_{\Lambda,j}$ and delete a uniformly chosen set $T\subseteq\hat H_j$ of triples of a prescribed size. We show that this discrepancy occurs with probability $1-o(1)$.
\item In the reverse argument, we fix $\hat H_j-T$ and $\hat F_\ast-T$. If $\hat F_\ast-T$ satisfies the degree lower bounds $\ell'_v$, then $T$ is uniform among the sets of missing triples of the prescribed size. A hypergeometric tail estimate shows that the discrepancy has exponentially small probability.
\end{enumerate}

Condition first on the exposed edge sets $F_*(\hat B)$ and $H_j(\hat B)$. By \cref{lem:boundary}(i)--(ii), the two induced hypergraphs have the following conditional distributions.
\begin{itemize}
\item $\hat F_\ast$ is uniform among simple $3$-graphs on $[n_\ast]\setminus\hat B$ with $m_\ast-|F_*(\hat B)|$ edges and degree lower bounds $\ell'_v$.
\item Given $\hat F_\ast$, the hypergraph $\hat H_j$ is uniform among all $m$-edge $3$-graphs on $[n_\ast]\setminus\hat B$ that contain $\hat F_\ast$.
\end{itemize}

To prepare for the forward estimate, we first apply the entropy bound to candidates whose underlying triples are edges of $\hat H_j$. The next claim bounds the exceptional proportion using \cref{thm:entropy-stability} and the explicit bounds on its parameters stated below.
\begin{claim}\label{fact:role-exceptions}
Fix $C\ge1$ and suppose that the hypotheses of \cref{thm:entropy-stability} hold with this constant. Suppose also that
\[
 \xi\le\frac{C}{(\log n)^{1/2}},\quad
 \delta\le\frac{C}{(\log n)^{1/8}},\quad
 \frac{n\Delta_2(\mathcal{G})}{m}\le\frac{C}{(\log n)^{1/4}},
 \quad\text{and}\quad s\le Cn^{1/10}.
\]
For every $\eta>0$, the fraction of oriented triples $(w;u\to v)\in\vec{\mathcal{G}}-V(\cM)$ satisfying $|\pi(w;u\to v)-n/(12m)|>\eta n/(12m)$ is $O_C(1/(\eta\sqrt{\log\log n}))$.
\end{claim}
\begin{proof}[Proof of \cref{fact:role-exceptions}]
By the codegree hypothesis, $\log\bigl(3m/(n\Delta_2(\mathcal G))\bigr)\ge\tfrac14\log\log n+\log(3/C)$. Since $\xi+\delta+s/n=o(1/\log\log n)$, the expression under the square root in \cref{thm:entropy-stability} is $O_C(1/\log\log n)$. Consequently, the total deviation satisfies
\[
 \sum_{(w;u\to v)\in\vec{\mathcal{G}}-V(\cM)}
 \left|\pi(w;u\to v)-\frac{n}{12m}\right|
 =O_C\left(\frac{n}{\sqrt{\log\log n}}\right).
\]
Each exceptional oriented triple contributes more than $\eta n/(12m)$ to this sum, so the number of such triples is at most
\[
 \frac{12m}{\eta n}\cdot O_C\left(\frac{n}{\sqrt{\log\log n}}\right)
 =O_C\left(\frac{m}{\eta\sqrt{\log\log n}}\right).
\]
At most $2s\Delta_1(\mathcal G)=o(m)$ host triples meet $V(\cM)$, so $|\vec{\mathcal G}-V(\cM)|=(6+o(1))m$. The proportion of exceptional oriented triples is therefore $O_C(1/(\eta\sqrt{\log\log n}))$.
\end{proof}

We next apply \cref{fact:role-exceptions} to $\hat H_j$, to control abnormal candidates whose underlying triples are edges of $\hat H_j$.
\begin{claim}\label{claim:present-candidates}
For every realisation in $\mathsf E_{\Lambda,j}$, the number of legal directed candidates $(w;u\to v)\in\mathcal Q_{\Lambda}$ with $\{u,v,w\}\in\hat H_j$ that are $1/(100\sqrt{\log\log\log n_\ast})$-abnormal in $\hat H_j$ is $O\Bigl(m\sqrt{\frac{\log\log\log n_\ast}{\log\log n_\ast}}\Bigr)=o\left(\frac{m}{\sqrt{\log\log\log n_\ast}}\right)$.
\end{claim}
\begin{proof}[Proof of \cref{claim:present-candidates}]
Fix a realisation in $\mathsf E_{\Lambda,j}$ and choose $\mathbf C$ uniformly from the family counted by $\cyc_{\Lambda}(\hat H_j)$. We will apply \cref{fact:role-exceptions} to its directed edge marginals. In this application, take $n=n_\ast-|\hat B|$, $m=|\hat H_j|$, and $s=|\cM|$.

Every legal candidate $(w;u\to v)$ has $\{u,v,w\}\in\Kc$. If $\{u,v,w\}\in\hat H_j$, then contracting this triple to the pair $uv$ in a cycle that uses it from $u$ to $v$ with $w$ private preserves all prescribed pair directions. Thus its directed edge marginal satisfies
\[
 \pi(w;u\to v)=\frac{\comp_{\hat H_j,\cM}(w;u\to v)}{\cyc_{\Lambda}(\hat H_j)}.
\]
Since $d(\hat H_j)=3m/n$, the comparison value $n/(12m)$ in \cref{fact:role-exceptions} equals $1/(4d(\hat H_j))$, the value used to define normality in the frame $\Lambda$. Hence the exceptional legal oriented triples in \cref{fact:role-exceptions} are exactly the abnormal legal candidates whose underlying triples belong to $\hat H_j$, at the same tolerance. It remains to verify the hypotheses of \cref{fact:role-exceptions} for this host and cycle distribution.

We first check the entropy lower bound in \cref{thm:entropy-stability}(iii). \Cref{thm:entropy-stability} allows an arbitrary distribution on mixed Hamilton cycles, including the uniform distribution on this subfamily using $\cM_0$-compatible triples and satisfying the prescribed pair directions. Since the distribution is uniform, condition (\ref{item:completion-event-count}) and $2d(\hat H_j)=6m/n$ give
\[
 \HH[\mathbf C]=\log\cyc_{\Lambda}(\hat H_j)
 \ge\frac{n-s}{2}\log\frac{6m}{n}-(n-s)-\frac{10n_\ast}{\sqrt{\log n_\ast}}.
\]
Thus the required entropy lower bound holds with
\[
 \xi=\frac{10n_\ast}{n\sqrt{\log n_\ast}}
 =\frac{10n_\ast}{(n_\ast-|\hat B|)\sqrt{\log n_\ast}}
 =O\left(\frac{1}{\sqrt{\log n_\ast}}\right),
\]
where the last estimate uses $|\hat B|\le8$.

We next check the degree and codegree bounds in \cref{thm:entropy-stability}(i). Condition (\ref{item:completion-event-regularity}) and $|\hat B|\le8$ give $j-m=|H_j(\hat B)|\le|\hat B|\Delta_1(H_j)\le24Cj/n_\ast$. Thus $m=j(1-O(1/n_\ast))$. Substituting this comparison and $n=n_\ast-|\hat B|$ into \cref{prop:path-regularity}(i) gives $\Delta_1(\hat H_j)=O(m/n)$ and $n\Delta_2(\hat H_j)/m=O((\log n_\ast)^{-1/4})$. Also, the frame bounds give $|\cM|\le|\cM_0|+2\le n_\ast^{1/10}+2$, so $s=O(n^{1/10})$.

For \cref{thm:entropy-stability}(ii), the required sets $A$ have size $(n+s)/2=(n_\ast-|\hat B|+|\cM|)/2$. Since $|\cM|\le n_\ast^{1/10}+2$, we have
\[
 \left||A|-\frac{n_\ast-|\hat B|}{2}\right|
 =\frac{|\cM|}{2}
 \le\frac{n_\ast^{1/10}+2}{2}
 \le2n_\ast^{1/10}
\]
for sufficiently large $n_\ast$. Thus \cref{prop:path-regularity}(ii) applies with $L=2$. Since $|3j/8-3m/8|=O(j/n_\ast)$, replacing the reference value $3j/8$ by $3m/8$ preserves the error bound required in \cref{thm:entropy-stability}(ii). Thus this condition holds with $\delta=O((\log n_\ast)^{-1/8})$. These estimates satisfy all the parameter bounds in \cref{fact:role-exceptions} with an absolute fixed constant.

We may now apply \cref{fact:role-exceptions} with tolerance $1/(100\sqrt{\log\log\log n_\ast})$. Since there are $O(m)$ oriented triples and $n=n_\ast-O(1)$, the number of abnormal legal candidates whose underlying triples belong to $\hat H_j$ is $O\left(m\sqrt{\frac{\log\log\log n_\ast}{\log\log n_\ast}}\right)$, completing the proof.
\end{proof}

Keep the exposed edge sets $F_*(\hat B)$ and $H_j(\hat B)$ fixed. We use $\widehat{\PP}^{*}$ to denote the joint law of the edge-deletion process and the set $T$ obtained from the following experiment.
\begin{itemize}
\item First sample a realisation $(H_h)_{h=m_\ast}^{\binom{n_\ast}{3}}$ of the edge-deletion process according to $\QQ[\cdot\mid F_*(\hat B),H_j(\hat B)]$.
\item Given this realisation, choose $T$ uniformly from all subsets of $\hat H_j=H_j-\hat B$ containing exactly $\bigl\lfloor m\log\log\log n_\ast/\bigl(50(n_\ast-|\hat B|-|\cM|)\bigr)\bigr\rfloor$ triples. For realisations in $\mathsf E_{\Lambda,j}$, this prescribed size is $(1+o(1))\frac{m\log\log\log n_\ast}{50n_\ast}$.
\end{itemize}

We now delete the triples in $T$ from $\hat H_j$. Note that this removes precisely $T\cap\hat F_\ast$ from its subhypergraph $\hat F_\ast$, so the remaining hypergraphs satisfy $\hat F_\ast-T\subseteq\hat H_j-T$. Since $\hat H_j-T$ has $m-|T|$ edges on the same $n_\ast-|\hat B|$ vertices, its average degree is $d(\hat H_j-T)=3(m-|T|)/(n_\ast-|\hat B|)$. \Cref{fig:batch-deletion} illustrates the frame, the deletion of $T$ in the forward experiment, and a surviving mixed Hamilton cycle.

To describe reverse sampling, we first specify the set of all triples missing from $\hat H_j-T$. Let $\Omega_{\hat H_j-T}\coloneqq\binom{[n_\ast]\setminus\hat B}{3}\setminus(\hat H_j-T)$ be the set of its missing triples. Note that $T\subseteq\Omega_{\hat H_j-T}$. When $\cyc_\Lambda(\hat H_j-T)>0$, define
\[
 \mathcal A\coloneqq
 \left\{e\in\Omega_{\hat H_j-T}\colon
 \text{some orientation of }e\text{ in }\mathcal Q_\Lambda\text{ is }\frac{1}{4\sqrt{\log\log\log n_\ast}}\text{-abnormal in }\hat H_j-T
 \right\}.
\]
When $\cyc_\Lambda(\hat H_j-T)=0$, set $\mathcal A\coloneqq\varnothing$.

Let $\mathsf W$ be the event that the following conditions hold.
\begin{enumerate}[label=(W\arabic*),ref=W\arabic*]
\item\label{item:reverse-event-degree-bounds} $d_{\hat F_\ast-T}(v)\ge\ell'_v$ for every $v\in[n_\ast]\setminus\hat B$.
\item\label{item:reverse-event-nonedges} $|\mathcal A|\ge \frac{|\Omega_{\hat H_j-T}|}{3\sqrt{\log\log\log n_\ast}}$.
\item\label{item:reverse-event-sampled} $|T\cap\mathcal A|\le \frac{|T|}{100\sqrt{\log\log\log n_\ast}}$.
\end{enumerate}

The event $\mathsf W$ requires $T$ to contain far fewer triples in $\mathcal A$ than their proportion among all missing triples would predict under uniform sampling. We also require $\hat F_\ast-T$ to satisfy the degree lower bounds, so that adding back deleted triples imposes no further degree restriction. Under this condition, \cref{prop:reverse-uniform}(\ref{item:reverse-set}) will show that, conditional on $\hat H_j-T$ and $\hat F_\ast-T$ with the exposed edge sets fixed, $T$ is uniform among sets of missing triples of the prescribed size.

The set $\mathcal A$ is determined by $\hat H_j-T$ and the frame $\Lambda$.

We now establish the forward estimate for $\mathsf W$. Fix a $\QQ$-admissible realisation in $\mathsf E_{\Lambda,j}$, and take probability only over the uniform choice of $T$. We first control the numbers of mixed Hamilton cycles after deletion using the overlap estimate in \cref{lem:overlap} and the variance bound in \cref{lem:batch-variance}, then show that many abnormal candidates correspond to missing triples, bound the number of abnormal orientations among the triples of $T$, and finally verify the degree lower bounds for $\hat F_\ast-T$. We apply \cref{lem:batch-variance} with $U=\hat H_j$, representing each cycle by its set of triples and omitting the prescribed pairs. For the following survival estimates, take $h=(n_\ast-|\hat B|-|\cM|)/2$ in the notation of \cref{lem:batch-variance}. Thus cycles in the frame have $h$ triples, whereas those counted by $\comp_{\hat H_j,\cM}(w;u\to v)$ have $h-1$.

For these forward estimates, call a legal candidate $(w;u\to v)$ \emph{light} if $\comp_{\hat H_j,\cM}(w;u\to v)<\cyc_{\Lambda}(\hat H_j)n_\ast^{-50}$. For the family counted by $\cyc_{\Lambda}(\hat H_j)$, or the family counted by $\comp_{\hat H_j,\cM}(w;u\to v)$ for a candidate that is not light, let $Y_T$ be the number of its cycles whose triples are disjoint from $T$.

\begin{claim}\label{claim:batch-stability}
With probability $1-o(1)$, the number of cycles in the frame satisfies
\begin{equation}\label{eq:X-stable-final}
 \cyc_{\Lambda}(\hat H_j-T)=\left(1\pm\frac{1}{10^4\sqrt{\log\log\log n_\ast}}\right)\frac{\binom{m-h}{|T|}}{\binom{m}{|T|}}\cyc_{\Lambda}(\hat H_j).
\end{equation}
For each of the cycle families specified above, the following deviation bound holds:
\begin{multline}\label{eq:count-concentration}
 \widehat{\PP}^{*}\left[
 |Y_T-\EE[Y_T\mid(H_i)_i]|>\frac{\EE[Y_T\mid(H_i)_i]}{10^4\sqrt{\log\log\log n_\ast}}
 \,\middle|\,(H_i)_i\right]\\
 \le10^8(\log\log\log n_\ast)\frac{\Var[Y_T\mid(H_i)_i]}{(\EE[Y_T\mid(H_i)_i])^2}
 =O\left(\frac{\log\log\log n_\ast}{(\log\log n_\ast)^{24/25}}\right)
 =o\left( \frac{1}{(\log\log\log n_\ast)^{1/2}} \right).
\end{multline}
\end{claim}
\begin{proof}[Proof of \cref{claim:batch-stability}]
By Chebyshev's inequality, it suffices to bound the relative variance $\Var[Y_T\mid(H_i)_i]/(\EE[Y_T\mid(H_i)_i])^2$ by $O((\log\log n_\ast)^{-24/25})$. By \cref{lem:batch-variance} and the prescribed size of $T$, this follows from a bound of order $n_\ast/\log d(\hat H_j)$ on the expected number of triples shared by two independent uniform cycles from the family under consideration. We therefore check the hypotheses of \cref{lem:overlap} for the family counted by $\cyc_{\Lambda}(\hat H_j)$ and each family counted by $\comp_{\hat H_j,\cM}(w;u\to v)$ for a candidate that is not light. The overlap estimate in \cref{lem:overlap} applies to any nonempty subfamily satisfying the prescribed pair directions, provided the host satisfies the degree hypotheses and the subfamily satisfies the logarithmic size lower bound.

For a candidate $(w;u\to v)$, the cycles counted by $\comp_{\hat H_j,\cM}(w;u\to v)$ lie in $(\hat H_j\cap\Kc)-w$ with matching $\cM\cup\{uv\}$ and have $h-1$ triples. Every triple in $\hat H_j\setminus\Kc$ contains at least two vertices of $V(\cM_0)$, which has $2s$ vertices, so there are at most $s\Delta_1(\hat H_j)$ such triples. Deleting these triples and then the vertex $w$ therefore removes $|\hat H_j\setminus\Kc|+d_{\hat H_j\cap\Kc}(w)\le(s+1)\Delta_1(\hat H_j)=O((s+1)d(\hat H_j))$ triples. Since the number of vertices decreases from $n_\ast-|\hat B|$ to $n_\ast-|\hat B|-1$, the average degrees satisfy
\[
\begin{aligned}
 \frac{d((\hat H_j\cap\Kc)-w)}{d(\hat H_j)}
 =\frac{n_\ast-|\hat B|}{n_\ast-|\hat B|-1}
 \left(1-\frac{|\hat H_j\setminus\Kc|+d_{\hat H_j\cap\Kc}(w)}{m}\right)
 =1+O\left(\frac{s+1}{n_\ast}\right).
\end{aligned}
\]
Since $h=O(n_\ast)$, the estimate above gives $(h-1)\log\bigl(d((\hat H_j\cap\Kc)-w)/d(\hat H_j)\bigr)=O(s+1)=o(n_\ast)$. For every candidate that is not light, \eqref{eq:frame-large} therefore gives
\[
\begin{aligned}
 \log\comp_{\hat H_j,\cM}(w;u\to v)
 \ge\log\cyc_{\Lambda}(\hat H_j)-50\log n_\ast
 &\ge(h-1)\log d(\hat H_j)-O(n_\ast)\\
 &=(h-1)\log d((\hat H_j\cap\Kc)-w)-O(n_\ast).
\end{aligned}
\]
By \cref{prop:path-regularity}(i), deleting edges preserves the maximum-degree upper bound for $\hat H_j$. Using $d((\hat H_j\cap\Kc)-w)/d(\hat H_j)=1+O((s+1)/n_\ast)$, we obtain $\Delta_1((\hat H_j\cap\Kc)-w)=O(d((\hat H_j\cap\Kc)-w))$. These estimates verify the hypotheses of \cref{lem:overlap} for the family counted by $\comp_{\hat H_j,\cM}(w;u\to v)$ whenever $(w;u\to v)$ is not light. For the family counted by $\cyc_{\Lambda}(\hat H_j)$, apply \cref{lem:overlap} with host $\hat H_j$ and prescribed matching $\cM$. The required vertex-degree bound follows from \cref{prop:path-regularity}(i), and the lower bound on $\cyc_\Lambda(\hat H_j)$ is \eqref{eq:frame-large}. In either case, choose $C_1,C_2$ independently and uniformly from the cycle family under consideration. With the realisation of the edge-deletion process fixed, the overlap estimate gives
\[
 \EE[|C_1\cap C_2\cap\hat H_j|]
 =O\left(\frac{n_\ast}{\log d(\hat H_j)}\right),
\]
where the intersection with $\hat H_j$ counts only the shared triples.

Also $h/m=O(1/\log n_\ast)$ and $|T|/m=O(\log\log\log n_\ast/n_\ast)$, so the size hypotheses of \cref{lem:batch-variance} hold. Since $4|T|h/m\le\frac1{25}\log\log\log n_\ast$ and $d(\hat H_j)\ge c\log n_\ast$, \cref{lem:overlap,lem:batch-variance} give
\[
\begin{aligned}
 \frac{\Var[Y_T\mid(H_i)_i]}{(\EE[Y_T\mid(H_i)_i])^2}
 &\le O\left(\frac1{\log d(\hat H_j)}\right)\exp\left\{\frac{4|T|h}{m}\right\}\\
 &\le O\left(\frac{(\log\log n_\ast)^{1/25}}{\log\log n_\ast}\right)
 =O\left( \frac{1}{(\log\log n_\ast)^{24/25}} \right).
\end{aligned}
\]
For the families counted by $\comp_{\hat H_j,\cM}(w;u\to v)$, apply \cref{lem:batch-variance} with $h-1$ triples in each family member and ground set $\hat H_j$; the displayed estimate is unchanged. By \cref{lem:moment-bounds}(ii), substituting this relative-variance bound into the Chebyshev calculation in \eqref{eq:count-concentration} proves that estimate.
These estimates are uniform over the realisations in $\mathsf E_{\Lambda,j}$ and apply to the family counted by $\cyc_{\Lambda}(\hat H_j)$ and every family counted by $\comp_{\hat H_j,\cM}(w;u\to v)$ for a candidate that is not light.

Each cycle counted by $\cyc_{\Lambda}(\hat H_j)$ has $h$ triples and survives with probability $\binom{m-h}{|T|}/\binom{m}{|T|}$. Applying \eqref{eq:count-concentration} with $Y_T=\cyc_{\Lambda}(\hat H_j-T)$ and $\EE[Y_T\mid(H_i)_i]=\binom{m-h}{|T|}\cyc_{\Lambda}(\hat H_j)/\binom{m}{|T|}$ gives \eqref{eq:X-stable-final} with probability $1-o((\log\log\log n_\ast)^{-1/2})$, completing the proof.\qedhere
\end{proof}

The bound in \eqref{eq:count-concentration} lets us compare the ratios defining normality before and after deletion.
\begin{claim}\label{claim:abnormal-persistence}
With probability $1-o(1)$, \eqref{eq:X-stable-final} holds and at least $9|\mathcal Q_{\Lambda}|/(10\sqrt{\log\log\log n_\ast})$ legal directed candidates are $1/(2\sqrt{\log\log\log n_\ast})$-abnormal in $\hat H_j-T$.
\end{claim}
\begin{proof}[Proof of \cref{claim:abnormal-persistence}]
We first compare the normalised ratios for candidates that are not light, then handle light candidates by monotonicity, and finally bound the number of concentration failures. For each legal candidate $(w;u\to v)$ that is not light, count a failure when
\begin{equation}\label{eq:candidate-deviation}
 \left|
 \frac{\binom{m}{|T|}\comp_{\hat H_j-T,\cM}(w;u\to v)}
 {\binom{m-h+1}{|T|}\comp_{\hat H_j,\cM}(w;u\to v)}-1
 \right|
 >\frac{1}{10^4\sqrt{\log\log\log n_\ast}}.
\end{equation}
For a legal candidate $(w;u\to v)$ that is not light, we will prove that, if \eqref{eq:X-stable-final} holds and \eqref{eq:candidate-deviation} does not occur, then
\[
 \frac{4d(\hat H_j-T)\comp_{\hat H_j-T,\cM}(w;u\to v)}{\cyc_{\Lambda}(\hat H_j-T)}
 =\left(1\pm\frac{1}{10\sqrt{\log\log\log n_\ast}}\right)
 \frac{4d(\hat H_j)\comp_{\hat H_j,\cM}(w;u\to v)}{\cyc_{\Lambda}(\hat H_j)}.
\]
This comparison will preserve abnormality when the tolerance is halved.

To establish the comparison, we apply \cref{claim:batch-stability} to the families counted by $\cyc_\Lambda(\hat H_j)$ and $\comp_{\hat H_j,\cM}(w;u\to v)$. Fix a legal candidate $(w;u\to v)$ that is not light. Each cycle counted by $\comp_{\hat H_j,\cM}(w;u\to v)$ has $h-1$ triples, so
\[
 \EE\left[\comp_{\hat H_j-T,\cM}(w;u\to v)\mid(H_i)_i\right]
 =\frac{\binom{m-h+1}{|T|}}{\binom{m}{|T|}}
 \comp_{\hat H_j,\cM}(w;u\to v).
\]
Applying \eqref{eq:count-concentration} with $Y_T=\comp_{\hat H_j-T,\cM}(w;u\to v)$ and the mean computed above shows that, with probability $1-o((\log\log\log n_\ast)^{-1/2})$,
\[
 \comp_{\hat H_j-T,\cM}(w;u\to v)
 =\left(1\pm\frac{1}{10^4\sqrt{\log\log\log n_\ast}}\right)
 \frac{\binom{m-h+1}{|T|}}{\binom{m}{|T|}}
 \comp_{\hat H_j,\cM}(w;u\to v).
\]

Dividing the probability that a fixed cycle with $h-1$ triples avoids $T$ by the corresponding probability for a cycle with $h$ triples and cancelling their common denominator $\binom{m}{|T|}$ gives
\begin{equation}\label{eq:survival-two-final}
 \frac{\binom{m-h+1}{|T|}}{\binom{m-h}{|T|}}
 =\frac{m-h+1}{m-h-|T|+1}
 =1+O\left(\frac{\log\log\log n_\ast}{n_\ast}\right).
\end{equation}

We now compare the ratios defining normality before and after deletion. Suppose that \eqref{eq:X-stable-final} holds and that \eqref{eq:candidate-deviation} does not occur. Dividing these estimates introduces a relative error at most $3/(10^4\sqrt{\log\log\log n_\ast})$ for large $n_\ast$. Combining this error with the degree ratio $d(\hat H_j-T)/d(\hat H_j)=1-|T|/m$ and the binomial-coefficient ratio in \eqref{eq:survival-two-final} gives
\begin{multline*}
  \frac{4d(\hat H_j-T)\comp_{\hat H_j-T,\cM}(w;u\to v)}{\cyc_{\Lambda}(\hat H_j-T)}
  =\left(1-\frac{|T|}{m}\right)\frac{m-h+1}{m-h-|T|+1}\\
 {}\times\left(1\pm\frac{3}{10^4\sqrt{\log\log\log n_\ast}}\right)
 \frac{4d(\hat H_j)\comp_{\hat H_j,\cM}(w;u\to v)}{\cyc_{\Lambda}(\hat H_j)}.
\end{multline*}
The factors $1-|T|/m$ and $(m-h+1)/(m-h-|T|+1)$ both differ from $1$ by $o((\log\log\log n_\ast)^{-1/2})$. Thus, for sufficiently large $n_\ast$, this proves the multiplicative comparison stated at the start of the proof.

For $0<\eta\le1$, we have $\left(1-\frac{\eta}{10}\right)(1+\eta)>1+\frac{\eta}{2}$ and $\left(1+\frac{\eta}{10}\right)(1-\eta)<1-\frac{\eta}{2}$. 
Taking $\eta=(\log\log\log n_\ast)^{-1/2}$ shows that every candidate that is not light and was $\eta$-abnormal before deletion remains $\eta/2$-abnormal whenever \eqref{eq:X-stable-final} holds and \eqref{eq:candidate-deviation} does not occur.

We now consider light candidates. Since $h/(m-|T|)=o(1)$, the inequality $-\log(1-x)\le2x$ for $0\le x\le1/2$ gives
\[
 \log\frac{\binom{m}{|T|}}{\binom{m-h}{|T|}}
 =\sum_{i=0}^{|T|-1}-\log\left(1-\frac{h}{m-i}\right)
 \le\frac{2h|T|}{m-|T|}
 =O(\log\log\log n_\ast).
\]
Hence $\binom{m}{|T|}/\binom{m-h}{|T|}=\exp\{O(\log\log\log n_\ast)\}=n_\ast^{o(1)}$. We also have $d(\hat H_j-T)=O(n_\ast^2)$. Thus, if $(w;u\to v)$ is light and \eqref{eq:X-stable-final} holds, then the bound $\comp_{\hat H_j-T,\cM}(w;u\to v)\le\comp_{\hat H_j,\cM}(w;u\to v)$ gives
\[
 \frac{4d(\hat H_j-T)\comp_{\hat H_j-T,\cM}(w;u\to v)}{\cyc_{\Lambda}(\hat H_j-T)}
 \le8d(\hat H_j-T)n_\ast^{-50}\frac{\binom{m}{|T|}}{\binom{m-h}{|T|}}=o(1).
\]

By \eqref{eq:count-concentration} and linearity of expectation, we have
\[
 \EE\left[\sum_{\substack{(w;u\to v)\in\mathcal Q_\Lambda\\
 \text{not light}}}
 \mathbf1_{\{\eqref{eq:candidate-deviation}\text{ holds}\}}\right]
 \le |\mathcal Q_\Lambda|\,o\left( \frac{1}{(\log\log\log n_\ast)^{1/2}} \right).
\]
Here and below the expectations in this forward calculation are over $T$, with this realisation fixed. Markov's inequality yields
\begin{multline*}
 \widehat{\PP}^{*}\left[
 \sum_{\substack{(w;u\to v)\in\mathcal Q_\Lambda\\
 \text{not light}}}
 \mathbf1_{\{\eqref{eq:candidate-deviation}\text{ holds}\}}
 >\frac{|\mathcal Q_\Lambda|}{100\sqrt{\log\log\log n_\ast}}
 \,\middle|\,(H_i)_i\right]\\
 \le\frac{100\sqrt{\log\log\log n_\ast}}{|\mathcal Q_\Lambda|}
 \EE\left[\sum_{\substack{(w;u\to v)\in\mathcal Q_\Lambda\\
 \text{not light}}}
 \mathbf1_{\{\eqref{eq:candidate-deviation}\text{ holds}\}}\right]=o(1).
\end{multline*}
If \eqref{eq:X-stable-final} holds, then the comparisons above show that every previously $(\log\log\log n_\ast)^{-1/2}$-abnormal candidate remains $1/(2\sqrt{\log\log\log n_\ast})$-abnormal in $\hat H_j-T$, except possibly the candidates that are not light and satisfy \eqref{eq:candidate-deviation}. By the preceding Markov bound, with probability $1-o(1)$ there are at most $|\mathcal Q_\Lambda|/(100\sqrt{\log\log\log n_\ast})$ such failures. Combining this bound with \eqref{eq:X-stable-final} and condition (\ref{item:completion-event-abnormal}) in the definition of $\mathsf E_{\Lambda,j}$, we find that at least $(1-1/100)|\mathcal Q_\Lambda|/\sqrt{\log\log\log n_\ast}\ge9|\mathcal Q_\Lambda|/(10\sqrt{\log\log\log n_\ast})$ candidates remain abnormal. This proves the claim.\qedhere
\end{proof}

By \cref{claim:abnormal-persistence}, many candidates remain abnormal after deletion, but their underlying triples need not be missing. We apply \cref{fact:role-exceptions} to $\hat H_j-T$ to show that only a small number of abnormal candidates have their underlying triples in $\hat H_j-T$; the others provide the missing triples needed for reverse sampling.
\begin{claim}\label{claim:batch-nonedges}
With probability $1-o(1)$, we have $|\mathcal A|\ge|\Omega_{\hat H_j-T}|/(3\sqrt{\log\log\log n_\ast})$.
\end{claim}
\begin{proof}[Proof of \cref{claim:batch-nonedges}]
By \cref{claim:batch-stability,claim:abnormal-persistence}, with probability $1-o(1)$, \eqref{eq:X-stable-final} holds and at least $\frac{9|\mathcal Q_\Lambda|}{10\sqrt{\log\log\log n_\ast}}$ legal directed candidates are $1/(2\sqrt{\log\log\log n_\ast})$-abnormal in $\hat H_j-T$. These candidates are also $1/(4\sqrt{\log\log\log n_\ast})$-abnormal. Since each triple has at most six legal orientations, it suffices to show that the number of legal candidates that are abnormal at this smaller tolerance and whose underlying triples belong to $\hat H_j-T$ is $o(n_\ast^3/\sqrt{\log\log\log n_\ast})$. The remaining candidates then give the required lower bound on $|\mathcal A|$.

To prove this bound, we apply \cref{fact:role-exceptions} to the uniform distribution on the cycles counted by $\cyc_\Lambda(\hat H_j-T)$. As in \cref{claim:present-candidates}, its directed edge marginals are the ratios defining normality. We first verify the entropy lower bound required for this application. By \eqref{eq:X-stable-final}, the change in the logarithm of the number of cycles in this family is, up to the stated concentration error, $\log\bigl(\binom{m-h}{|T|}/\binom m{|T|}\bigr)$. We compare this with $h\log\bigl(d(\hat H_j-T)/d(\hat H_j)\bigr)$, the change in $h\log\bigl(2d(\hat H_j)\bigr)-2h$, the main term in the counting lower bound \eqref{eq:frame-large}. The following expansions have the same leading term.
\[
\begin{aligned}
 \log\frac{\binom{m-h}{|T|}}{\binom{m}{|T|}}
 &=\sum_{i=0}^{|T|-1}\log\left(1-\frac{h}{m-i}\right)=-\frac{h|T|}{m}+O\left(\frac{h^2|T|+h|T|^2}{m^2}\right),\quad\text{and}\\
 h\log\frac{d(\hat H_j-T)}{d(\hat H_j)}
 &=h\log\left(1-\frac{|T|}{m}\right)=-\frac{h|T|}{m}+O\left(\frac{h|T|^2}{m^2}\right).
\end{aligned}
\]
Here $h^2|T|/m^2=O(\log\log\log n_\ast/d(\hat H_j))$ and $h|T|^2/m^2=O((\log\log\log n_\ast)^2/n_\ast)$. Taking logarithms in \eqref{eq:X-stable-final} contributes an additional error $O((\log\log\log n_\ast)^{-1/2})$. Substituting these estimates and the two expansions into the lower bound \eqref{eq:frame-large} on the logarithm of the number of mixed Hamilton cycles gives
\begin{multline*}
 \log\cyc_{\Lambda}(\hat H_j-T)
 \ge{}h\log\bigl(2d(\hat H_j-T)\bigr)-2h-\frac{10n_\ast}{\sqrt{\log n_\ast}}\\
 {}-O\left(\frac1{\sqrt{\log\log\log n_\ast}}+\frac{\log\log\log n_\ast}{d(\hat H_j)}+\frac{(\log\log\log n_\ast)^2}{n_\ast}\right).
\end{multline*}
Since $d(\hat H_j)\ge c\log n_\ast$, the additional error is $o(n_\ast/\sqrt{\log n_\ast})$. Hence $\log\cyc_{\Lambda}(\hat H_j-T)\ge h\log\bigl(2d(\hat H_j-T)\bigr)-2h-11n_\ast/\sqrt{\log n_\ast}$ for all sufficiently large $n_\ast$.

We next verify the degree bounds and the estimate in \cref{thm:entropy-stability}(ii). Maximum degrees and codegrees can only decrease when we delete $T$, while $d(\hat H_j-T)/d(\hat H_j)=1-O(\log\log\log n_\ast/n_\ast)$. For the host $\hat H_j-T$, apply \cref{thm:entropy-stability} with $n=n_\ast-|\hat B|$ vertices, $m-|T|$ edges, and matching size $s=|\cM|$. Deleting $T$ does not change the vertex set or the prescribed matching. Thus the size calculation for $A$ in the proof of \cref{claim:present-candidates} still applies, and we may use \cref{prop:path-regularity}(ii) with $L=2$. The comparison $m=j(1-O(1/n_\ast))$ established in the proof of \cref{claim:present-candidates} gives $|3j/8-3m/8|=O(j/n_\ast)$. This difference is absorbed by the allowed error, so before deleting $T$ we may replace the reference value $3j/8$ by $3m/8$. For each set $A$ allowed in \cref{thm:entropy-stability}(ii), deleting $T$ changes the number of such triples by at most $|T|$ and its reference value by $3|T|/8$, so
\[
 \left|\bigl|(\hat H_j-T)[A,\overline A]\bigr|-\frac{3(m-|T|)}8\right|\le\left|\bigl|\hat H_j[A,\overline A]\bigr|-\frac{3m}8\right|+2|T|.
\]
Since $|T|=O(d(\hat H_j)\log\log\log n_\ast)=o(n_\ast d(\hat H_j) (\log n_\ast)^{-1/8})$, these changes are absorbed by the allowed error. Thus $\Delta_1(\hat H_j-T)=O(d(\hat H_j-T))$, the ratio $\Delta_2(\hat H_j-T)/d(\hat H_j-T)$ remains $O((\log n_\ast)^{-1/4})$, and condition (ii) of \cref{thm:entropy-stability} holds with $\delta=O((\log n_\ast)^{-1/8})$, all with absolute implied constants. This verifies the remaining hypotheses needed to apply \cref{fact:role-exceptions}.

Apply \cref{fact:role-exceptions} to $\hat H_j-T$, using the entropy lower bound established above, whose additive error is $11n_\ast/\sqrt{\log n_\ast}$. As in \cref{claim:present-candidates}, we use the uniform law on the counted cycles, all of whose triples are $\cM_0$-compatible; every legal candidate has its underlying triple in $\Kc$. We have $m=O(n_\ast^3)$, $|\mathcal Q_\Lambda|=(1+o(1))n_\ast^3$, and $|\Omega_{\hat H_j-T}|\le\binom{n_\ast-|\hat B|}{3}$. The number of legal directed candidates that are $1/(4\sqrt{\log\log\log n_\ast})$-abnormal in $\hat H_j-T$ and whose underlying triples also belong to $\hat H_j-T$ is $O\Bigl(m\sqrt{\frac{\log\log\log n_\ast}{\log\log n_\ast}}\Bigr)=o(n_\ast^3/\sqrt{\log\log\log n_\ast})$. Removing these candidates from those supplied by \cref{claim:abnormal-persistence} and using at most six legal orientations per triple gives
\[
\begin{aligned}
 |\mathcal A|
 &\ge\frac16\left(\frac{9|\mathcal Q_\Lambda|}{10\sqrt{\log\log\log n_\ast}}
 -O\left(m\sqrt{\frac{\log\log\log n_\ast}{\log\log n_\ast}}\right)\right)\\
 &\ge\left(\frac3{20}-o(1)\right)\frac{n_\ast^3}{\sqrt{\log\log\log n_\ast}} 
\ge\frac{|\Omega_{\hat H_j-T}|}{3\sqrt{\log\log\log n_\ast}}.
\end{aligned}
\]
This proves the claim.
\end{proof}

To establish condition (\ref{item:reverse-event-sampled}) in the definition of $\mathsf W$, we next consider a candidate whose underlying triple belongs to the deleted set $T$. For these estimates, call a candidate \emph{initially normal} if it is $1/(100\sqrt{\log\log\log n_\ast})$-normal in $\hat H_j$, and \emph{initially abnormal} otherwise. For the fixed realisation of the edge-deletion process, define
\begin{align*}
 \mathcal N_\Lambda&\coloneqq\left\{(w;u\to v)\in\mathcal Q_\Lambda\colon \{u,v,w\}\in\hat H_j\text{ and }(w;u\to v)\text{ is initially normal}\right\},\\
 \mathcal B_\Lambda&\coloneqq\left\{(w;u\to v)\in\mathcal Q_\Lambda\colon \{u,v,w\}\in\hat H_j\text{ and }(w;u\to v)\text{ is initially abnormal}\right\}.
\end{align*}
For a candidate $(w;u\to v)\in\mathcal N_\Lambda$, we record a deviation from its mean conditional on $\{u,v,w\}\in T$ when
\begin{equation}\label{eq:sampled-candidate-deviation}
 \left|
 \frac{\binom{m-1}{|T|-1}\comp_{\hat H_j-T,\cM}(w;u\to v)}
 {\binom{m-h}{|T|-1}\comp_{\hat H_j,\cM}(w;u\to v)}-1
 \right|
 >\frac1{10^4\sqrt{\log\log\log n_\ast}}.
\end{equation}
\begin{claim}\label{claim:sampled-candidate-stability}
For each candidate $(w;u\to v)\in\mathcal N_\Lambda$, the following statements hold.
\begin{enumerate}[label=(\roman*),ref=\roman*,beginpenalty=10000]
\item\label{item:sampled-candidate-tail} The conditional failure probability satisfies
\[
 \widehat{\PP}^{*}\left[\eqref{eq:sampled-candidate-deviation}\text{ holds}
 \,\middle|\,(H_i)_i,\ \{u,v,w\}\in T\right]
 =o\left(\frac{1}{\sqrt{\log\log\log n_\ast}}\right).
\]
\item\label{item:sampled-candidate-normality} If $\{u,v,w\}\in T$, \eqref{eq:X-stable-final} holds, and \eqref{eq:sampled-candidate-deviation} does not hold,\newline then $(w;u\to v)$ is $1/(4\sqrt{\log\log\log n_\ast})$-normal in $\hat H_j-T$.
\end{enumerate}
\end{claim}
\begin{proof}[Proof of \cref{claim:sampled-candidate-stability}]
Fix $(w;u\to v)\in\mathcal N_\Lambda$. We first prove (i) by computing the mean and variance under the conditional sampling law, then prove (ii) by comparing the normalised ratios before and after deletion. Conditional on $\{u,v,w\}\in T$, the other $|T|-1$ triples of $T$ form a uniform subset of the remaining $m-1$ triples of $\hat H_j$. Every cycle counted by $\comp_{\hat H_j,\cM}(w;u\to v)$ omits $w$, so $\{u,v,w\}$ belongs to none of these cycles. Avoiding the $h-1$ triples of such a cycle therefore gives
\[
 \EE\left[\comp_{\hat H_j-T,\cM}(w;u\to v)
 \mid(H_i)_i,\ \{u,v,w\}\in T\right]
 =\frac{\binom{m-h}{|T|-1}}{\binom{m-1}{|T|-1}}
 \comp_{\hat H_j,\cM}(w;u\to v).
\]

For sufficiently large $n_\ast$, the initial normality of $(w;u\to v)$ gives $\comp_{\hat H_j,\cM}(w;u\to v)\ge\cyc_\Lambda(\hat H_j)/(8d(\hat H_j))>\cyc_\Lambda(\hat H_j)n_\ast^{-50}$. Thus this candidate is not light, so its cycle family satisfies the hypotheses of \cref{lem:overlap}. To obtain concentration under the conditional law, apply \cref{lem:batch-variance} to the triple sets of this family. Each cycle has $h-1$ triples, and $T\setminus\{\{u,v,w\}\}$ is sampled uniformly from the other $m-1$ host triples. Since $4(|T|-1)(h-1)/(m-1)\le4|T|h/m$, we obtain
\[
 \frac{\Var[\comp_{\hat H_j-T,\cM}(w;u\to v)\mid(H_i)_i,\ \{u,v,w\}\in T]}
 {\bigl(\EE[\comp_{\hat H_j-T,\cM}(w;u\to v)\mid(H_i)_i,\ \{u,v,w\}\in T]\bigr)^2}
 =O\left(\frac1{(\log\log n_\ast)^{24/25}}\right).
\]
By the Chebyshev calculation in \eqref{eq:count-concentration}, the probability of \eqref{eq:sampled-candidate-deviation}, conditional on $(H_i)_i$ and $\{u,v,w\}\in T$, is $o((\log\log\log n_\ast)^{-1/2})$, uniformly over $(w;u\to v)\in\mathcal N_\Lambda$, proving (i).

For (ii), we compare the normalised ratios defining normality before and after deletion. Dividing the probability that a fixed cycle counted by $\comp_{\hat H_j,\cM}(w;u\to v)$ avoids $T$, conditional on $\{u,v,w\}\in T$, by the unconditioned probability that a fixed cycle counted by $\cyc_\Lambda(\hat H_j)$ avoids $T$ gives
\[
 \frac{\binom{m-h}{|T|-1}\binom{m}{|T|}}
 {\binom{m-1}{|T|-1}\binom{m-h}{|T|}}
 =\frac{m}{m-h-|T|+1}
 =1+O\left(\frac{1}{d(\hat H_j)}+\frac{\log\log\log n_\ast}{n_\ast}\right).
\]
Combining \eqref{eq:X-stable-final} with the complement of \eqref{eq:sampled-candidate-deviation} and dividing the resulting bounds for $\comp_{\hat H_j-T,\cM}(w;u\to v)$ and $\cyc_\Lambda(\hat H_j-T)$ introduces a relative error at most $3/(10^4\sqrt{\log\log\log n_\ast})$. The degree ratio $d(\hat H_j-T)/d(\hat H_j)=1-|T|/m$ and the factor $m/(m-h-|T|+1)$ just computed therefore give the multiplicative factor
\[
 \left(1-\frac{|T|}{m}\right)\frac{m}{m-h-|T|+1}
 \left(1\pm\frac{3}{10^4\sqrt{\log\log\log n_\ast}}\right)
 =1\pm\frac{1}{10\sqrt{\log\log\log n_\ast}}
\]
for sufficiently large $n_\ast$. Thus $4d(\hat H_j-T)\comp_{\hat H_j-T,\cM}(w;u\to v)/\cyc_\Lambda(\hat H_j-T)$ is $1\pm1/(10\sqrt{\log\log\log n_\ast})$ times $4d(\hat H_j)\comp_{\hat H_j,\cM}(w;u\to v)/\cyc_\Lambda(\hat H_j)$.

For $0<\eta\le1$, we have $\left(1-\frac{\eta}{10}\right)\left(1-\frac{\eta}{100}\right)>1-\frac{\eta}{4}$ and $\left(1+\frac{\eta}{10}\right)\left(1+\frac{\eta}{100}\right)<1+\frac{\eta}{4}$.
With $\eta=(\log\log\log n_\ast)^{-1/2}$, these inequalities show that every candidate in $\mathcal N_\Lambda$ remains $1/(4\sqrt{\log\log\log n_\ast})$-normal in $\hat H_j-T$ whenever \eqref{eq:X-stable-final} holds and \eqref{eq:sampled-candidate-deviation} does not occur. This proves (ii).\qedhere
\end{proof}

We combine this conditional estimate with \cref{claim:present-candidates} to bound the number of deleted triples having an abnormal orientation.
\begin{claim}\label{claim:sampled-abnormal-bound}
With probability $1-o(1)$, \eqref{eq:X-stable-final} holds and at most $|T|/(100\sqrt{\log\log\log n_\ast})$ triples of $T$ have a legal orientation that is $1/(4\sqrt{\log\log\log n_\ast})$-abnormal in $\hat H_j-T$.
\end{claim}
\begin{proof}[Proof of \cref{claim:sampled-abnormal-bound}]
We estimate separately the candidates in $\mathcal B_\Lambda$ whose triples lie in $T$ and those in $\mathcal N_\Lambda$ whose triples lie in $T$ and for which \eqref{eq:sampled-candidate-deviation} holds. First, by \cref{claim:present-candidates},
\[
\begin{aligned}
 \EE\left[\sum_{(w;u\to v)\in\mathcal B_\Lambda}
 \mathbf1_{\{\{u,v,w\}\in T\}}\right]
=\frac{|T|}{m}\,
 O\left(m\sqrt{\frac{\log\log\log n_\ast}{\log\log n_\ast}}\right)
 =o\left(\frac{|T|}{\sqrt{\log\log\log n_\ast}}\right).
\end{aligned}
\]
Next consider the candidates in $\mathcal N_\Lambda$ for which \eqref{eq:sampled-candidate-deviation} occurs. Using \cref{claim:sampled-candidate-stability}(\ref{item:sampled-candidate-tail}) and the bound of six legal orientations per triple, we obtain
\[
\begin{aligned}
 &\sum_{(w;u\to v)\in\mathcal N_\Lambda}
 \widehat{\PP}^{*}\left[\{u,v,w\}\in T\text{ and }\eqref{eq:sampled-candidate-deviation}\text{ holds}
 \,\middle|\,(H_i)_i\right]\\
 &\quad=\frac{|T|}{m}
 \sum_{(w;u\to v)\in\mathcal N_\Lambda}
 \widehat{\PP}^{*}\left[\eqref{eq:sampled-candidate-deviation}\text{ holds}
 \,\middle|\,(H_i)_i,\ \{u,v,w\}\in T\right]\\
 &\quad\le6|T|\,o\left(\frac{1}{(\log\log\log n_\ast)^{1/2}}\right)
 =o\left(\frac{|T|}{\sqrt{\log\log\log n_\ast}}\right).
\end{aligned}
\]
If \eqref{eq:X-stable-final} holds, then \cref{claim:sampled-candidate-stability}(\ref{item:sampled-candidate-normality}) shows that every candidate in $\mathcal N_\Lambda$ whose underlying triple belongs to $T$ and whose deviation event in \eqref{eq:sampled-candidate-deviation} does not occur is $1/(4\sqrt{\log\log\log n_\ast})$-normal in $\hat H_j-T$. Every triple in $T\cap\mathcal A$ therefore has a legal orientation $(w;u\to v)$ that either belongs to $\mathcal B_\Lambda$ or belongs to $\mathcal N_\Lambda$ and satisfies \eqref{eq:sampled-candidate-deviation}. Hence
\[
\begin{aligned}
 |T\cap\mathcal A|
 &\le\sum_{(w;u\to v)\in\mathcal B_\Lambda}
 \mathbf1_{\{\{u,v,w\}\in T\}}
 +\sum_{(w;u\to v)\in\mathcal N_\Lambda}
 \mathbf1_{\{\{u,v,w\}\in T\}}
 \mathbf1_{\{\eqref{eq:sampled-candidate-deviation}\text{ holds}\}}.
\end{aligned}
\]
The preceding expectation bounds show that the expected right-hand side is $o(|T|/\sqrt{\log\log\log n_\ast})$. Markov's inequality therefore bounds the probability that the right-hand side exceeds $\frac{|T|}{100\sqrt{\log\log\log n_\ast}}$ by
\[
 \frac{100\sqrt{\log\log\log n_\ast}}{|T|}\,
 o\left(\frac{|T|}{\sqrt{\log\log\log n_\ast}}\right)=o(1).
\]
By \cref{claim:batch-stability}, \eqref{eq:X-stable-final} also holds with probability $1-o(1)$. Intersecting this event with the event supplied by the preceding Markov bound proves the claim.\qedhere
\end{proof}

We finally verify the degree lower bounds needed for reverse sampling.
\begin{claim}\label{claim:batch-degree-bounds}
With probability $1-o(1)$, we have $d_{\hat F_\ast-T}(v)\ge\ell'_v$ for every $v\in[n_\ast]\setminus\hat B$.
\end{claim}
\begin{proof}[Proof of \cref{claim:batch-degree-bounds}]
For the fixed realisation in $\mathsf E_{\Lambda,j}$, passing from $F_*$ to $\hat F_\ast=F_*-\hat B$ removes the edges incident to $\hat B$, where $|\hat B|\le8$. Since $\Delta_2(F_*)\le2$, we have $d_{F_*}(v)-d_{\hat F_\ast}(v)\le\sum_{u\in\hat B}d_{F_*}(v,u)\le2|\hat B|$ for every $v\in[n_\ast]\setminus\hat B$. Hence $\ell'_v=0.01\log n_\ast+O(1)$ for every $v\in[n_\ast]\setminus\hat B$. Also, the maximum-degree bound in \cref{thm:terminal-regularity}(i) gives $|F_*(\hat B)|\le|\hat B|\Delta_1(F_*)=O(\log n_\ast)$, so $|\hat F_\ast|=m_\ast-O(\log n_\ast)$. We treat the remaining vertices in $B_*$ and outside $B_*$ separately.
\begin{enumerate}[label=(\textit{\roman*})]
\item By the bounds on $|B_*|$ and $\Delta_1(F_*)$ in \cref{thm:terminal-regularity}(i) and the uniform choice of $T$,
\[
\begin{aligned}
 \widehat{\PP}^{*}\left[\exists e\in T\cap\hat F_\ast\text{ with }e\cap B_*\ne\varnothing
 \,\middle|\,(H_i)_i\right]
 &\le\bigl|\{e\in\hat F_\ast\colon e\cap B_*\ne\varnothing\}\bigr|\frac{|T|}{m} \\
 & \le |B_*|\Delta_1(F_*)\frac{|T|}{m}
 =O\left(\frac{\log n_\ast \log\log\log n_\ast}{n_\ast^{3/4}}\right).
\end{aligned}
\]
Outside this event, $d_{\hat F_\ast-T}(v)=d_{\hat F_\ast}(v)\ge\ell'_v$ for every $v\in B_*\setminus\hat B$.
\item For $v\in[n_\ast]\setminus(\hat B\cup B_*)$, we have $d_{\hat F_\ast}(v)>0.03\log n_\ast-2|\hat B|$ and $\ell'_v=0.01\log n_\ast+O(1)$. Thus a violation of the lower bound requires at least $\lfloor0.01\log n_\ast\rfloor$ incident edges of $\hat F_\ast$ to lie in $T$. The inclusion probability of any fixed collection of this size is at most $(|T|/m)^{\lfloor0.01\log n_\ast\rfloor}$. Using $\Delta_1(\hat F_\ast)=O(\log n_\ast)$ and $|T|/m=O(\log\log\log n_\ast/n_\ast)$, a union bound gives
\[
\begin{aligned}
 &\widehat{\PP}^{*}\left[\exists v\in[n_\ast]\setminus(\hat B\cup B_*)\colon
 d_{\hat F_\ast-T}(v)<\ell'_v\,\middle|\,(H_i)_i\right]\\
 &\quad\le\sum_{v\in[n_\ast]\setminus(\hat B\cup B_*)}
 \binom{d_{\hat F_\ast}(v)}{\lfloor0.01\log n_\ast\rfloor}
 \left(\frac{|T|}{m}\right)^{\lfloor0.01\log n_\ast\rfloor}\\
 &\quad\le n_\ast\left(\frac{C'|T|}{m}\right)^{\lfloor0.01\log n_\ast\rfloor}
 =\exp\left\{\log n_\ast+\lfloor0.01\log n_\ast\rfloor\log\frac{C'|T|}{m}\right\}\\
 &\quad\le\exp\{-(0.01-o(1))(\log n_\ast)^2\}
 =\exp\{-\Omega(\log^2n_\ast)\}.
\end{aligned}
\]
\end{enumerate}
The estimate $|T|/m=O(\log\log\log n_\ast/n_\ast)$ shows that these probability bounds are uniform over the permitted times $j$ and the fixed realisations under consideration. Thus $\hat F_\ast-T$ satisfies these degree lower bounds with probability at least
\[
 1-O\left(\frac{(\log n_\ast)\log\log\log n_\ast}{n_\ast^{3/4}}\right)-\exp\{-\Omega(\log^2n_\ast)\}=1-o(1).\qedhere
\]
\end{proof}

By \cref{claim:sampled-abnormal-bound,claim:batch-nonedges,claim:batch-degree-bounds}, for every fixed $\QQ$-admissible realisation in $\mathsf E_{\Lambda,j}$, the event $\mathsf W$ holds with probability $1-o(1)$ over the uniformly chosen set $T$. The error is uniform over these realisations and their exposed edge sets $(F_*(\hat B),H_j(\hat B))$.

For the reverse estimate, we return to the joint law $\widehat{\PP}^{*}$. We keep the exposed edge sets $F_*(\hat B)$ and $H_j(\hat B)$ fixed and condition on the two remaining hypergraphs $\hat H_j-T$ and $\hat F_\ast-T$. We do not additionally condition on $\mathsf E_{\Lambda,j}$; the bounds in condition (\ref{item:completion-event-regularity}) in its definition were used in the forward estimate. We now determine the conditional law of the original hypergraphs and the deleted set $T$. In reverse sampling, $T$ specifies the triples added back to $\hat H_j-T$, while $T\cap\hat F_\ast$ specifies those added back to $\hat F_\ast-T$. The original hypergraphs $\hat F_\ast,\hat H_j$ are no longer fixed, although their sizes are determined by $F_*(\hat B),H_j(\hat B)$.
The revealed edges $F_*(\hat B)$ contribute $d_{F_*(\hat B)}(v)$ to the degree of each remaining vertex $v$. Hence the reconstructed hypergraph $\hat F_\ast=(\hat F_\ast-T)\cup(T\cap\hat F_\ast)$ must satisfy
\begin{equation}\label{eq:reverse-degree-bounds}
 d_{(\hat F_\ast-T)\cup(T\cap\hat F_\ast)}(v)
 \ge\ell'_v=\max\{0,\ell_v-d_{F_*(\hat B)}(v)\}
 \quad\text{for every }v\in[n_\ast]\setminus\hat B.
\end{equation}
These lower bounds are fixed by $F_*(\hat B)$ throughout reverse sampling.
\begin{claim}\label{prop:reverse-uniform}
Conditional on any $\widehat{\PP}^{*}$-admissible value of $(\hat H_j-T,\hat F_\ast-T)$, the following statements hold.
\begin{enumerate}[label=(\roman*),ref=\roman*]
\item\label{item:reverse-nested} The pair $(T\cap\hat F_\ast,T)$ is uniformly distributed over all nested pairs $T\cap\hat F_\ast\subseteq T\subseteq\Omega_{\hat H_j-T}$ with $|T\cap\hat F_\ast|=|\hat F_\ast|-|\hat F_\ast-T|$ and $|T|=\bigl\lfloor m\log\log\log n_\ast/\bigl(50(n_\ast-|\hat B|-|\cM|)\bigr)\bigr\rfloor$, and satisfying \eqref{eq:reverse-degree-bounds}.
\item\label{item:reverse-set} If $d_{\hat F_\ast-T}(v)\ge\ell'_v$ for every $v\in[n_\ast]\setminus\hat B$, then $T$ is exactly uniform among the subsets of $\Omega_{\hat H_j-T}$ of the prescribed size.
\end{enumerate}
\end{claim}
\begin{proof}[Proof of \cref{prop:reverse-uniform}]
For \cref{prop:reverse-uniform}(\ref{item:reverse-nested}), keep $F_*(\hat B),H_j(\hat B)$ fixed, as in the definition of $\widehat{\PP}^{*}$. For any $\widehat{\PP}^{*}$-admissible value $(F,H,U)$ of $(\hat F_\ast,\hat H_j,T)$, \cref{lem:boundary}(i)--(ii) and the uniform choice of $T$ give
\[
 \widehat{\PP}^{*}[\hat F_\ast=F,\hat H_j=H,T=U]
 =\frac{\widehat{\PP}^{*}[\hat F_\ast=F]}
 {\displaystyle\binom{\binom{n_\ast-|\hat B|}{3}-|F|}{m-|F|}\binom{m}{|T|}}.
\]
The numerator is constant over the permitted realisations $F$, and the binomial factors depend only on the prescribed sizes. Thus all choices of $(\hat F_\ast,\hat H_j,T)$ with the prescribed sizes, satisfying $\hat F_\ast\subseteq\hat H_j$, $T\subseteq\hat H_j$, and $d_{\hat F_\ast}(v)\ge\ell'_v$ for every $v\in[n_\ast]\setminus\hat B$, have equal probability. Once $\hat H_j-T$ and $\hat F_\ast-T$ are fixed, these realisations correspond bijectively to nested pairs $(T\cap\hat F_\ast,T)$ through $\hat F_\ast=(\hat F_\ast-T)\cup(T\cap\hat F_\ast)$ and $\hat H_j=(\hat H_j-T)\cup T$. The only remaining restriction is \eqref{eq:reverse-degree-bounds}, proving \cref{prop:reverse-uniform}(\ref{item:reverse-nested}).

For \cref{prop:reverse-uniform}(\ref{item:reverse-set}), suppose that $d_{\hat F_\ast-T}(v)\ge\ell'_v$ for every $v\in[n_\ast]\setminus\hat B$. Adding edges preserves these inequalities, so the restriction in \eqref{eq:reverse-degree-bounds} disappears; every $T$ then admits $\binom{|T|}{|\hat F_\ast|-|\hat F_\ast-T|}$ possible sets $T\cap\hat F_\ast$. Hence, for every fixed $U\subseteq\Omega_{\hat H_j-T}$ with $|U|=|T|$,
\[
\begin{aligned}
 \widehat{\PP}^{*}[T=U\mid\hat H_j-T,\hat F_\ast-T]
 =\frac{\binom{|T|}{|\hat F_\ast|-|\hat F_\ast-T|}}
 {\binom{|\Omega_{\hat H_j-T}|}{|T|}\binom{|T|}{|\hat F_\ast|-|\hat F_\ast-T|}}
 = \frac{1}{\binom{|\Omega_{\hat H_j-T}|}{|T|}},
\end{aligned}
\]
as required.
\end{proof}

The next claim gives the reverse probability bound, with $F_*(\hat B),H_j(\hat B)$ still fixed.
\begin{claim}\label{claim:reverse-witness-bound}
There is an absolute constant $c>0$ such that, for every $\widehat{\PP}^{*}$-admissible value of $(\hat H_j-T,\hat F_\ast-T)$, we have
\[
 \widehat{\PP}^{*}[\mathsf W\mid\hat H_j-T,\hat F_\ast-T]
 \le\exp\left\{-\frac{c|T|}{\sqrt{\log\log\log n_\ast}}\right\}.
\]
\end{claim}
\begin{proof}[Proof of \cref{claim:reverse-witness-bound}]
If $d_{\hat F_\ast-T}(v)<\ell'_v$ for some $v\in[n_\ast]\setminus\hat B$ or $|\mathcal A|<|\Omega_{\hat H_j-T}|/(3\sqrt{\log\log\log n_\ast})$, then condition (\ref{item:reverse-event-degree-bounds}) or (\ref{item:reverse-event-nonedges}) in the definition of $\mathsf W$ fails, so this probability is zero. Otherwise, by \cref{prop:reverse-uniform}(\ref{item:reverse-set}), $T$ is exactly uniform among the subsets of $\Omega_{\hat H_j-T}$ of the prescribed size. The set $\mathcal A$ is determined by $\hat H_j-T$ and the frame $\Lambda$. Under this conditional law, $|T\cap\mathcal A|$ is hypergeometric, with
\[
 \EE[|T\cap\mathcal A|\mid\hat H_j-T,\hat F_\ast-T]
 =\frac{|T||\mathcal A|}{|\Omega_{\hat H_j-T}|}
 \ge\frac{|T|}{3\sqrt{\log\log\log n_\ast}}.
\]
The threshold in condition (\ref{item:reverse-event-sampled}) in the definition of $\mathsf W$ is at most $3/100$ of this mean. Thus \cref{lem:chernoff}(i), with $a=3/100$, gives
\[
\begin{aligned}
 \widehat{\PP}^{*}[\mathsf W\mid\hat H_j-T,\hat F_\ast-T]
 &\quad\le\widehat{\PP}^{*}\left[|T\cap\mathcal A|\le
 \frac3{100}\EE[|T\cap\mathcal A|\mid\hat H_j-T,\hat F_\ast-T]
 \,\middle|\,\hat H_j-T,\hat F_\ast-T\right]\\
 &\quad\le\exp\left\{-\varphi\left(\frac{3}{100}\right)\frac{|T||\mathcal A|}{|\Omega_{\hat H_j-T}|}\right\}
 \le\exp\left\{-\frac{c|T|}{\sqrt{\log\log\log n_\ast}}\right\}.
\end{aligned}
\]
This proves the claim.
\end{proof}

We now combine the forward estimate with \cref{claim:reverse-witness-bound} to bound the conditional probability of $\mathsf E_{\Lambda,j}$.
\begin{claim}\label{claim:conditional-completion-bound}
There is an absolute constant $c>0$ such that, for all $\QQ$-admissible values of $(F_*(\hat B),H_j(\hat B))$, we have
\[
 \QQ[\mathsf E_{\Lambda,j}\mid F_*(\hat B),H_j(\hat B)]
 \le\exp\{-c(\log n_\ast)\sqrt{\log\log\log n_\ast}\}.
\]
\end{claim}
\begin{proof}[Proof of \cref{claim:conditional-completion-bound}]
Keep $F_*(\hat B),H_j(\hat B)$ fixed. Then $m=j-|H_j(\hat B)|$ is fixed, and the prescribed size $|T|$ is fixed as well. If no realisation in $\mathsf E_{\Lambda,j}$ has these exposed edge sets, then the conditional probability to be bounded is zero. Otherwise, under $\widehat{\PP}^{*}$ the edge-deletion process has law $\QQ$ conditional on these edge sets. All expectations below are taken under $\widehat{\PP}^{*}$. Since the forward estimate holds uniformly over realisations in $\mathsf E_{\Lambda,j}$, averaging over the edge-deletion process gives
\[
\begin{aligned}
 (1-o(1))\QQ[\mathsf E_{\Lambda,j}\mid F_*(\hat B),H_j(\hat B)]
 \le\EE\left[\mathbf1_{\mathsf E_{\Lambda,j}}\,
 \widehat{\PP}^{*}[\mathsf W\mid(H_i)_i]\right]
 =\widehat{\PP}^{*}[\mathsf E_{\Lambda,j}\cap\mathsf W]
 \le\widehat{\PP}^{*}[\mathsf W].
\end{aligned}
\]
Similarly, averaging the bound in \cref{claim:reverse-witness-bound} over the two remaining hypergraphs gives
\[
 \widehat{\PP}^{*}[\mathsf W]
 =\EE\bigl[\widehat{\PP}^{*}[\mathsf W\mid\hat H_j-T,\hat F_\ast-T]\bigr]
 \le\exp\left\{-\frac{c|T|}{\sqrt{\log\log\log n_\ast}}\right\}.
\]

It remains to express this bound in terms of $n_\ast$. For the fixed exposed edge sets, choose a realisation in $\mathsf E_{\Lambda,j}$. The bound $d_{H_j}(v)\le3Cj/n_\ast$ for every $v\in[n_\ast]$ gives $|H_j(\hat B)|\le\sum_{v\in\hat B}d_{H_j}(v)\le3Cj|\hat B|/n_\ast=O(j/n_\ast)$, and hence $m=j-|H_j(\hat B)|=j(1-O(1/n_\ast))$. Since $j\ge m_\ast\sim n_\ast\log n_\ast/3$ and $n_\ast-|\hat B|-|\cM|=(1-o(1))n_\ast$, the prescribed size of $T$ yields
\[
 |T|=\left\lfloor\frac{m\log\log\log n_\ast}{50(n_\ast-|\hat B|-|\cM|)}\right\rfloor
 \ge c(\log n_\ast)\log\log\log n_\ast.
\]
Thus $|T|/\sqrt{\log\log\log n_\ast}\ge c(\log n_\ast)\sqrt{\log\log\log n_\ast}$. Substituting this inequality into the preceding probability bounds and decreasing $c$ to absorb the factor $1-o(1)$ proves the claim.
\end{proof}

\Cref{claim:conditional-completion-bound} gives the conditional estimate in \cref{prop:uniform-completion}. Averaging this bound over $F_*(\hat B),H_j(\hat B)$ gives the same bound for $\QQ[\mathsf E_{\Lambda,j}]$.
\end{proof}

\section{Cycle modifications and uniform edge marginals}
\label{sec:migration}
\Cref{prop:uniform-completion} shows that, with high probability, the numbers $\comp_{\hat H_j,\cM}(w;u\to v)$ are nearly equal for most legal directed candidates in each frame satisfying \eqref{eq:frame-large}. In this section, we strengthen this control to a uniform upper bound for every edge marginal whenever $\mathsf A_j$ holds, proving \cref{thm:uniform-marginal}. The proof uses two migration operations, one moving a private vertex and the other moving an intersection vertex. If one legal choice of the deleted vertex $w$ and the added directed pair $u\to v$ gives too many mixed Hamilton cycles, successive migrations force a positive proportion of legal candidates to have comparably large numbers of mixed Hamilton cycles, contradicting \cref{prop:uniform-completion}. For edges containing a $\cM_0$-saturated vertex, we instead combine the migration comparisons with a partition of the counted mixed Hamilton cycles. The estimate in \cref{lem:simultaneous-link} supplies the triples needed for the migrations.

The following lemma allows us to apply \cref{prop:uniform-completion} to auxiliary cycle families whose sizes are at least $\Xc(H_j)n_\ast^{-100}$.
\begin{lemma}\label{claim:frame-count-transfer}
Under \cref{ass:core}, the following holds for all sufficiently large $n_\ast$. Let $m_\ast\le j\le\binom{n_\ast}{3}$, and let $\Lambda$ be a frame satisfying \eqref{eq:frame-bounds-final}. Suppose that the following conditions hold.
\begin{enumerate}[label=(\textit{\roman*})]
\item The event $\mathsf A_j$ holds.
\item The vertex-degree bounds in \cref{prop:path-regularity}(i), with $h=8$ and $L=2$, hold at size $j$.
\item We have $\cyc_{\Lambda}(\hat H_j)\ge\Xc(H_j)n_\ast^{-100}$.
\end{enumerate}
Then $\log\cyc_{\Lambda}(\hat H_j)$ satisfies the lower bound \eqref{eq:frame-large}.
\end{lemma}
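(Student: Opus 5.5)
The proof is a direct chain of inequalities: start from $\mathsf A_j$, pass to $\cyc_\Lambda(\hat H_j)$ via assumption (iii), and compare the resulting main terms with those in \eqref{eq:frame-large}. Assumption (iii) gives $\log\cyc_\Lambda(\hat H_j)\ge\log\Xc(H_j)-100\log n_\ast$. The event $\mathsf A_j$ together with \eqref{eq:benchmark} then gives
\[
 \log\cyc_\Lambda(\hat H_j)\ge\frac{n_\ast-s}{2}\log\frac{6j}{n_\ast}-n_\ast+s-\frac{n_\ast}{\sqrt{\log n_\ast}}-O(s\log n_\ast+\log n_\ast).
\]
Since $s\le n_\ast^{1/10}$, the error term is $o(n_\ast/\sqrt{\log n_\ast})$. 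It remains to show that this lower bound dominates the right-hand side of \eqref{eq:frame-large}, namely $h\log(2d(\hat H_j))-n_\ast+|\hat B|+|\cM|-10n_\ast/\sqrt{\log n_\ast}$ with $h=(n_\ast-|\hat B|-|\cM|)/2$, allowing a total slack of $9n_\ast/\sqrt{\log n_\ast}$.

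First compare the logarithmic main terms. We have $2d(\hat H_j)=6m/(n_\ast-|\hat B|)$ with $m=|\hat H_j|$. By the degree bounds in assumption (ii), $j-m\le|\hat B|\Delta_1(H_j)=O(j/n_\ast)$, so $\log(2d(\hat H_j))=\log(6j/n_\ast)+O(1/n_\ast)$. The frame bounds \eqref{eq:frame-bounds-final} give $|\cM|=s+O(1)$ and $|\hat B|\le 8$, hence $h=(n_\ast-s)/2+O(1)$. Consequently,
\[
 h\log\bigl(2d(\hat H_j)\bigr)=\frac{n_\ast-s}{2}\log\frac{6j}{n_\ast}+O(\log j).
\]
The bound $\log j=O(\log n_\ast)$ holds because $j\le\binom{n_\ast}3$, and $\log(6j/n_\ast)\le 3\log n_\ast$. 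Next compare the linear terms: $-n_\ast+s$ and $-n_\ast+|\hat B|+|\cM|$ differ by $O(1)$.

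Collecting terms, the total discrepancy is $O(\log n_\ast)+100\log n_\ast+O(s\log n_\ast)=O(n_\ast^{1/10}\log n_\ast)$. This is far below $9n_\ast/\sqrt{\log n_\ast}$ for large $n_\ast$, which gives \eqref{eq:frame-large}.

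The step that needs the most care is the error bookkeeping in $h\log(2d(\hat H_j))$. The constant-size shift in $h$ is multiplied by a logarithm of order $\log n_\ast$, so it contributes only $O(\log n_\ast)$. The relative change $O(1/n_\ast)$ in the average degree is multiplied by $h=O(n_\ast)$, so it contributes only $O(1)$. Both contributions are therefore absorbed by the slack.
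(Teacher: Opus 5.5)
Your proposal is correct and follows the same route as the paper. You lose $100\log n_\ast$ via (iii), apply $\mathsf A_j$ together with \eqref{eq:benchmark}, and show that the main terms differ by only $O(\log n_\ast)$ using $|\hat B|\le 8$, $\bigl||\cM|-s\bigr|\le 2$ and $j-|\hat H_j|=O(j/n_\ast)$; the only cosmetic difference is that you compare with $\log(6j/n_\ast)$ directly rather than with $\log(2d(H_j))$, which is the same quantity.
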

\begin{proof}[Proof of \cref{claim:frame-count-transfer}]
The assumed loss of a factor $n_\ast^{100}$ costs only $100\log n_\ast$ after taking logarithms. We show that passing from $H_j,\cM_0$ to $\hat H_j,\cM$ changes the main term in \eqref{eq:benchmark} by $O(\log n_\ast)$, so both losses are absorbed by the error allowed in \eqref{eq:frame-large}.

By \eqref{eq:frame-bounds-final}, $|\hat B|\le8$ and $\bigl||\cM|-s\bigr|\le2$. The degree bounds in \cref{prop:path-regularity}(i) give $|H_j\setminus\hat H_j|\le8\Delta_1(H_j)=O(d(H_j))$. Consequently, $d(\hat H_j)=d(H_j)(1+O(1/n_\ast))$ and $\log(d(\hat H_j)/d(H_j))=O(1/n_\ast)$. In comparing the main terms in \eqref{eq:benchmark} and \eqref{eq:frame-large}, the coefficient of the logarithm changes by $O(1)$, the logarithm itself changes by $O(1/n_\ast)$, and the linear term changes by $O(1)$. Since $\log(2d(H_j))=O(\log n_\ast)$, the two main terms differ by $O(\log n_\ast)$.

Using this comparison, $\mathsf A_j$, \eqref{eq:benchmark}, and hypothesis (iii), we obtain
\[
\begin{aligned}
 \log\cyc_{\Lambda}(\hat H_j)
 &\ge\log\Xc(H_j)-100\log n_\ast\\
 &\ge\frac{n_\ast-|\hat B|-|\cM|}{2}\log\bigl(2d(\hat H_j)\bigr)
 -n_\ast+|\hat B|+|\cM|\\
 &\qquad{}-\frac{2n_\ast}{\sqrt{\log n_\ast}}-100\log n_\ast-O(\log n_\ast).
\end{aligned}
\]
Since $100\log n_\ast+O(\log n_\ast)=o(n_\ast/\sqrt{\log n_\ast})$, the last bound implies \eqref{eq:frame-large} for sufficiently large $n_\ast$.
\end{proof}

We work with the conditioned edge-deletion process under $\QQ$ and \cref{ass:core}. We first collect the bounds from \cref{thm:terminal-regularity}(i)--(ii) and \cref{prop:path-regularity}, together with the estimates for legal directed candidates from \cref{prop:uniform-completion}, that will be used in both migration proofs. 

Let $\mathsf R$ be the event that the following three conditions hold.
\begin{enumerate}[label=(R\arabic*),ref=R\arabic*]
\item\label{item:migration-core-regularity} The hypergraph $F_*$ satisfies the bounds in \cref{thm:terminal-regularity}(i)--(ii).
\item\label{item:migration-process-regularity} The conclusions of \cref{prop:path-regularity}(i)--(iii), with $h=8$ and $L=2$, hold simultaneously for every $m_\ast\le j\le\binom{n_\ast}{3}$ and every $\hat B\subseteq[n_\ast]$ with $|\hat B|\le8$.
\item\label{item:migration-candidates} For every $m_\ast\le j\le\binom{n_\ast}{3}$, in every frame satisfying \eqref{eq:frame-bounds-final} and \eqref{eq:frame-large}, the fraction of legal directed candidates that are $(\log\log\log n_\ast)^{-1/2}$-abnormal in $\hat H_j$ is at most $(\log\log\log n_\ast)^{-1/2}$.
\end{enumerate}
\begin{fact}\label{fact:migration-common-event}
Under \cref{ass:core}, the following statements hold.
\begin{enumerate}[label=(\textit{\roman*})]
\item We have $\QQ[\mathsf R]=1-o(1)$.
\item If $\mathsf R$ holds, then there exists an absolute constant $c_0>0$ such that $d_{\hat H_j}(v)\ge100c_0d(\hat H_j)$ simultaneously for every $m_\ast\le j\le\binom{n_\ast}{3}$, every $\hat H_j=H_j-\hat B$ with $|\hat B|\le8$, and every $v\in V(\hat H_j)$, for all sufficiently large $n_\ast$.
\end{enumerate}
\end{fact}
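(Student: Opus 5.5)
For part (i), I would bound the probability that each of (R1), (R2), (R3) fails and then take a union bound. Condition (R1) fails with probability $o(1)$ by \cref{thm:terminal-regularity}(i)--(ii), since $F_*=H_{m_\ast}$ is uniform on $\Omega(n_\ast,m_\ast,\boldsymbol\ell)$ under $\QQ$. Condition (R2) holds with probability $1-o(1)$ by \cref{prop:path-regularity} applied with $h=8$ and $L=2$; that result is already uniform over all times $j$ and all $\hat B$ with $|\hat B|\le8$.

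For (R3), I would bound the complement by a union bound over times $j$ and permitted frames $\Lambda$. On (R1)--(R2), a frame satisfying \eqref{eq:frame-bounds-final} and \eqref{eq:frame-large} in which more than the allowed fraction of legal candidates is abnormal realises the event $\mathsf E_{\Lambda,j}$. The fraction allowed in (R3) matches the threshold in (E3), since it is stated relative to $|\mathcal Q_\Lambda|$. So it suffices to sum $\QQ[\mathsf E_{\Lambda,j}]$, which \cref{prop:uniform-completion} bounds (after averaging over $(F_*(\hat B),H_j(\hat B))$) by $\exp\{-c(\log n_\ast)\sqrt{\log\log\log n_\ast}\}$.

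The main obstacle is the count of frames, which I would handle as follows.
\begin{itemize}
\item The choices of $\hat B$ number at most $n_\ast^{8}$.
\item A matching $\cM$ differing from $\cM_0$ by at most two removed and two added pairs gives $O(n_\ast^{4}s^2)$ choices.
\item The root pair, its direction, and an optional directed extra pair give $O(n_\ast^{2})$ choices.
\item The times $j$ give at most $n_\ast^3$ choices.
\end{itemize}
This is $n_\ast^{O(1)}=\exp\{O(\log n_\ast)\}$ frames in total. That is beaten by $\exp\{-c\log n_\ast\sqrt{\log\log\log n_\ast}\}$ because $\sqrt{\log\log\log n_\ast}\to\infty$, so the sum is $o(1)$. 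The only care needed is that \cref{prop:uniform-completion} holds for every frame and time with an absolute constant $c$, which it does.

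For part (ii), I would argue deterministically on $\mathsf R$. By (R2) and \cref{prop:path-regularity}(i), every $v\in V(\hat H_j)$ satisfies $d_{\hat H_j}(v)\ge 3cj/n_\ast$, where $c$ is the constant for $h=8$, $L=2$. I would also use $d(\hat H_j)=3|\hat H_j|/(n_\ast-|\hat B|)\le 3j/(n_\ast-8)\le 4j/n_\ast$ for large $n_\ast$. Combining these gives $d_{\hat H_j}(v)\ge(3c/4)\,d(\hat H_j)$, so setting $c_0\coloneqq 3c/400$ proves (ii).
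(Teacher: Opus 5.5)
Your proposal is correct and follows the paper's proof. For (i), you bound the complement of $\mathsf R$ by the failures of the regularity conditions plus a union bound of \cref{prop:uniform-completion} over the $n_\ast^{O(1)}$ frames and times. For (ii), you use the degree lower bound from \cref{prop:path-regularity}(i). The only cosmetic difference is in (ii): you bound $d(\hat H_j)\le 3j/(n_\ast-8)$ directly from $|\hat H_j|\le j$, whereas the paper uses $d(\hat H_j)\le\Delta_1(\hat H_j)\le 3Cj/n_\ast$ and takes $c_0=c/(100C)$.
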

\begin{proof}
For \cref{fact:migration-common-event}(i), by \cref{thm:terminal-regularity}(i)--(ii) and \cref{prop:path-regularity}, conditions (\ref{item:migration-core-regularity}) and (\ref{item:migration-process-regularity}) in the definition of $\mathsf R$ hold simultaneously with $\QQ$-probability $1-o(1)$. The bounds in \eqref{eq:frame-bounds-final} allow at most eight deleted vertices, two removed pairs of $\cM_0$, and two added pairs. A frame also chooses a root pair and fixes directions on at most two pairs. These data involve only a fixed number of vertex labels and direction choices, giving $n_\ast^{O(1)}$ possible frames. A union bound over these frames and all possible numbers of edges $j$, using \cref{prop:uniform-completion} for condition (\ref{item:migration-candidates}) in the definition of $\mathsf R$, gives
\[
 \QQ[\mathsf R^c]\le o(1)+n_\ast^{O(1)}\exp\{-\Omega((\log n_\ast)\sqrt{\log\log\log n_\ast})\}=o(1).
\]
For \cref{fact:migration-common-event}(ii), take $c,C$ from \cref{prop:path-regularity}(i) with $h=8$ and $L=2$. If $\mathsf R$ holds, then for every host and vertex in the statement,
\[
 \frac{d_{\hat H_j}(v)}{d(\hat H_j)}
 \ge\frac{\delta_1(\hat H_j)}{\Delta_1(\hat H_j)}
 \ge\frac{3cj}{3Cj}
 =\frac{c}{C}.
\]
Thus the absolute constant $c_0=c/(100C)>0$ gives the required inequality simultaneously for all these hosts and vertices.
\end{proof}

We now introduce \emph{migration of a private vertex} and \emph{migration of an intersection vertex}. The former changes the deleted vertex while keeping the added pair fixed; the latter changes one vertex of the added pair while keeping the deleted vertex fixed.

The migration lemmas compare mixed Hamilton cycle families obtained by deleting a vertex $x$ and adding a prescribed pair $yz$, with $\cyc\bigl((\hat H_j\cap\Kc)-x,\cM\cup\{yz\}\bigr)$ cycles in the initial family. At a size $m_\ast\le j\le\binom{n_\ast}{3}$, we use the following two choices of host and prescribed matching.
\begin{enumerate}[label=\textup{Case \arabic*:},ref=\arabic*,leftmargin=*]
\item\label{item:migration-unsaturated-case} Choose distinct vertices $x,y,z\in[n_\ast]\setminus V(\cM_0)$. Take $\hat B=\varnothing$ and $\cM=\cM_0$, so $\hat H_j=H_j$. If a counted mixed Hamilton cycle uses $\{x,y,z\}$ with $x$ private, then replacing this triple by the prescribed pair $yz$ gives a mixed Hamilton cycle on $[n_\ast]\setminus\{x\}$. The migration comparisons below do not require $\{x,y,z\}\in H_j$.
\item\label{item:migration-saturated-case} Fix a pair $y_0z_0\in\cM_0$ and choose distinct vertices $x,y\in[n_\ast]\setminus V(\cM_0)$. Take $\hat B=\{y_0\}$ and $\cM=\cM_0\setminus\{y_0z_0\}$, so $\hat H_j=H_j-y_0$, and set $z=z_0$. When $\{y_0,x,y\}\in H_j$, consider a cycle counted by $\Xc(H_j)$ using this triple with $x$ private. Contract $y_0z_0$ together with this triple to the prescribed pair $yz_0$, deleting $y_0$ and $x$. The migration comparisons below do not require $\{y_0,x,y\}\in H_j$.
\end{enumerate}

\begin{figure}[htbp]
\centering
\begin{tikzpicture}[
  y=.8cm,font=\small,
  vertex/.style={circle,fill=black,inner sep=1.5pt},
  retained/.style={draw=black!55,fill=black!6,line width=.6pt},
  marker/.style={line width=1.1pt},
  removed/.style={draw=red!70!black,line width=1.3pt},
  inserted/.style={draw=blue!75!black,fill=blue!12,line width=.9pt}
]
  \foreach \stage/\heading in {0/{1. Auxiliary cycle},1/{2. Remove $uv$},2/{3. Insert $\{u,x,v\}$}} {
    \begin{scope}[shift={({4.6*\stage},0)}]
      \node at (.45,2.45) {\heading};
      \coordinate (y) at (-1,-.7);
      \coordinate (z) at (-1,.7);
      \coordinate (u) at (1.4,.7);
      \coordinate (v) at (1.4,-.7);
      \coordinate (x) at (2.15,0);
      \coordinate (top) at (.2,1.35);
      \coordinate (bottom) at (.2,-1.35);
      \coordinate (ptl) at (-.65,1.75);
      \coordinate (ptr) at (1.05,1.75);
      \coordinate (pbl) at (-.65,-1.75);
      \coordinate (pbr) at (1.05,-1.75);
      \draw[retained] (z)--(ptl)--(top)--cycle;
      \draw[retained] (top)--(ptr)--(u)--cycle;
      \draw[retained] (v)--(pbr)--(bottom)--cycle;
      \draw[retained] (bottom)--(pbl)--(y)--cycle;
      \draw[marker,black!55] (y)--(z);
      \ifnum\stage=0
        \draw[removed] (u)--(v);
        \node at (.45,-2.2) {$x,x'$ absent};
      \else
        \ifnum\stage=2
          \draw[inserted] (u)--(x)--(v)--cycle;
          \node at (.45,-2.2) {$x'$ absent; $x$ private};
        \else
          \node at (.45,-2.2) {path with ends $u,v$};
        \fi
        \node[vertex,label=right:$x$] at (x) {};
      \fi
      \foreach \point in {top,bottom,ptl,ptr,pbl,pbr}
        \node[vertex] at (\point) {};
      \node[vertex,label=left:$y$] at (y) {};
      \node[vertex,label=left:$z$] at (z) {};
      \node[vertex,label=above right:$u$] at (u) {};
      \node[vertex,label=below right:$v$] at (v) {};
    \end{scope}
  }
  \draw[-{Stealth[length=5pt]},line width=.6pt] (2.75,0)--(3.15,0);
  \draw[-{Stealth[length=5pt]},line width=.6pt] (7.35,0)--(7.75,0);
\end{tikzpicture}
\caption{An example of migration of a private vertex with $uv\in L_{\hat H_j\cap\Kc}(x)$. Replace the red pair $uv$ by the blue triple $\{u,x,v\}$, making $x$ private; $x'$ remains absent. Grey triples and the pair $yz$ are unchanged.}
\label{fig:private-migration}
\end{figure}

We first prove the comparison for migration of a private vertex, using the reconstruction illustrated in \cref{fig:private-migration}.

\begin{lemma}\label{lem:private-migration}
There exists an absolute constant $C_1>0$ such that, with $\QQ$-probability $1-o(1)$, the following holds simultaneously for every $m_\ast\le j\le\binom{n_\ast}{3}$ and every choice of $x,y,z,\hat B,\cM$ described in Cases~\ref{item:migration-unsaturated-case} and~\ref{item:migration-saturated-case}. If $\mathsf A_j$ holds and $\cyc\bigl((\hat H_j\cap\Kc)-x,\cM\cup\{yz\}\bigr)\ge\Xc(H_j)n_\ast^{-40}$, then for all but $C_1n_\ast/(\log\log\log n_\ast)^{1/4}$ vertices $x'\in[n_\ast]\setminus(V(\cM_0)\cup\{x,y,z\})$, we have
\[
\begin{aligned}
 \cyc\bigl((\hat H_j\cap\Kc)-x',\cM\cup\{yz\}\bigr)&\ge\frac{d_{\hat H_j}(x)}{24d(\hat H_j-x)}\cyc\bigl((\hat H_j\cap\Kc)-x,\cM\cup\{yz\}\bigr)\\
 &\ge c_0\cyc\bigl((\hat H_j\cap\Kc)-x,\cM\cup\{yz\}\bigr).
\end{aligned}
\]
\end{lemma}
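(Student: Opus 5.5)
We work on the event $\mathsf R$ of \cref{fact:migration-common-event}, intersected with the events supplied by \cref{lem:simultaneous-link} through a union bound over the $n_\ast^{O(1)}$ relevant data. The reconstruction is the one in \cref{fig:private-migration}. Fix $x'$, and consider the frame $\Lambda'$ with deleted set $\hat B\cup\{x,x'\}$ and an added pair $uv$. For each pair $uv\in L_{\hat H_j\cap\Kc}(x)$ with $u,v\notin V(\cM)\cup\{y,z,x'\}$, take a $\bigl((\hat H_j\cap\Kc)-\{x,x'\},\cM\cup\{yz,uv\}\bigr)$-respecting cycle and replace $uv$ by $\{u,x,v\}$. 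The output is a cycle counted by $\cyc\bigl((\hat H_j\cap\Kc)-x',\cM\cup\{yz\}\bigr)$ in which $x$ is private. Contracting the unique triple at $x$ recovers both $uv$ and the input cycle, so summing over eligible pairs gives the injective bound
\[
 \cyc\bigl((\hat H_j\cap\Kc)-x',\cM\cup\{yz\}\bigr)\ge\sum_{uv}\cyc\bigl((\hat H_j\cap\Kc)-\{x,x'\},\cM\cup\{yz,uv\}\bigr).
\]
Symmetrically, contracting the triple at $x'$ in cycles of the $x$-family gives
\[
 \cyc\bigl((\hat H_j\cap\Kc)-x,\cM\cup\{yz\}\bigr)=\sum_{(u,v)}\cyc\bigl((\hat H_j\cap\Kc)-\{x,x'\},\cM\cup\{yz,uv\}\bigr).
\]
The sum runs over the link of $x'$, and no sum term counts pairs outside the link of $x'$. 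So the right side is the sum of the same completion counts, now indexed by pairs in the link of $x'$ rather than of $x$.

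The strategy is to compare both sums with the typical completion count in the frame $\Lambda'=(\hat B\cup\{x\},\cM\cup\{yz\})$. The family counted by $\cyc\bigl((\hat H_j\cap\Kc)-x,\cM\cup\{yz\}\bigr)$ has size at least $\Xc(H_j)n_\ast^{-40}\ge\Xc(H_j)n_\ast^{-100}$. Hence \cref{claim:frame-count-transfer} gives \eqref{eq:frame-large} for $\Lambda'$, and (\ref{item:migration-candidates}) then says that all but a $(\log\log\log n_\ast)^{-1/2}$-fraction of legal candidates $(x';u\to v)$ are normal, with $\comp\approx\cyc_{\Lambda'}/(4d)$. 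By averaging, all but $O(n_\ast/(\log\log\log n_\ast)^{1/4})$ vertices $x'$ have at most a $(\log\log\log n_\ast)^{-1/4}$-fraction of abnormal pairs $(u,v)$. These excluded vertices give the bound $C_1n_\ast/(\log\log\log n_\ast)^{1/4}$.

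For a good $x'$, define $\Gamma$ as the set of pairs $uv$ for which the candidate $(x';u\to v)$ or $(x';v\to u)$ is abnormal, or which meet $V(\cM)\cup\{y,z,x'\}$. This set is determined by $\hat H_j-x$ and fixed labels, and has size $o(n_\ast^2)$. \Cref{lem:simultaneous-link}, applied with $y$ replaced by $x$ and $\hat B$ by $\hat B\cup\{x'\}$, then shows that at least $d_{\hat H_j}(x)/3$ link pairs of $x$ avoid $\Gamma$, up to a union bound. Each such pair contributes at least $(1-o(1))\cyc_{\Lambda'}/(2d)$, summing both directions of $uv$. This yields
\[
 \cyc\bigl((\hat H_j\cap\Kc)-x',\cM\cup\{yz\}\bigr)\ge\frac{d_{\hat H_j}(x)}{3}\cdot\frac{\cyc_{\Lambda'}}{8d(\hat H_j-x)},
\]
and $\cyc_{\Lambda'}$ is exactly the $x$-family count, which gives the constant $1/24$. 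The second inequality in the statement then follows from \cref{fact:migration-common-event}(ii).

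The main obstacle is measurability. \Cref{lem:simultaneous-link} requires $\Gamma$ to be determined by $\hat H_j-x$ while the link of $x$ stays unrevealed, but normality in $\Lambda'$ refers to $\cyc_{\Lambda'}$, which depends on $\hat H_j-x$ only. This part is fine. The delicate point is that the set of good $x'$ must also be chosen without seeing $L(x)$. This holds because the frame $\Lambda'$ lives on $\hat H_j-x$. The remaining care is a union bound over $j$, $x$, $x'$ and the cases, which is affordable against the superpolynomial tail in \cref{lem:simultaneous-link}. Finally, the argument does not need the upper tail of the completion counts.
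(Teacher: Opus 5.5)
Your argument is correct and is essentially the paper's proof. It uses the same injection $uv\mapsto\{u,x,v\}$, and the same set $\Gamma$ of bad pairs read off from abnormal candidates $(x';u\to v)$ in the frame with host $\hat H_j-x$ and matching $\cM\cup\{yz\}$. You discard a pair if either orientation is abnormal, whereas the paper discards it only when the two orientations together give fewer than $\cyc\bigl((\hat H_j\cap\Kc)-x,\cM\cup\{yz\}\bigr)/(8d(\hat H_j-x))$ cycles; this only affects constants. The rest also matches: averaging over $x'$ via condition (\ref{item:migration-candidates}), then \cref{lem:simultaneous-link} for the link of $x$.

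There are two slips, neither affecting the conclusion. First, \cref{lem:simultaneous-link} should be applied with deleted set $\hat B$ rather than $\hat B\cup\{x'\}$. Your $\Gamma$ depends on $\cyc_{\Lambda'}(\hat H_j-x)$ and $d(\hat H_j-x)$, which involve triples through $x'$, so it is not determined by the host with $x'$ deleted. It is, however, determined by $\hat H_j-x$, exactly as required when the deleted set is $\hat B$. Second, your ``symmetric'' identity is false as an equality, though you never use it. In a cycle of the $x$-family the vertex $x'$ may be an intersection vertex, so the sum over the link of $x'$ counts only the cycles in which $x'$ is private.
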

\begin{proof}[Proof of \cref{lem:private-migration}]
Put $\mathcal{G}\coloneqq\hat H_j\cap\Kc$. We compare the two cycle families through auxiliary cycles omitting both $x$ and $x'$ and containing an added pair $uv$. For most $x'$, the candidate estimates make the excluded set of pairs small, and the link estimate supplies many remaining pairs $uv\in L_{\mathcal G}(x)$. Expanding $uv$ to $\{u,x,v\}$ restores $x$; the injection below allows the resulting numbers of cycles to be summed.

\begin{claim}\label{claim:private-injection}
For every vertex $x'\in[n_\ast]\setminus(V(\cM_0)\cup\{x,y,z\})$,
\[
 \cyc\bigl(\mathcal{G}-x',\cM\cup\{yz\}\bigr)\ge
 \sum_{uv\in L_{\mathcal{G}}(x)}
 \cyc\bigl(\mathcal{G}-\{x,x'\},\cM\cup\{yz,uv\}\bigr),
\]
where the sum runs over pairs disjoint from $V(\cM)\cup\{y,z,x'\}$.
\end{claim}

\begin{proof}[Proof of \cref{claim:private-injection}]
Take a pair $uv$ in the sum and a cycle $C$ counted by the corresponding summand. Replacing $uv$ by $\{u,x,v\}\in\mathcal G$ restores $x$ as a private vertex while $x'$ remains absent, giving a cycle counted by $\cyc\bigl(\mathcal{G}-x',\cM\cup\{yz\}\bigr)$; see \cref{fig:private-migration}. The unique triple containing $x$ in the resulting cycle determines $uv$, and replacing this triple by $uv$ recovers $C$. Thus the resulting cycle uniquely determines both $uv$ and $C$, so the map is injective.
\end{proof}

Fix a vertex $x'\in[n_\ast]\setminus(V(\cM_0)\cup\{x,y,z\})$. To use \cref{claim:private-injection}, we need many pairs $uv\in L_{\hat H_j}(x)$ that avoid $V(\cM_0)\cup\{y,z,x'\}$ and give sufficiently large summands. If $d(\hat H_j-x)=0$, then put $\Gamma=\binom{V(\hat H_j)\setminus\{x\}}2$. Otherwise, let $\Gamma$ be the union of the following two sets of pairs in $\binom{V(\hat H_j)\setminus\{x\}}2$.
\begin{enumerate}[label=(\textit{\roman*})]
\item Pairs $uv$ with $\{u,v\}\cap(V(\cM_0)\cup\{y,z,x'\})\ne\varnothing$.
\item Pairs $uv$ with $\{u,v\}\cap(V(\cM_0)\cup\{y,z,x'\})=\varnothing$ and satisfying
\begin{equation}\label{eq:private-pair-small}
 \cyc\bigl(\mathcal G-\{x,x'\},\cM\cup\{yz,uv\}\bigr)
 <\frac{\cyc\bigl(\mathcal G-x,\cM\cup\{yz\}\bigr)}{8d(\hat H_j-x)}.
\end{equation}
\end{enumerate}

\begin{claim}\label{claim:private-link-bound}
With $\QQ$-probability $1-o(1)$, the following holds simultaneously for every $m_\ast\le j\le\binom{n_\ast}{3}$, every choice of $x,y,z,\hat B,\cM$ in Cases~\ref{item:migration-unsaturated-case} and~\ref{item:migration-saturated-case}, and every $x'\in[n_\ast]\setminus(V(\cM_0)\cup\{x,y,z\})$. If $\mathsf R$ holds and $|\Gamma|\le\binom{n_\ast-|\hat B|-1}{2}/(\log\log\log n_\ast)^{1/16}$, then $|L_{\hat H_j}(x)\setminus\Gamma|\ge d_{\hat H_j}(x)/3$.
\end{claim}
\begin{proof}[Proof of \cref{claim:private-link-bound}]
Both $\cyc\bigl(\mathcal G-\{x,x'\},\cM\cup\{yz,uv\}\bigr)$ and $\cyc\bigl(\mathcal G-x,\cM\cup\{yz\}\bigr)$ are determined by $\hat H_j-x$ and the fixed labels. Since the denominator $8d(\hat H_j-x)$ is also determined by $\hat H_j-x$, the set $\Gamma$ is determined by $\hat H_j-x$ and the fixed labels. Apply \cref{lem:simultaneous-link} with the current deleted set $\hat B$, with $x$ as the vertex whose link is sampled, and with $h=0$ and $\mathcal P=\varnothing$. Since $j$ has at most $\binom{n_\ast}{3}$ possible values and each application specifies only a fixed number of vertices, there are $n_\ast^{O(1)}$ applications of the link estimate across Cases~\ref{item:migration-unsaturated-case} and~\ref{item:migration-saturated-case}. A union bound gives a failure probability at most $n_\ast^{O(1)}\exp\{-\Omega((\log n_\ast)\log\log\log\log n_\ast)\}=o(1)$.
\end{proof}

For the rest of the proof, fix a realisation in $\mathsf R$ for which the conclusion of \cref{claim:private-link-bound} holds simultaneously for all these choices. Fix a size $j$ for which $\mathsf A_j$ holds and a choice of $x,y,z,\hat B,\cM$ in Case~\ref{item:migration-unsaturated-case} or Case~\ref{item:migration-saturated-case} such that $\cyc\bigl(\mathcal G-x,\cM\cup\{yz\}\bigr)\ge\Xc(H_j)n_\ast^{-40}$.

\begin{claim}\label{claim:private-small-exception}
There exists an absolute constant $C_1>0$ such that, for all but $\frac{C_1n_\ast}{(\log\log\log n_\ast)^{1/4}}$ vertices $x'\in[n_\ast]\setminus(V(\cM_0)\cup\{x,y,z\})$, we have
\[
 |\Gamma|\le\binom{n_\ast-|\hat B|-1}{2}/(\log\log\log n_\ast)^{1/16}.
\]
\end{claim}
\begin{proof}[Proof of \cref{claim:private-small-exception}]
Consider the frame $\Lambda$ with host $\hat H_j-x$, prescribed matching $\cM\cup\{yz\}$, and root direction $y\to z$.

We first verify \eqref{eq:frame-bounds-final} and \eqref{eq:frame-large} for $\Lambda$ before applying condition (\ref{item:migration-candidates}) in the definition of $\mathsf R$. The host of the frame $\Lambda$ is $H_j-(\hat B\cup\{x\})$, obtained by deleting at most two vertices from $H_j$ (i.e. deleting $x$ in Case~\ref{item:migration-unsaturated-case} and $y_0,x$ in Case~\ref{item:migration-saturated-case}). The prescribed matching $\cM\cup\{yz\}$ is obtained from $\cM_0$ by removing at most one pair and adding $yz$, so $\Lambda$ satisfies \eqref{eq:frame-bounds-final}. The number of mixed Hamilton cycles in the frame $\Lambda$ is $\cyc_{\Lambda}(\hat H_j-x)=\cyc\bigl(\mathcal G-x,\cM\cup\{yz\}\bigr)$. The assumed lower bound on this number of cycles and \cref{claim:frame-count-transfer} give \eqref{eq:frame-large}.

For a permitted vertex $x'$, a legal pair here is $uv\in\binom{[n_\ast]\setminus(V(\cM_0)\cup\{x,y,z,x'\})}{2}$. We now show that each such pair satisfying \eqref{eq:private-pair-small} gives two abnormal directed candidates.

For each legal candidate $(x';u\to v)$, consider the mixed Hamilton cycles using triples of $\mathcal G-\{x,x'\}$ and containing the prescribed matching $\cM\cup\{yz,uv\}$. We give each cycle the root orientation determined by $y\to z$. The pair $uv$ is then traversed either from $u$ to $v$ or from $v$ to $u$, and these two classes are counted by $\comp_{\hat H_j-x,\cM\cup\{yz\}}(x';u\to v)$ and $\comp_{\hat H_j-x,\cM\cup\{yz\}}(x';v\to u)$, respectively. Thus
\[
 \cyc\bigl(\mathcal G-\{x,x'\},\cM\cup\{yz,uv\}\bigr)
 =\comp_{\hat H_j-x,\cM\cup\{yz\}}(x';u\to v)
 +\comp_{\hat H_j-x,\cM\cup\{yz\}}(x';v\to u).
\]
If \eqref{eq:private-pair-small} holds, then the number in each orientation class is at most the sum for both classes, so
\[
 \frac{4d(\hat H_j-x)\comp_{\hat H_j-x,\cM\cup\{yz\}}(x';u\to v)}
 {\cyc_{\Lambda}(\hat H_j-x)}
 <\frac12\le1-\frac1{\sqrt{\log\log\log n_\ast}}.
\]
The same inequality holds with $u$ and $v$ interchanged. Thus both candidates are\newline $(\log\log\log n_\ast)^{-1/2}$-abnormal in $\hat H_j-x$.

Each choice of $x'$ and a legal unordered pair $uv$ satisfying \eqref{eq:private-pair-small} gives two distinct abnormal candidates. Condition (\ref{item:migration-candidates}) in the definition of $\mathsf R$ therefore gives
\[
 \sum_{\substack{x'\text{ and }uv\text{ legal}\\
                 \eqref{eq:private-pair-small}\text{ holds}}}1
 \le\frac{|\mathcal Q_{\Lambda}|}{2\sqrt{\log\log\log n_\ast}}
 =O\left(\frac{n_\ast^3}{\sqrt{\log\log\log n_\ast}}\right).
\]
Every vertex $x'$ with more than $n_\ast^2/(\log\log\log n_\ast)^{1/4}$ such pairs contributes more than this quantity to the sum. Consequently, the number of these vertices is at most
\[
 \frac{(\log\log\log n_\ast)^{1/4}}{n_\ast^2}
 O\left(\frac{n_\ast^3}{\sqrt{\log\log\log n_\ast}}\right)
 =O\left(\frac{n_\ast}{(\log\log\log n_\ast)^{1/4}}\right).
\]

There are at most $2s+3$ vertices in $V(\cM_0)\cup\{y,z,x'\}$, each belonging to at most $n_\ast-1$ pairs. Hence pairs meeting this set contribute at most $(2s+3)(n_\ast-1)=O((s+1)n_\ast)$ further pairs. Since $s\le n_\ast^{1/10}$, this is $o(n_\ast^2/(\log\log\log n_\ast)^{1/16})$. Also, $(\log\log\log n_\ast)^{-1/4}=o((\log\log\log n_\ast)^{-1/16})$ and $|\hat B|\le1$ in the two cases, so every vertex $x'$ outside the exceptional set satisfies
\[
 |\Gamma|\le\frac{n_\ast^2}{(\log\log\log n_\ast)^{1/4}}+O((s+1)n_\ast)
 =o\left(\frac{\binom{n_\ast-|\hat B|-1}{2}}{(\log\log\log n_\ast)^{1/16}}\right).
\]
Choosing $C_1$ to cover the preceding bound on the number of exceptional vertices proves the claim.
\end{proof}

\begin{claim}\label{claim:private-sum-bound}
For every $x'\in[n_\ast]\setminus(V(\cM_0)\cup\{x,y,z\})$ satisfying the size bound on $\Gamma$ in \cref{claim:private-small-exception}, we have
\[
\begin{aligned}
 \sum_{uv\in L_{\mathcal G}(x)}\cyc\bigl(\mathcal G-\{x,x'\},\cM\cup\{yz,uv\}\bigr)
 &\ge\frac{d_{\hat H_j}(x)}{24d(\hat H_j-x)}\cyc\bigl(\mathcal G-x,\cM\cup\{yz\}\bigr)\\
 &\ge c_0\cyc\bigl(\mathcal G-x,\cM\cup\{yz\}\bigr),
\end{aligned}
\]
with the same summation range as in \cref{claim:private-injection}.
\end{claim}
\begin{proof}[Proof of \cref{claim:private-sum-bound}]
By \cref{claim:private-link-bound}, at least $d_{\hat H_j}(x)/3$ pairs belong to $L_{\hat H_j}(x)\setminus\Gamma$. Each such pair $uv$ is disjoint from $V(\cM_0)\cup\{y,z,x'\}$. Hence $\{x,u,v\}$ is $\cM_0$-compatible, so $uv\in L_{\mathcal G}(x)$ and belongs to the summation range. By the definition of $\Gamma$, the summand corresponding to $uv$ is at least $\cyc\bigl(\mathcal G-x,\cM\cup\{yz\}\bigr)/(8d(\hat H_j-x))$. Thus the sum is at least
\[
 \frac{d_{\hat H_j}(x)}3\,
 \frac{\cyc\bigl(\mathcal G-x,\cM\cup\{yz\}\bigr)}{8d(\hat H_j-x)}
 =\frac{d_{\hat H_j}(x)}{24d(\hat H_j-x)}\cyc\bigl(\mathcal G-x,\cM\cup\{yz\}\bigr).
\]
The hypergraph $\hat H_j-x$ has $|\hat H_j|-d_{\hat H_j}(x)$ triples on $|V(\hat H_j)|-1$ vertices, so $d(\hat H_j-x)\le\frac{|V(\hat H_j)|}{|V(\hat H_j)|-1}d(\hat H_j)\le2d(\hat H_j)$. Together with $d_{\hat H_j}(x)\ge100c_0d(\hat H_j)$ from \cref{fact:migration-common-event}(ii), this gives $d_{\hat H_j}(x)/(24d(\hat H_j-x))\ge100c_0/48\ge c_0$.
\end{proof}

By \cref{claim:private-small-exception}, all but $C_1n_\ast/(\log\log\log n_\ast)^{1/4}$ vertices $x'$ satisfy the size bound on $\Gamma$. Applying \cref{claim:private-injection,claim:private-sum-bound} to each of these vertices gives the inequalities in the lemma. By \cref{fact:migration-common-event}(i) and \cref{claim:private-link-bound}, these conclusions hold simultaneously in both cases on an event of $\QQ$-probability $1-o(1)$, proving the lemma.
\end{proof}

We next prove the comparison for migration of an intersection vertex. The contraction and reconstruction are illustrated in \cref{fig:endpoint-migration}.
\begin{lemma}\label{lem:intersection-migration}
There exists an absolute constant $C_1>0$ such that, with $\QQ$-probability $1-o(1)$, the following holds simultaneously for every $m_\ast\le j\le\binom{n_\ast}{3}$ and every choice of $x,y,z,\hat B,\cM$ described in Cases~\ref{item:migration-unsaturated-case} and~\ref{item:migration-saturated-case}. If $\mathsf A_j$ holds and $\cyc\bigl((\hat H_j\cap\Kc)-x,\cM\cup\{yz\}\bigr)\ge\Xc(H_j)n_\ast^{-40}$, then for all but $C_1n_\ast/(\log\log\log n_\ast)^{1/8}$ vertices $y'\in[n_\ast]\setminus(V(\cM_0)\cup\{x,y,z\})$, we have
\[
\begin{aligned}
 \cyc\bigl((\hat H_j\cap\Kc)-x,\cM\cup\{y'z\}\bigr)&\ge\frac{d_{\hat H_j}(y)}{100d(\hat H_j)}\cyc\bigl((\hat H_j\cap\Kc)-x,\cM\cup\{yz\}\bigr)\\
 &\ge c_0\cyc\bigl((\hat H_j\cap\Kc)-x,\cM\cup\{yz\}\bigr).
\end{aligned}
\]
\end{lemma}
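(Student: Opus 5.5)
We mirror the proof of \cref{lem:private-migration}, now moving the endpoint $y$ of the added pair rather than the deleted vertex. Put $\mathcal G\coloneqq\hat H_j\cap\Kc$ and consider a cycle counted by $\cyc(\mathcal G-x,\cM\cup\{yz\})$. In such a cycle, $y$ is an $\cM\cup\{yz\}$-saturated intersection vertex lying in a unique triple, say $\{y,a,b\}$, with $b$ private and $a$ the next intersection vertex. The reconstruction runs in reverse.

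\textbf{Injection.} Start from a cycle in $\mathcal G-\{x,y'\}$ with prescribed matching $\cM\cup\{yz,ab\}$ and a suitable pair $ab$. Replace the two pairs $yz$ and $ab$ by the triple $\{y',\,\cdot\,\}$ chain. Concretely, I would take auxiliary cycles in $\mathcal G-\{x,y\}$ with prescribed matching $\cM\cup\{y'z,\ldots\}$ and insert a triple through $y$ to restore it. A cleaner choice is to compare both families to a common family: cycles in $\mathcal G-\{x,y,y'\}$ with matching $\cM\cup\{uz\}$, where $u$ is the intersection neighbour. Contracting the triple $\{y,b,u\}$ at $y$ together with the pair $yz$ gives a pair $uz$ and deletes $y,b$. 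Since this contraction is canonical, summing over $(b,u)$ with $\{y,b,u\}\in\mathcal G$ gives
\[
\cyc(\mathcal G-x,\cM\cup\{yz\})=\sum_{bu\in L_{\mathcal G}(y)}\cyc\bigl(\mathcal G-\{x,y,b\},\cM\cup\{uz\}\bigr)
\]
with orientation bookkeeping. The analogous identity holds for $y'$ via $L_{\mathcal G}(y')$. The target bound then reduces to two things. First, most summands for $y$ are at most $O(\cyc/d)$; this is the ``typical'' scale, but we only need a lower bound on the $y'$-side. Second, for most $y'$, many pairs $bu\in L_{\mathcal G}(y')$ give summands $\cyc(\mathcal G-\{x,y',b\},\cM\cup\{uz\})$ that are at least a constant times $\cyc(\mathcal G-x,\cM\cup\{yz\})/d$. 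By the $y$-identity, the common family indexed by $(b,u)$ has average size $\cyc(\mathcal G-x,\cM\cup\{yz\})/d_{\hat H_j}(y)$ over $L_{\mathcal G}(y)$. The families $\mathcal G-\{x,y,b\}$ and $\mathcal G-\{x,y',b\}$ differ only in which vertex is deleted, so \cref{lem:private-migration} (applied in the frame after contracting, with deleted vertex $y$ moved to $y'$) transfers lower bounds from $y$ to $y'$ for most $y'$.

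\textbf{Link supply.} For fixed $y'$, define $\Gamma$ as the set of pairs $bu$ meeting $V(\cM_0)\cup\{x,y,z\}$, together with pairs whose summand is below $\cyc(\mathcal G-x,\cM\cup\{yz\})/(C d(\hat H_j))$. This $\Gamma$ is determined by $\hat H_j-y'$ and labels, so \cref{lem:simultaneous-link}, with a union bound over $n_\ast^{O(1)}$ choices, gives $|L_{\hat H_j}(y')\setminus\Gamma|\ge d_{\hat H_j}(y')/3$ whenever $\Gamma$ is small. The bound then follows as in \cref{claim:private-sum-bound}, using \cref{fact:migration-common-event}(ii) to reach $c_0$.

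\textbf{Main obstacle.} The hard step is showing $\Gamma$ is small for most $y'$. A pair $bu$ with small summand corresponds to a legal candidate $(b;u\to z)$-type count in the frame with host $\hat H_j-x$ and deleted vertex $y'$. We need these counts to be near $\cyc/(4d)$ for most $(y',b,u)$ simultaneously. I would obtain this by applying condition (\ref{item:migration-candidates}) of $\mathsf R$ to the frame $(\hat H_j-x,\cM)$ with the pair $yz$ removed and $z$ treated via a two-step frame. This needs \cref{claim:frame-count-transfer}, which applies because the frame count is at least $\Xc(H_j)n_\ast^{-40}$; it stays within \eqref{eq:frame-bounds-final} since at most one more vertex is deleted and one pair changed. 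Two applications of Markov's inequality over $y'$ then cost a factor $(\log\log\log n_\ast)^{1/8}$, explaining the weaker exceptional count $C_1n_\ast/(\log\log\log n_\ast)^{1/8}$.
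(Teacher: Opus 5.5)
Your reduction fails at the link-supply step, and the tools you invoke cannot repair it.

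\begin{itemize}
\item \textbf{The link lemma does not apply.} To use \cref{lem:simultaneous-link} on the link of $y'$, the excluded set $\Gamma$ must be determined by $\hat H_j-y'$ and fixed labels. Your threshold $\cyc(\mathcal G-x,\cM\cup\{yz\})/(Cd(\hat H_j))$ counts cycles spanning $V(\hat H_j)\setminus\{x\}$, and all of them pass through $y'$. So the threshold depends on $L_{\hat H_j}(y')$.

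\item \textbf{The private-migration transfer gives bounds for the wrong pairs.} \Cref{lem:private-migration} concerns only pairs $bu\in L_{\mathcal G}(y)$, whereas your $y'$-identity sums over $L_{\mathcal G}(y')$. Near $j=m_\ast$ these are sets of $\Theta(\log n_\ast)$ pairs, and they are disjoint for all but $O(d_{\hat H_j}(y)\Delta_2(\hat H_j))=o(n_\ast)$ targets. In addition, that lemma is stated only in Cases~\ref{item:migration-unsaturated-case} and~\ref{item:migration-saturated-case}, not for the host $\mathcal G-\{x,b\}$.

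\item \textbf{Condition (\ref{item:migration-candidates}) of $\mathsf R$ cannot control your small summands.} In any frame it bounds the abnormal fraction of all of $\mathcal Q_\Lambda$. That says nothing about the $O(n_\ast^2)$ candidates $(b;u\to z)$ with the fixed endpoint $z$. In your common-family structure $z$ is always a fixed coordinate of the candidate, so the Markov argument over $y'$ has nothing to act on.

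\item \textbf{A comparison is missing.} Even with candidate control, you would need to compare a frame count such as $\cyc(\mathcal G-\{x,y'\},\cM)$ with $\cyc(\mathcal G-x,\cM\cup\{yz\})$, and no lemma supplies this.

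\item \textbf{Two smaller defects.} Your contraction identity omits the case where the intersection neighbour $u$ of $y$ is $\cM$-saturated, when $\cM\cup\{uz\}$ is not a matching. Also, a route through $L(y')$ yields the factor $d_{\hat H_j}(y')$ rather than the $d_{\hat H_j}(y)$ in the statement.
\end{itemize}

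The missing idea is a reconstruction in which every auxiliary family omits the vertex whose link is sampled, and that vertex is $y$, not $y'$.

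First decompose $\cyc(\mathcal G-x,\cM\cup\{yz\})=\sum_P\cyc(H_P\cap\Kc,M_P)$ by the initial segment $P$ leaving $y$ away from $yz$. Either $P$ is one triple ending at an $\cM$-unsaturated vertex $a$, or, when the first neighbour is saturated, it is triple, pair of $\cM$, triple. Contract $\{y,z\}\cup P$ to the pair $az$. Each host $H_P$ omits $y$, and $z$ is absorbed into the frame through the root pair $az$.

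Next, take a cycle counted by $\comp_{H_P,M_P}(w;v\to y')$ with $ywv\in\mathcal G$. Remove $az$ and $vy'$, restore $P$, and add $ywv$ and $y'z$. The result is a single cycle, with the path from $y'$ to $a$ traversed backwards, counted by $\cyc(\mathcal G-x,\cM\cup\{y'z\})$. The map is injective in $(P,w,v)$.

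This fixes each problem above:
\begin{itemize}
\item $\Gamma$ now compares $\comp_{H_P,M_P}$ with $\cyc(H_P\cap\Kc,M_P)/(8d(H_P))$, and all of these are determined by $\hat H_j-y$. So \cref{lem:simultaneous-link} applies to $L_{\hat H_j}(y)$, with the first triple of $P$ prescribed.
\item The target $y'$ is a free coordinate of the candidate $(w;v\to y')$. Discard the classes with $\cyc(H_P\cap\Kc,M_P)<n_\ast^{-8}\cyc(\mathcal G-x,\cM\cup\{yz\})$ and use \cref{claim:frame-count-transfer}. Then condition (\ref{item:migration-candidates}) leaves $O(n_\ast/(\log\log\log n_\ast)^{1/4})$ bad targets per $P$.
\item Markov's inequality weighted by $\cyc(H_P\cap\Kc,M_P)$ then gives an exceptional set of size $O(n_\ast/(\log\log\log n_\ast)^{1/8})$ and the factor $d_{\hat H_j}(y)/(100d(\hat H_j))$.
\end{itemize}
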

\begin{proof}[Proof of \cref{lem:intersection-migration}]
Put $\mathcal{G}\coloneqq\hat H_j\cap\Kc$. We compare the numbers of cycles counted by $\cyc(\mathcal G-x,\cM\cup\{yz\})$ and $\cyc(\mathcal G-x,\cM\cup\{y'z\})$, keeping $x$ deleted. We first group the cycles containing $yz$ according to a short path starting at $y$. Within each group, we replace this path together with $yz$ by a single prescribed pair, obtaining auxiliary cycles from which the original cycles can be recovered (see \cref{fig:endpoint-migration}). Different paths may exclude different target vertices. We therefore sum the numbers of cycles in the excluded path classes, showing that for most targets only a small fraction of the original cycles is lost. An injective reconstruction using triples in the link of $y$ then gives the required lower bound.

\begin{figure}[!htb]
\centering
\begin{tikzpicture}[
  x=.85cm,y=.72cm,font=\small,
  vertex/.style={circle,fill=black,inner sep=1.5pt},
  retained/.style={draw=black!55,fill=black!6,line width=.6pt},
  marker/.style={line width=1.1pt},
  removed/.style={draw=red!70!black,line width=1.3pt},
  inserted/.style={draw=blue!75!black,fill=blue!12,line width=.9pt},
  cut/.style={draw=orange!70!black,fill=orange!15,line width=.9pt},
  traversal/.style={postaction={decorate},decoration={markings,mark=at position .55 with {\arrow{Stealth[length=4pt]}}}}
]
  \begin{scope}[xshift=-.585cm,xscale=.682,yscale=.935,font=\scriptsize]
  \foreach \type in {1,2} {
    \begin{scope}[shift={({13.4*(\type-1)-.9},0)}]
      \pgfmathsetmacro{\cutwidth}{1.8*\type+.4}
      \pgfmathsetmacro{\aftershift}{2.2*\type+4}
      \foreach \offset in {0,\aftershift} {
        \begin{scope}[shift={(\offset,0)}]\pgfmathsetmacro{\panelwidth}{\offset==0 ? \cutwidth : 2.2}
          \coordinate (cz) at (0,0);
          \coordinate (ca) at (\panelwidth,0);
          \coordinate (cl) at (-.45,1.4);
          \coordinate (cm) at ({\panelwidth/2},2.4);
          \coordinate (cr) at ({\panelwidth+.45},1.4);
          \coordinate (cpl) at (-1.15,.6);
          \coordinate (cpm) at (-.2,2.6);
          \coordinate (cpn) at ({\panelwidth+.2},2.6);
          \coordinate (cpr) at ({\panelwidth+1.15},.6);
          \draw[retained] (cz)--(cpl)--(cl)--cycle;
          \draw[retained] (cl)--(cpm)--(cm)--cycle;
          \draw[retained] (cm)--(cpn)--(cr)--cycle;
          \draw[retained] (cr)--(cpr)--(ca)--cycle;
          \foreach \point in {cz,ca,cl,cm,cr,cpl,cpm,cpn,cpr}
            \node[vertex] at (\point) {};
        \end{scope}
      }
      \coordinate (z) at (0,0);
      \coordinate (y) at (.8,0);
      \draw[removed] (z)--(y);
      \ifnum\type=1
        \node[font=\small] at ({(2.2+\aftershift)/2},3.5) {Type I segment};
        \coordinate (a) at (2.2,0);
        \coordinate (p) at (1.5,.65);
        \draw[cut] (y)--(p)--(a)--cycle;
        \node[vertex,label=above:$w_1$] at (p) {};
        \draw[orange!70!black,decorate,decoration={brace,mirror,amplitude=2pt}]
          (.8,-.85)--(2.2,-.85) node[midway,below=1pt] {$P$};
        \node at ({(2.2+\aftershift)/2},-1.8) {delete $y,w_1$};
      \else
        \node[font=\small] at ({(2.2+\aftershift)/2},3.5) {Type II segment};
        \coordinate (u) at (2,0);
        \coordinate (up) at (2.7,0);
        \coordinate (a) at (4,0);
        \coordinate (p) at (1.4,.65);
        \coordinate (q) at (3.35,.65);
        \draw[cut] (y)--(p)--(u)--cycle;
        \draw[marker,orange!70!black] (u)--(up);
        \draw[cut] (up)--(q)--(a)--cycle;
        \node[vertex,label=above:$w_1$] at (p) {};
        \node[vertex,label=above:$w_2$] at (q) {};
        \node[vertex,label=below:$u$] at (u) {};
        \node[vertex,label=below:$u'$] at (up) {};
        \draw[orange!70!black,decorate,decoration={brace,mirror,amplitude=2pt}]
          (.8,-.85)--(4,-.85) node[midway,below=1pt] {$P$};
        \node at ({(2.2+\aftershift)/2},-1.8) {delete $y,w_1,u,u',w_2$};
      \fi
      \draw[-{Stealth[length=4pt]},line width=.6pt]
        ({\cutwidth+1.5},1.2)--({\aftershift-1.5},1.2);
      \draw[marker,blue!75!black] (\aftershift,0)--({\aftershift+2.2},0);
      \node[vertex,label=below left:$z$] at (\aftershift,0) {};
      \node[vertex,label=below right:$a$] at ({\aftershift+2.2},0) {};
      \node[vertex,label=below left:$z$] at (z) {};
      \node[vertex,label=below:$y$] at (y) {};
      \node[vertex,label=below right:$a$] at (a) {};
    \end{scope}
  }
  \end{scope}

  \foreach \stage/\sx/\sy/\heading in {
    0/-.9/-5.3/{},
    1/-.9/-10.3/{},
    2/4.8/-7.45/{Restore the Type II path $P$},
    3/11.55/-7.45/{Insert $ywv$ and $y'z$}} {
    \begin{scope}[shift={(\sx,\sy)},scale=.8]
      \ifnum\stage>1\node at (1.5,4.5) {\heading};\fi\ifnum\stage<2\pgftransformshift{\pgfpointxy{0}{.2}}\pgftransformyscale{.8}\fi
      \coordinate (a) at (0,0);
      \coordinate (z) at (0,2);
      \coordinate (v) at (3,2);
      \coordinate (t) at (3,0);
      \coordinate (top) at (1.5,2.65);
      \coordinate (bottom) at (1.5,-.65);
      \coordinate (ptl) at (.55,3.2);
      \coordinate (ptr) at (2.45,3.2);
      \coordinate (pbl) at (.55,-1.2);
      \coordinate (pbr) at (2.45,-1.2);
      \draw[retained] (z)--(ptl)--(top)--cycle;
      \draw[retained] (top)--(ptr)--(v)--cycle;
      \draw[retained] (t)--(pbr)--(bottom)--cycle;
      \draw[retained] (bottom)--(pbl)--(a)--cycle;
      \draw[black!65,line width=.6pt,traversal] (z)--(top);
      \draw[black!65,line width=.6pt,traversal] (top)--(v);
      \node at (1.5,3.35) {$Q_1$};
      \ifnum\stage=0
        \draw[removed,traversal] (a)--(z);
        \draw[removed,traversal] (v)--(t);
        \draw[black!65,line width=.6pt,traversal] (t)--(bottom);
        \draw[black!65,line width=.6pt,traversal] (bottom)--(a);
        \node at (1.5,-1.4) {$Q_2$};
        \node at (1.5,-2) {$w$ absent};
      \else
        \draw[black!65,line width=.6pt,traversal] (a)--(bottom);
        \draw[black!65,line width=.6pt,traversal] (bottom)--(t);
        \node at (1.5,-1.4) {$\overleftarrow{Q_2}$};
        \ifnum\stage=1
          \node at (1.5,-2.6) {same edges; reverse $Q_2$};
        \fi
      \fi
      \ifnum\stage>1
        \coordinate (y) at (4,1);
        \coordinate (u) at (3.8,-1.9);
        \coordinate (up) at (-.7,-1.9);
        \coordinate (p) at (5,-.6);
        \coordinate (q) at (-1.3,-.7);
        \draw[cut] (y)--(p)--(u)--cycle;
        \draw[marker,orange!70!black,traversal] (u)--(up);
        \draw[cut] (up)--(q)--(a)--cycle;
        \draw[orange!70!black,line width=.7pt,traversal] (y)--(u);
        \draw[orange!70!black,line width=.7pt,traversal] (up)--(a);
        \node[orange!70!black] at (1.5,-2.4) {$P$};
        \node[vertex,label=below:$u$] at (u) {};
        \node[vertex,label=below:$u'$] at (up) {};
        \node[vertex,label={[xshift=2pt,yshift=-2pt]below right:$w_1$}] at (p) {};
        \node[vertex,label=above:$w_2$] at (q) {};
        \ifnum\stage=3
          \coordinate (w) at (4.8,2.35);
          \draw[inserted] (v)--(w)--(y)--cycle;
          \draw[blue!75!black,line width=.9pt,traversal] (v)--(y);
          \draw[marker,blue!75!black,traversal] (t)--(z);
          \node[vertex,label=above:$w$] at (w) {};
        \fi
        \ifnum\stage=2 \node[vertex,label=above left:$y$] at (y) {};\else \node[vertex,label=right:$y$] at (y) {};\fi
      \fi
      \foreach \point in {top,bottom,ptl,ptr,pbl,pbr}
        \node[vertex] at (\point) {};
      \node[vertex,label=above left:$a$] at (a) {};
      \node[vertex,label=left:$z$] at (z) {};
      \node[vertex,label=above right:$v$] at (v) {};
      \node[vertex,label=right:$y'$] at (t) {};
    \end{scope}
  }
  \draw[-{Stealth[length=5pt]},line width=.6pt] (.3,-6.9)--(.3,-7.5);
  \draw[-{Stealth[length=5pt]},line width=.6pt] (2.1,-9.5)--(2.7,-9.5)--(2.7,-6.55)--(3.5,-6.55);\draw[-{Stealth[length=5pt]},line width=.6pt] (9.4,-6.45)--(10.15,-6.45);
\end{tikzpicture}
\caption{Migration of an intersection vertex. Above, contract $yz$ and the orange path $P$ from $y$ to $a$ to the pair $az$. Below, the auxiliary cycle omits $w$ and contains $az,vy'$; its paths $Q_1,Q_2$ run from $z$ to $v$ and from $y'$ to $a$, respectively. Remove the red pairs, restore the orange path, and insert the blue triple $ywv\in\mathcal G$ and pair $y'z$. The grey paths are unchanged, with $Q_2$ traversed in reverse; $x$ remains absent.}
\label{fig:endpoint-migration}
\end{figure}

For a cycle counted by $\cyc(\mathcal G-x,\cM\cup\{yz\})$, start at $y$ and move away from the pair $yz$ along the unique triple containing $y$. This records the path that must be restored after contraction. We end the segment at an $\cM$-unsaturated intersection vertex $a$, so that the contracted pair $az$ is disjoint from every remaining prescribed pair. If the first triple reaches an $\cM$-saturated vertex, we include its prescribed pair and the following triple. We thus consider the following two types of initial segment.
\begin{enumerate}[label=\textup{Type \Roman*:},leftmargin=*]
\item The segment consists of the single triple $\{y,w_1,a\}$, traversed from $y$ to an $\cM$-unsaturated intersection vertex $a\ne z$, with $w_1$ as its private vertex.
\item The first triple is $\{y,w_1,u\}$, where $w_1$ is private and $u$ belongs to a pair $uu'\in\cM$. Follow this pair from $u$ to $u'$, and then follow the triple $\{u',w_2,a\}$ to an $\cM$-unsaturated intersection vertex $a\ne z$, with $w_2$ private. Since $uu'\in\cM\subseteq\cM_0$ and $\{u',w_2,a\}\in\Kc$, we have $a,w_2\notin V(\cM_0)$. Thus $a$ is $\cM$-unsaturated, and the segment ends after this triple.
\end{enumerate}
Let $P$ range over all labelled initial paths of Types I and II that occur in a cycle counted by $\cyc(\mathcal G-x,\cM\cup\{yz\})$, with the private vertex in each triple specified. The path $P$ is determined by $(w_1,a)$ in Type I and $(w_1,u,u',w_2,a)$ in Type II. Each path $P$ has one edge in Type I and three edges in Type II. Fix such a path $P$ and consider the cycles containing $yz$ whose initial path is $P$. In each cycle, the pair $yz$ and the segment form a path from $z$ to $a$. Replace this path by the pair $az$. These cycles already omit $x$. The contraction additionally deletes $y,w_1$ in Type I and $y,w_1,u,u',w_2$ in Type II. Once $\hat H_j,x,y,z,\cM$ are fixed, the resulting host and matching are determined by the vertex labels of $P$. We define the host $H_P$ and prescribed matching $M_P$ as follows.
\begin{enumerate}[label=\textup{Type \Roman*:},leftmargin=*]
\item Define $H_P\coloneqq (\hat H_j-x)-\{y,w_1\}$ and $M_P\coloneqq\cM\cup\{az\}$.
\item Define $H_P\coloneqq (\hat H_j-x)-\{y,w_1,u,u',w_2\}$ and $M_P\coloneqq(\cM\setminus\{uu'\})\cup\{az\}$.
\end{enumerate}
Take $az$ as the root pair for the frame on $H_P$ with matching $M_P$ and fix the direction $a\to z$.

We now decompose the family counted by $\cyc(\mathcal G-x,\cM\cup\{yz\})$ according to these segments.
\begin{claim}\label{claim:cut-decomposition}
The following statements hold.
\begin{enumerate}[label=(\roman*),ref=\roman*]
\item Every cycle counted by $\cyc(\mathcal G-x,\cM\cup\{yz\})$ has a unique initial segment of Type I or II.
\item For each fixed path $P$, contracting $P$ together with $yz$ gives a bijection from the cycles containing $yz$ with initial path $P$ to the cycles counted by $\cyc(H_P\cap\Kc,M_P)$. Thus $\cyc\bigl(\mathcal{G}-x,\cM\cup\{yz\}\bigr)=\sum_P\cyc(H_P\cap\Kc,M_P)$.
\item The number of possible labelled initial paths $P$ is at most $2d_{\hat H_j}(y)+4d_{\hat H_j}(y)\Delta_1(\hat H_j)=O(n_\ast^4)$.
\end{enumerate}
\end{claim}
\begin{proof}[Proof of \cref{claim:cut-decomposition}]
For (i), following the first triple from $y$ leads either to an $\cM$-unsaturated intersection vertex other than $z$, to $z$, or to a vertex $u$ of a pair $uu'\in\cM$. In the last case the cycle follows $uu'$ and then a triple $\{u',w_2,a\}$. Since $u'\in V(\cM_0)$ and this triple belongs to $\Kc$, both $w_2$ and $a$ lie outside $V(\cM_0)$. Reaching $z$ and then traversing the pair $yz$ would return to $y$ after visiting at most six vertices, whereas the cycle spans $V(\hat H_j)\setminus\{x\}$. Thus, for sufficiently large $n_\ast$, the segment ends at an $\cM$-unsaturated intersection vertex $a\ne z$ and has Type I or II. The cycle determines the first triple encountered from $y$ away from $yz$ and the private vertex of that triple. In Type II, it also determines the next triple after $uu'$. These data uniquely determine the labelled path $P$.

For (ii), fix a path $P$ and a cycle $C_P$ counted by $\cyc(H_P\cap\Kc,M_P)$. The inverse of contraction is
\[
 C=\bigl(C_P\setminus\{\{a,z\}\}\bigr)\cup P\cup\{\{y,z\}\}.
\]
The fixed path $P$ restores all vertices deleted by contraction and any pair of $\cM$ removed by contraction, giving a cycle counted by $\cyc(\mathcal G-x,\cM\cup\{yz\})$ with initial path $P$. Thus contraction and expansion are inverse bijections for each segment, proving the stated identity.

For (iii), there are at most $2d_{\hat H_j}(y)$ choices for a Type I segment. For Type II, choose the first triple and its private vertex in at most $2d_{\hat H_j}(y)$ ways; the pair $uu'\in\cM$ is then determined. There are at most $\Delta_1(\hat H_j)$ choices for the triple through $u'$, and at most two choices for its private vertex. Multiplying these bounds gives at most $4d_{\hat H_j}(y)\Delta_1(\hat H_j)$ Type II segments. Adding the Type I bound and using $\Delta_1(\hat H_j)\le\binom{n_\ast-1}{2}$ gives the stated $O(n_\ast^4)$ bound.
\end{proof}

Call $(w,v)$ \emph{legal} for a path $P$ and a vertex $y'\in[n_\ast]\setminus(V(\cM_0)\cup\{x,y,z\})$ if $w,v,y'$ are distinct vertices of $V(H_P)\setminus V(M_P)$. Every vertex of $V(\cM_0)$ that remains in $H_P$ is $M_P$-saturated, so $V(\cM_0)\cap V(H_P)\subseteq V(M_P)$. Thus this legality condition also excludes $V(\cM_0)$, as required in the definition of legal directed candidates.

For a legal pair $(w,v)$ with $wv\in L_{\mathcal G}(y)$, we start with a cycle counted by $\comp_{H_P,M_P}(w;v\to y')$. Removing the pairs $az,vy'$, restoring $P$, and adding the triple $\{y,w,v\}$ and pair $y'z$ gives a cycle counted by $\cyc(\mathcal G-x,\cM\cup\{y'z\})$. The injection below allows us to sum these numbers of auxiliary cycles over the link of $y$ and the segments.
\begin{claim}\label{claim:intersection-injection}
For every vertex $y'\in[n_\ast]\setminus(V(\cM_0)\cup\{x,y,z\})$,
\[
 \cyc\bigl(\mathcal{G}-x,\cM\cup\{y'z\}\bigr)\ge
 \sum_P\ \sum_{\substack{(w,v)\text{ legal for }P\,,\ ywv\in \mathcal{G}}}
 \comp_{H_P,M_P}(w;v\to y').
\]
\end{claim}
\begin{proof}[Proof of \cref{claim:intersection-injection}]

Indeed, fix a path $P$ and a legal pair $(w,v)$ with $ywv\in\mathcal{G}$. Choose a cycle $C$ counted by $\comp_{H_P,M_P}(w;v\to y')$ and give it the root orientation determined by $a\to z$. Thus $C$ has vertex set $V(H_P)\setminus\{w\}$, uses triples from $(H_P\cap\Kc)-w$, and contains all pairs of $M_P\cup\{vy'\}$. Let $Q_1$ be the directed path along $C$ from $z$ to $v$ avoiding the pairs $az,vy'$, and let $Q_2$ be the directed path from $y'$ to $a$ avoiding these pairs. Thus the directed cyclic order of $C$ is
\[
 a\longrightarrow z\ \overset{Q_1}{\longrightarrow}\ v
 \longrightarrow y'\ \overset{Q_2}{\longrightarrow}\ a.
\]
We reconnect $Q_1,Q_2$ by forming
\[
 C'\coloneqq\bigl(C\setminus\{\{a,z\},\{v,y'\}\}\bigr)
 \cup P\cup\{\{y,w,v\},\{y',z\}\}.
\]
This restores $P$ from $y$ to $a$ and inserts $ywv$ with $w$ private. The resulting cyclic order is
\begin{equation}\label{eq:output-order}
 y'\longrightarrow z\ \overset{Q_1}{\longrightarrow}\ v
 \longrightarrow y\ \overset{P}{\longrightarrow}\ a
 \overset{\overleftarrow{Q_2}}{\longrightarrow}\ y'.
\end{equation}
The path $Q_2$ runs from $y'$ to $a$ in the original cycle, so it is traversed in reverse to return from $a$ to $y'$ in the resulting cycle; see \cref{fig:endpoint-migration}.

The chosen auxiliary cycle omits exactly $\{w\}\cup(V(P)\setminus\{a\})$ from $V(\hat H_j)\setminus\{x\}$. The new triple restores $w$, and the path $P$ restores the other missing vertices. Every triple of the restored path $P$ belongs to $\hat H_j\cap\Kc$, as does the new triple $ywv$. Thus $C'$ is a single mixed Hamilton cycle with cyclic order \eqref{eq:output-order}, counted by $\cyc\bigl(\mathcal{G}-x,\cM\cup\{y'z\}\bigr)$.

To prove that the construction is injective, orient $C'$ so that it traverses the pair $y'z$ from $y'$ to $z$. Its incoming triple at $y$ is exactly $vyw$, while its outgoing triple starts the path $P$. In the incoming triple at $y$, $w$ is the private vertex and $v$ is the intersection vertex other than $y$. To recover the segment, follow the outgoing triple at $y$. If its other intersection vertex is $\cM$-unsaturated, that intersection vertex is $a$ and the segment is of Type I. Otherwise the other intersection vertex is $u$ in a pair $uu'\in\cM$; follow this pair and the next triple to reach $a$, recovering a Type II segment. The private vertices of these triples determine the remaining segment labels. Once $P,w,v$ have been recovered, the input cycle is
\[
 C=\bigl(C'\setminus(P\cup\{\{y,w,v\},\{y',z\}\})\bigr)
 \cup\{\{a,z\},\{v,y'\}\}.
\]
Thus $C'$ determines the segment, the ordered pair, and the input cycle. In particular, different segments have disjoint images, proving the claim.
\end{proof}

We next obtain enough triples $ywv$ for the construction. To apply \cref{lem:simultaneous-link}, we fix the segment labels before revealing $L_{\hat H_j}(y)$. We allow the following choices, without requiring the triples specified by these labels to belong to $\hat H_j$.
\begin{enumerate}[label=\textup{Type \Roman*:},leftmargin=*]
\item Choose distinct $w_1,a\in[n_\ast]\setminus(V(\cM_0)\cup\{x,y,z\})$.
\item Choose ordered vertices $u,u'$ with $uu'\in\cM$, and distinct $w_1,w_2,a\in[n_\ast]\setminus(V(\cM_0)\cup\{x,y,z\})$.
\end{enumerate}
The definitions of $H_P$ and $M_P$ apply to each such choice, before checking whether the triples specified by the vertex labels of $P$ belong to $\hat H_j$.

Fix one such choice of segment labels and a target $y'\in[n_\ast]\setminus(V(\cM_0)\cup\{x,y,z\})$. Define $\Gamma$ as follows.
\begin{enumerate}[label=(\textit{\roman*}),ref=\roman*]
\item If $y'\notin V(H_P)\setminus V(M_P)$ or $d(H_P)=0$, then put $\Gamma=\binom{V(\hat H_j)\setminus\{y\}}2$.
\item If $y'\in V(H_P)\setminus V(M_P)$ and $d(H_P)>0$, then let $\Gamma$ consist of all pairs $wv\in\binom{V(\hat H_j)\setminus\{y\}}2$ meeting $([n_\ast]\setminus V(H_P))\cup V(M_P)\cup\{y'\}$, together with the remaining pairs satisfying
\[
 \max\left\{\comp_{H_P,M_P}(w;v\to y'),\comp_{H_P,M_P}(v;w\to y')\right\}
 <\frac{\cyc(H_P\cap\Kc,M_P)}{8d(H_P)}.
\]
\end{enumerate}
The two numbers correspond to choosing $w$ or $v$, respectively, as the private vertex of the triple $\{y,w,v\}$ to be inserted. The displayed inequality excludes a pair only if both choices give fewer than $\cyc(H_P\cap\Kc,M_P)/(8d(H_P))$ cycles. Since $H_P$ omits $y$, for $y'\in V(H_P)\setminus V(M_P)$ and $d(H_P)>0$ the numbers $\comp_{H_P,M_P}(w;v\to y')$ and $\comp_{H_P,M_P}(v;w\to y')$, as well as $\cyc(H_P\cap\Kc,M_P)$ and $d(H_P)$, are determined by $\hat H_j-y$ and the fixed labels. In the other cases, $\Gamma$ is the entire fixed set of pairs. Thus $\Gamma$ is determined by $\hat H_j-y$ and the fixed labels in every case, as required in \cref{lem:simultaneous-link}.

\begin{claim}\label{claim:intersection-link-bound}
With $\QQ$-probability $1-o(1)$, the following holds simultaneously for every $m_\ast\le j\le\binom{n_\ast}{3}$, every choice of data in Cases~\ref{item:migration-unsaturated-case} and~\ref{item:migration-saturated-case}, every labelled path $P$ of either type, and every target $y'\in[n_\ast]\setminus(V(\cM_0)\cup\{x,y,z\})$. If $\mathsf R$ holds, the triple through $y$ specified by $P$ belongs to $\hat H_j$, and $|\Gamma|\le\binom{n_\ast-|\hat B|-1}{2}/(\log\log\log n_\ast)^{1/16}$, then $|L_{\hat H_j}(y)\setminus\Gamma|\ge\frac{d_{\hat H_j}(y)}3$. 
\end{claim}
\begin{proof}[Proof of \cref{claim:intersection-link-bound}]
The required triple through $y$ is included as the prescribed triple in \cref{lem:simultaneous-link}: for Type I, prescribe $\{y,w_1,a\}$; for Type II, prescribe $\{y,w_1,u\}$. In Type II, the presence of the other triple $\{u',w_2,a\}$ is determined by $\hat H_j-y$. For each fixed time and choice of data in Cases~\ref{item:migration-unsaturated-case} and~\ref{item:migration-saturated-case}, there are at most $n_\ast^2$ choices of Type I segment labels, at most $2|\cM|n_\ast^3$ choices of Type II segment labels, and at most $n_\ast$ targets $y'$. Including the choices of $j$ and the case data therefore gives $n_\ast^{O(1)}$ applications of \cref{lem:simultaneous-link} with $h=1$. A union bound over these applications gives a failure probability at most $n_\ast^{O(1)}\exp\{-\Omega((\log n_\ast)\log\log\log\log n_\ast)\}=o(1)$, proving the claim.
\end{proof}

Fix a realisation in $\mathsf R$ satisfying \cref{claim:intersection-link-bound}, a size $j$ for which $\mathsf A_j$ holds, and a choice of $x,y,z,\hat B,\cM$ in Case~\ref{item:migration-unsaturated-case} or Case~\ref{item:migration-saturated-case} such that $\cyc\bigl(\mathcal G-x,\cM\cup\{yz\}\bigr)\ge\Xc(H_j)n_\ast^{-40}$. We now restrict the sum in \cref{claim:intersection-injection} to segments whose auxiliary cycle families are large enough to apply condition (\ref{item:migration-candidates}) in the definition of $\mathsf R$. Specifically, discard paths $P$ with $\cyc(H_P\cap\Kc,M_P)<\cyc\bigl(\mathcal{G}-x,\cM\cup\{yz\}\bigr)n_\ast^{-8}$.

Before applying the candidate estimates, we verify that the contracted frame and the candidate frames satisfy \eqref{eq:frame-bounds-final}. The contracted frame has host $H_P$, prescribed matching $M_P$, and root direction $a\to z$. For a legal candidate $(w;v\to y')$, the corresponding frame has host $H_P-w$ and prescribed matching $M_P\cup\{vy'\}$, with $vy'$ prescribed to be traversed from $v$ to $y'$ in the root orientation determined by $a\to z$; the cycles counted by $\comp_{H_P,M_P}(w;v\to y')$ use only triples of $(H_P\cap\Kc)-w$. The vertices deleted from $H_j$ are $\hat B\cup\{x,y,w_1,w\}$ in Type I and $\hat B\cup\{x,y,w_1,u,u',w_2,w\}$ in Type II. Since $|\hat B|\le1$, these sets have at most five and eight vertices, respectively. We count changes to the prescribed matching relative to $\cM_0$. The only pairs of $\cM_0$ that can be removed are $y_0z_0$ when $\hat B=\{y_0\}$ and $uu'$ in Type II. The added pairs are $az$ and, for this candidate, $vy'$. The auxiliary pair $yz$ does not belong to $\cM_0$, so replacing it by $az$ removes no additional pair of $\cM_0$. At most two pair directions are fixed.

For every remaining path $P$, the threshold used to discard paths gives $\cyc(H_P\cap\Kc,M_P)\ge\Xc(H_j)n_\ast^{-40}n_\ast^{-8}=\Xc(H_j)n_\ast^{-48}\ge\Xc(H_j)n_\ast^{-100}$. Thus \cref{claim:frame-count-transfer} gives \eqref{eq:frame-large} for the frame on $H_P$ with prescribed matching $M_P$. We may therefore apply condition (\ref{item:migration-candidates}) in the definition of $\mathsf R$ to the frame on $H_P$ with prescribed matching $M_P$ for every remaining path $P$.

For each remaining path $P$, let $E_P$ consist of the vertices in $[n_\ast]\setminus V(H_P)$, the vertices in $V(M_P)$, and the targets $y'\in V(H_P)\setminus V(M_P)$ for which the set $\Gamma$ defined above for $P,y'$ violates the size condition in \cref{lem:simultaneous-link}(iii). For a fixed target $y'$, we omit the summands indexed by paths $P$ with $y'\in E_P$. The next claim shows that, for most targets, this loses only a small proportion of the cycles counted by $\cyc(\mathcal G-x,\cM\cup\{yz\})$.
\begin{claim}\label{claim:intersection-target-weight}
For all but $O(n_\ast/(\log\log\log n_\ast)^{1/8})$ vertices $y'\in[n_\ast]\setminus(V(\cM_0)\cup\{x,y,z\})$, we have
\[
 \sum_{\substack{P\text{ remaining}\\y'\notin E_P}}\cyc(H_P\cap\Kc,M_P)
 \ge\left(1-O\left( \frac{1}{n_\ast^{4}} \right)-\frac{1}{(\log\log\log n_\ast)^{1/8}}\right)
 \cyc\bigl(\mathcal{G}-x,\cM\cup\{yz\}\bigr).
\]
\end{claim}
\begin{proof}[Proof of \cref{claim:intersection-target-weight}]
There are two losses to control. We first bound the number of cycles in the discarded path classes. Among the remaining paths, a target $y'$ also excludes the classes indexed by $P$ with $y'\in E_P$. We will bound $|E_P|$ for each remaining path and then sum over targets to show that this second loss is small for most targets.

By \cref{claim:cut-decomposition},
\[
 \sum_{P\text{ discarded}}\cyc(H_P\cap\Kc,M_P)
 \le O(n_\ast^4)\,n_\ast^{-8}\cyc\bigl(\mathcal G-x,\cM\cup\{yz\}\bigr)
 =O\left(\frac{1}{n_\ast^4}\right)\cyc\bigl(\mathcal G-x,\cM\cup\{yz\}\bigr).
\]
We now bound $|E_P|$ for a remaining path $P$. If a legal candidate $(w;v\to y')$ satisfies $\comp_{H_P,M_P}(w;v\to y')<\cyc(H_P\cap\Kc,M_P)/(8d(H_P))$, then $4d(H_P)\comp_{H_P,M_P}(w;v\to y')/\cyc(H_P\cap\Kc,M_P)<1/2<1-(\log\log\log n_\ast)^{-1/2}$ for sufficiently large $n_\ast$. Hence this candidate is $(\log\log\log n_\ast)^{-1/2}$-abnormal in $H_P$, and condition (\ref{item:migration-candidates}) in the definition of $\mathsf R$ bounds the number of these candidates by $O(n_\ast^3/\sqrt{\log\log\log n_\ast})$.

Each target $y'\in V(H_P)\setminus V(M_P)$ with more than $n_\ast^2/(\log\log\log n_\ast)^{1/4}$ legal ordered pairs $(w,v)$ satisfying $\comp_{H_P,M_P}(w;v\to y')<\cyc(H_P\cap\Kc,M_P)/(8d(H_P))$ contributes more than this many abnormal candidates to the total above. Consequently, the number of such targets is at most
\[
 \frac{(\log\log\log n_\ast)^{1/4}}{n_\ast^2}
 O\left(\frac{n_\ast^3}{\sqrt{\log\log\log n_\ast}}\right)
 =O\left(\frac{n_\ast}{(\log\log\log n_\ast)^{1/4}}\right).
\]
For every other target $y'\in V(H_P)\setminus V(M_P)$, at most $n_\ast^2/(\log\log\log n_\ast)^{1/4}$ legal ordered pairs satisfy this inequality. By the definition of $\Gamma$, a legal unordered pair $wv$ belongs to $\Gamma$ only if both $(w;v\to y')$ and $(v;w\to y')$ satisfy this inequality. The vertex restrictions also exclude pairs meeting $([n_\ast]\setminus V(H_P))\cup V(M_P)\cup\{y'\}$. Since $|[n_\ast]\setminus V(H_P)|\le7$ and $|M_P|\le s+1$, this union has at most $2s+10$ vertices. Each vertex belongs to at most $n_\ast-1$ pairs, so these restrictions exclude at most $(2s+10)(n_\ast-1)=O((s+1)n_\ast)$ pairs. Consequently, $|\Gamma|\le O(n_\ast^2/(\log\log\log n_\ast)^{1/4})+O((s+1)n_\ast)=o(n_\ast^2/(\log\log\log n_\ast)^{1/16})$. This satisfies the size condition in \cref{lem:simultaneous-link}, so \cref{claim:intersection-link-bound} gives $|L_{\hat H_j}(y)\setminus\Gamma|\ge d_{\hat H_j}(y)/3$.

The definition of $E_P$ and the preceding bound on exceptional targets give $|E_P|\le7+2(s+1)+O(n_\ast/(\log\log\log n_\ast)^{1/4})$. Since $s\le n_\ast^{1/10}$, this is $O(n_\ast/(\log\log\log n_\ast)^{1/4})$, uniformly in $P$. For each target $y'$, the sum over remaining paths $P$ with $y'\in E_P$ is the number of cycles lost by omitting these path classes. By \cref{claim:cut-decomposition}, the sum of $\cyc(H_P\cap\Kc,M_P)$ over the remaining paths is at most $\cyc(\mathcal G-x,\cM\cup\{yz\})$. Summing the loss over all targets and reversing the order of summation therefore gives
\[
\begin{aligned}
 \sum_{y'\in[n_\ast]}\ \sum_{\substack{P\text{ remaining}\\y'\in E_P}}\cyc(H_P\cap\Kc,M_P)
 &=\sum_{P\text{ remaining}}|E_P|\cyc(H_P\cap\Kc,M_P)\\
 &=O\left(\frac{n_\ast}{(\log\log\log n_\ast)^{1/4}}\right)\cyc\bigl(\mathcal{G}-x,\cM\cup\{yz\}\bigr).
\end{aligned}
\]
By Markov's inequality, at most
\[
 \frac{(\log\log\log n_\ast)^{1/8}}
 {\cyc\bigl(\mathcal G-x,\cM\cup\{yz\}\bigr)}
 \sum_{P\text{ remaining}}|E_P|\cyc(H_P\cap\Kc,M_P)
 =O\left(\frac{n_\ast}{(\log\log\log n_\ast)^{1/8}}\right)
\]
targets $y'\in[n_\ast]$ fail to satisfy
\[
 \sum_{\substack{P\text{ remaining}\\y'\in E_P}}\cyc(H_P\cap\Kc,M_P)
 \le\frac{\cyc\bigl(\mathcal G-x,\cM\cup\{yz\}\bigr)}{(\log\log\log n_\ast)^{1/8}}.
\]
For every other target, the discarded paths contribute only $O(n_\ast^{-4})\cyc\bigl(\mathcal G-x,\cM\cup\{yz\}\bigr)$. By the decomposition in \cref{claim:cut-decomposition}, the retained weight is
\begin{multline*}
  \sum_{\substack{P\text{ remaining}\\y'\notin E_P}}\cyc(H_P\cap\Kc,M_P)
  =\cyc\bigl(\mathcal G-x,\cM\cup\{yz\}\bigr)
  -\sum_{P\text{ discarded}}\cyc(H_P\cap\Kc,M_P)\\
  -\sum_{\substack{P\text{ remaining}\\y'\in E_P}}\cyc(H_P\cap\Kc,M_P).
\end{multline*}
Substituting the two preceding loss bounds gives the lower bound in \cref{claim:intersection-target-weight}.
\end{proof}

We now combine the injection in \cref{claim:intersection-injection} with the link bound in \cref{claim:intersection-link-bound} and the lower bound in \cref{claim:intersection-target-weight} for $\sum_{\substack{P\text{ remaining}\\y'\notin E_P}}\cyc(H_P\cap\Kc,M_P)$.
\begin{claim}\label{claim:intersection-count-comparison}
For every target $y'\in[n_\ast]\setminus(V(\cM_0)\cup\{x,y,z\})$ satisfying the inequality in \cref{claim:intersection-target-weight}, we have
\[
\begin{aligned}
 \cyc\bigl(\mathcal G-x,\cM\cup\{y'z\}\bigr)
 \ge\frac{d_{\hat H_j}(y)}{100d(\hat H_j)}\cyc\bigl(\mathcal G-x,\cM\cup\{yz\}\bigr)
 \ge c_0\cyc\bigl(\mathcal G-x,\cM\cup\{yz\}\bigr).
\end{aligned}
\]
\end{claim}
\begin{proof}[Proof of \cref{claim:intersection-count-comparison}]
We first compare $d(H_P)$ with $d(\hat H_j)$ so that the contributions from different paths can be summed using a common denominator. The host $H_P$ is obtained from $\hat H_j$ by deleting three vertices in Type I and six in Type II. Hence $|\hat H_j\setminus H_P|\le6\Delta_1(\hat H_j)=O(d(\hat H_j))$ by \cref{prop:path-regularity}(i), so
\[
 \frac{d(H_P)}{d(\hat H_j)}
 =\frac{|V(\hat H_j)|}{|V(H_P)|}\frac{|H_P|}{|\hat H_j|}
 =1+O\left(\frac1{n_\ast}\right)
\]
uniformly in $P$.

For each remaining path $P$ with $y'\notin E_P$, \cref{claim:intersection-link-bound} gives $|L_{\hat H_j}(y)\setminus\Gamma|\ge d_{\hat H_j}(y)/3$. For each unordered pair in $L_{\hat H_j}(y)\setminus\Gamma$, choose an order $(w,v)$ for which $\comp_{H_P,M_P}(w;v\to y')\ge\cyc(H_P\cap\Kc,M_P)/(8d(H_P))$; such an order exists by the definition of $\Gamma$. This order is legal. Since $wv\in L_{\hat H_j}(y)\setminus\Gamma$, the triple $\{y,w,v\}$ belongs to $\hat H_j$. Its vertices lie outside $V(\cM_0)$, so it also belongs to $\Kc$, and hence to $\mathcal G$. Thus the corresponding term occurs in the sum in \cref{claim:intersection-injection}. Summing these contributions in \cref{claim:intersection-injection} and using the preceding identity and \cref{claim:intersection-target-weight}, we obtain
\[
\begin{aligned}
 \cyc\bigl(\mathcal G-x,\cM\cup\{y'z\}\bigr)
 &\ge\sum_{\substack{P\text{ remaining}\\y'\notin E_P}}
 \frac{d_{\hat H_j}(y)\cyc(H_P\cap\Kc,M_P)}{24d(H_P)}\\
 &\ge\frac{1-o(1)}{24(1+o(1))}
 \frac{d_{\hat H_j}(y)}{d(\hat H_j)}\cyc\bigl(\mathcal G-x,\cM\cup\{yz\}\bigr)\\
 &\ge\frac{d_{\hat H_j}(y)}{100d(\hat H_j)}\cyc\bigl(\mathcal G-x,\cM\cup\{yz\}\bigr),
\end{aligned}
\]
for sufficiently large $n_\ast$. \Cref{fact:migration-common-event}(ii) gives $d_{\hat H_j}(y)/(100d(\hat H_j))\ge c_0$, as required.
\end{proof}

By \cref{claim:intersection-target-weight,claim:intersection-count-comparison}, the migration inequalities hold for all but $O(n_\ast/(\log\log\log n_\ast)^{1/8})$ targets. By \cref{fact:migration-common-event}(i) and \cref{claim:intersection-link-bound}, these conclusions hold simultaneously in both cases on an event of $\QQ$-probability $1-o(1)$, proving the lemma.
\end{proof}

We now prove \cref{thm:uniform-marginal}. For edges disjoint from $V(\cM_0)$, successive migrations would turn an excessively large family of mixed Hamilton cycles into comparably large families for almost all choices of the deleted vertex and added pair. These three vertices need not form an edge of the host. The estimates in \cref{prop:uniform-completion} for the two directions of each pair rule this out.

For an edge containing a $\cM_0$-saturated vertex $y_0$, the added pair contains the partner of $y_0$ in $\cM_0$. Every legal directed candidate avoids $V(\cM_0)$, so \cref{prop:uniform-completion} gives no estimate for the corresponding family of mixed Hamilton cycles. We instead partition the counted mixed Hamilton cycles according to their unique triple at $y_0$ and sum over $L_{H_j}(y_0)$. The migration comparisons and \cref{lem:simultaneous-link} ensure that a large auxiliary family would make sufficiently many summands comparably large. Since the full sum equals $\Xc(H_j)$, this gives the required upper bound.

\begin{proof}[Proof of \cref{thm:uniform-marginal}]
We first establish the simultaneous link estimate needed for edges containing a $\cM_0$-saturated vertex. For a time $m_\ast\le j\le\binom{n_\ast}{3}$ and a pair $y_0z_0\in\cM_0$, put $\cM\coloneqq\cM_0\setminus\{y_0z_0\}$. The following partition identifies the numbers of cycles that will be compared in the link estimate.

\begin{claim}\label{claim:marked-cycle-partition}
The total number of counted mixed Hamilton cycles satisfies
\begin{multline*}
 \Xc(H_j)=\sum_{\substack{w'v\in L_{H_j}(y_0)\\w',v\notin V(\cM_0)}}
 \bigl(\cyc\bigl((H_j\cap\Kc)-\{y_0,w'\},\cM\cup\{vz_0\}\bigr)+\cyc\bigl((H_j\cap\Kc)-\{y_0,v\},\cM\cup\{w'z_0\}\bigr)\bigr).
\end{multline*}
\end{claim}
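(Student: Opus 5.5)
The plan is to partition the cycles counted by $\Xc(H_j)$ according to their unique triple at $y_0$ and its private vertex, and then to contract that triple together with the pair $y_0z_0$. This mirrors the decomposition in \cref{claim:cut-decomposition}, with the pair $y_0z_0$ playing the role of $yz$.

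Fix a cycle $C$ counted by $\Xc(H_j)$. By \cref{fact:mixed-cycle-parameters}(i), the vertex $y_0$ is $\cM_0$-saturated and lies in exactly one triple $e$ of $C$, and $y_0$ is an intersection vertex of $e$. Write $e=\{y_0,w',v\}$, so that $w'v\in L_{H_j}(y_0)$. Since $e\in\Kc$ and $y_0\in V(\cM_0)$, neither $w'$ nor $v$ lies in $V(\cM_0)$. The pair $w'v$ is therefore in the summation range and is determined by $C$. Exactly one of $w',v$ is the private vertex of $e$. This splits the cycles through $e$ into two classes, which correspond to the two summands.

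Consider the class in which $w'$ is private, so that $v$ is the other intersection vertex of $e$. In $C$ the path $z_0\to y_0\to v$ consists of the pair $z_0y_0$ followed by $e$. Replace this path by the single pair $vz_0$, deleting $y_0$ and $w'$. The result uses only triples of $(H_j\cap\Kc)-\{y_0,w'\}$ and contains $\cM\cup\{vz_0\}$. The set $\cM\cup\{vz_0\}$ is a matching, because $v\notin V(\cM_0)$ and $z_0$ has been freed from its pair. The cyclic order and the distinctness of the intersection vertices are preserved, so the result is a $\bigl((H_j\cap\Kc)-\{y_0,w'\},\cM\cup\{vz_0\}\bigr)$-respecting mixed Hamilton cycle.

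The inverse map takes such a cycle, removes the pair $vz_0$, and inserts the pair $y_0z_0$ and the triple $\{y_0,w',v\}$. This triple lies in $H_j\cap\Kc$ because $w'v\in L_{H_j}(y_0)$ and $w',v\notin V(\cM_0)$. The output is a counted cycle whose triple at $y_0$ is $\{y_0,w',v\}$ with $w'$ private. The contraction and the expansion are mutually inverse, exactly as in \cref{fact:completion-identities}(ii) and \cref{claim:cut-decomposition}(ii). Hence this class has size $\cyc\bigl((H_j\cap\Kc)-\{y_0,w'\},\cM\cup\{vz_0\}\bigr)$. The class with $v$ private gives the second summand in the same way, with $w'$ and $v$ interchanged.

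Summing over the unordered pairs $w'v$ then gives the identity. Each counted cycle contributes to exactly one pair and one choice of private vertex, so the classes partition all counted cycles. The only point that needs care is that the expanded cycle is a single spanning cycle. This holds because we replace one edge of a cycle by a path, with no reconnection involved.
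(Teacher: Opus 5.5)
Your proposal is correct and matches the paper's proof. Like the paper, you partition the counted cycles by their unique triple at $y_0$ and its private vertex, contract the pair $z_0y_0$ together with that triple to $vz_0$ (respectively $w'z_0$), and use expansion as the inverse; you also check explicitly that the reinserted triple lies in $H_j\cap\Kc$ and that $\cM\cup\{vz_0\}$ is a matching, which the paper leaves implicit.
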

\begin{proof}[Proof of \cref{claim:marked-cycle-partition}]
Fix an edge $y_0wu\in H_j\cap\Kc$. Since this edge is $\cM_0$-compatible, $w,u\notin V(\cM_0)$. Consider a cycle counted by $\Xc(H_j)$ that contains $y_0wu$ with $w$ private. In this cycle, the pair $z_0y_0$ and the triple $y_0wu$ form a segment from $z_0$ to $u$. Delete $y_0,w$ and replace this segment by the pair $z_0u$. Restoring the same segment is the inverse operation, so this gives a bijection from these cycles to those counted by $\cyc\bigl((H_j\cap\Kc)-\{y_0,w\},\cM\cup\{uz_0\}\bigr)$. Every counted cycle has a unique triple at $y_0$ and a unique private vertex on that triple. Partitioning the cycles by these choices proves \cref{claim:marked-cycle-partition}.
\end{proof}

For the link estimate, fix distinct vertices $w,u\notin V(\cM_0)$ and let $\Gamma\subseteq\binom{[n_\ast]\setminus\{y_0\}}2$ consist of pairs meeting $V(\cM_0)\cup\{w,u\}$, together with the remaining pairs $w'v$ satisfying
\begin{multline*}
 \cyc\bigl((H_j\cap\Kc)-\{y_0,w'\},\cM\cup\{vz_0\}\bigr)
 +\cyc\bigl((H_j\cap\Kc)-\{y_0,v\},\cM\cup\{w'z_0\}\bigr) \\
 <\frac{c_0^2}{2}\cyc\bigl((H_j\cap\Kc)-\{y_0,w\},\cM\cup\{uz_0\}\bigr).
\end{multline*}
For $w'v\in L_{H_j}(y_0)$ with $w',v\notin V(\cM_0)$, the sum on the left is the number of cycles counted by $\Xc(H_j)$ that contain $\{y_0,w',v\}$. The two terms correspond to choosing $w'$ or $v$, respectively, as its private vertex.

The numbers $\cyc\bigl((H_j\cap\Kc)-\{y_0,w'\},\cM\cup\{vz_0\}\bigr)$, $\cyc\bigl((H_j\cap\Kc)-\{y_0,v\},\cM\cup\{w'z_0\}\bigr)$, and $\cyc\bigl((H_j\cap\Kc)-\{y_0,w\},\cM\cup\{uz_0\}\bigr)$ are determined by $H_j-y_0$ and the fixed labels. Hence $\Gamma$ is determined by this same information. Apply \cref{lem:simultaneous-link} with $\hat B=\varnothing$, with $y_0$ as the vertex whose link is sampled, and with $h=0$ and $\mathcal P=\varnothing$. There are at most $2s\binom{n_\ast}{3}n_\ast^2$ choices of $j,(y_0,z_0),w,u$, including both orders of each pair in $\cM_0$. A union bound over these choices gives a failure probability at most $n_\ast^{O(1)}\exp\{-\Omega((\log n_\ast)\log\log\log\log n_\ast)\}=o(1)$. Consequently, with $\QQ$-probability $1-o(1)$, on $\mathsf R$ the bound $|L_{H_j}(y_0)\setminus\Gamma|\ge d_{H_j}(y_0)/3$ holds simultaneously for all these choices whenever $|\Gamma|\le\binom{n_\ast-1}{2}/(\log\log\log n_\ast)^{1/16}$.

By \cref{fact:migration-common-event,lem:private-migration,lem:intersection-migration} and the union bound above, with $\QQ$-probability $1-o(1)$, $\mathsf R$ holds, the two migration inequalities hold for every permitted choice, and the link bound just established holds simultaneously for every $j,y_0,z_0,w,u$ whenever $\Gamma$ satisfies its stated size bound. Fix a realisation with all these properties, and fix a size $m_\ast\le j\le\binom{n_\ast}{3}$ for which $\mathsf A_j$ holds. Choose $C_1$ sufficiently large for both migration lemmas. For a $\cM_0$-compatible edge, we partition the counted cycles containing it according to its private vertex. There are three choices if the edge avoids $V(\cM_0)$ and two if it contains one $\cM_0$-saturated vertex, since that vertex must be an intersection vertex. We bound the size of each resulting cycle family by $O(\Xc(H_j)/d(H_j))$.

Since $d(H_j)\le\binom{n_\ast-1}{2}$, we have $n_\ast^{-20}d(H_j)=O(n_\ast^{-18})=o(1)$. Thus a cycle family of size less than $\Xc(H_j)n_\ast^{-20}$ has size $o(\Xc(H_j)/d(H_j))$.

We first treat edges disjoint from $V(\cM_0)$, and then edges containing one $\cM_0$-saturated vertex.
\begin{claim}\label{claim:unmarked-completion-bound}
For every three distinct vertices $x,y,z\in[n_\ast]\setminus V(\cM_0)$, we have
\[
 \cyc\bigl((H_j\cap\Kc)-x,\cM_0\cup\{yz\}\bigr)
 =O\left(\frac{\Xc(H_j)}{d(H_j)}\right).
\]
\end{claim}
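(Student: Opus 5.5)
We argue by contradiction, using the fixed realisation in which $\mathsf R$, both migration lemmas, and the link bound hold. Write $Y(x;y,z)\coloneqq\cyc\bigl((H_j\cap\Kc)-x,\cM_0\cup\{yz\}\bigr)$. Fix a large absolute constant $K$, to be chosen in terms of $c_0$. Suppose that $Y(x;y,z)\ge K\Xc(H_j)/d(H_j)$ for some distinct $x,y,z\notin V(\cM_0)$. Then $Y(x;y,z)\ge\Xc(H_j)n_\ast^{-40}$ comfortably, so the hypotheses of \cref{lem:private-migration,lem:intersection-migration} hold in Case~\ref{item:migration-unsaturated-case}.

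The plan is to migrate successively the three coordinates $x$, $y$ and $z$. Each migration multiplies the family size by at least $c_0$. First apply \cref{lem:private-migration}: for all but $o(n_\ast)$ vertices $x'$ we get $Y(x';y,z)\ge c_0Y(x;y,z)$. Each such family still has size at least $c_0K\Xc(H_j)/d(H_j)\ge\Xc(H_j)n_\ast^{-40}$, so \cref{lem:intersection-migration} applies with $x'$ fixed. For all but $o(n_\ast)$ vertices $y'$ it gives $Y(x';y',z)\ge c_0^2Y(x;y,z)$. Finally, migrate $z$ by applying \cref{lem:intersection-migration} again, with the roles of $y$ and $z$ interchanged; this is legitimate because $Y$ is symmetric in the pair $yz$. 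The result is $Y(x';y',z')\ge c_0^3K\Xc(H_j)/d(H_j)$ for all but $o(n_\ast^3)$ triples $(x',y',z')$ of distinct vertices outside $V(\cM_0)$. The exceptional sets depend on the earlier choices, but each level discards only $o(n_\ast)$ vertices, so this count is correct.

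Next, turn this into abnormality in the frame $\Lambda$ with $\hat B=\varnothing$ and $\cM=\cM_0$. \cref{fact:completion-identities}(i) gives $Y(x';y',z')=\comp(x';y'\to z')+\comp(x';z'\to y')$. Hence at least one of the two orientations satisfies $\comp\ge c_0^3K\Xc(H_j)/(2d(H_j))$. Now $\mathsf A_j$, \eqref{eq:benchmark} and \cref{claim:frame-count-transfer} give \eqref{eq:frame-large} for $\Lambda$, with $\cyc_\Lambda(H_j)=\Xc(H_j)$. Therefore $4d(H_j)\comp/\cyc_\Lambda(H_j)\ge 2c_0^3K>2$ once $K>c_0^{-3}$, and that orientation is $(\log\log\log n_\ast)^{-1/2}$-abnormal. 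This produces at least $(\frac12-o(1))|\mathcal Q_\Lambda|$ distinct abnormal legal candidates, since each unordered triple contributes at least one of its orientations. That contradicts condition (\ref{item:migration-candidates}) in $\mathsf R$, which allows only a fraction $(\log\log\log n_\ast)^{-1/2}$. So the claim holds with constant $K$.

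The main obstacle is bookkeeping for the chained migrations rather than any new estimate. At each step we must check that the intermediate family is still above the threshold $\Xc(H_j)n_\ast^{-40}$. We must also check that the lemmas apply uniformly to the new base points $(x',y,z)$ and $(x',y',z)$. This holds because the migration lemmas were proved simultaneously for all admissible choices on a single high-probability event. The last point to verify is that the symmetric use of \cref{lem:intersection-migration} for the endpoint $z$ is covered by Case~\ref{item:migration-unsaturated-case}, which needs only $z\notin V(\cM_0)$.
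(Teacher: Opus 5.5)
Your proof is correct and takes essentially the same route as the paper. It uses three chained migrations in Case~\ref{item:migration-unsaturated-case}: one private migration, then two intersection migrations, with the roles of the pair's endpoints swapped for the last. These are followed by condition (\ref{item:migration-candidates}) of $\mathsf R$ applied to the frame with $\hat B=\varnothing$ and $\cM=\cM_0$, where $\cyc_\Lambda(H_j)=\Xc(H_j)$. The only difference is presentational. Instead of arguing by contradiction from one abnormal orientation per triple, the paper selects a triple $(x',y',z')$ at which the migrated lower bound holds and both orientations are normal, which yields the explicit constant $c_0^{-3}(1+o(1))/2$.
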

\begin{proof}[Proof of \cref{claim:unmarked-completion-bound}]
Choose distinct vertices $x,y,z\in[n_\ast]\setminus V(\cM_0)$ maximising $\cyc\bigl((H_j\cap\Kc)-x,\cM_0\cup\{yz\}\bigr)$. If $\cyc\bigl((H_j\cap\Kc)-x,\cM_0\cup\{yz\}\bigr)<\Xc(H_j)n_\ast^{-20}$, then the bound $n_\ast^{-20}d(H_j)=o(1)$ gives $\cyc\bigl((H_j\cap\Kc)-x,\cM_0\cup\{yz\}\bigr)=o(\Xc(H_j)/d(H_j))$.

In the remaining case, it suffices to find distinct vertices $x',y',z'\in[n_\ast]\setminus V(\cM_0)$ satisfying
\begin{multline*}
 c_0^3\cyc\bigl((H_j\cap\Kc)-x,\cM_0\cup\{yz\}\bigr)
 \le\cyc\bigl((H_j\cap\Kc)-x',\cM_0\cup\{y'z'\}\bigr)\\
 \le\left(1+O\left(\frac{1}{(\log\log\log n_\ast)^{1/2}}\right)\right)
 \frac{\Xc(H_j)}{2d(H_j)}.
\end{multline*}
We will obtain the lower bound by changing all three coordinates, using one migration of a private vertex and two migrations of an intersection vertex. The upper bound will follow from the candidate estimates when both orientations of the added pair give normal candidates.

For the lower bound, apply \cref{lem:private-migration} to change $x$ to $x'$, then \cref{lem:intersection-migration} to change $y$ to $y'$. Finally, apply \cref{lem:intersection-migration} with its parameters $(x,y,z)$ taken as $(x',z,y')$ and target $z'$; this replaces the prescribed pair $zy'$ by $z'y'$. All three applications use Case~\ref{item:migration-unsaturated-case}, and at each stage the number of cycles is at least $c_0^3\Xc(H_j)n_\ast^{-20}\ge\Xc(H_j)n_\ast^{-40}$ for large $n_\ast$. At each step, choose the target outside the exceptional set in the corresponding migration lemma; the exceptional set at the next step may depend on the vertices already chosen. For every sequence of targets chosen this way, the three comparisons give
\[
\begin{aligned}
 \cyc\bigl((H_j\cap\Kc)-x',\cM_0\cup\{y'z'\}\bigr)
 & \ge c_0\cyc\bigl((H_j\cap\Kc)-x',\cM_0\cup\{y'z\}\bigr) \\
& \ge c_0^2\cyc\bigl((H_j\cap\Kc)-x',\cM_0\cup\{yz\}\bigr)
 \\
 &\ge c_0^3\cyc\bigl((H_j\cap\Kc)-x,\cM_0\cup\{yz\}\bigr).
\end{aligned}
\]
By \cref{lem:private-migration,lem:intersection-migration}, there are at most $C_1n_\ast/(\log\log\log n_\ast)^{1/4}$ excluded choices of $x'$, then at most $C_1n_\ast/(\log\log\log n_\ast)^{1/8}$ excluded choices of $y'$ for each remaining $x'$, and at most $C_1n_\ast/(\log\log\log n_\ast)^{1/8}$ excluded choices of $z'$ for each remaining ordered choice $(x',y')$. At each of the three stages, the target must avoid $V(\cM_0)$ and at most three specified vertices, including targets chosen at earlier stages. The other two coordinates each have at most $n_\ast$ choices, so these vertex restrictions exclude at most $3(2s+3)n_\ast^2$ further ordered triples. Adding these bounds shows that the number of excluded ordered triples $(x',y',z')$ is at most
\[
 n_\ast^2\frac{C_1n_\ast}{(\log\log\log n_\ast)^{1/4}}
 +2n_\ast^2\frac{C_1n_\ast}{(\log\log\log n_\ast)^{1/8}}+3(2s+3)n_\ast^2
 =o(n_\ast^3).
\]
Thus the lower bound with factor $c_0^3$ holds for all but $o(n_\ast^3)$ ordered triples $(x',y',z')$ of distinct vertices outside $V(\cM_0)$.

On the other hand, consider the frame $\Lambda$ with $\hat B=\varnothing$ and $\cM=\cM_0$, with only the root direction fixed. The frame $\Lambda$ has $\Xc(H_j)$ cycles, so \cref{claim:frame-count-transfer} gives \eqref{eq:frame-large}. Apply condition (\ref{item:migration-candidates}) in the definition of $\mathsf R$ to the frame $\Lambda$. The map $(x';y'\to z')\mapsto(x';z'\to y')$ is a bijection of $\mathcal Q_\Lambda$. Thus the number of ordered triples $(x',y',z')$ for which at least one of these two candidates is $(\log\log\log n_\ast)^{-1/2}$-abnormal is at most $2|\mathcal Q_\Lambda|/\sqrt{\log\log\log n_\ast}=O(n_\ast^3/\sqrt{\log\log\log n_\ast})$. For every other ordered triple, both candidates are $(\log\log\log n_\ast)^{-1/2}$-normal. By \cref{fact:completion-identities}(i), adding their upper bounds gives
\begin{multline*}
 \cyc\bigl((H_j\cap\Kc)-x',\cM_0\cup\{y'z'\}\bigr)
 =\comp_{H_j,\cM_0}(x';y'\to z')+\comp_{H_j,\cM_0}(x';z'\to y')\\
 \le\left(1+\frac1{\sqrt{\log\log\log n_\ast}}\right)\frac{\Xc(H_j)}{2d(H_j)}.
\end{multline*}
The upper bound just proved and the lower bound with factor $c_0^3$ each exclude only $o(n_\ast^3)$ of the $\Theta(n_\ast^3)$ triples, so some triple satisfies the two-sided comparison displayed at the start of the proof.
Hence $\cyc\bigl((H_j\cap\Kc)-x,\cM_0\cup\{yz\}\bigr)=O(\Xc(H_j)/d(H_j))$. By maximality, this bound holds for every choice of distinct vertices $x,y,z\in[n_\ast]\setminus V(\cM_0)$.
\end{proof}
It remains to treat edges containing a $\cM_0$-saturated vertex. Fix a pair $y_0z_0\in\cM_0$ and put $\cM\coloneqq\cM_0\setminus\{y_0z_0\}$. We use \cref{claim:marked-cycle-partition} to bound the size of each auxiliary family and hence the edge marginals.
\begin{claim}\label{claim:marked-edge-bound}
For every edge $y_0wu\in H_j\cap\Kc$, we have
\[
 \cyc\bigl((H_j\cap\Kc)-\{y_0,w\},\cM\cup\{uz_0\}\bigr)
 =O\left(\frac{\Xc(H_j)}{d(H_j)}\right),
 \quad\text{and}\quad
 \Qc_{y_0wu}(H_j)=O\left(\frac{1}{d(H_j)}\right).
\]
\end{claim}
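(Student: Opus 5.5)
The plan mirrors the proof of \cref{claim:unmarked-completion-bound}, with the partition in \cref{claim:marked-cycle-partition} in place of the candidate estimates. Write
\[
 Z(w,u)\coloneqq\cyc\bigl((H_j\cap\Kc)-\{y_0,w\},\cM\cup\{uz_0\}\bigr)
\]
for distinct $w,u\notin V(\cM_0)$. This is a Case~\ref{item:migration-saturated-case} family with $x=w$, $y=u$, $z=z_0$ and $\hat B=\{y_0\}$. The number is defined whether or not $y_0wu$ is an edge, and it depends only on $H_j-y_0$. By \cref{claim:marked-cycle-partition}, the cycles counted by $\Xc(H_j)$ that contain $y_0wu$ are $Z(w,u)+Z(u,w)$. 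Since $y_0$ must be an intersection vertex, the second assertion, $\Qc_{y_0wu}(H_j)=O(1/d(H_j))$, follows from the first applied to both orders. It therefore suffices to bound every $Z(w,u)$ by $O(\Xc(H_j)/d(H_j))$.

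First dispose of the small case. If $Z(w,u)<\Xc(H_j)n_\ast^{-20}$, then $Z(w,u)=o(\Xc(H_j)/d(H_j))$ because $n_\ast^{-20}d(H_j)=o(1)$. Otherwise both migration lemmas apply in Case~\ref{item:migration-saturated-case}. \Cref{lem:private-migration} changes $w$ to $w'$, and \cref{lem:intersection-migration} then changes $u$ to $v$. For all but $o(n_\ast^2)$ ordered pairs $(w',v)$ of vertices outside $V(\cM_0)\cup\{w,u\}$, this gives $Z(w',v)\ge c_0^2Z(w,u)$. The intermediate family has size at least $c_0Z(w,u)\ge\Xc(H_j)n_\ast^{-40}$, so the second application is legitimate. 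The exceptional sets have sizes $C_1n_\ast/(\log\log\log n_\ast)^{1/4}$ and $C_1n_\ast/(\log\log\log n_\ast)^{1/8}$, so together they exclude $o(n_\ast^2)$ ordered pairs, and so does the vertex restriction.

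Next, take the $\Gamma$ defined just before \cref{claim:marked-cycle-partition}, with threshold $\tfrac{c_0^2}{2}Z(w,u)$. A pair $w'v$ avoiding $V(\cM_0)\cup\{w,u\}$ lies in $\Gamma$ only if $Z(w',v)+Z(v,w')<\tfrac{c_0^2}{2}Z(w,u)$. Every such pair has both orders among the $o(n_\ast^2)$ exceptional ordered pairs. The pairs meeting $V(\cM_0)\cup\{w,u\}$ number $O((s+1)n_\ast)$. Hence $|\Gamma|=o(n_\ast^2)$, and with the rates above this is below $\binom{n_\ast-1}{2}/(\log\log\log n_\ast)^{1/16}$.

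The fixed realisation was chosen so that the link estimate holds for all $j,y_0,z_0,w,u$. It therefore gives $|L_{H_j}(y_0)\setminus\Gamma|\ge d_{H_j}(y_0)/3$. Each pair $w'v$ in $L_{H_j}(y_0)\setminus\Gamma$ avoids $V(\cM_0)$ and contributes at least $\tfrac{c_0^2}{2}Z(w,u)$ to the sum in \cref{claim:marked-cycle-partition}, and these contributions come from distinct summands. Thus
\[
 \Xc(H_j)\ge\frac{d_{H_j}(y_0)}{3}\cdot\frac{c_0^2}{2}Z(w,u).
\]
By \cref{fact:migration-common-event}(ii), applied with $\hat B=\varnothing$, we have $d_{H_j}(y_0)\ge100c_0d(H_j)$. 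Combining the two bounds gives $Z(w,u)\le 6\Xc(H_j)/(100c_0^3d(H_j))$.

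The main obstacle is the bookkeeping. The exceptional sets of the two migration steps must be combined correctly, since the second set depends on the chosen $w'$. The size of $\Gamma$ must also meet the hypothesis of the link estimate quantitatively. The condition for lying in $\Gamma$ requires both private-vertex choices to be small, so only one of the two orders needs to be non-exceptional. This keeps the count of pairs in $\Gamma$ at $o(n_\ast^2)$.
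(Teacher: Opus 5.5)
Your proposal is correct and follows the paper's proof essentially step by step. It uses the same two Case~2 migrations, giving $Z(w',v)\ge c_0^2Z(w,u)$ outside $O(n_\ast^2/(\log\log\log n_\ast)^{1/8})$ ordered pairs, the same observation that a pair in $\Gamma$ must have both orders exceptional, and the same partition-plus-link count yielding $Z(w,u)\le 3\Xc(H_j)/(50c_0^3d(H_j))$; the $\Gamma$ you use is introduced just after the partition claim rather than before it, which is only a referencing slip.
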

\begin{proof}[Proof of \cref{claim:marked-edge-bound}]
Fix an edge $y_0wu\in H_j\cap\Kc$. If $\cyc\bigl((H_j\cap\Kc)-\{y_0,w\},\cM\cup\{uz_0\}\bigr)<\Xc(H_j)n_\ast^{-20}$, then the bound $n_\ast^{-20}d(H_j)=o(1)$ gives $\cyc\bigl((H_j\cap\Kc)-\{y_0,w\},\cM\cup\{uz_0\}\bigr)=o(\Xc(H_j)/d(H_j))$. Otherwise, let $\Gamma$ be the set defined before \cref{claim:unmarked-completion-bound} for the labels $j,y_0,z_0,w,u$. We use Case~\ref{item:migration-saturated-case} of the migration lemmas, with $\hat B=\{y_0\}$ and $\cM=\cM_0\setminus\{y_0z_0\}$. First apply \cref{lem:private-migration} with $(x,y,z)=(w,u,z_0)$ and target $w'$ to change the deleted vertex from $w$ to $w'$, keeping the pair $uz_0$. Then apply \cref{lem:intersection-migration} with $(x,y,z)=(w',u,z_0)$ and target $v$ to replace $uz_0$ by $vz_0$, keeping $y_0$ deleted throughout. The number of cycles after the first migration is at least $c_0\Xc(H_j)n_\ast^{-20}\ge\Xc(H_j)n_\ast^{-40}$ for large $n_\ast$.

There are $O(n_\ast/(\log\log\log n_\ast)^{1/4})$ exceptional targets $w'$ for the first migration. For each other $w'$, there are $O(n_\ast/(\log\log\log n_\ast)^{1/8})$ exceptional targets $v$ for the second migration. Thus the number of exceptional ordered pairs is at most
\[
 n_\ast O\left(\frac{n_\ast}{(\log\log\log n_\ast)^{1/4}}\right)
 +n_\ast O\left(\frac{n_\ast}{(\log\log\log n_\ast)^{1/8}}\right)
 =O\left(\frac{n_\ast^2}{(\log\log\log n_\ast)^{1/8}}\right).
\]
For every other ordered pair of distinct vertices $w',v\in[n_\ast]\setminus(V(\cM_0)\cup\{w,u\})$, we have
\[
\begin{aligned}
 \cyc\bigl((H_j\cap\Kc)-\{y_0,w'\},\cM\cup\{vz_0\}\bigr)
 &\ge c_0\cyc\bigl((H_j\cap\Kc)-\{y_0,w'\},\cM\cup\{uz_0\}\bigr)\\
 &\ge c_0^2\cyc\bigl((H_j\cap\Kc)-\{y_0,w\},\cM\cup\{uz_0\}\bigr).
\end{aligned}
\]
Since the threshold defining $\Gamma$ is half this lower bound, each such ordered pair satisfying the bound yields an unordered pair $w'v\notin\Gamma$. Therefore, for every pair $w'v\in\Gamma$ disjoint from $V(\cM_0)\cup\{w,u\}$, both $(w',v)$ and $(v,w')$ belong to the exceptional set of ordered pairs. The set $V(\cM_0)\cup\{w,u\}$ has $2s+2$ vertices, each belonging to at most $n_\ast-1$ pairs. Hence at most $(2s+2)(n_\ast-1)=O(sn_\ast)$ pairs meet this set. Including these pairs gives $|\Gamma|\le O(n_\ast^2/(\log\log\log n_\ast)^{1/8})+O(sn_\ast)=o(n_\ast^2/(\log\log\log n_\ast)^{1/16})$.

The simultaneous link bound established before \cref{claim:unmarked-completion-bound} gives $|L_{H_j}(y_0)\setminus\Gamma|\ge d_{H_j}(y_0)/3$. By the definition of $\Gamma$, each pair in this set contributes at least $(c_0^2/2)\cyc\bigl((H_j\cap\Kc)-\{y_0,w\},\cM\cup\{uz_0\}\bigr)$ to the sum in \cref{claim:marked-cycle-partition}. Therefore,
\[
 \Xc(H_j)\ge\frac{d_{H_j}(y_0)}3\,\frac{c_0^2}{2}
 \cyc\bigl((H_j\cap\Kc)-\{y_0,w\},\cM\cup\{uz_0\}\bigr).
\]
By \cref{fact:migration-common-event}(ii), $d_{H_j}(y_0)\ge100c_0d(H_j)$, so
\[
 \cyc\bigl((H_j\cap\Kc)-\{y_0,w\},\cM\cup\{uz_0\}\bigr)
 \le\frac{6\Xc(H_j)}{c_0^2d_{H_j}(y_0)}
 \le\frac{3\Xc(H_j)}{50c_0^3d(H_j)}.
\]
Applying the same bound with $w$ and $u$ interchanged controls the other choice of private vertex. Combining the two bounds gives
\[
\begin{aligned}
 \Qc_{y_0wu}(H_j)
 &=\frac{\cyc\bigl((H_j\cap\Kc)-\{y_0,w\},\cM\cup\{uz_0\}\bigr)+\cyc\bigl((H_j\cap\Kc)-\{y_0,u\},\cM\cup\{wz_0\}\bigr)}{\Xc(H_j)}\\
 &=O\left(\frac1{d(H_j)}\right), 
\end{aligned}
\]
which proves the claim.
\end{proof}

The bounds in \cref{claim:unmarked-completion-bound,claim:marked-edge-bound} hold at the fixed size $j$. The event $\mathsf A_j$ implies $\Xc(H_j)>0$, so the edge marginals are defined. We consider the following three cases for an edge $e\in H_j$.
\begin{enumerate}[label=(\textit{\roman*})]
\item If $e=\{x,y,z\}\in H_j\cap\Kc$ contains no $\cM_0$-saturated vertex, then by \cref{fact:completion-identities}(ii),
\begin{multline*}
 \Xc(H_j)\Qc_{xyz}(H_j)
 =\cyc\bigl((H_j\cap\Kc)-x,\cM_0\cup\{yz\}\bigr)\\
 {}+\cyc\bigl((H_j\cap\Kc)-y,\cM_0\cup\{xz\}\bigr)
 +\cyc\bigl((H_j\cap\Kc)-z,\cM_0\cup\{xy\}\bigr).
\end{multline*}
The three summands correspond to choosing $x,y,z$, respectively, as the private vertex of $xyz$. By \cref{claim:unmarked-completion-bound}, each summand is $O(\Xc(H_j)/d(H_j))$. Summing these bounds and dividing by $\Xc(H_j)$ gives $\Qc_e(H_j)=O(1/d(H_j))$.
\item If $e\in H_j\cap\Kc$ contains exactly one $\cM_0$-saturated vertex, then \cref{claim:marked-edge-bound} gives $\Qc_e(H_j)=O(1/d(H_j))$.
\item If $e\in H_j\setminus\Kc$, then no counted $\cM_0$-respecting mixed Hamilton cycle uses $e$, so $\Qc_e(H_j)=0$.
\end{enumerate}

The constants in these estimates are absolute and independent of the edge and the size $j$. Choose an absolute constant $C_0\ge1$ large enough to bound all the marginals by $C_0/d(H_j)$. Since $|H_j|=j$, we have $d(H_j)=3j/n_\ast$. Also, $s\le n_\ast^{1/10}$ gives $n_\ast-s\ge2n_\ast/3$ for sufficiently large $n_\ast$. Therefore,
\[
 \max_{e\in H_j}\Qc_e(H_j)
 \le\frac{C_0}{d(H_j)}
 =\frac{C_0n_\ast}{3j}
 \le\frac{C_0(n_\ast-s)}{2j}.
\]
The size $j$ was arbitrary subject to $\mathsf A_j$, and the fixed realisation belongs to an event of $\QQ$-probability $1-o(1)$. Thus the bound holds simultaneously for every such $j$, proving \cref{thm:uniform-marginal}.\qedhere
\end{proof}

\section{Concluding remarks}
\label{sec:conclusion}
We present the proof for $3$-graphs to keep the cycle modifications transparent. For $r\ge3$, a loose Hamilton cycle is a spanning $r$-graph whose edges admit a cyclic ordering in which consecutive edges intersect in exactly one vertex, these intersection vertices are distinct, and non-consecutive edges are disjoint. For $r=2$, we interpret loose Hamilton cycles as ordinary Hamilton cycles.

The following theorem gives the hitting-time and counting conclusions for every fixed $r\ge2$.
\begin{theorem}\label{thm:counting-cycles}
Fix an integer $r\ge2$, and consider the random $r$-graph process $(\mathcal H_j)$ on $[n]$. Let $\tau$ be the first time at which the minimum degree is at least $2$ if $r=2$, and at least $1$ if $r\ge3$, and let $\tau_{\LH}$ be the first time at which a loose Hamilton cycle appears. As $n$ tends to infinity through multiples of $r-1$, with high probability, the following statements hold.
\begin{enumerate}[label=(\textit{\roman*})]
\item We have $\tau_{\LH}=\tau$.
\item The number of loose Hamilton cycles in $\mathcal H_\tau$ is $\exp\{o(n)\} \left(\frac{(r-1)\log n}{e^{r-1}}\right)^{n/(r-1)}$.
\end{enumerate} 
\end{theorem}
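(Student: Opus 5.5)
The plan is to run the $r=3$ argument of \cref{sec:mixed,sec:martingale,sec:switchings,sec:resampling,sec:migration} with its parameters made $r$-dependent. Part (ii) then follows by adding a matching upper bound obtained from a first-moment estimate.

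\emph{Reduction.} For $r\ge3$, reveal $\tau$, the low-degree set $B$ (vertices of degree at most $0.01\log n$) and all edges meeting $B$, exactly as in the proof of \cref{thm:main}. For each $v\in B$ choose an incident edge $e_v$. Then prescribe the $(r-1)$-set $e_v\setminus\{v\}$, viewed as a ``short edge'' whose two end vertices are intersection vertices and whose other $r-3$ vertices are private. The analogues of \cref{prop:exact-core,prop:low-at-tau} go through verbatim with $p=\Theta(\log n/n^{r-1})$, because Berge paths of length at most two between low-degree vertices still have expected number $n^{1-2\varphi(0.01)+o(1)}=o(1)$.

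For $r=2$ the hitting time is $\delta\ge2$, and the obstruction is vertices of degree exactly $2$. Their two incident edges are forced, so I contract each such vertex together with its two edges into a prescribed edge, i.e.\ a pair $a_vb_v$ that the cycle must use. This gives the same mixed-cycle framework. Vertices of small degree at least $3$ can be handled the same way, by selecting two incident edges.

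\emph{Counting theorem.} I then prove the analogue of \cref{thm:conditioned-core} for $r$-uniform cores. There the reference value becomes $\frac{n-s}{r-1}\log\frac{(r-1)!\,\cdot\,\text{const}\cdot m}{n}-\cdots$, read off from the complete-hypergraph count of \cref{prop:complete-count}, whose block-arrangement proof generalises directly.

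The encoding into perfect matchings changes as follows. Split each intersection vertex into in and out copies. Each edge becomes an $r$-edge containing one out-copy, one in-copy and $r-2$ private vertices. Kahn's inequality \cref{thm:kahn} holds for every fixed $r$, so \cref{prop:matching-stability} and \cref{thm:entropy-stability} carry over, with the $3/8$ edge-distribution constant replaced by the corresponding two-in/$(r-2)$-out proportion. \cref{lem:overlap}, the resampling argument \cref{prop:uniform-completion}, and the martingale \cref{thm:stopped-counting} use only degree, codegree and entropy bounds, so they generalise unchanged.

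\emph{Main obstacle: migrations.} The step I expect to be hardest is the migration lemmas. For $r\ge4$ a completion count deletes $r-2$ private vertices and adds a pair. So I would migrate the private vertices one at a time, using the link of the migrated vertex, which is now an $(r-1)$-graph, via an analogue of \cref{lem:simultaneous-link}. The intersection migration keeps its Type I/II segment structure.

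For $r=2$ there are no private vertices, and I would instead use only endpoint migration on paths. I must also check that the entropy/matching encoding (in/out copies give a perfect matching of a bipartite graph, where Brégman-type entropy replaces Kahn) still gives stability.

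\emph{Upper bound and conclusion.} The upper bound in (ii) follows from Markov's inequality applied to the expected number of loose Hamilton cycles in $\mathcal H_{\tau}$. Use the sandwich $\mathcal H(p_{\mathrm{lo}})\subseteq\mathcal H_\tau\subseteq\mathcal H(p_{\mathrm{hi}})$, where $\mathcal H(p_{\mathrm{hi}})$ has expected count $\exp\{o(n)\}((r-1)\log n/e^{r-1})^{n/(r-1)}$ by the computation in \cref{prop:complete-count}(iii).

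Expanding prescribed pairs multiplies the core count by $\exp\{o(n)\}$, since $|B|=n^{o(1)}$. So the lower bound from the conditioned counting theorem gives the matching order, and it also proves (i).
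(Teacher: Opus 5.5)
Your plan follows the route the paper sketches in its concluding remarks. You redo the conditioned-core counting theorem for general $r$: $r$-uniform auxiliary matchings, and $r-2$ private-vertex migrations followed by an intersection-vertex migration when $r\ge4$; for $r=2$, a bipartite Br\'egman/Radhakrishnan-type entropy step with only intersection migration. Injective expansion then gives the lower bound in (ii), and hence (i), and a first-moment estimate on $\mathcal H(p_{\mathrm{hi}})$ gives the upper bound. There are two minor differences. First, for $r\ge4$ you keep $e_v\setminus\{v\}$ as a prescribed $(r-1)$-edge with forced private vertices, whereas the paper deletes the whole private block containing $v$ and prescribes a pair; so you reveal only edges at $B$ and need only exclude Berge paths of length at most two, while the paper excludes length at most four. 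Second, $|B|$ is typically of order $n^{1-\varphi(0.01)}$ rather than $n^{o(1)}$, but the bound $|B|\le n^{1/12}$ already makes the loss $\exp\{O(|B|\log n)\}$ from the prescribed structures $\exp\{o(n)\}$.
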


The conditioned-core estimates (\cref{thm:conditioned-core} for $r=3$), together with injective expansion of prescribed pairs, give lower bounds for the number of loose Hamilton cycles at the hitting time. These bounds reach the first-moment scale, as in Kahn's~\cite[Theorem~1.4]{Kahn2022} counting result for perfect matchings. Combining them with first-moment upper bounds gives \cref{thm:counting-cycles}(ii).

For $r=2$, the hitting-time conclusion is due to Bollob\'as~\cite{Bollobas1984} and Ajtai, Koml\'os, and Szemer\'edi~\cite{AjtaiKomlosSzemeredi1985}, and the leading exponential counting estimate is due to Glebov and Krivelevich~\cite[Theorem~2]{GlebovKrivelevich2013}. The present argument gives another proof without P\'osa's rotation--extension method. Since every vertex of a Hamilton cycle is an intersection vertex, only migration of an intersection vertex is needed. The entropy step uses bipartite perfect matchings and a random-order argument as in Radhakrishnan's proof of Bregman's theorem~\cite{Radhakrishnan1997}.
Using this framework, one can show that, with high probability, the number of Hamilton cycles in $\mathcal H_\tau$ is
\[
 \left(\frac{2\tau}{en}\right)^n\exp\left\{O\left(\frac{n}{\log n}\right)\right\}.
\]
This estimate retains the actual hitting time $\tau$ and reduces the logarithmic error from $o(n)$ to $O(n/\log n)$.

For fixed $r\ge4$, private blocks have size $r-2$, leading to the following changes from the $3$-uniform proof.
\begin{enumerate}[label=(\textit{\roman*}),nosep]
\item In the reduction, each selected edge is contracted to a prescribed pair by deleting its private block, chosen to contain the low-degree vertex. For $r=4$, this deletes two vertices. To keep these blocks separated, we exclude Berge paths of length at most four between low-degree vertices.
\item In the entropy step, the auxiliary matchings are $r$-uniform, and the comparison uses a Bernoulli product law with parameter $1/(r-1)$. Thus, for $r=4$, the limiting intersection and private proportions are $1/3$ and $2/3$.
\item To change the deleted block, apply private-vertex migration once for each of its $r-2$ vertices. Then apply intersection-vertex migration to change the added pair.
\end{enumerate}

Detailed proofs for the graph case $r=2$ and for every fixed $r\ge4$ are given in two separate drafts, available at \url{https://github.com/xliu2022/xliu2022.github.io/tree/main/Unpolished_Manuscripts}.
\section*{Acknowledgements}


We thank Wanfang Chen for exploring an extension of the proof from
$r=3$ to general uniformity with the assistance of GPT. Gaps in the
proposed extension were identified during Lean formalisation and
subsequently repaired with the assistance of GPT.

\section*{Declaration on the use of AI}
The authors used GPT~6~Pro to assist in discussing proof strategies,
checking proofs, improving exposition, and formalising proofs in Lean.

\bibliographystyle{custom-plainurl}
\bibliography{references}

\end{document}